\newtheorem{theorem}{Theorem}[section]
\newtheorem{corollary}[theorem]{Corollary}
\newtheorem{proposition}[theorem]{Proposition}
\newtheorem{lemma}[theorem]{Lemma}
\theoremstyle{definition}
\newtheorem{example}[theorem]{Example}
\theoremstyle{remark}
\newtheorem{remark}[theorem]{Remark}
\numberwithin{equation}{section}
\DeclareMathOperator{\disc}{disc}
\DeclareMathOperator{\pr}{pr}
\DeclareMathOperator{\Km}{Km}
\DeclareMathOperator{\NS}{NS}
\DeclareMathOperator{\Hom}{Hom}
\DeclareMathOperator{\Pic}{Pic}
\DeclareMathOperator{\Prym}{Prym}
\DeclareMathOperator{\Aut}{Aut}
\DeclareMathOperator{\Ker}{Ker}
\DeclareMathOperator{\Image}{Image}
\newcommand\blfootnote[1]{%
  \begingroup
  \renewcommand\thefootnote{}\footnote{#1}%
  \addtocounter{footnote}{-1}%
  \endgroup
}
\title{Bielliptic curves of genus three and the Torelli problem for certain elliptic surfaces}
\author{Atsushi Ikeda}
\date{}
\begin{document}
\maketitle
\blfootnote
{2010 {\it Mathematics Subject Classification}.
14C34, 14J27, 14H40.}
\begin{abstract}
 We study the Hodge structure of elliptic surfaces which are canonically defined from bielliptic curves of genus three.
 We prove that the period map for the second cohomology has one dimensional fibers, and the period map for the total cohomology is of degree twelve, and moreover, by adding the information of the Hodge structure of the canonical divisor, we prove a generic Torelli theorem for these elliptic surfaces.
 Finally, we give explicit examples of the pair of non-isomorphic elliptic surfaces which have the same Hodge structure on themselves and the same Hodge structure on their canonical divisors.
\end{abstract}
\section{Introduction}\label{104325_1Dec16}
Let $f:Y\rightarrow{B}$ be an elliptic surface with a section.
The Torelli problem asks if the isomorphism class of the surface $Y$ is determined by the isomorphism class of the Hodge structure $H^{i}(Y,\mathbf{Z})$.
In \cite{Ch2}, Chakiris proved that general simply connected elliptic surface with the geometric genus $p_{g}(Y)\geq2$ is determined by the Hodge structure on $H^{2}(Y,\mathbf{Z})$, and it is called generic Torelli theorem.
But we can not find this kind of results in the case when $Y$ has positive irregularity $q(Y)\geq1$.
In this paper, we consider certain elliptic surfaces with $p_{g}(Y)=q(Y)=1$, which are canonically defined from bielliptic curves of genus $3$.

Let $C$ be a nonsingular projective curve of genus $3$.
Then the symmetric square $C^{(2)}$ has the involution $\kappa:C^{(2)}\rightarrow{C^{(2)}}$ which is defined as the extension of the birational involution given by $q+q'+\kappa(q+q')\in|\Omega_{C}^{1}|$ for $q+q'\in{C^{(2)}}$ with $h^{0}(C,\mathcal{O}_{C}(q+q'))<2$.
In this paper, we consider the case when $C$ has a bielliptic involution $\sigma:C\rightarrow{C}$, whose quotient $E=C/\sigma$ is a nonsingular projective curve of genus $1$.
Then the involution $\sigma^{(2)}:C^{(2)}\rightarrow{C^{(2)}}$ commutes with the involution $\kappa$, and we have several quotient surfaces of $C^{(2)}$;
$$
\begin{array}{cccccccccc}
 &&C^{(2)}&&\\
 &\swarrow&\downarrow&\searrow&\\
 A'=C^{(2)}/(\sigma^{(2)}\circ\kappa)&&X'=C^{(2)}/\kappa&&Y'=C^{(2)}/\sigma^{(2)}\\
 &\searrow&\downarrow&\swarrow&&\searrow\\
 &&Z'=C^{(2)}/\langle\sigma^{(2)},\kappa\rangle&&&&E^{(2)}.\\
\end{array}
$$
The quotient $X'=C^{(2)}/\kappa$ is a projective surface of general type, and it has $28$ ordinary double points, which come from the $28$ bitangent lines of the non-hyperelliptic case, or the $28$ pair of distinct Weierstrass points of the hyperelliptic case.
The quotient $Y'=C^{(2)}/\sigma^{(2)}$ is a projective surface of Kodaira dimension $1$, and it has $6$ ordinary double points, which come from the $6$ pair of distinct fixed points of the involution $\sigma$.
We denote by $Y=Y(C/E)$ the minimal resolution of singularities of $Y'$, which is the main object of this paper.
The quotient $A'=C^{(2)}/(\sigma^{(2)}\circ\kappa)$ is a nonsingular projective surface of Kodaira dimension $0$.
We denote by $A$ the minimal model of $A'$.
Then $A$ is isomorphic to the dual abelian surface of the Prym variety $\Prym{(C/E)}$ of the branched double covering $C\rightarrow{E}$, and the Kummer surface of $A$ is isomorphic to the nonsingular minimal model of the quotient $Z'=C^{(2)}/\langle\sigma^{(2)},\kappa\rangle$.
Since the abelian surface $A$ and its Kummer surface are investigated by Barth in \cite{B}, we can apply their results to the study of the surface $Y$.

The surface $Y$ has the canonical elliptic fibration $f:Y\rightarrow{B}$ by
$Y'\rightarrow{E^{(2)}}\rightarrow{\Pic^{(2)}{(E)}=B}$, and the Hodge structure $H^{1}(Y,\mathbf{Z})\simeq{H^{1}(B,\mathbf{Z})}$ recovers only the information of the base curve $B$.
We have to consider the Hodge structure on $H^{2}(Y,\mathbf{Z})$ for the Torelli theorem.
Since the involution $\kappa$ acts trivially on the space $H^{0}(C^{(2)},\Omega_{C^{(2)}}^{2})$ of holomorphic $2$-forms on $C^{(2)}$, we have the coincidence
$H^{0}(C^{(2)},\Omega_{C^{(2)}}^{2})^{\sigma^{(2)}}=
H^{0}(C^{(2)},\Omega_{C^{(2)}}^{2})^{\sigma^{(2)}\circ\kappa}$
of the invariant subspaces for the involutions $\sigma^{(2)}$ and $\sigma^{(2)}\circ\kappa$.
It implies that the Hodge structures on $H^{2}(Y,\mathbf{Z})$ and on $H^{2}(A,\mathbf{Z})$ are essentially equivalent.
In fact, we have the isomorphism $H^{2}(Y,\mathbf{Q})\simeq{H^{2}(A,\mathbf{Q})\oplus\mathbf{Q}(-1)^{\oplus8}}$ of rational Hodge structures.
We remark that the integral cohomology $H^{2}(A,\mathbf{Z})$ is not a direct summand of $H^{2}(Y,\mathbf{Z})$, hence we need more technical arguments to compare the integral Hodge structures.
The detail is given in Theorem~\ref{104530_30Nov16}.

The Torelli problem for the Prym map asks if the isomorphism class of the double covering $C\rightarrow{E}$ is determined by the isomorphism class of the polarized abelian variety $\Prym{(C/E)}$.
In the case for bielliptic curves, it is studied in \cite{MN}.
In our case, the Torelli theorem does not hold, because the dimension of the moduli space of bielliptic curve of genus $3$ is grater than the dimension of the moduli space of polarized abelian surfaces.
Hence we have a nontrivial deformation $\{C_{t}\rightarrow{E_{t}}\}_{t}$ of the bielliptic curve $C\rightarrow{E}$ with the same Prym variety.
By the above observation in Theorem~\ref{104530_30Nov16}, we have a family $\{Y(C_{t}/E_{t})\}_{t}$ of surfaces with the constant Hodge structure $H^{2}(Y(C_{t}/E_{t}),\mathbf{Z})\simeq{H^{2}(Y(C/E),\mathbf{Z})}$.
Since the bielliptic curve $C_{t}\rightarrow{E_{t}}$ is recovered from the elliptic surface $Y(C_{t}/E_{t})$ by Proposition~\ref{112036_17Oct16}, the family $\{Y(C_{t}/E_{t})\}_{t}$ is a nontrivial deformation of the surface $Y$, and it forms a fiber of the period map for the second cohomology.

Let $F_{\xi}$ be the fiber of the elliptic fibration $f:{Y}\rightarrow{B}$, and let $D_{\xi}$ be the fiber of the composition $C^{(2)}\rightarrow{Y'}\rightarrow{B}$ at $\xi\in{B}$.
Then $D_{\xi}\rightarrow{Y}_{\xi}$ is a bielliptic curve of genus $3$ for general $\xi\in{B}$.
By Pantazis' bigonal construction \cite{B}, \cite{P}, we can show that the Prym variety $\Prym{(D_{\xi}/F_{\xi})}$ is the dual abelian surface of $\Prym{(C/E)}$ (Corollary~\ref{104804_30Nov16}).
Since the Hodge structure on the second cohomology of the abelian surface is isomorphic to the Hodge structure of its dual abelian surface by \cite{Sh}, it gives another family $\{Y(D_{\xi}/F_{\xi})\}_{\xi}$ of elliptic surfaces which satisfies $H^{2}(Y(D_{\xi}/F_{\xi}),\mathbf{Z})\simeq{H^{2}(Y(C/E),\mathbf{Z})}$.
If $\Prym{(C/E)}$ is not a self-dual polarized abelian surface, this family does not contain the original surface $Y=Y(C/E)$, hence the fiber of the period map for $H^{2}(Y,\mathbf{Z})$ has $2$ connected components.
Let $\mathcal{M}$ be the set of isomorphism classes of bielliptic curves of genus $3$, which distinguishes isomorphism classes of corresponding elliptic surfaces, and let $\mathcal{H}$ be the set of isomorphism classes of Hodge structures with additional structures (bilinear forms and some effective classes).
Then we have the period map $\mathcal{M}\rightarrow\mathcal{H}$ by the Hodge structure of $Y$ or the canonical divisor $K_{Y}$.
We remark that the effective canonical divisor $K_{Y}$ of $Y$ is uniquely determined because $p_{g}(Y)=1$.
\renewcommand{\labelenumi}
{$(\arabic{enumi})$}
\begin{theorem}\label{173837_17Jan17}
 \begin{enumerate}
  \item The period map for the Hodge structure $H^{2}(Y,\mathbf{Z})$ has $1$ dimensional fibers, and the general fiber consists of $2$ connected components
	(Theorem~\ref{181010_13Dec16}).
  \item The period map for the mixed Hodge structure $\bigoplus_{i=1}^{2}H^{i}(Y,\mathbf{Z})$ has finite fibers, and the general fiber consists of $12$ points
	(Theorem~\ref{163939_14Jan17}).
  \item The period map for the mixed Hodge structure
	$\bigl(\bigoplus_{i=1}^{2}H^{i}(Y,\mathbf{Z})\bigr)
	\oplus{H^{1}(K_{Y},\mathbf{Z})}$
	is generically injective
	(Theorem~\ref{163320_13Jan17}).
 \end{enumerate}
\end{theorem}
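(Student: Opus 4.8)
The three statements are progressively finer, so the plan is to build them in order, each time reducing to data about the abelian surface $A=\widehat{\Prym(C/E)}$ and the base $B$. For (1), the starting point is Theorem~\ref{104530_30Nov16}, which matches the integral Hodge structure on $H^2(Y,\mathbf{Z})$ with that of $A$ up to the explicit algebraic part $\mathbf{Q}(-1)^{\oplus8}$; modulo the lattice bookkeeping forced on us because $H^2(A,\mathbf{Z})$ is not a direct summand, the $H^2$-period map therefore factors through the period of the $(1,2)$-polarized surface $A$. A dimension count then gives the fiber dimension: the moduli of bielliptic curves of genus $3$ is $4$-dimensional (a double cover of $E$ branched at $4$ points, so $1$ for $E$ plus $4$ branch points modulo the translation action on $E$), while the relevant polarized abelian surfaces form a $3$-dimensional family, so the generic fiber is $1$-dimensional. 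I would exhibit this fiber concretely as the iso-Prym deformation $\{C_{t}\to E_{t}\}_{t}$, using Proposition~\ref{112036_17Oct16} to ensure each member genuinely recovers its own bielliptic curve, so that these deformations are the actual fiber and not an artifact of the Hodge-theoretic identification.

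To obtain the two connected components in (1) I would combine this iso-Prym family, which contains $Y=Y(C/E)$ and realizes the class of $A$, with the bigonal family $\{Y(D_{\xi}/F_{\xi})\}_{\xi}$ of Corollary~\ref{104804_30Nov16}, whose associated abelian surface is the dual $\widehat{A}$. By the theorem of \cite{Sh} the second cohomology of an abelian surface and of its dual carry isomorphic Hodge structures, so both families lie in the same fiber of the $H^2$-period map; and when $A$ is not self-dual the bigonal family is disjoint from the iso-Prym family, giving exactly two components. The point to verify is that generically there are no further components, i.e. that every bielliptic curve realizing the given $H^2$ arises either as an iso-Prym deformation or through Pantazis' construction \cite{P}.

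For (2) I add the weight-one piece $H^1(Y,\mathbf{Z})\simeq H^1(B,\mathbf{Z})$, which records $B=\Pic^{2}(E)\cong E$ and hence pins down the elliptic curve $E$. Fixing $E$ collapses the $1$-dimensional fiber of (1) to a finite set, since the Prym map from bielliptic curves over a fixed $E$ to polarized abelian surfaces is then a generically finite map between $3$-dimensional moduli spaces. The substance is the degree count: I would study the monodromy of this finite map together with the combinatorics of the $4$ branch points (equivalently the $\binom{4}{2}=6$ nodes of $Y'$) and the duality involution supplied by the bigonal construction, and show that the resulting deck group has order $12$. Pinning down exactly $12$, rather than just finiteness, is the main obstacle, and it is precisely here that the explicit geometry of $(1,2)$-polarized abelian surfaces from \cite{B} and the involutivity of the bigonal construction \cite{P} have to be used in earnest.

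Finally, for (3) I would separate the generic $12$-point fiber using $H^1(K_Y,\mathbf{Z})$, the Hodge structure of the canonical divisor, which is a well-defined effective curve because $p_{g}(Y)=1$. The canonical curve is supported on the fibration and carries a period that is not determined by $H^1\oplus H^2$ alone; the plan is to show this period is non-constant on the $12$ points of a generic fiber, so that prescribing its value isolates a single curve and yields generic injectivity of the combined period map. The explicit pairs promised in the abstract, which share $H^1\oplus H^2\oplus H^1(K_Y)$ yet are non-isomorphic, then confirm that \emph{generic} injectivity is the sharp statement and that the map cannot be made globally injective; calibrating the argument against such examples is what makes the separation step delicate. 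Altogether, (1)--(3) are the content of Theorems~\ref{181010_13Dec16}, \ref{163939_14Jan17} and \ref{163320_13Jan17} respectively.
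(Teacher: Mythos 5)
Your outline follows the paper's architecture exactly --- reduce $H^{2}(Y,\mathbf{Z})$ to the $(1,2)$-polarized abelian surface via Theorem~\ref{104530_30Nov16}, get two components from $A$ and its dual, pin down $E$ with $H^{1}$, and separate the finite fiber by the period of $K_{Y}$ --- but at each of the three decisive points you name the difficulty and stop, and those are exactly the steps that carry the proof. In (1), the dimension count and the two exhibited families (iso-Prym and bigonal) only bound the fiber from below; the content of Theorem~\ref{181010_13Dec16} is the reverse inclusion, that \emph{every} $[Y']$ with the same $(H^{2},\langle\ ,\ \rangle,\Pi)$ lies in $\mathcal{M}(\Prym{(C/E)})\cup\mathcal{M}(\Prym{(C/E)}^{\vee})$. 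The paper gets this from Shioda's Torelli theorems: Theorem~I of \cite{Sh} forces $A'\simeq A$ or $A'\simeq A^{\vee}$ as complex tori, and Theorems~1 and 2 of \cite{Sh} show the given Hodge isometry is $\pm\psi^{*}$ (respectively $\pm\psi^{*}\circ\alpha_{A}^{-1}$), so that matching the ample classes $[\iota(C)]\mapsto[\iota(C')]$ upgrades $\psi$ to an isomorphism of \emph{polarized} abelian surfaces, hence $[Y']\in\mathcal{M}(A)\cup\mathcal{M}(P)$. You cite \cite{Sh} only for the easy direction (the dual surface has isometric $H^{2}$); the hard direction, which you flag as ``the point to verify,'' is the proof. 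In (2), you concede that pinning down exactly $12$ is ``the main obstacle'': the paper's count is $12=2\times6$, where $6$ is the degree of $j:\mathcal{M}(A)\rightarrow\mathbf{P}^{1}$, $[D\rightarrow F]\mapsto j(F)$, computed in Proposition~\ref{175005_11Dec16} by dividing the degree-$24$ $j$-function of the fibration $g_{2}:\Km{(P)}\rightarrow|\mathcal{L}_{P}|$ by the degree-$4$ covering $|\mathcal{L}_{P}|\dashrightarrow\mathcal{M}(A)$ of Lemma~\ref{110908_14Dec16}. Your picture of a single ``deck group of order $12$'' is structurally off: the general fiber is two sets of $6$ points lying on two disjoint rational curves, not one orbit of a degree-$12$ covering group, and no monodromy argument is actually given.

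In (3) your idea (separate the $12$ points by the period of $K_{Y}$) is the paper's, but asserting that this invariant is non-constant on the fiber is not an argument, and your plan omits a second requirement. The paper must show (a) that the map $a\mapsto(j_{F}(a),j_{K}(a))$ on the $a$-line parametrizing $\mathcal{M}(A)$ is generically injective, which it reduces (Lemma~\ref{162617_13Jan17}) to the statement that $j_{F}$ and $j_{K}$ --- written out explicitly in Corollary~\ref{170044_12Jan17} and Corollary~\ref{191514_10Dec16}, via the equation of $D_{\xi}$ in Lemma~\ref{172705_10Dec16} --- have no common ramification point, verified for one explicit curve in Example~\ref{175545_13Jan17} and hence for a general one; and (b) that $\Image{(\mathbf{j}_{(s,\lambda)})}\neq\Image{(\check{\mathbf{j}}_{(s,\lambda)})}$ (Lemma~\ref{162638_13Jan17}), again by explicit example. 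Point (b) is what prevents a point of $\mathcal{M}(A)$ and a point of $\mathcal{M}(P)$ from sharing all of $H^{1}(Y)$, $H^{2}(Y)$ and $H^{1}(K_{Y})$; injectivity on each component separately, which is all your plan aims at, would still allow the two components to collide in the period domain and generic injectivity would fail. So the proposal is a correct road map of the paper, but the three quantitative claims --- exactly two components, exactly twelve points, and generic separation --- are each left at the point where the actual work begins.
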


We remark that the elliptic surface $Y$ has a section and the non-constant $j$-function (Proposition~\ref{143201_14Oct16}).
Our Theorem~\ref{173837_17Jan17}~(1) implies that the differential of the period map for $H^{2}(Y,\mathbf{Z})$ is not injective, and it contradicts to the infinitesimal Torelli theorem by \cite{S}.
It seems that there is a gap in the proof of Main Theorem~(A) in \cite{S}.
Therefore, in Proposition~\ref{142139_12Oct16}, we give a direct proof for the failure of the infinitesimal Torelli theorem without using Theorem~\ref{173837_17Jan17}~(1).
A class of surfaces whose period map has a positive dimensional fiber is known
in \cite{Ca}, \cite{Ch1}, \cite{Ky}, \cite{T}, \cite{U1}.
Theorem~\ref{173837_17Jan17}~(3) is motivated by the result on the mixed Torelli theorem for these surfaces in \cite{SSU}.
The global Torelli theorem for the period map in Theorem~\ref{173837_17Jan17}~(3) is not true.
We can find the pair of non-isomorphic elliptic surfaces which have the same Hodge structure, and we give several types of examples which are defined over $\mathbf{Q}$ in Example~\ref{175545_13Jan17}, Example~\ref{165513_18Jan17} and Example~\ref{165437_18Jan17}.

This paper proceed as follows.
In Section~\ref{101650_14Nov16}, we show that the pluri-canonical morphism of $Y$ gives the elliptic fibration $Y\rightarrow{B}$, and we describe its singular fibers.
By using the information of the singular fibers and the canonical fiber, we give the way to reconstruct the original bielliptic curve $C\rightarrow{E}$ from the elliptic surface $Y$.
We also compute the $j$-function of the elliptic fibration.
In Section~\ref{115200_2Dec16}, we review some results in \cite{B} on the Prym varieties for the bielliptic curve $C\rightarrow{E}$ of genus $3$, and we show that the Prym variety of the bielliptic curve $D_{\xi}\rightarrow{F_{\xi}}$ is the dual abelian surface of the Prym variety $\Prym{(C/E)}$.
We compute the explicit equation of the canonical model of the curve $D_{\xi}$, which is used in the proof of Theorem~\ref{173837_17Jan17}~(3), and is useful to find examples for the failure of the global Torelli theorem.
In Section~\ref{221934_29Nov16}, we compute the lattice structure on $H^{2}(Y,\mathbf{Z})$ using a basis of $H^{1}(C,\mathbf{Z})$.
We explain the way to construct the Hodge structure $H^{2}(A,\mathbf{Z})$ from the Hodge structure $H^{2}(Y,\mathbf{Z})$, and also explain the way to construct $H^{2}(Y,\mathbf{Z})$ from $H^{2}(A,\mathbf{Z})$.
In Section~\ref{183635_16Oct16}, first we show that the differential of the period map for $H^{2}(Y,\mathbf{Z})$ is not injective, but it is not used in the proof of the Main theorem (Theorem~\ref{173837_17Jan17}).
We prove the Main theorem by using results in Section~\ref{115200_2Dec16} and Section~\ref{221934_29Nov16}, and we give examples for the failure of the global Torelli theorem.

\section{Construction and singular fibers of elliptic surfaces}\label{101650_14Nov16}
\subsection{Construction}
Let $E$ be a nonsingular projective curve of genus $1$, and let $\pi:C\rightarrow{E}$ be a double covering branched at distinct $4$ points $p_{1},\dots,p_{4}\in{E}$.
Then $C$ is a nonsingular projective curve of genus $3$.
In this paper, the covering $\pi:C\rightarrow{E}$ is called bielliptic curve of genus $3$.
We denote by $\sigma:C\rightarrow{C}$ the covering involution of $\pi$, and denote by $\sigma^{(2)}:C^{(2)}\rightarrow{C^{(2)}}$ the induced involution on its symmetric square $C^{(2)}$.
Then the induced morphism $\pi^{(2)}:C^{(2)}\rightarrow{E^{(2)}}$ on the symmetric squares factors through the quotient $C^{(2)}/\sigma^{(2)}$, and the morphism $\phi:C^{(2)}/\sigma^{(2)}\rightarrow{E^{(2)}}$ is a finite double covering of $E^{(2)}$ branched along $\bigcup_{i=1}^{4}\Gamma_{i}$, where $\Gamma_{i}=\{p_{i}+p\in{E^{(2)}}\mid{p\in{E}}\}$.
Let $\nu:Y\rightarrow{C^{(2)}/\sigma^{(2)}}$ be the minimal resolution, and let $\psi:E^{(2)}\rightarrow\Pic^{(2)}{(E)}$ be the $\mathbf{P}^{1}$-bundle defined by $p+p'\mapsto[\mathcal{O}_{E}(p+p')]$.
Then the composition $f=\psi\circ\phi\circ\nu:Y\rightarrow\Pic^{(2)}{(E)}$ gives a fibration of curves of genus $1$.
We remark that the proper transform of $\phi^{-1}(\Gamma_{i})$ in $Y$ gives a section of the fibration.
Let $\eta\in\Pic^{(2)}{(E)}$ be the isomorphism class of the invertible sheaf with $\eta^{\otimes2}=[\mathcal{O}_{E}(p_{1}+\dots+p_{4})]$ which is determined by
$\pi^{*}\eta=[\Omega_{C}^{1}]$.
First, we prove the following proposition.
\begin{proposition}\label{161646_15Oct16}
 $Y$ is a minimal surface with the numerical invariants $p_{g}(Y)=1$, $q(Y)=1$ and $K_{Y}^{2}=0$, and the effective canonical divisor $K_{Y}$ of $Y$ is the fiber $f^{-1}(\eta)$ at $\eta\in\Pic^{(2)}{(E)}$.
\end{proposition}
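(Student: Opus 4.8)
The plan is to read off the numerical invariants from the eigenspace decomposition of the holomorphic forms on $C^{(2)}$ under $\sigma^{(2)}$, and then to pin down the canonical class by locating the zeros of the unique holomorphic $2$-form. First I would record that $\sigma$ acts on $H^{0}(C,\Omega_{C}^{1})$ with invariant part $\pi^{*}H^{0}(E,\Omega_{E}^{1})$ of dimension $1$ and anti-invariant (Prym) part of dimension $2$; write $\omega_{0}$ for a generator of the former and $\omega_{1},\omega_{2}$ for a basis of the latter. Using $H^{0}(C^{(2)},\Omega_{C^{(2)}}^{1})\simeq H^{0}(C,\Omega_{C}^{1})$ and $H^{0}(C^{(2)},\Omega_{C^{(2)}}^{2})\simeq\bigwedge^{2}H^{0}(C,\Omega_{C}^{1})$, on which $\sigma^{(2)}$ acts as $\sigma^{*}$ and $\bigwedge^{2}\sigma^{*}$, the invariant subspaces are $\langle\omega_{0}\rangle$ and $\langle\omega_{1}\wedge\omega_{2}\rangle$, both one-dimensional. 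Since the six singular points of $Y'$ are ordinary double points, hence rational (canonical) singularities, the resolution $\nu$ satisfies $\nu_{*}\mathcal{O}_{Y}=\mathcal{O}_{Y'}$ and $R^{i}\nu_{*}\mathcal{O}_{Y}=0$ for $i>0$; therefore $q(Y)=\dim H^{0}(C^{(2)},\Omega^{1})^{\sigma^{(2)}}=1$ and $p_{g}(Y)=\dim H^{0}(C^{(2)},\Omega^{2})^{\sigma^{(2)}}=1$, giving $\chi(\mathcal{O}_{Y})=1$.

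Since $p_{g}(Y)=1$, the effective canonical divisor $K_{Y}$ is exactly the zero divisor of the unique $2$-form $\Omega=\omega_{1}\wedge\omega_{2}$, so it suffices to locate its zeros. The anti-invariant forms $\omega_{1},\omega_{2}$ correspond to a basis of $H^{0}(E,\eta)$ (here $\pi_{*}\Omega_{C}^{1}=\Omega_{E}^{1}\oplus(\Omega_{E}^{1}\otimes\eta)$ and $\Omega_{E}^{1}\cong\mathcal{O}_{E}$), and since $\deg\eta=2$ the system $|\eta|$ is base-point free and defines a degree-two morphism $\bar{\varphi}\colon E\rightarrow\mathbf{P}^{1}$; let $\iota$ be its covering involution, so that $e+\iota(e)\in|\eta|$ for every $e\in E$. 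Then $\varphi:=[\omega_{1}:\omega_{2}]\colon C\rightarrow\mathbf{P}^{1}$ factors as $\varphi=\bar{\varphi}\circ\pi$. The key computation, carried out after pulling $\Omega$ back to $C\times C$, identifies its zero divisor with the degeneracy locus $\{q+q'\in C^{(2)}:\varphi(q)=\varphi(q')\}$ with the diagonal removed (the vanishing of the pullback along the diagonal is merely the ramification of $C\times C\rightarrow C^{(2)}$). Splitting this locus according to whether $\pi(q)=\pi(q')$ or $\pi(q')=\iota(\pi(q))$, I obtain two pieces: the fixed curve $R=\{q+\sigma(q)\}$ of $\sigma^{(2)}$, and the locus where $\pi(q)+\pi(q')\in|\eta|$, i.e. where $q+q'$ maps to $[\eta]\in\Pic^{(2)}(E)$. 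The second piece is precisely (the generic part of) the fibre $f^{-1}(\eta)$, since $|\eta|$ is exactly the pencil cut out by $\bar{\varphi}$.

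Finally I would descend to $Y$. The curve $R$ is the ramification divisor of the quotient double cover $\rho\colon C^{(2)}\rightarrow Y'$, so it is absorbed by $K_{C^{(2)}}=\rho^{*}K_{Y'}+R$, leaving the canonical divisor of $Y'$ supported on the image of the second piece; because the resolution of the $A_{1}$-points is crepant, $K_{Y}$ is supported on $f^{-1}(\eta)$. To conclude that $K_{Y}$ equals the whole fibre with multiplicity one I would invoke that $f$ is a relatively minimal elliptic fibration carrying a section, hence has no multiple fibres: the canonical bundle formula then gives $K_{Y}\sim f^{*}\mathfrak{d}$ with $\deg\mathfrak{d}=\chi(\mathcal{O}_{Y})=1$, so $K_{Y}$ is a single fibre, which by the above must be $f^{-1}(\eta)$. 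Because a fibre has self-intersection $0$ this yields $K_{Y}^{2}=0$, and since $K_{Y}$ is then nef it meets no $(-1)$-curve, so $Y$ is minimal.

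The hard part will be the divisor computation of the third paragraph: correctly bookkeeping the contribution of the ramification curve $R$ and checking that the remaining component occurs with multiplicity one (equivalently, identifying $\mathfrak{d}=[\eta]$), including the degenerate behaviour at the ramification points $r_{i}$ over the $p_{i}$ and at the six nodes, where the two cases in the splitting collide. The invariant and resolution arguments for $p_{g}$, $q$ and minimality are routine by comparison.
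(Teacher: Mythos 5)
Your computation of $p_{g}(Y)=q(Y)=1$ (invariant forms together with the rationality of the six ordinary double points) is sound, and your identification of the \emph{support} of $\operatorname{div}(\omega_{1}\wedge\omega_{2})$ as $\Delta_{\sigma}\cup\{q+q'\mid \pi(q)+\pi(q')\in|\eta|\}$ is correct; this is a genuinely different route from the paper, which never touches the $2$-form directly but instead realizes $\nu$ as Horikawa's canonical resolution of the double cover $\phi:C^{(2)}/\sigma^{(2)}\rightarrow E^{(2)}$, proves $\mathcal{G}\simeq[\mathcal{O}_{E^{(2)}}(2\Gamma_{p})]$ and $\Omega_{E^{(2)}}^{2}\otimes\mathcal{G}\simeq\mathcal{O}_{E^{(2)}}(\Lambda_{p})$ by seesaw arguments, and reads off $\Omega_{Y}^{2}\simeq f^{*}\mathcal{O}_{B}(\eta)$, $\chi(\mathcal{O}_{Y})=1$, $K_{Y}^{2}=0$ from Horikawa's formulas. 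The problem is your final step, which is circular. Kodaira's canonical bundle formula requires $f$ to be \emph{relatively minimal}, i.e.\ that no fiber contains a $(-1)$-curve. Since $B$ is an elliptic curve, every rational curve on $Y$ is vertical, so relative minimality of $f$ is equivalent to minimality of $Y$ --- one of the conclusions of the Proposition --- and you only derive minimality afterwards, from the equality $K_{Y}=f^{-1}(\eta)$ that the formula was invoked to establish. The support statement alone does not close this loop: knowing only that $K_{Y}$ is effective with support contained in $f^{-1}(\eta)$, a $(-1)$-curve could still occur as a component of $f^{-1}(\eta)$ on which $K_{Y}$ has degree $-1$; excluding this requires the multiplicities of $K_{Y}$, which is exactly the step you defer as ``the hard part.''

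The gap can be filled along the lines you set up, after which the canonical bundle formula is not needed at all. On $C\times C$ the zero divisor of $\pr_{1}^{*}\omega_{1}\wedge\pr_{2}^{*}\omega_{2}-\pr_{1}^{*}\omega_{2}\wedge\pr_{2}^{*}\omega_{1}$ lies in the class $K_{C\times C}=\pr_{1}^{*}K_{C}+\pr_{2}^{*}K_{C}$, and its support is $\Delta_{C}\cup\Gamma_{\sigma}\cup Z$, where $\Gamma_{\sigma}$ is the graph of $\sigma$ and $Z=\{(q,q')\mid\pi(q)+\pi(q')\in|\eta|\}$ (here one uses that $\omega_{1},\omega_{2}$ have no common zeros, because $|\eta|$ is base point free). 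A ruling $\{q\}\times C$ meets $K_{C\times C}$ in degree $\deg K_{C}=4$, meets $\Delta_{C}$ and $\Gamma_{\sigma}$ once each, and meets $Z$ twice; since every component of the zero divisor occurs with multiplicity at least one, all multiplicities are forced to equal one. Pushing this down through $\epsilon:C\times C\rightarrow C^{(2)}$ (ramified along $\Delta_{C}$) and through $C^{(2)}\rightarrow C^{(2)}/\sigma^{(2)}$ (ramified along $\Delta_{\sigma}$), and using crepancy of $\nu$, gives $K_{Y}=f^{-1}(\eta)$ exactly, with multiplicity one. From there your conclusion is fine and agrees with the paper's: $K_{Y}^{2}=0$, $K_{Y}$ is nef and has degree zero on every vertical curve, so adjunction excludes $(-1)$-curves and $Y$ is minimal.
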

For $p\in{E}$, we set $\Gamma_{p}=\{p+p'\in{E^{(2)}}\mid{p'\in{E}}\}$, and we denote by $\Lambda_{p}=\psi^{-1}([\mathcal{O}_{E}(2p)])$ the fiber of $\psi$ at $[\mathcal{O}_{E}(2p)]\in\Pic^{(2)}{(E)}$.
We remark that $\Gamma_{p}$ is a section of the $\mathbf{P}^{1}$-bundle $\psi:E^{(2)}\rightarrow\Pic^{(2)}{(E)}$.
\begin{lemma}\label{161043_15Oct16}
 The canonical sheaf $\Omega_{E^{(2)}}^{2}$ is isomorphic to $\mathcal{O}_{E^{(2)}}(\Lambda_{p}-2\Gamma_{p})$ for any $p\in{E}$.
\end{lemma}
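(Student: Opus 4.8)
The plan is to exploit that $\psi\colon E^{(2)}\to B:=\Pic^{(2)}(E)$ is a $\mathbf{P}^{1}$-bundle admitting the section $\Gamma_{p}$, so that $\Pic(E^{(2)})=\mathbf{Z}\,\Gamma_{p}\oplus\psi^{*}\Pic(B)$ and we may write $\Omega^{2}_{E^{(2)}}\cong\mathcal{O}_{E^{(2)}}(a\Gamma_{p})\otimes\psi^{*}M$ for a unique integer $a$ and a unique line bundle $M$ on $B$. First I would pin down $a$ by restricting to a fibre $\Lambda_{q}$: adjunction on $\Lambda_{q}\cong\mathbf{P}^{1}$ gives $\Omega^{2}_{E^{(2)}}|_{\Lambda_{q}}\cong\mathcal{O}(-2)$, since $\mathcal{O}(\Lambda_{q})|_{\Lambda_{q}}$ is trivial, while $\Gamma_{p}\cdot\Lambda_{q}=1$ and $\psi^{*}M$ is trivial on fibres; hence $a=-2$.

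Next I would reduce the determination of $M$ to a normal bundle computation. Restricting to the section $\Gamma_{p}\cong E$ and using adjunction together with $\Omega^{1}_{\Gamma_{p}}\cong\mathcal{O}_{\Gamma_{p}}$ (genus one) gives $\Omega^{2}_{E^{(2)}}|_{\Gamma_{p}}\cong N_{\Gamma_{p}}^{-1}$, where $N_{\Gamma_{p}}=\mathcal{O}(\Gamma_{p})|_{\Gamma_{p}}$. On the other hand, restricting $\mathcal{O}(-2\Gamma_{p})\otimes\psi^{*}M$ and identifying $\Gamma_{p}$ with $B$ via $\psi|_{\Gamma_{p}}$ gives $N_{\Gamma_{p}}^{-2}\otimes M$. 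Comparing the two expressions yields $M\cong N_{\Gamma_{p}}$, so everything comes down to identifying the normal bundle of the section as an honest isomorphism class, not merely numerically.

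Finally I would compute $N_{\Gamma_{p}}$ using the quotient double cover $\rho\colon E\times E\to E^{(2)}$. Since $\rho^{-1}(\Gamma_{p})=(\{p\}\times E)\cup(E\times\{p\})=:F_{1}\cup F_{2}$ with each component reduced, $\rho^{*}\mathcal{O}(\Gamma_{p})\cong\mathcal{O}(F_{1}+F_{2})$; restricting to $F_{1}$ and using that $\mathcal{O}(F_{1})|_{F_{1}}$ is trivial while $\mathcal{O}(F_{2})|_{F_{1}}\cong\mathcal{O}_{F_{1}}((p,p))$ gives $\rho^{*}\mathcal{O}(\Gamma_{p})|_{F_{1}}\cong\mathcal{O}_{F_{1}}((p,p))$. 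As $\rho|_{F_{1}}\colon F_{1}\to\Gamma_{p}$ is an isomorphism carrying $(p,p)$ to the point $p+p$, I conclude $N_{\Gamma_{p}}\cong\mathcal{O}_{\Gamma_{p}}(p+p)$. Under $\psi|_{\Gamma_{p}}$ the point $p+p$ maps to $[\mathcal{O}_{E}(2p)]$, so $M\cong\mathcal{O}_{B}([\mathcal{O}_{E}(2p)])$ and $\psi^{*}M\cong\mathcal{O}_{E^{(2)}}(\Lambda_{p})$, whence $\Omega^{2}_{E^{(2)}}\cong\mathcal{O}_{E^{(2)}}(\Lambda_{p}-2\Gamma_{p})$. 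The main obstacle is precisely this last identification: a purely numerical computation, for instance pulling the whole class back to $E\times E$, where $\Omega^{2}$ is trivial and one obtains $-\Delta$, only determines $M$ up to the two-torsion kernel of $\rho^{*}$. I therefore keep the argument on $E^{(2)}$ and use $\rho$ solely to evaluate the normal bundle of $\Gamma_{p}$ by restriction, where no descent ambiguity intervenes.
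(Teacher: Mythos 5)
Your proof is correct, but it takes a genuinely different route from the paper's. The paper works on the product: it pulls everything back along $\epsilon\colon E\times E\to E^{(2)}$, uses the seesaw theorem to get $\epsilon^{*}\mathcal{O}_{E^{(2)}}(\Lambda_{p})\otimes\mathcal{O}_{E\times E}(\Delta_{E})\simeq\epsilon^{*}\mathcal{O}_{E^{(2)}}(2\Gamma_{p})$, combines this with the triviality of $\Omega^{2}_{E\times E}$ and the ramification formula $\epsilon^{*}\Omega^{2}_{E^{(2)}}\simeq\Omega^{2}_{E\times E}(-\Delta_{E})$, and then descends the resulting isomorphism using the injectivity of $\epsilon^{*}\colon\Pic(E^{(2)})\to\Pic(E\times E)$ --- exactly the descent step you flagged as the danger point, and which the paper asserts without justification (it does hold, because $\epsilon$ is ramified along the diagonal, so the $2$-torsion kernel that an \'etale double cover would produce does not arise). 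Your argument instead stays on $E^{(2)}$: the ruled-surface decomposition $\Pic(E^{(2)})=\mathbf{Z}\Gamma_{p}\oplus\psi^{*}\Pic(B)$, adjunction along a fibre to pin down the coefficient $-2$, adjunction along the section to reduce the twist $M$ to the normal bundle $N_{\Gamma_{p}}$, and finally the identification $N_{\Gamma_{p}}\simeq\mathcal{O}_{\Gamma_{p}}(p+p)$ by restricting $\epsilon^{*}\mathcal{O}(\Gamma_{p})=\mathcal{O}(F_{1}+F_{2})$ to the component $F_{1}$, which involves no descent since restriction commutes with pullback and $\epsilon|_{F_{1}}$ is an isomorphism onto $\Gamma_{p}$. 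What the paper's approach buys is brevity (two applications of seesaw plus one line of Hurwitz); what yours buys is self-containedness: every step is standard adjunction or ruled-surface theory, and the one delicate ingredient of the paper's proof --- injectivity of $\epsilon^{*}$ on Picard groups --- is never needed.
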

\begin{proof}
 We denote by $\epsilon:E\times{E}\rightarrow{E^{(2)}}$ the natural covering, denote by $\pr_{i}:E\times{E}\rightarrow{E}$ the $i$-th projection,
 and denote by $\Delta_{E}$ the diagonal divisor on $E\times{E}$.
 Then we have an isomorphism
 $$
 \mathcal{O}_{E\times{E}}(\epsilon^{-1}(\Lambda_{p})+\Delta_{E})\vert_{\pi_{i}^{-1}(p')}
 \simeq
 \mathcal{O}_{E\times{E}}(2\pr_{1}^{-1}(p)+2\pr_{2}^{-1}(p))\vert_{\pi_{i}^{-1}(p')}
 $$
 for any $p'\in{E}$ and $i=1,2$.
 By the seesaw theorem \cite{M}, we have
 $$
 \epsilon^{*}\mathcal{O}_{E^{(2)}}(\Lambda_{p})\otimes\mathcal{O}_{E\times{E}}(\Delta_{E})
 \simeq\mathcal{O}_{E\times{E}}(\epsilon^{-1}(\Lambda_{p})+\Delta_{E})
 \simeq
 \mathcal{O}_{E\times{E}}(2\pr_{1}^{-1}(p)+2\pr_{2}^{-1}(p))
 \simeq\epsilon^{*}\mathcal{O}_{E^{(2)}}(2\Gamma_{p}).
 $$
 Hence we have
 $$
 \epsilon^{*}\mathcal{O}_{E^{(2)}}(\Lambda_{p}-2\Gamma_{p})\simeq
 \mathcal{O}_{E\times{E}}(-\Delta_{E})\simeq
 \Omega_{E\times{E}}^{2}(-\Delta_{E})\simeq
 \epsilon^{*}\Omega_{E^{(2)}}^{2}.
 $$
 By the injectivity of $\epsilon^{*}:\Pic{(E^{(2)})}\rightarrow\Pic{(E\times{E})}$, we have $\mathcal{O}_{E^{(2)}}(\Lambda_{p}-2\Gamma_{p})\simeq\Omega_{E^{(2)}}^{2}$.
\end{proof}
Let $\mathcal{G}\in\Pic{(E^{(2)})}$ be the isomorphism class of the invertible sheaf with $\mathcal{G}^{\otimes2}=[\mathcal{O}_{E^{(2)}}(\sum_{i=1}^{4}\Gamma_{i})]$ which determines the finite double covering $\phi:C^{(2)}/\sigma^{(2)}\rightarrow{E^{(2)}}$.
\begin{lemma}\label{160027_15Oct16}
 $\epsilon^{*}\mathcal{G}=\pr_{1}^{*}\eta\otimes\pr_{2}^{*}\eta
 \in\Pic{(E\times{E})}$.
\end{lemma}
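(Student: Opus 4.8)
The plan is to determine $\epsilon^{*}\mathcal{G}$ first up to $2$-torsion by a divisor computation, and then to remove the remaining ambiguity by restricting to the rulings of $E\times{E}$ and recognizing the original bielliptic covering.

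First I would pull back the branch divisor. Since $\epsilon(x,y)=x+y$, a point $(x,y)$ lies over $\Gamma_{i}$ exactly when $p_{i}\in\{x,y\}$, so $\epsilon^{-1}(\Gamma_{i})=\pr_{1}^{-1}(p_{i})\cup\pr_{2}^{-1}(p_{i})$; because $\Gamma_{i}$ is transverse to the branch divisor $\Delta_{E^{(2)}}$ of $\epsilon$, this holds with multiplicity one, giving $\epsilon^{*}\mathcal{O}_{E^{(2)}}(\Gamma_{i})\simeq\pr_{1}^{*}\mathcal{O}_{E}(p_{i})\otimes\pr_{2}^{*}\mathcal{O}_{E}(p_{i})$. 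Summing over $i$ and using $\eta^{\otimes2}=\mathcal{O}_{E}(p_{1}+\dots+p_{4})$ yields $(\epsilon^{*}\mathcal{G})^{\otimes2}\simeq(\pr_{1}^{*}\eta\otimes\pr_{2}^{*}\eta)^{\otimes2}$. Since $\NS(E\times{E})$ is torsion free, the two bundles have the same first Chern class, so $\alpha:=\epsilon^{*}\mathcal{G}\otimes(\pr_{1}^{*}\eta\otimes\pr_{2}^{*}\eta)^{-1}$ lies in $\Pic^{0}(E\times{E})\simeq\Pic^{0}(E)\times\Pic^{0}(E)$.

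It remains to show $\alpha$ is trivial, which is the crux. An element of $\Pic^{0}(E\times{E})$ is determined by its restrictions to the two rulings $E\times\{q\}$ and $\{q\}\times{E}$, and since both $\epsilon^{*}\mathcal{G}$ and $\pr_{1}^{*}\eta\otimes\pr_{2}^{*}\eta$ are invariant under the involution of $E\times{E}$ exchanging the factors, it suffices to treat one ruling. On $E\times\{q\}$ one has $(\pr_{1}^{*}\eta\otimes\pr_{2}^{*}\eta)\vert_{E\times\{q\}}\simeq\eta$. For the other, note that $\epsilon$ maps $E\times\{q\}$ isomorphically onto the section $\Gamma_{q}$, so $\epsilon^{*}\mathcal{G}\vert_{E\times\{q\}}\simeq\mathcal{G}\vert_{\Gamma_{q}}$, and by base change along $\Gamma_{q}\hookrightarrow{E^{(2)}}$ the bundle $\mathcal{G}\vert_{\Gamma_{q}}$ is exactly the line bundle defining the restricted double covering $\phi^{-1}(\Gamma_{q})\rightarrow\Gamma_{q}$. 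The plan is then to identify this restricted covering with the original $\pi:C\rightarrow{E}$: writing $C^{(2)}/\sigma^{(2)}$ as the quotient of $C\times{C}$ by the group generated by $\sigma\times\sigma$ and the factor exchange, the preimage $(\pi^{(2)})^{-1}(\Gamma_{q})$ is the union of the two curves $\{q_{1}+c\}$ and $\{q_{2}+c\}$ (where $\pi^{-1}(q)=\{q_{1},q_{2}\}$), which $\sigma^{(2)}$ interchanges, so its image in $C^{(2)}/\sigma^{(2)}$ is a single copy of $C$; the covering involution of $\phi$, induced by $\sigma\times1$, acts on it as $\sigma$, and the induced map to $\Gamma_{q}\cong{E}$ becomes $\pi$. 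Hence $\mathcal{G}\vert_{\Gamma_{q}}\simeq\eta$, which forces $\alpha=\mathcal{O}_{E\times{E}}$ and proves the lemma.

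The main obstacle is precisely the removal of this $2$-torsion ambiguity: the divisor computation sees only the branch locus $\sum\Gamma_{i}$ and so pins down $\epsilon^{*}\mathcal{G}$ only up to a $2$-torsion element of $\Pic^{0}(E)\times\Pic^{0}(E)$, and eliminating it genuinely requires remembering the covering, not just its ramification. The delicate points to verify are that for general $q$ the preimage $\phi^{-1}(\Gamma_{q})$ really is reduced and isomorphic to $C$ carrying the covering involution $\sigma$, so that the identification of defining line bundles is legitimate. Equivalently, one may run the argument globally: the pullback covering $(C^{(2)}/\sigma^{(2)})\times_{E^{(2)}}(E\times{E})$ should be identified with the quotient $(C\times{C})/(\sigma\times\sigma)\rightarrow{E\times{E}}$, whose defining bundle is read off as $\pr_{1}^{*}\eta\otimes\pr_{2}^{*}\eta$ from the $(\sigma\times\sigma)$-invariant part of $(\pi\times\pi)_{*}\mathcal{O}_{C\times{C}}\simeq\pr_{1}^{*}(\mathcal{O}_{E}\oplus\eta^{-1})\otimes\pr_{2}^{*}(\mathcal{O}_{E}\oplus\eta^{-1})$; here the subtlety to check is that this fibre product is reduced and normal so that the two coverings agree on the nose.
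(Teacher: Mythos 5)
Your proof is correct, and its geometric heart is exactly the paper's: both arguments hinge on the observation that the restriction of the covering $\phi:C^{(2)}/\sigma^{(2)}\rightarrow{E^{(2)}}$ over a section $\Gamma_{q}$ (for $q\notin\{p_{1},\dots,p_{4}\}$) is the original covering $\pi:C\rightarrow{E}$, so that $\mathcal{G}\vert_{\Gamma_{q}}\simeq\eta$ --- and your verification of this identification, via the two interchanged copies $\{q_{1}+c\}$, $\{q_{2}+c\}$ upstairs, is a correct expansion of what the paper asserts in one line. Where you differ is in the globalization. The paper feeds the identity $(\epsilon^{*}\mathcal{G})\vert_{\pr_{i}^{-1}(p)}=\eta=(\pr_{1}^{*}\eta\otimes\pr_{2}^{*}\eta)\vert_{\pr_{i}^{-1}(p)}$, valid for \emph{all} general $p$ and both projections, directly into the seesaw theorem and is done. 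You instead first compute $\epsilon^{*}$ of the branch divisor to get $(\epsilon^{*}\mathcal{G})^{\otimes2}\simeq(\pr_{1}^{*}\eta\otimes\pr_{2}^{*}\eta)^{\otimes2}$, conclude (using torsion-freeness of $\NS(E\times{E})$) that the discrepancy $\alpha$ is a $2$-torsion point of $\Pic^{0}(E\times{E})$, and then kill $\alpha$ using the covering identification at a \emph{single} ruling together with the swap symmetry; note that the rigidity fact you invoke --- a class in $\Pic^{0}(E\times{E})$ is determined by its restrictions to the two rulings --- is itself a cousin of seesaw, so your route is not really more elementary. What your version buys is a clean isolation of the issue you correctly flag: the branch locus alone determines $\epsilon^{*}\mathcal{G}$ only up to $2$-torsion, and the covering structure is needed precisely (and only) at one fiber to resolve it; what the paper's version buys is brevity, since the same identification, used uniformly in $p$, makes the first step superfluous. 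Both are sound.
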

\begin{proof}
 The restriction of the covering $\phi:C^{(2)}/\sigma^{(2)}\rightarrow{E^{(2)}}$ to the divisor $E\simeq\Gamma_{p}\hookrightarrow{E^{(2)}}$ is isomorphic to the original covering $\pi:C\rightarrow{E}$ for $p\in{E}\setminus\{p_{1},\dots,p_{4}\}$, hence we have
 $$
 (\epsilon^{*}\mathcal{G})\vert_{\pi_{i}^{-1}(p)}=
 \mathcal{G}\vert_{\Gamma_{p}}=
 \eta=
 (\pr_{1}^{*}\eta\otimes\pr_{2}^{*}\eta)\vert_{\pi_{i}^{-1}(p)}
 $$
 for $p\in{E}\setminus\{p_{1},\dots,p_{4}\}$ and $i=1,2$.
 By the seesaw theorem \cite{M}, we have
 $\epsilon^{*}\mathcal{G}=\pr_{1}^{*}\eta\otimes\pr_{2}^{*}\eta$.
\end{proof}
\begin{proof}[Proof of Proposition~\ref{161646_15Oct16}]
 Since the singularities of the branch divisor $\bigcup_{i=1}^{4}\Gamma_{i}$ of the double covering $\phi:C^{(2)}/\sigma^{(2)}\rightarrow{E^{(2)}}$ are at most nodes, the minimal resolution $\nu:Y\rightarrow{C^{(2)}/\sigma^{(2)}}$ is the canonical resolution in the sense of \cite[Lemma~5]{H}.
 Let $p\in{E}$ be a point with $\eta=[\mathcal{O}_{E}(2p)]$.
 By Lemma~\ref{160027_15Oct16}, we have $\mathcal{G}=[\mathcal{O}_{E^{(2)}}(2\Gamma_{p})]$, and by Lemma~\ref{161043_15Oct16}, we have
 $$
 \Omega_{E^{(2)}}^{2}\otimes\mathcal{G}\simeq
 \Omega_{E^{(2)}}^{2}\otimes
 \mathcal{O}_{E^{(2)}}(2\Gamma_{p})
 \simeq\mathcal{O}_{E^{(2)}}(\Lambda_{p}).
 $$
 By \cite[Lemma~6]{H}, we can compute the numerical invariants
 $$
 \chi(Y,\mathcal{O}_{Y})=\frac{1}{2}(2\Gamma_{p}.\ \Lambda_{p})
 +2\chi(E^{(2)},\mathcal{O}_{E^{(2)}})
 =1,\quad
 K_{Y}^{2}=2(\Lambda_{p}.\ \Lambda_{p})=0.
 $$
 Since
 $$
 \Omega_{Y}^{2}\simeq
 (\phi\circ\nu)^{*}(\Omega_{E^{(2)}}^{2}\otimes\mathcal{G})
 \simeq(\phi\circ\nu)^{*}(\mathcal{O}_{E^{(2)}}(\Lambda_{p}))
 \simeq\mathcal{O}_{Y}(f^{-1}(\eta)),
 $$
 we have
 $$
 q(Y)=1+p_{g}(Y)-\chi(Y,\mathcal{O}_{Y})=
 p_{g}(Y)=h^{0}(Y,\mathcal{O}_{Y}(f^{-1}(\eta)))
 =h^{0}(\Pic^{(2)}(E),\mathcal{O}_{\Pic^{(2)}(E)}(\eta))=1,
 $$
 and the fiber $f^{-1}(\eta)$ is the canonical divisor of $Y$.\par
 Let $D$ be a nonsingular rational curve on $Y$.
 Since $\Pic^{(2)}{(E)}$ is not rational, $D$ is contained in a fiber of $f$.
 Then we have $(K_{Y}.\ D)=0$ and $(D^{2})=-2$, hence $Y$ is a minimal surface.
\end{proof}
\begin{corollary}\label{143245_20Jan17}
 There is an isomorphism of Hodge structures
 $H^{1}(Y,\mathbf{Z})\simeq
 H^{1}(E,\mathbf{Z})$.
\end{corollary}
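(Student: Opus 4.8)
The plan is to realize $H^{1}(Y,\mathbf{Z})$ through the elliptic fibration $f:Y\rightarrow{B}$ with $B=\Pic^{(2)}(E)$, and to show that the pullback $f^{*}$ furnishes the desired isomorphism. I would first record that the base $B=\Pic^{(2)}(E)$ is a smooth projective curve of genus $1$: it carries the rational point $[\mathcal{O}_{E}(2p)]$, so it is isomorphic as a curve to the group $\Pic^{0}(E)\simeq{E}$, and any such isomorphism induces an isomorphism of weight-$1$ Hodge structures $H^{1}(B,\mathbf{Z})\simeq{H^{1}(E,\mathbf{Z})}$. It therefore suffices to prove that $f^{*}:H^{1}(B,\mathbf{Z})\rightarrow{H^{1}(Y,\mathbf{Z})}$ is an isomorphism of Hodge structures.

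The key ingredient is the section of $f$. As noted in the construction, the proper transform in $Y$ of $\phi^{-1}(\Gamma_{i})$ is a section $s:B\rightarrow{Y}$, so that $f\circ{s}=\mathrm{id}_{B}$ and hence $s^{*}\circ{f^{*}}=\mathrm{id}$ on $H^{1}(B,\mathbf{Z})$. Since $f^{*}$ and $s^{*}$ are morphisms of pure Hodge structures of weight $1$, this exhibits $f^{*}$ as a split injection of $H^{1}(B,\mathbf{Z})$ onto a direct summand of $H^{1}(Y,\mathbf{Z})$, with complement $\Ker{(s^{*})}$.

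It remains to compare ranks. By Proposition~\ref{161646_15Oct16} we have $q(Y)=1$, so $H^{1}(Y,\mathbf{Z})$ is a torsion-free abelian group of rank $b_{1}(Y)=2q(Y)=2$, equal to the rank of $H^{1}(B,\mathbf{Z})$. Hence the direct summand $\Ker{(s^{*})}$ has rank $0$, and being torsion-free it vanishes; thus $f^{*}$ is an isomorphism of $\mathbf{Z}$-modules, and being a morphism of Hodge structures it is an isomorphism of Hodge structures. Composing with $H^{1}(B,\mathbf{Z})\simeq{H^{1}(E,\mathbf{Z})}$ gives the claim. The only real subtlety is obtaining an \emph{integral} rather than merely rational isomorphism, and this is precisely what the existence of the section supplies through the splitting $s^{*}\circ f^{*}=\mathrm{id}$; the remaining steps are formal.
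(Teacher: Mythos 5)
Your proof is correct, and it reaches the same reduction as the paper --- showing that $f^{*}:H^{1}(B,\mathbf{Z})\rightarrow H^{1}(Y,\mathbf{Z})$ is an isomorphism, with the rank count coming from $q(Y)=1$ in Proposition~\ref{161646_15Oct16} --- but the mechanism you use to rule out a torsion cokernel is different. The paper invokes the connectedness of the fibers of $f$: for a fibration with connected fibers the induced map on fundamental groups is surjective, so the image of $f^{*}$ in $H^{1}(Y,\mathbf{Z})=\Hom(\pi_{1}(Y),\mathbf{Z})$ is primitive, and a primitive sublattice of full rank is everything. You instead use the section $s$ coming from the proper transform of $\phi^{-1}(\Gamma_{i})$, so that $s^{*}\circ f^{*}=\mathrm{id}$ splits $f^{*}$ off as a direct summand, and a rank-zero summand of the torsion-free group $H^{1}(Y,\mathbf{Z})$ vanishes. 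Your argument is the more elementary of the two (it needs no statement about $\pi_{1}$ and fibrations, only functoriality of pullback and the existence of the section, which the paper records explicitly in the construction), while the paper's argument is the more general one, since it would apply verbatim to any elliptic fibration with connected fibers over a genus-one base even in the absence of a section. Both are complete; no gap.
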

\begin{proof}
 By Proposition~\ref{161646_15Oct16}, the cokernel of the pull-back
 $f^{*}:H^{1}(E,\mathbf{Z})\simeq
 H^{1}(\Pic^{(2)}{(E)},\mathbf{Z})\rightarrow{H^{1}(Y,\mathbf{Z})}$
 by $f:Y\rightarrow\Pic^{(2)}{(E)}\simeq{E}$
 is finite.
 Since $f$ is connected, it is an isomorphism.
\end{proof}
In the following, we denote by $B=\Pic^{(2)}{(E)}$ the base space of the elliptic surface $f:Y\rightarrow{B=\Pic^{(2)}{(E)}}$, and we call the point $\eta\in{B}$ the canonical point of the covering $\pi:C\rightarrow{E}$.
\begin{corollary}\label{171219_16Oct16}
 The pluri-canonical morphism $\Phi_{|\mathcal{O}_{Y}(mK_{Y})|}:Y\rightarrow\mathbf{P}^{m-1}$ factors through the elliptic surface $f:Y\rightarrow{B}$.
\end{corollary}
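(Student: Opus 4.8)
The plan is to show that the pluri-canonical line bundle $\mathcal{O}_Y(mK_Y)$ is itself pulled back from the base curve $B$ along $f$, so that every pluri-canonical section is constant on the fibers of $f$ and the associated morphism automatically factors through $f$.

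First I would extract from the proof of Proposition~\ref{161646_15Oct16} the identity $\Omega_Y^2\simeq(\phi\circ\nu)^*\mathcal{O}_{E^{(2)}}(\Lambda_p)$, where $\Lambda_p=\psi^{-1}([\mathcal{O}_E(2p)])$ is a fiber of the $\mathbf{P}^1$-bundle $\psi:E^{(2)}\rightarrow{B}$ and $\eta=[\mathcal{O}_E(2p)]$ is the canonical point. Because $\Lambda_p$ is a $\psi$-fiber, we have $\mathcal{O}_{E^{(2)}}(\Lambda_p)\simeq\psi^*\mathcal{O}_B(\eta)$ for the degree-one invertible sheaf $\mathcal{O}_B(\eta)$ attached to $\eta$. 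Since $f=\psi\circ\phi\circ\nu$, this gives $\mathcal{O}_Y(K_Y)\simeq{f^*\mathcal{O}_B(\eta)}$, and therefore $\mathcal{O}_Y(mK_Y)\simeq{f^*\mathcal{O}_B(m\eta)}$ for every $m\geq1$.

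Next I would use that $f$ has connected fibers, so $f_*\mathcal{O}_Y\simeq\mathcal{O}_B$, and the projection formula then yields $H^0(Y,\mathcal{O}_Y(mK_Y))\simeq{H^0(B,\mathcal{O}_B(m\eta))}$, with every pluri-canonical section of the form $f^*s$ for some $s\in{H^0(B,\mathcal{O}_B(m\eta))}$. By Riemann--Roch on the elliptic curve $B$ this space has dimension $m$, matching the target $\mathbf{P}^{m-1}$. Choosing a basis $s_0,\dots,s_{m-1}$, the pluri-canonical morphism sends $y$ to $[f^*s_0(y):\cdots:f^*s_{m-1}(y)]=[s_0(f(y)):\cdots:s_{m-1}(f(y))]$, so that $\Phi_{|\mathcal{O}_Y(mK_Y)|}=\Phi_{|m\eta|}\circ{f}$ factors through $f$, as claimed.

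The argument is essentially formal once Proposition~\ref{161646_15Oct16} is available; the only point that genuinely needs care is the identification of $\mathcal{O}_Y(K_Y)$ as an actual pullback from $B$ rather than merely a divisor supported on a fiber, which is exactly what the computation $\Omega_Y^2\simeq{f^*\mathcal{O}_B(\eta)}$ secures. The factorization then follows at once from $f_*\mathcal{O}_Y=\mathcal{O}_B$, so I expect no substantial obstacle beyond keeping the bookkeeping between $E^{(2)}$, $\psi$, and $B$ consistent.
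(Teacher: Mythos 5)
Your proof is correct and follows essentially the same route as the paper: the paper likewise deduces $(\Omega_{Y}^{2})^{\otimes m}\simeq f^{*}\mathcal{O}_{B}(m\eta)$ from Proposition~\ref{161646_15Oct16} and concludes that $\Phi_{|\mathcal{O}_{Y}(mK_{Y})|}$ is the composition of $f$ with $\Phi_{|\mathcal{O}_{B}(m\eta)|}$. Your write-up merely makes explicit the intermediate steps (the identification $\mathcal{O}_{E^{(2)}}(\Lambda_{p})\simeq\psi^{*}\mathcal{O}_{B}(\eta)$ and the use of $f_{*}\mathcal{O}_{Y}\simeq\mathcal{O}_{B}$ with the projection formula) that the paper leaves implicit.
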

\begin{proof}
 By Proposition~\ref{161646_15Oct16}, we have
 $(\Omega_{Y}^{2})^{\otimes{m}}=f^{*}\mathcal{O}_{B}(m\eta)$ and
 $h^{0}(Y,(\Omega_{Y}^{2})^{\otimes{m}})=h^{0}(B,\mathcal{O}_{B}(m\eta))=m$.
 Hence the composition
 $$
 Y\overset{f}{\longrightarrow}B\overset{\Phi_{|\mathcal{O}_{B}(m\eta)|}}{\longrightarrow}\mathbf{P}^{m-1}
 $$
 is defined by the pluri-canonical morphism $\Phi_{|\mathcal{O}_{Y}(mK_{Y})|}$.
\end{proof}
\subsection{Singular fibers}
We explain about fibers of $f:Y\rightarrow{B}$.
If $\xi\in{B}=\Pic^{(2)}{(E)}$ is not the class $[\mathcal{O}_{E}(p_{i}+p_{j})]$ for $1\leq{i}<{j}\leq{4}$, then the fiber $f^{-1}(\xi)$ is the double covering of $\psi^{-1}(\xi)\simeq\mathbf{P}^{1}$ branched at $4$-points $\psi^{-1}(\xi)\cap{\bigcup_{i=1}^{4}\Gamma_{i}}$, hence $f^{-1}(\xi)$ is a nonsingular curve of genus $1$.
The fiber of $f$ at $[\mathcal{O}_{E}(p_{1}+p_{2})]\in{B}=\Pic^{(2)}{(E)}$ is not irreducible, and it is of type $\mathstrut_{1}I_{2}$ if $[\mathcal{O}_{E}(p_{1}+p_{2})]\neq[\mathcal{O}_{E}(p_{3}+p_{4})]$, or of type $\mathstrut_{1}I_{4}$ if $[\mathcal{O}_{E}(p_{1}+p_{2})]=[\mathcal{O}_{E}(p_{3}+p_{4})]$, by the notation of Kodaira \cite{K}.
Let $\Sigma\subset{B}$ be the set of the critical points of $f:Y\rightarrow{B}$.
Then we have $3\leq{\sharp\Sigma}\leq{6}$.
The following Lemma is used for recovering the bielliptic curve $C\rightarrow{E}$ from the elliptic surface $Y$ in Proposition~\ref{112036_17Oct16}.
\begin{lemma}\label{184636_16Oct16}
 The image of $\Sigma$ by the morphism
 $
 \Phi_{|\mathcal{O}_{B}(2\eta)|}:B\rightarrow\mathbf{P}^{1}
 $
 is a set of distinct $3$ points in $\mathbf{P}^{1}$.
\end{lemma}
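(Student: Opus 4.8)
The plan is to first pin down $\Sigma$ explicitly and then to recognize $\Phi_{|\mathcal{O}_{B}(2\eta)|}$ as the standard degree-two map of the elliptic curve $B=\Pic^{(2)}(E)$, whose fibres I can compute in terms of line bundles on $E$. As noted in the discussion of singular fibres, a fibre $f^{-1}(\xi)$ degenerates exactly when two of the four branch points $\psi^{-1}(\xi)\cap\bigcup_{i}\Gamma_{i}$ of the fibrewise double cover collide; since $\Gamma_{i}\cap\Gamma_{j}=\{p_{i}+p_{j}\}$ and $\psi(p_{i}+p_{j})=[\mathcal{O}_{E}(p_{i}+p_{j})]$, this happens precisely over the points $[\mathcal{O}_{E}(p_{i}+p_{j})]$. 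Hence $\Sigma=\{[\mathcal{O}_{E}(p_{i}+p_{j})]\mid 1\le i<j\le 4\}$, a set of at most six points of $B$, and the whole problem is to see that these six classes fall into exactly three fibres of $\Phi_{|\mathcal{O}_{B}(2\eta)|}$.

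Next I would analyse $\Phi:=\Phi_{|\mathcal{O}_{B}(2\eta)|}$. Since $\deg\mathcal{O}_{B}(2\eta)=2$ on the genus-one curve $B$, Riemann--Roch gives $h^{0}(B,\mathcal{O}_{B}(2\eta))=2$, so $\Phi$ is a degree-two morphism to $\mathbf{P}^{1}$; its fibres are exactly the members of $|\mathcal{O}_{B}(2\eta)|$, i.e. the $\iota$-orbits $\{\xi,\iota(\xi)\}$ for the deck involution $\iota$ characterized by $\xi+\iota(\xi)\sim 2\eta$ on $B$. To make $\iota$ explicit I would use the translation isomorphism $t:B=\Pic^{(2)}(E)\xrightarrow{\ \sim\ }\Pic^{0}(E)$, $L\mapsto L\otimes\eta^{-1}$, which carries $\eta$ to the origin and the divisor $2\eta$ to $2\cdot 0$; as $\eta$ is a ramification point of $\Phi$, under $t$ the involution of this $g^{1}_{2}$ becomes negation in the group $\Pic^{0}(E)$, so that $\iota(L)=\eta^{\otimes 2}\otimes L^{-1}$.

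Now the computation closes quickly. Since $\eta^{\otimes 2}=[\mathcal{O}_{E}(p_{1}+\dots+p_{4})]$, I get $\iota([\mathcal{O}_{E}(p_{i}+p_{j})])=[\mathcal{O}_{E}(p_{k}+p_{l})]$ whenever $\{i,j,k,l\}=\{1,2,3,4\}$; thus the six classes of $\Sigma$ group into the three complementary pairs $\{p_{1}+p_{2},\,p_{3}+p_{4}\}$, $\{p_{1}+p_{3},\,p_{2}+p_{4}\}$, $\{p_{1}+p_{4},\,p_{2}+p_{3}\}$, each an $\iota$-orbit, so $\Phi(\Sigma)$ consists of at most three points $Q_{1},Q_{2},Q_{3}$. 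The remaining, and genuinely content-bearing, step is to prove these are pairwise distinct. If, say, $Q_{1}=Q_{2}$, then $[\mathcal{O}_{E}(p_{1}+p_{3})]$ lies in the fibre $\Phi^{-1}(Q_{1})=\{[\mathcal{O}_{E}(p_{1}+p_{2})],[\mathcal{O}_{E}(p_{3}+p_{4})]\}$, forcing $\mathcal{O}_{E}(p_{1}+p_{3})\simeq\mathcal{O}_{E}(p_{1}+p_{2})$ or $\mathcal{O}_{E}(p_{1}+p_{3})\simeq\mathcal{O}_{E}(p_{3}+p_{4})$; being degree-one equivalences on a curve, these give $p_{3}=p_{2}$ or $p_{1}=p_{4}$, contradicting the distinctness of the branch points $p_{1},\dots,p_{4}$. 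By symmetry all three $Q_{i}$ are distinct, so $\Phi(\Sigma)$ has exactly three points. The main obstacle I anticipate is entirely the bookkeeping of the second step---correctly identifying the deck involution $\iota$ of $B$ with the line-bundle operation $L\mapsto\eta^{\otimes 2}\otimes L^{-1}$, i.e. keeping straight the two Picard structures (that of the curve $B$ itself and that of $E$, whose degree-two classes are the points of $B$); once that identification is secured, the rest is a short finite check.
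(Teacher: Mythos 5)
Your proof is correct and takes essentially the same route as the paper: the paper's one-line proof consists precisely of the three linear equivalences $[\mathcal{O}_{E}(p_{1}+p_{2})]+[\mathcal{O}_{E}(p_{3}+p_{4})]\sim[\mathcal{O}_{E}(p_{1}+p_{3})]+[\mathcal{O}_{E}(p_{2}+p_{4})]\sim[\mathcal{O}_{E}(p_{1}+p_{4})]+[\mathcal{O}_{E}(p_{2}+p_{3})]\sim 2\eta$ on $B$, which is exactly your statement that the complementary pairs are orbits of the deck involution $\iota(L)=\eta^{\otimes2}\otimes L^{-1}$ of $\Phi_{|\mathcal{O}_{B}(2\eta)|}$. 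You additionally spell out the identification of $\Sigma$ (done in the paper in the text preceding the lemma) and the pairwise distinctness of the three image points, which the paper leaves implicit; both checks are carried out correctly.
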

\begin{proof}
 It follows from
 $$
 [\mathcal{O}_{E}(p_{1}+p_{2})]+[\mathcal{O}_{E}(p_{3}+p_{4})]\sim
 [\mathcal{O}_{E}(p_{1}+p_{3})]+[\mathcal{O}_{E}(p_{2}+p_{4})]\sim
 [\mathcal{O}_{E}(p_{1}+p_{4})]+[\mathcal{O}_{E}(p_{2}+p_{3})]\sim
 2\eta,
 $$
 where $\sim$ denotes the linear equivalence on the curve $B=\Pic^{(2)}{(E)}$.
\end{proof}
\begin{remark}\label{110654_1Dec16}
 Let $\Delta_{h}\subset{C^{(2)}}$ be the curve defined by
 $$
 \Delta_{h}=\{q+q'\in{C^{(2)}}\mid
 h^{0}(C,\mathcal{O}_{C}(q+q'))=2\},
 $$
 which is empty if $C$ is a non-hyperelliptic curve.
 Since $C$ is a nonsingular projective curve of genus $3$, the symmetric square $C^{(2)}$ has a birational involution
 $$
 \kappa:C^{(2)}\setminus\Delta_{h}\longrightarrow{C^{(2)}\setminus\Delta_{h}};\
 q+q'\longmapsto{\kappa(q+q')},
 $$
 where $\kappa(q+q')\in{C^{(2)}}$ is defined as the unique member of the linear system $|\Omega_{C}^{1}(-q-q')|$.
 If $C$ is hyperelliptic, then it extends to the regular involution $\kappa:C^{(2)}\rightarrow{C^{(2)}}$, because $\kappa(q+q')=h(q)+h(q')$ for $q+q'\in{C^{(2)}\setminus\Delta_{h}}$, where $h:C\rightarrow{C}$ denotes the hyperelliptic involution.
 Then the involution $\kappa$ commutes with the involution on the base $B$ defined by the double covering $\Phi_{|\mathcal{O}_{B}(2\eta)|}:B\rightarrow\mathbf{P}^{1}$, hence we have a fibration $Y/\kappa\rightarrow\mathbf{P}^{1}$ of curves of genus $1$ in the commutative diagram
 $$
 \begin{array}{ccc}
  Y&\longrightarrow&Y/\kappa\\
  \downarrow& &\downarrow\\
  B&\underset{\Phi_{|\mathcal{O}_{B}(2\eta)|}}{\longrightarrow}&
   \mathbf{P}^{1}.\\
 \end{array}
 $$
\end{remark}

Let $Y=Y(C/E)$ be the surface constructed from a bielliptic curve $\pi:C\rightarrow{E}$.
\begin{proposition}\label{112036_17Oct16}
 The isomorphism class of the bielliptic curve $\pi:C\rightarrow{E}$ is recovered from the surface $Y$.
\end{proposition}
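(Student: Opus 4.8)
The strategy is to recover, one ingredient at a time, the genus-one curve $E$, the four branch points $p_{1},\dots,p_{4}\in{E}$, and the line bundle $\eta$, all from data intrinsic to $Y$, and then to rebuild the double covering. First, by Corollary~\ref{171219_16Oct16} the pluri-canonical morphism factors as $\Phi_{|\mathcal{O}_{Y}(mK_{Y})|}=\Phi_{|\mathcal{O}_{B}(m\eta)|}\circ{f}$; since $\mathcal{O}_{B}(m\eta)$ has degree $m$ on the genus-one curve $B$ and is very ample for $m\geq3$, the elliptic fibration $f:Y\rightarrow{B}$ is recovered from $Y$ as the morphism onto the image of $\Phi_{|\mathcal{O}_{Y}(mK_{Y})|}$, and the base is recovered up to isomorphism, $B=\Pic^{(2)}(E)\simeq{E}$. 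Because $p_{g}(Y)=1$ the effective canonical divisor is unique, and by Proposition~\ref{161646_15Oct16} it equals $f^{-1}(\eta)$; hence the canonical point $\eta=f(K_{Y})\in{B}$ and the associated double cover $g=\Phi_{|\mathcal{O}_{B}(2\eta)|}:B\rightarrow\mathbf{P}^{1}$ are recovered. Finally the discriminant $\Sigma\subset{B}$ of $f$ is intrinsic and, by the analysis of the singular fibers, equals the set of points $b_{ij}:=[\mathcal{O}_{E}(p_{i}+p_{j})]$ for $1\leq{i}<{j}\leq4$. It remains to reconstruct $\{p_{1},\dots,p_{4}\}$ from the triple $(E,\eta,\Sigma)$.

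For this I would use the deck involution $\tau:\xi\mapsto2\eta-\xi$ of $g$. By Lemma~\ref{184636_16Oct16} one has $b_{ij}+b_{kl}\sim2\eta$ whenever $\{i,j,k,l\}=\{1,2,3,4\}$, so $\tau$ partitions $\Sigma$ into the three complementary pairs $\{b_{12},b_{34}\}$, $\{b_{13},b_{24}\}$, $\{b_{14},b_{23}\}$, which are precisely the three fibers of $g|_{\Sigma}$. Reading the defining relations $b_{ij}=[\mathcal{O}_{E}(p_{i}+p_{j})]$ additively in $\Pic(E)$, the three points indexed by the three $2$-subsets of a fixed $3$-subset satisfy $2[\mathcal{O}_{E}(p_{1})]=b_{12}+b_{13}-b_{23}$; this determines $[\mathcal{O}_{E}(p_{1})]$, hence the point $p_{1}\in{E}$, up to translation by a point of $E[2]$, after which $[\mathcal{O}_{E}(p_{j})]=b_{1j}-[\mathcal{O}_{E}(p_{1})]$ forces the remaining points. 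Running over the admissible choices of one representative from each complementary pair produces the full set $\{p_{1},\dots,p_{4}\}$.

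The resulting configuration is well defined only up to relabeling of the $p_{i}$ and up to a simultaneous translation $p_{i}\mapsto{p_{i}+\epsilon}$ by a fixed $\epsilon\in{E}[2]$. Relabeling does not change the unordered branch set. The translation $t_{\epsilon}:E\rightarrow{E}$ satisfies $t_{\epsilon}^{*}L\simeq{L}$ for every invertible sheaf $L$ of degree two, in particular $t_{\epsilon}^{*}\eta\simeq\eta$, and it carries $\sum_{i}p_{i}$ to $\sum_{i}(p_{i}+\epsilon)$, so it induces an isomorphism of the associated bielliptic coverings; thus $\{p_{i}\}$ is determined up to isomorphism of the covering. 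Now $E$, the reduced branch divisor $\sum_{i}p_{i}$, and the line bundle $\eta$ (which satisfies $\eta^{\otimes2}=\mathcal{O}_{E}(\sum_{i}p_{i})$ by construction) are all recovered, and the bielliptic curve $\pi:C\rightarrow{E}$ is the double covering cut out by a section of $\eta^{\otimes2}$ with divisor $\sum_{i}p_{i}$. This determines $\pi:C\rightarrow{E}$ up to isomorphism.

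The delicate point is the reconstruction step and its indeterminacies. I would need to verify that solving the pairwise-sum relations yields a genuine configuration of four points, and that the two ambiguities, namely the choice of representative in each complementary pair and the $E[2]$-translation, are exactly absorbed by isomorphisms of $\pi:C\rightarrow{E}$. One must also handle the degenerate discriminants with $\sharp\Sigma<6$: a coincidence $b_{ij}=b_{kl}$ forces $b_{ij}\in\eta+E[2]$, the corresponding complementary pair collapses onto a branch point of $g$, and the fiber is of type ${}_{1}I_{4}$ rather than ${}_{1}I_{2}$. In these cases the combinatorial input of Lemma~\ref{184636_16Oct16} is partly lost, and the missing incidence must be recovered from the fiber type together with the position of the collapsed point. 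I expect this case analysis, rather than the generic argument, to be the main obstacle.
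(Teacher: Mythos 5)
Your reconstruction has the same skeleton as the paper's proof: recover the fibration $f:Y\rightarrow{B}$ and its base from the pluri-canonical maps (Corollary~\ref{171219_16Oct16}), the canonical point as $\eta=f(K_{Y})$ (Proposition~\ref{161646_15Oct16}), the discriminant $\Sigma$ together with its partition into the three fibers of $\Phi_{|\mathcal{O}_{B}(2\eta)|}$ (Lemma~\ref{184636_16Oct16}), and then the four branch points and the square root. The gap is in the last step, precisely the one you flag but do not carry out, and it is not a routine check: your claim that the output is well defined ``up to relabeling of the $p_{i}$ and a simultaneous $E[2]$-translation'' is false. The formula $2[\mathcal{O}_{E}(p_{1})]=b_{12}+b_{13}-b_{23}$ presupposes that the three chosen representatives pairwise share a branch point without having a common one (``triangle'' type). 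Of the $2^{3}=8$ ways to choose one class from each complementary pair, four are of this type, and four consist of three classes through a common branch point (``star'' type, e.g.\ $b_{12},b_{13},b_{14}$); nothing in the intrinsic data $(B,\eta,\Sigma)$ with its pairing distinguishes the two types, and every choice passes every consistency test available to your recipe (the complementary classes always come out right automatically). For a star-type choice your formulas still execute, but they return not a translate of $\{p_{1},\dots,p_{4}\}$ but the set $\{q_{1},\dots,q_{4}\}$ characterized by $q_{i}+p_{i}\in|\mathcal{L}|$ for a fixed square root $\mathcal{L}$ of $\mathcal{O}_{E}(p_{1}+\dots+p_{4})$ of degree two, i.e.\ the image of the branch locus under an \emph{inversion} $x\mapsto{c-x}$ of $E$, not a translation by $2$-torsion.

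Well-definedness of the recovered covering therefore requires an extra, nontrivial statement: the double cover branched at $\{q_{i}\}$ with square root $\eta$ is isomorphic to the one branched at $\{p_{i}\}$ with square root $\eta$; equivalently, the inversions that arise here pull $\eta$ back to itself and hence lift to isomorphisms commuting with the bielliptic involutions. This is exactly the second, harder case of the lemma by which the paper finishes its proof, handled there by the isomorphism $\varphi_{\bar{4}}:p\mapsto[\mathcal{O}_{E}(p_{1}+p_{2}+p_{3}-p)]$; it is the one genuinely delicate point of the whole proposition, and it is absent from your argument --- indeed your description of the ambiguity excludes it. Two further remarks. The paper's version of the recipe is linear: it defines $r_{4}$ by $\mathcal{O}_{E'}(s_{1}+s_{2}+s_{3}+r_{4})\simeq\mathcal{O}_{E'}(4t)$ and uses the square root $[\mathcal{O}_{E'}(t+r_{4})]$, so it never extracts a square root of a point of the Picard group and avoids your $E[2]$ ambiguity altogether, leaving only the triangle/star dichotomy. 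Finally, the degeneration $\sharp\Sigma<6$ that you expect to be the main obstacle is harmless: a coincidence $b_{ij}=b_{kl}$ just means that the pair collapses onto a ramification point of $\Phi_{|\mathcal{O}_{B}(2\eta)|}$, where the fiber is of type $\mathstrut_{1}I_{4}$; the partition of $\Sigma$ into (possibly doubled) pairs is still determined, both recipes run verbatim, and the paper disposes of this with a one-line remark allowing $r_{i}=s_{i}$.
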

\begin{proof}
 We set $E'=\Phi_{|\mathcal{O}_{Y}(3K_{Y})|}(Y)\subset\mathbf{P}^{2}$.
 By Corollary~\ref{171219_16Oct16}, $E'$ is isomorphic to $E$, and by Proposition~\ref{161646_15Oct16}, the image of the effective canonical divisor $K_{Y}$ of $Y$ by $\Phi_{|\mathcal{O}_{Y}(3K_{Y})|}$ is a point $t\in{E'}$.
 Let $\Sigma'\subset{E'}$ be the set of critical points of the fibration
 $\Phi_{|\mathcal{O}_{Y}(3K_{Y})|}:Y\rightarrow{E'}$.
 By Corollary~\ref{171219_16Oct16} and Lemma~\ref{184636_16Oct16}, we write as $\Sigma'=\{r_{1},\dots,r_{3},s_{1},\dots,s_{3}\}$ with the condition $\Phi_{|\mathcal{O}_{E'}(2t)|}(r_{i})=\Phi_{|\mathcal{O}_{E'}(2t)|}(s_{i})\in\mathbf{P}^{1}$ for $i=1,2,3$.
 We remark that it is possible that $r_{i}=s_{i}$, if it has a singular fiber of type $\mathstrut_{1}I_{4}$.
 We define the point $r_{4}\in{E}$ by
 $\mathcal{O}_{E'}(s_{1}+s_{2}+s_{3}+r_{4})\simeq\mathcal{O}_{E'}(4t)$.
 Then $r_{1},\dots,r_{4}\in{E'}$ are distinct $4$ points, and we have
 $$
 \mathcal{O}_{E'}(r_{1}+\dots+r_{4})\simeq
 \mathcal{O}_{E'}(t+r_{4})^{\otimes2},
 $$
 because $\mathcal{O}_{E'}(r_{i}+s_{i})\simeq\mathcal{O}_{E'}(2t)$ for $i=1,2,3$.
 Proposition~\ref{112036_17Oct16} is proved by the following Lemma.
\end{proof}
\begin{lemma}
 Let $\pi':C'\rightarrow{E'}$ be the double covering branched at the $4$ points $r_{1},\dots,r_{4}$ which is determined by $[\mathcal{O}_{E'}(t+r_{4})]\in\Pic^{(2)}{(E')}$.
 Then $\pi':C'\rightarrow{E'}$ is isomorphic to the original covering $\pi:C\rightarrow{E}$.
\end{lemma}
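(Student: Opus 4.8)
The plan is to recover the bielliptic datum in the form used in the construction, namely a triple $(E,\eta,p_{1}+\dots+p_{4})$ consisting of the base, the square root, and the branch divisor, and then to exhibit an explicit isomorphism of $E'$ onto $E$ carrying the reconstructed branch points $r_{1},\dots,r_{4}$ and square root $[\mathcal{O}_{E'}(t+r_{4})]$ to $p_{1},\dots,p_{4}$ and $\eta$. Recall that a double covering of $E$ branched along a reduced divisor $D$ of degree $4$ is $\Spec_{\mathcal{O}_{E}}(\mathcal{O}_{E}\oplus\mathcal{L}^{-1})$ for a square root $\mathcal{L}$ of $\mathcal{O}_{E}(D)$, and that two such coverings $(E,\mathcal{L},D)$ and $(E',\mathcal{L}',D')$ are isomorphic if and only if there is an isomorphism $\alpha:E'\rightarrow{E}$ with $\alpha(D')=D$ and $\alpha^{*}\mathcal{L}\simeq\mathcal{L}'$. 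Thus the whole Lemma reduces to producing one such $\alpha$.

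First I would fix an origin on $E$ and use it to identify $B=\Pic^{(2)}(E)$, hence $E'$, with $E$ through the isomorphism $\mu:[\mathcal{O}_{E}(a+b)]\mapsto{a+b}$, where from now on $+$ denotes the group law on $E$ written additively. Under $\mu$ the critical points $\xi_{ij}=[\mathcal{O}_{E}(p_{i}+p_{j})]$ become the points $p_{i}+p_{j}$, the canonical point $t$ corresponds to $\eta$ and becomes the point $2p$ with $2(2p)=p_{1}+p_{2}+p_{3}+p_{4}$, and Lemma~\ref{184636_16Oct16} says precisely that the three complementary pairs $\{p_{i}+p_{j},\,p_{k}+p_{l}\}$ each add up to $2(2p)$; this is exactly the condition $\Phi_{|\mathcal{O}_{E'}(2t)|}(r_{i})=\Phi_{|\mathcal{O}_{E'}(2t)|}(s_{i})$ transported to the group.

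Next I would run the reconstruction inside this group. For the labelling $r_{1}=p_{1}+p_{2}$, $r_{2}=p_{1}+p_{3}$, $r_{3}=p_{1}+p_{4}$ with complementary $s_{i}$, the defining relation $\mathcal{O}_{E'}(s_{1}+s_{2}+s_{3}+r_{4})\simeq\mathcal{O}_{E'}(4t)$ forces $r_{4}=2p_{1}$, so that $\{r_{1},\dots,r_{4}\}=p_{1}+\{p_{1},\dots,p_{4}\}$ and $[\mathcal{O}_{E'}(t+r_{4})]$ has $\mu$-value $2p+2p_{1}$. Hence the translation $\alpha(x)=x-p_{1}$ carries the reconstructed branch divisor onto $p_{1}+\dots+p_{4}$ and the reconstructed square root onto $\eta$, since a degree-two class is shifted by $-2p_{1}$ under $\alpha$ and $2p+2p_{1}-2p_{1}=2p$. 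For any of the other labellings the same computation produces a branch divisor of the form $\beta(p_{1}+\dots+p_{4})$ for an automorphism $\beta$ of $E'$ which is a translation, possibly composed with the inversion $x\mapsto-x$ (an even number of swaps among the three pairs gives a pure translation, an odd number gives a translation composed with inversion), together with the matching square root; the required isomorphism is then $\beta^{-1}$.

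The main point to verify carefully is the bookkeeping of the square roots under these automorphisms: a translation shifts a degree-two class by a fixed point and the inversion negates it, and one must check that in every case the reconstructed class $[\mathcal{O}_{E'}(t+r_{4})]$ is sent to $\eta$ itself, not merely to one of the four square roots of $\mathcal{O}_{E}(p_{1}+\dots+p_{4})$. This is exactly what the identity $\mathcal{O}_{E'}(r_{1}+\dots+r_{4})\simeq\mathcal{O}_{E'}(t+r_{4})^{\otimes2}$ established in the proof of Proposition~\ref{112036_17Oct16} guarantees, since it rigidly ties the reconstructed square root to the reconstructed branch divisor, so that the $\mu$-computation above pins down the image. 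The degenerate case $r_{i}=s_{i}$, arising from a fibre of type $\mathstrut_{1}I_{4}$, corresponds to a coincidence $p_{i}+p_{j}=p_{k}+p_{l}$ among the critical points and is absorbed into the same group-law computation, completing the identification $\pi':C'\rightarrow{E'}\simeq\pi:C\rightarrow{E}$.
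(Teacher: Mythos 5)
Your proof is correct and is essentially the paper's own argument in additive notation: the paper fixes the isomorphism $\varphi:E'\rightarrow\Pic^{(2)}{(E)}$ and treats the same two cases (your ``star'' and ``triangle'' labellings, all others being equivalent up to relabelling the $p_{i}$), producing the isomorphisms $\varphi_{4}^{-1}\circ\varphi$ and $\varphi_{\bar{4}}^{-1}\circ\varphi$, which under your group-law identification $\mu$ are exactly your translation $\alpha$ and your translation-composed-with-inversion $\beta^{-1}$. In both treatments the key check --- that $[\mathcal{O}_{E'}(t+r_{4})]$ is carried to $\eta$ itself and not merely to some square root of $[\mathcal{O}_{E}(p_{1}+\dots+p_{4})]$ --- is settled by the same explicit computation with degree-two classes that you sketch.
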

\begin{proof}
 By Corollary~\ref{171219_16Oct16}, there is an isomorphism $\varphi:E'\rightarrow\Pic^{(2)}{(E)}$ with $\varphi\circ\Phi_{|\mathcal{O}_{Y}(3K_{Y})|}=f$.
 We have to consider two cases for the proof, which depends on the choice of $r_{i}$ from $\{r_{i},s_{i}\}$.
 First we assume that
 $\varphi(r_{i})=[\mathcal{O}_{E}(p_{i}+p_{4})]$
 for $i=1,2,3$.
 Then we have
 $$
 \varphi(s_{1})\otimes\varphi(s_{2})\otimes\varphi(s_{3})
 =[\mathcal{O}_{E}(2(p_{1}+p_{2}+p_{3}))]
 =\eta^{\otimes4}\otimes[\mathcal{O}_{E}(-2p_{4})]
 =\varphi(t)^{\otimes4}\otimes[\mathcal{O}_{E}(-2p_{4})],
 $$
 hence $\varphi(r_{4})=[\mathcal{O}_{E}(2p_{4})]$.
 Let $\varphi_{4}:E\rightarrow\Pic^{(2)}{(E)}$ be the isomorphism defined by $p\mapsto[\mathcal{O}_{E}(p+p_{4})]$.
 Then the isomorphism $\varphi_{4}^{-1}\circ\varphi:E'\rightarrow{E}$ satisfies $(\varphi_{4}^{-1}\circ\varphi)(r_{i})=p_{i}$ for $i=1,\dots,4$, and
 $$
 \eta=[\mathcal{O}_{E}(\varphi_{4}^{-1}(\eta)+p_{4})]
 =[\mathcal{O}_{E}((\varphi_{4}^{-1}\circ\varphi)(t)+(\varphi_{4}^{-1}\circ\varphi)(r_{4}))]
 =(\varphi_{4}^{-1}\circ\varphi)_{*}[\mathcal{O}_{E'}(t+r_{4})].
 $$
 Next we assume that
 $\varphi(s_{i})=[\mathcal{O}_{E}(p_{i}+p_{4})]$
 for $i=1,2,3$.
 Then we have
 $$
 \varphi(s_{1})\otimes\varphi(s_{2})\otimes\varphi(s_{3})
 =[\mathcal{O}_{E}(p_{1}+p_{2}+p_{3}+3p_{4})]
 =\varphi(t)^{\otimes4}\otimes[\mathcal{O}_{E}(-p_{1}-p_{2}-p_{3}+p_{4})],
 $$
 hence $\varphi(r_{4})=[\mathcal{O}_{E}(p_{1}+p_{2}+p_{3}-p_{4})]$.
 Let $\varphi_{\bar{4}}:{E}\rightarrow\Pic^{(2)}{(E)}$ be the isomorphism defined by $p\mapsto[\mathcal{O}_{E}(p_{1}+p_{2}+p_{3}-p)]$.
 Then the isomorphism $\varphi_{\bar{4}}^{-1}\circ\varphi:E'\rightarrow{E}$ satisfies $(\varphi_{\bar{4}}^{-1}\circ\varphi)(r_{i})=p_{i}$ for $i=1,\dots,4$, and
 $$
 \eta=\eta^{\otimes2}\otimes\eta^{\vee}
 =\eta^{\otimes2}\otimes[\mathcal{O}_{E}(-p_{1}-p_{2}-p_{3}+\varphi_{\bar{4}}^{-1}(\eta))]
 =[\mathcal{O}_{E}(\varphi_{\bar{4}}^{-1}(\eta)+p_{4})]
 =(\varphi_{\bar{4}}^{-1}\circ\varphi)_{*}[\mathcal{O}_{E'}(t+r_{4})],
 $$
 where $\eta^{\vee}$ denotes the class of the dual invertible sheaf of $\eta$.
\end{proof}

\begin{proposition}\label{160302_14Oct16}
 The effective canonical divisor $K_{Y}=f^{-1}(\eta)$ is singular if and only if $C$ is a hyperelliptic curve.
 In this case, the fiber $K_{Y}=f^{-1}(\eta)$ is of type $\mathstrut_{1}I_{4}$.
\end{proposition}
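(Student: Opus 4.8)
The plan is to reduce the whole statement to a single condition on the base $B$, namely that the canonical point $\eta$ equals one of the critical values $[\mathcal{O}_{E}(p_{i}+p_{j})]$, and then to identify that condition with hyperellipticity of $C$. The first reduction is forced by the fibre description already established: $f^{-1}(\xi)$ is the double cover of $\psi^{-1}(\xi)\cong\mathbf{P}^{1}$ branched along $\psi^{-1}(\xi)\cap\bigcup_{i}\Gamma_{i}$, and over $\xi=\eta$ the four branch points are exactly the divisors $p_{i}+p_{i}'\in|\eta|=\psi^{-1}(\eta)$ with $p_{i}+p_{i}'\sim\eta$. Such a cover degenerates precisely when two of these collide, i.e. when $p_{i}+p_{i}'=p_{j}+p_{j}'$ in $|\eta|$, which (as $p_{i}\neq p_{j}$) means $p_{i}'=p_{j}$, i.e. $[\mathcal{O}_{E}(p_{i}+p_{j})]=\eta$. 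Thus $K_{Y}=f^{-1}(\eta)$ is singular iff $\eta=[\mathcal{O}_{E}(p_{i}+p_{j})]$ for some pair. Moreover, once this holds, the relation $\eta^{\otimes2}=[\mathcal{O}_{E}(p_{1}+\dots+p_{4})]=[\mathcal{O}_{E}(p_{i}+p_{j})]\otimes[\mathcal{O}_{E}(p_{k}+p_{l})]$ immediately gives $[\mathcal{O}_{E}(p_{k}+p_{l})]=\eta$ for the complementary pair as well, so both branch points of the pair collide together with their complementary partners and, by the classification of singular fibres, $f^{-1}(\eta)$ is of type $\mathstrut_{1}I_{4}$. Hence the ``type $\mathstrut_{1}I_{4}$'' assertion is automatic, and only the equivalence with hyperellipticity needs work.

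For the easy half of that equivalence I would argue by exhibiting a $g^{1}_{2}$ on $C$ through a fibre-product description. Assume $\eta=[\mathcal{O}_{E}(p_{1}+p_{2})]=[\mathcal{O}_{E}(p_{3}+p_{4})]$ and set $g=\Phi_{|\eta|}:E\to\mathbf{P}^{1}$. Then $p_{1}+p_{2}$ and $p_{3}+p_{4}$ are fibres of $g$, so $\eta=g^{*}\mathcal{O}_{\mathbf{P}^{1}}(1)$ and the branch divisor satisfies $p_{1}+\dots+p_{4}=g^{*}(\alpha+\gamma)$ for two points $\alpha,\gamma\in\mathbf{P}^{1}$. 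Since the section of $\eta^{\otimes2}$ cutting out the branch locus is unique up to scalar (because $h^{0}(\eta^{\otimes2}(-\sum p_{i}))=1$), it is the pullback by $g$ of a section of $\mathcal{O}_{\mathbf{P}^{1}}(2)$ vanishing at $\alpha,\gamma$. Therefore $\pi:C\to E$ is the pullback by $g$ of the double cover $C_{0}\to\mathbf{P}^{1}$ branched at $\{\alpha,\gamma\}$, i.e. $C\cong E\times_{\mathbf{P}^{1}}C_{0}$; as $C_{0}\to\mathbf{P}^{1}$ is branched at only two points we get $C_{0}\cong\mathbf{P}^{1}$, and the second projection $C\to C_{0}\cong\mathbf{P}^{1}$ is a degree-$2$ morphism, so $C$ is hyperelliptic.

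Conversely, if $C$ is hyperelliptic with hyperelliptic involution $h$, then $h$ is central in $\Aut(C)$, hence commutes with $\sigma$, and $\langle\sigma,h\rangle\cong(\mathbf{Z}/2)^{2}$. As $C/h\cong\mathbf{P}^{1}$ dominates $Q:=C/\langle\sigma,h\rangle$, we get $Q\cong\mathbf{P}^{1}$, and $\bar h:E=C/\sigma\to Q$ is a double cover. I would then identify $C$ with the fibre product $E\times_{Q}(C/h)$: the first projection is $\pi$, realized as the pullback of $C/h\to Q$ along $\bar h$. The point that must be checked is that $\bar h$ and $C/h\to Q$ have disjoint branch loci, so that this fibre product is already smooth; this holds because $\pi$ is branched at the four \emph{distinct} points $p_{1},\dots,p_{4}$ while $C/h\to Q$ is branched at exactly two points $\alpha,\gamma$, which forces $\bar h^{-1}(\alpha)$ and $\bar h^{-1}(\gamma)$ to be four distinct reduced points and hence $\alpha,\gamma$ to avoid the branch values of $\bar h$. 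Then $\{p_{1},\dots,p_{4}\}=\bar h^{-1}(\alpha)\cup\bar h^{-1}(\gamma)$, and the line bundle defining $\pi$ is $\bar h^{*}\mathcal{O}_{Q}(1)$. Since a ramified double cover is determined by its branch divisor together with this bundle, and $\pi^{*}\eta=[\Omega^{1}_{C}]$ singles out $\eta$ as exactly that bundle (using that $\pi^{*}$ is injective on $\Pic^{0}$ for a ramified cover), I conclude $\eta=\bar h^{*}\mathcal{O}_{Q}(1)=[\mathcal{O}_{E}(p_{1}+p_{2})]$, which by the first paragraph completes the proof.

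I expect the converse direction to be the main obstacle, concentrated in the two bookkeeping points above: (i) showing the fibre product $E\times_{Q}(C/h)$ is smooth, i.e. that $\bar h$ and $C/h\to Q$ share no branch value — this is exactly where the hypothesis that $\pi$ is branched at four distinct points is indispensable; and (ii) matching the bundle $\bar h^{*}\mathcal{O}_{Q}(1)$ coming from the fibre-product description with the specific $\eta$ pinned down by $\pi^{*}\eta=[\Omega^{1}_{C}]$, where one must take care not to introduce a spurious $2$-torsion ambiguity. Both are short once set up, and as a consistency check one can note that this fibre-product picture is precisely what makes $\kappa$ extend to a regular involution in the hyperelliptic case, as recorded in Remark~\ref{110654_1Dec16}.
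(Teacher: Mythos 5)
Your proof is correct, and it reaches the result by a genuinely different route than the paper. Both arguments pass through the same pivot: $f^{-1}(\eta)$ is singular if and only if $\eta=[\mathcal{O}_{E}(p_{i}+p_{j})]$ for some pair, in which case $\eta^{\otimes2}=[\mathcal{O}_{E}(p_{1}+\dots+p_{4})]$ forces the complementary pair to represent $\eta$ as well, so the type $\mathstrut_{1}I_{4}$ assertion is automatic from the fiber classification. The difference lies in how that condition is tied to hyperellipticity. The paper works upstairs on $C$ with divisor arithmetic on the ramification points $q_{i}$: for ``singular $\Rightarrow$ hyperelliptic'' it notes $[\mathcal{O}_{C}(2q_{1}+2q_{2})]=\pi^{*}\eta=[\mathcal{O}_{C}(q_{1}+\dots+q_{4})]$, hence $\mathcal{O}_{C}(q_{1}+q_{2})\simeq\mathcal{O}_{C}(q_{3}+q_{4})$ is already a $g^{1}_{2}$ (three lines); for ``hyperelliptic $\Rightarrow$ singular'' it cites Lemma~(1.9) of \cite{B} to see that $h$ permutes $\{q_{1},\dots,q_{4}\}$ without fixed points, pairs them up, and descends the resulting equivalence to $E$ by injectivity of $\pi^{*}$. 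You instead argue by descent of covering data: in one direction $(\eta,s)$ descends along $\Phi_{|\eta|}$, exhibiting $C\simeq E\times_{\mathbf{P}^{1}}C_{0}$ with $C_{0}\simeq\mathbf{P}^{1}$ and the second projection as the $g^{1}_{2}$ (incidentally the same pencil $|\mathcal{O}_{C}(q_{1}+q_{2})|$ the paper produces); in the other you run the bidouble tower $C\rightarrow Q=C/\langle\sigma,h\rangle$ with a branch-point count replacing Barth's lemma. What your route buys is self-containedness and a geometric picture consistent with Remark~\ref{110654_1Dec16}; what the paper's buys is brevity. Two compressed steps in your forward direction should be written out: (a) the claim $\bar{h}(p_{i})\in\{\alpha,\gamma\}$, which launches the count --- it holds because the $\langle\sigma,h\rangle$-orbit of $q_{i}$ is $\{q_{i},h(q_{i})\}$, a single $h$-orbit, so the fiber of $C/h\rightarrow Q$ over $\bar{h}(p_{i})$ is set-theoretically a single point and that map is branched there; and (b) the identification $C\simeq E\times_{Q}(C/h)$: the natural map is finite and generically injective, the fiber product is smooth by your disjointness argument, and it is connected since otherwise its components would map isomorphically to $E$, contradicting $g(C)=3$; a finite birational map onto a smooth connected curve is then an isomorphism. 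Neither point is a gap, only bookkeeping, and your remark that the bundle identification $\eta=\bar{h}^{*}\mathcal{O}_{Q}(1)$ rests on injectivity of $\pi^{*}$ uses exactly the same ingredient as the paper.
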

\begin{proof}
 Let $q_{i}\in{C}$ be the ramification point of $\pi:C\rightarrow{E}$ with $\pi(q_{i})=p_{i}$.
 If $C$ is a hyperelliptic curve, then the hyperelliptic involution $h$ commutes with the involution $\sigma$, hence $h$ acts on the set $\{q_{1},\dots,q_{4}\}\subset{C}$.
 By \cite[Lemma~(1.9)]{B}, we have $h(q_{i})\neq{q_{i}}$ for $1\leq{i}\leq{4}$.
 We may assume that $q_{2}=h(q_{1})$ and $q_{4}=h(q_{3})$.
 Then we have
 $\mathcal{O}_{C}(q_{1}+q_{2})\simeq
 \mathcal{O}_{C}(q_{3}+q_{4})$,
 hence
 $\pi^{*}\eta=[\mathcal{O}_{C}(q_{1}+\dots+q_{4})]
 =[\mathcal{O}_{C}(2q_{1}+2q_{2})]
 =\pi^{*}[\mathcal{O}_{E}(p_{1}+p_{2})]$.
 Since $\pi^{*}:\Pic{(E)}\rightarrow\Pic{(C)}$ is injective, we have
 $\eta=[\mathcal{O}_{E}(p_{1}+p_{2})]=[\mathcal{O}_{E}(p_{3}+p_{4})]$,
 and the fiber $K_{Y}=f^{-1}(\eta)$ is singular of type $\mathstrut_{1}I_{4}$.\par
 Conversely, if $K_{Y}=f^{-1}(\eta)$ is singular, then
 $\eta=[\mathcal{O}_{E}(p_{i}+p_{j})]$ for some $1\leq{i}<{j}\leq{4}$.
 We may assume that $\eta=[\mathcal{O}_{E}(p_{1}+p_{2})]$.
 Since
 $[\mathcal{O}_{C}(2q_{1}+2q_{2})]=
 \pi^{*}\eta=
 [\mathcal{O}_{C}(q_{1}+\dots+q_{4})]$,
 we have
 $\mathcal{O}_{C}(q_{1}+q_{2})\simeq
 \mathcal{O}_{C}(q_{3}+q_{4})$.
 It means that the linear system $|\mathcal{O}_{C}(q_{1}+q_{2})|$ gives the morphism of degree $2$ to the projective line.
\end{proof}
\subsection{The $j$-function}
\begin{proposition}\label{143201_14Oct16}
 The $j$-function $j:B\rightarrow\mathbf{P}^{1}$ for the elliptic surface $f:Y\rightarrow{B}$ is a covering of degree $12$, and it is ramified at the canonical point $\eta\in{B}$.
\end{proposition}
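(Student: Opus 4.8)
The plan is to treat the two assertions separately, exploiting the description of each fibre as the double cover of $\Lambda_{\xi}\cong\mathbf{P}^{1}$ branched at the four points $\Gamma_{i}\cap\Lambda_{\xi}$.

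For the degree I would invoke the standard fact that, for a relatively minimal elliptic surface, the $j$-function has a pole of order $n$ at each fibre of Kodaira type $I_{n}$ and is holomorphic at every other fibre; hence $\deg j$ equals the sum of pole orders $\sum_{v}n_{v}$ over the multiplicative fibres $F_{v}$ of type $I_{n_{v}}$. By the singular-fibre analysis of the previous subsection, every fibre over a class $[\mathcal{O}_{E}(p_{i}+p_{j})]$ is of type $I_{2}$ or $I_{4}$ and these are the only singular fibres, so $\sum_{v}n_{v}=e(Y)=12\chi(\mathcal{O}_{Y})=12$ by Proposition~\ref{161646_15Oct16}. Equivalently, one checks directly that the six classes $[\mathcal{O}_{E}(p_{i}+p_{j})]$, grouped into the at most three coincidences allowed by Lemma~\ref{184636_16Oct16}, always contribute $6\times 2=12$, a single $I_{4}$ replacing a complementary pair of $I_{2}$'s without changing the total. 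In particular $j$ is non-constant of degree $12$.

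For the ramification at $\eta$, the key observation is that the residuation involution $\kappa$ of Remark~\ref{110654_1Dec16} covers the involution $\iota\colon\xi\mapsto 2\eta-\xi$ of the base $B=\Pic^{(2)}(E)$, that is, the involution cut out by $\Phi_{|\mathcal{O}_{B}(2\eta)|}$. Writing the composite $C^{(2)}\rightarrow B$ as $q+q'\mapsto\mathrm{Nm}[\mathcal{O}_{C}(q+q')]$ via the norm map $\mathrm{Nm}\colon\Pic(C)\rightarrow\Pic(E)$, and using $\kappa(q+q')\in|\Omega_{C}^{1}(-q-q')|$ together with $\pi^{*}\eta=[\Omega_{C}^{1}]$ and $\mathrm{Nm}\circ\pi^{*}=[2]$, I obtain that the image of $\kappa(q+q')$ in $B$ is $2\eta-\xi$ whenever $q+q'$ lies over $\xi$. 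Since $\kappa$ commutes with $\sigma^{(2)}$, it descends to a birational involution of $Y$ covering $\iota$, which for general $\xi$ restricts to a birational, hence biregular, map $F_{\xi}\rightarrow F_{2\eta-\xi}$ between smooth genus-one fibres. Consequently $j(\xi)=j(2\eta-\xi)$, so $j$ is $\iota$-invariant and factors as $j=\bar{j}\circ\Phi_{|\mathcal{O}_{B}(2\eta)|}$ for a degree-$6$ map $\bar{j}\colon\mathbf{P}^{1}\rightarrow\mathbf{P}^{1}$.

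Since $\eta$ is a fixed point of $\iota$, the quotient map $\Phi_{|\mathcal{O}_{B}(2\eta)|}$ ramifies at $\eta$ with index $2$; as $j=\bar{j}\circ\Phi_{|\mathcal{O}_{B}(2\eta)|}$ is non-constant, the ramification index of $j$ at $\eta$ is a positive multiple of $2$, and $j$ is ramified there (uniformly in the hyperelliptic and non-hyperelliptic cases, the former being the pole of order $4$ coming from the $I_{4}$ canonical fibre). The main obstacle I anticipate is justifying that the birational self-map of $Y$ induced by $\kappa$ is genuinely biregular on a general fibre, so that the equality $j(\xi)=j(2\eta-\xi)$ is legitimate; this is exactly what the norm-map computation secures, since a birational map between smooth projective genus-one curves is an isomorphism, and one need only discard the finitely many fibres where the descended map fails to be regular.
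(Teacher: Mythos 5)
Your proof is correct; the ramification part coincides with the paper's argument, but your degree computation takes a genuinely different route. For the ramification at $\eta$ the paper does exactly what you do: it deduces the statement from Remark~\ref{110654_1Dec16}, i.e.\ from the fact that $\kappa$ descends to $Y$ and covers the covering involution of $\Phi_{|\mathcal{O}_{B}(2\eta)|}$, which is precisely your norm-map computation. For the degree, however, the paper proves by a long explicit coordinate computation (Lemma~\ref{112302_18Oct16}) that the $\lambda$-function $\xi\mapsto i_{\xi}(\psi^{-1}(\xi)\cap\Gamma_{4})$ \emph{coincides} with $\Phi_{|\mathcal{O}_{B}(2\eta)|}$, and then reads off $j=2^{8}(\lambda^{2}-\lambda+1)^{3}/(\lambda^{2}(\lambda-1)^{2})$ as the classical degree-$6$ modular function composed with a degree-$2$ map, obtaining $\deg j=12$ and the ramification at $\eta$ in one stroke. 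You instead get $\deg j=12$ from Kodaira theory plus Noether's formula: all singular fibres are of type $I_{2}$ or $I_{4}$, so $\deg j=\sum_{v}n_{v}=e(Y)=12\chi(\mathcal{O}_{Y})=12$, and you then recover the factorization $j=\bar{j}\circ\Phi_{|\mathcal{O}_{B}(2\eta)|}$ with $\deg\bar{j}=6$ as a consequence rather than as the starting point. Your route is shorter and more conceptual; what the paper's computation buys is the explicit identification $\lambda=\Phi_{|\mathcal{O}_{B}(2\eta)|}$, hence an explicit formula for $j$, which is reused downstream (Remark~\ref{121832_11Dec16}, Proposition~\ref{175005_11Dec16}, Corollary~\ref{170044_12Jan17}, Remark~\ref{153122_13Jan17} and the examples), so your argument could not simply replace that lemma in the paper's architecture. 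Two small points: the ``standard fact'' you quote should also allow a pole of order $n$ at fibres of type $I_{n}^{*}$ (harmless here, since the fibre analysis rules these out); and the regularity worry you raise at the end is unnecessary, because by Remark~\ref{110654_1Dec16} the involution $\kappa$ is regular on all of $C^{(2)}$ (for non-hyperelliptic $C$ since $\Delta_{h}=\emptyset$, for hyperelliptic $C$ via the hyperelliptic involution), commutes with $\sigma^{(2)}$, and therefore induces a biregular involution of $Y$ covering $\iota$, so the fibrewise isomorphisms $F_{\xi}\simeq F_{2\eta-\xi}$ hold for every $\xi$ without any genericity caveat.
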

The statement that the $j$-function is ramified at $\eta\in{B}$ follows from Remark~\ref{110654_1Dec16}.
For a point $\xi\in{B}\setminus\Sigma$, the fiber $f^{-1}(\xi)$ is the double covering of $\psi^{-1}(\xi)\simeq\mathbf{P}^{1}$ branched at $4$ points $\psi^{-1}(\xi)\cap{\bigcup_{i=1}^{4}\Gamma_{i}}$.
Let
$i_{\xi}:\psi^{-1}(\xi)\rightarrow\mathbf{P}^{1}=\mathbf{C}\cup\{\infty\}$
be the isomorphism
defined by
$$
i_{\xi}(\psi^{-1}(\xi)\cap{\Gamma_{1}})=\infty,\
i_{\xi}(\psi^{-1}(\xi)\cap{\Gamma_{2}})=0,\
i_{\xi}(\psi^{-1}(\xi)\cap{\Gamma_{3}})=1.
$$
We defines the $\lambda$-function by
$$
\lambda:B\setminus\Sigma
\longrightarrow\mathbf{C};\
\xi\longmapsto{i_{\xi}(\psi^{-1}(\xi)\cap{\Gamma_{4}})}.
$$
Then $f^{-1}(\xi)$ is isomorphic to the compactification of
$$
\{(x,y)\in\mathbf{A}^{2}\mid
y^{2}=x(x-1)(x-\lambda(\xi))\}.
$$
\begin{lemma}\label{112302_18Oct16}
 The $\lambda$-function coincides with the morphism
 $
 \Phi_{|\mathcal{O}_{B}(2\eta)|}:B\rightarrow\mathbf{P}^{1},
 $
 by taking the coordinate of $\mathbf{P}^{1}=\mathbf{C}\cup\{\infty\}$ as
 $$
 \Phi_{|\mathcal{O}_{B}(2\eta)|}([\mathcal{O}_{E}(p_{1}+p_{4})])=\infty,\
 \Phi_{|\mathcal{O}_{B}(2\eta)|}([\mathcal{O}_{E}(p_{2}+p_{4})])=0,\
 \Phi_{|\mathcal{O}_{B}(2\eta)|}([\mathcal{O}_{E}(p_{3}+p_{4})])=1.
 $$
\end{lemma}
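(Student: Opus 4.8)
The plan is to evaluate the $\lambda$-function fiber by fiber as a classical cross-ratio, to read off its zeros, poles and $1$-level set, and then to match these against $\Phi_{|\mathcal{O}_B(2\eta)|}$ by means of Lemma~\ref{184636_16Oct16}. First I would describe the four sections inside a single fiber. For $\xi\in{B}=\Pic^{(2)}(E)$ the fiber $\psi^{-1}(\xi)$ is the linear system $|\xi|\subset{E^{(2)}}$, and $\Gamma_i\cap\psi^{-1}(\xi)$ is the unique effective divisor in the class $\xi$ containing $p_i$. Hence the marked points coming from $p_i$ and $p_j$ coincide in $\psi^{-1}(\xi)$ precisely when $p_i+p_j\in|\xi|$, that is, when $\xi=[\mathcal{O}_E(p_i+p_j)]$. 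By definition $\lambda(\xi)$ is the coordinate of the fourth marked point after the first three have been sent to $\infty,0,1$, so it is the cross-ratio of the four marked points; evaluating it in any coordinate $z_1,z_2,z_3,z_4$ on $\psi^{-1}(\xi)$ gives $\lambda(\xi)=\frac{(z_4-z_2)(z_3-z_1)}{(z_4-z_1)(z_3-z_2)}$.

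Next I would read off the three special fibers of $\lambda$, now regarded as a morphism $B\rightarrow\mathbf{P}^1$, using the collision criterion $z_i=z_j\Leftrightarrow\xi=[\mathcal{O}_E(p_i+p_j)]$. The denominator forces poles at $[\mathcal{O}_E(p_1+p_4)]$ and $[\mathcal{O}_E(p_2+p_3)]$; the numerator forces zeros at $[\mathcal{O}_E(p_2+p_4)]$ and $[\mathcal{O}_E(p_1+p_3)]$; and $\lambda=1$ exactly where $(z_2-z_1)(z_4-z_3)=0$, that is at $[\mathcal{O}_E(p_3+p_4)]$ and $[\mathcal{O}_E(p_1+p_2)]$. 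At each of the two poles only one factor of the denominator vanishes while the numerator stays nonzero, so the pole is genuine, and its simplicity amounts to the transversality of the two sections $\Gamma_i,\Gamma_j$ at their unique common point. I expect this local transversality, equivalently the statement that the polar divisor of $\lambda$ has degree $2$, to be the only genuinely technical point; it is a routine computation in the $\mathbf{P}^1$-bundle $\psi$.

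Finally I would match $\lambda$ with $\mu:=\Phi_{|\mathcal{O}_B(2\eta)|}$. This $\mu$ is the degree-two map sending $\xi,\xi'$ to the same point of $\mathbf{P}^1$ exactly when $\xi+\xi'\sim 2\eta$ on $B$, and by Lemma~\ref{184636_16Oct16} the three pairs $\{[\mathcal{O}_E(p_1+p_4)],[\mathcal{O}_E(p_2+p_3)]\}$, $\{[\mathcal{O}_E(p_2+p_4)],[\mathcal{O}_E(p_1+p_3)]\}$ and $\{[\mathcal{O}_E(p_3+p_4)],[\mathcal{O}_E(p_1+p_2)]\}$ are each linearly equivalent to $2\eta$; with the normalization of the statement these are precisely the fibers of $\mu$ over $\infty,0,1$. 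Thus $\lambda$ and $\mu$ have identical fibers, with matching values, over $\infty,0,1$.

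To conclude, Lemma~\ref{184636_16Oct16} shows the polar divisor $[\mathcal{O}_E(p_1+p_4)]+[\mathcal{O}_E(p_2+p_3)]$ is linearly equivalent to $2\eta$, so $\lambda$ and $\mu$ both lie in the two-dimensional space $H^0(B,\mathcal{O}_B(2\eta))$, which is spanned by the constant function $1$ and the nonconstant function $\mu$. Writing $\lambda=a+b\mu$ and evaluating at the common zero $[\mathcal{O}_E(p_2+p_4)]$ gives $a=0$, while evaluating at the common $1$-point $[\mathcal{O}_E(p_3+p_4)]$ gives $b=1$; hence $\lambda=\mu$, as claimed. The conceptual heart of the argument is the fiberwise identification of the four marked points with $p_1,\dots,p_4$ together with the collision criterion, since everything else follows formally once the three special fibers are pinned down and Lemma~\ref{184636_16Oct16} is invoked.
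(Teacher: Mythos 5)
Your proposal is correct, but it follows a genuinely different route from the paper's proof. The paper argues by explicit computation: it fixes a Weierstrass equation $y^{2}=g(x)$ for $E\setminus\{p_{0}\}$, realizes the four marked points in the fiber over $\xi=[\mathcal{O}_{E}(p+p_{0})]$ through the rational functions $f_{p}(x,y)=\frac{y+b}{x-a}$ attached to $|\mathcal{O}_{E}(p+p_{0})|$, and then performs a long elimination (using identities such as $A(x)+B(x)=r(x-a_{24})$ and $g(x)=A(x)^{2}+(x-a_{2})(x-a_{4})(x-a_{24})$) until the cross-ratio collapses to the visibly degree-two expression $\frac{a_{34}-a_{14}}{a_{34}-a_{24}}\cdot\frac{a-a_{24}}{a-a_{14}}=\Phi_{|2\eta|}(\xi)$. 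You instead pin $\lambda$ down abstractly: the collision criterion for the sections $\Gamma_{i}$ identifies the fibers of $\lambda$ over $\infty,0,1$; transversality of $\Gamma_{i}$ and $\Gamma_{j}$ at their common point (which is indeed routine --- off the diagonal the covering $E\times{E}\rightarrow{E^{(2)}}$ is \'etale and pulls the two sections back to a vertical and a horizontal curve, or equivalently $(\Gamma_{i}.\ \Gamma_{j})=1$) bounds the polar divisor of $\lambda$ by a degree-two divisor $D_{\infty}$; and Lemma~\ref{184636_16Oct16} both gives $D_{\infty}\sim2\eta$ and matches the fibers of $\Phi_{|\mathcal{O}_{B}(2\eta)|}$, so that interpolation in the two-dimensional space $H^{0}(B,\mathcal{O}_{B}(D_{\infty}))$ at two points forces $\lambda=\Phi_{|\mathcal{O}_{B}(2\eta)|}$. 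Your argument is shorter and coordinate-free, and makes clear why the result holds; the paper's computation has the side benefit of producing a closed formula in coordinates, in the spirit of its later explicit $j$-function computations. One caveat: your statement that ``at each of the two poles only one factor of the denominator vanishes'' tacitly assumes $[\mathcal{O}_{E}(p_{1}+p_{4})]\neq[\mathcal{O}_{E}(p_{2}+p_{3})]$; when these coincide (the case of a fiber of type $\mathstrut_{1}I_{4}$), the two collisions occur at the same point of $B$ and $\lambda$ has a double pole there --- but the polar divisor still has degree two and still coincides with that of $\Phi_{|\mathcal{O}_{B}(2\eta)|}$, so your interpolation step goes through verbatim; you should just phrase the pole analysis to cover this degenerate case.
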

\begin{proof}
 We fix a point $p_{0}\in{E}$ with $\eta=[\mathcal{O}_{E}(2p_{0})]$.
 Let $E\setminus\{p_{0}\}$ be defined by the equation $y^{2}=g(x)$ in $\mathbf{A}^{2}$, where $g(x)$ is a separated monic cubic polynomial.
 We assume that $p_{1},\dots,p_{4}\in{E\setminus\{p_{0}\}}$, because the case $p_{1}=p_{0}$ is easier.
 We denote the coordinate of the point $p_{i}\in{E}$ by $(x,y)=(a_{i},b_{i})$.
 For a point $p=(a,b)\in{E\setminus\{p_{0}\}}$, we define a rational function by
 $$
 f_{p}:\
 E\longrightarrow\mathbf{P}^{1};\
 (x,y)\longmapsto
 \frac{y+b}{x-a}\in\mathbf{P}^{1}
 =\mathbf{C}\cup\{\infty\},
 $$
 which is the morphism given by the linear system
 $|\mathcal{O}_{E}(p+p_{0})|$.
 Hence we have
 $$
 \lambda(\xi)
 =\frac{f_{p}(p_{4})-f_{p}(p_{2})}{f_{p}(p_{4})-f_{p}(p_{1})}\cdot
 \frac{f_{p}(p_{3})-f_{p}(p_{1})}{f_{p}(p_{3})-f_{p}(p_{2})}
 =\frac{\frac{b_{4}+b}{a_{4}-a}-\frac{b_{2}+b}{a_{2}-a}}
 {\frac{b_{4}+b}{a_{4}-a}-\frac{b_{1}+b}{a_{1}-a}}
 \frac{\frac{b_{3}+b}{a_{3}-a}-\frac{b_{1}+b}{a_{1}-a}}
 {\frac{b_{3}+b}{a_{3}-a}-\frac{b_{2}+b}{a_{2}-a}}
 $$
 for general $\xi=[\mathcal{O}_{E}(p+p_{0})]\in\Pic^{(2)}{(E)}$.
 Let $p_{ij}\in{E}$ be the point with $\mathcal{O}_{E}(p_{1}+p_{j}+p_{ij})\simeq\mathcal{O}_{E}(3p_{0})$.
 We assume that $p_{ij}\in{E\setminus\{p_{0}\}}$, because the case $p_{12}=p_{0}$ is easier.
 We denote the coordinate of the point $p_{ij}\in{E}$ by $(x,y)=(a_{ij},b_{ij})$, and we set
 $r=\frac{b_{4}-b_{1}}{a_{4}-a_{1}}+\frac{b_{3}-b_{2}}{a_{3}-a_{2}}$.
 Then we have
 $$
 \begin{cases}
  r=\frac{b_{4}-b_{1}}{a_{4}-a_{1}}+\frac{b_{3}-b_{2}}{a_{3}-a_{2}}
  =\frac{b_{4}-b_{2}}{a_{4}-a_{2}}+\frac{b_{3}-b_{1}}{a_{3}-a_{1}}
  =\frac{b_{4}-b_{3}}{a_{4}-a_{3}}+\frac{b_{2}-b_{1}}{a_{2}-a_{1}},\\
  r^{2}=\frac{(a_{4}-a_{3})(a_{2}-a_{1})}{a_{24}-a_{14}}
  =\frac{(a_{4}-a_{2})(a_{3}-a_{1})}{a_{34}-a_{14}}
  =\frac{(a_{4}-a_{1})(a_{3}-a_{2})}{a_{34}-a_{24}},
 \end{cases}
 $$
 where these equalities are proved from $\mathcal{O}_{E}(p_{1}+\dots+p_{4})\simeq\mathcal{O}_{E}(4p_{0})$ by the elementary computation.
 We set
 \begin{align*}
  A(x)=\frac{b_{4}-b_{2}}{{a_{4}-a_{2}}}x
  +\frac{a_{4}b_{2}-a_{2}b_{4}}{a_{4}-a_{2}},\
  B(x)=\frac{b_{3}-b_{1}}{{a_{3}-a_{1}}}x
  +\frac{a_{3}b_{1}-a_{1}b_{3}}{a_{3}-a_{1}},\\
  C(x)=\frac{b_{4}-b_{1}}{{a_{4}-a_{1}}}x
  +\frac{a_{4}b_{1}-a_{1}b_{4}}{a_{4}-a_{1}},\
  D(x)=\frac{b_{3}-b_{2}}{{a_{3}-a_{2}}}x
  +\frac{a_{3}b_{2}-a_{2}b_{3}}{a_{3}-a_{2}}.
 \end{align*}
 Then
 $$
 \lambda(\xi)
 =\frac{(a_{4}-a_{2})(a_{3}-a_{1})}{(a_{4}-a_{1})(a_{3}-a_{2})}
 \cdot
 \frac{(A(a)+b)(B(a)+b)}{(C(a)+b)(D(a)+b)}
 =c
 \cdot
 \frac{b^{2}+A(a)B(a)+(A(a)+B(a))b}{b^{2}+C(a)D(a)+(C(a)+D(a))b},
 $$
 where we set $c=\frac{(a_{4}-a_{2})(a_{3}-a_{1})}{(a_{4}-a_{1})(a_{3}-a_{2})}$.
 Since $(a_{24},b_{24})=(a_{13},-b_{13})$, we have
 $$
 A(x)+B(x)=r(x-a_{24})
 $$
 and
 $$
 g(x)=A(x)^{2}+(x-a_{2})(x-a_{4})(x-a_{24})
 =B(x)^{2}+(x-a_{1})(x-a_{3})(x-a_{24}),
 $$
 hence
 \begin{align*}
  &2b^{2}+2A(a)B(a)
  =2g(a)+2A(a)B(a)\\
  =&A(a)^{2}+(a-a_{2})(a-a_{4})(a-a_{24})
  +B(a)^{2}+(a-a_{1})(a-a_{3})(a-a_{24})
  +2A(a)B(a)\\
  =&(A(a)+B(a))^{2}+(a^{2}-(a_{1}+a_{2}+a_{3}+a_{4})a
  +a_{1}a_{3}+a_{2}a_{4})(a-a_{24})\\
  =&r^{2}(a-a_{24})^{2}+(a^{2}-(a_{1}+a_{2}+a_{3}+a_{4})a
  +a_{1}a_{3}+a_{2}a_{4})(a-a_{24}).
 \end{align*}
 Since $(a_{14},b_{14})=(a_{23},-b_{23})$, by the same way, we have
 $$
 C(x)+D(x)=r(x-a_{14})
 $$
 and
 $$
 2b^{2}+2C(a)D(a)=
 r^{2}(a-a_{14})^{2}+(a^{2}-(a_{1}+a_{2}+a_{3}+a_{4})a
 +a_{1}a_{4}+a_{2}a_{3})(a-a_{14}).
 $$
 Hence we have
 $$
 \lambda(\xi)
 =c
 \cdot
 \frac{a-a_{24}}{a-a_{14}}\cdot
 \frac{r^{2}(a-a_{24})+(a^{2}-(a_{1}+a_{2}+a_{3}+a_{4})a
 +a_{1}a_{3}+a_{2}a_{4})+2rb}
 {r^{2}(a-a_{14})+(a^{2}-(a_{1}+a_{2}+a_{3}+a_{4})a
 +a_{1}a_{4}+a_{2}a_{3})+2rb}.
 $$
 Since
 $
 c=\frac{a_{34}-a_{14}}{a_{34}-a_{24}}
 $
 and
 $$
 r^{2}(a-a_{24})+(a^{2}-(a_{1}+a_{2}+a_{3}+a_{4})a
 +a_{1}a_{3}+a_{2}a_{4})
 =r^{2}(a-a_{14})+(a^{2}-(a_{1}+a_{2}+a_{3}+a_{4})a
 +a_{1}a_{4}+a_{2}a_{3}),
 $$
 we have
 $$
 \lambda(\xi)=\frac{a_{34}-a_{14}}{a_{34}-a_{24}}\cdot
 \frac{a-a_{24}}{a-a_{14}}
 =\Phi_{|2\eta|}(\xi).
 $$
\end{proof}
\begin{proof}[Proof of Proposition~\ref{143201_14Oct16}]
 By Lemma~\ref{112302_18Oct16}, the $j$-function
 $
 j(\xi)=2^{8}\frac{(\lambda(\xi)^{2}-\lambda(\xi)+1)^{3}}
 {\lambda(\xi)^{2}(\lambda(\xi)-1)^{2}}
 $
 is of degree $12$, and it is ramified at $\eta\in{B}$.
\end{proof}
\begin{corollary}\label{112709_18Oct16}
 If $C$ is not a hyperelliptic, then the Kodaira-Spencer map
 ${T}_{E}\vert_{\eta}\rightarrow{H^{1}(K_{Y},{T}_{K_{Y}})}$ at the canonical fiber $K_{Y}=f^{-1}(\eta)$
 is zero.
\end{corollary}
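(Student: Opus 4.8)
The plan is to reduce the vanishing of the Kodaira--Spencer map to the ramification of the $\lambda$-function at $\eta$. Since $C$ is non-hyperelliptic, Proposition~\ref{160302_14Oct16} tells us that $K_Y=f^{-1}(\eta)$ is a \emph{nonsingular} fiber, so $\eta\in{B}\setminus\Sigma$ and $\lambda(\eta)\notin\{0,1,\infty\}$; in particular the Kodaira--Spencer map $T_B|_\eta\to{H^1(K_Y,T_{K_Y})}$ (with $B\cong{E}$) is the usual one attached to the smooth family $f$ over a neighborhood of $\eta$. Over $B\setminus\Sigma$ every fiber is the nonsingular genus-one curve $y^2=x(x-1)(x-\lambda(\xi))$, so analytically near $\eta$ the family $f$ is obtained from the Legendre family over $\mathbf{P}^1\setminus\{0,1,\infty\}$ by the base change $\lambda=\Phi_{|\mathcal{O}_B(2\eta)|}$ identified in Lemma~\ref{112302_18Oct16}.

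I would then invoke the functoriality of the Kodaira--Spencer map under base change. For the pulled-back family one has, at the point $\eta$,
$$
\mathrm{KS}_f|_\eta=\mathrm{KS}_{\mathrm{Leg}}|_{\lambda(\eta)}\circ{d\lambda|_\eta},
$$
where $d\lambda|_\eta:T_B|_\eta\to{T_{\mathbf{P}^1}|_{\lambda(\eta)}}$ and $\mathrm{KS}_{\mathrm{Leg}}$ denotes the Kodaira--Spencer map of the Legendre family at $\lambda(\eta)$, the two fibers being identified through $K_Y\cong{E}_{\lambda(\eta)}$. Consequently it suffices to prove that $d\lambda|_\eta=0$, and no information about $\mathrm{KS}_{\mathrm{Leg}}$ is required. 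This last point is elementary: by Lemma~\ref{112302_18Oct16} the map $\lambda=\Phi_{|\mathcal{O}_B(2\eta)|}$ is a degree-two morphism from the elliptic curve $B$ to $\mathbf{P}^1$ whose fibers are the effective divisors of $|\mathcal{O}_B(2\eta)|$; two points $\xi_1,\xi_2$ have the same image exactly when $\xi_1+\xi_2\sim2\eta$, so the ramification points are the four solutions of $2\xi\sim2\eta$, i.e. the translates of $\eta$ by the $2$-torsion of $B$. In particular $\eta$ is itself a ramification point, whence $d\lambda|_\eta=0$ and therefore $\mathrm{KS}_f|_\eta=0$.

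The step that I expect to require the most care is \emph{not} the ramification computation but the insistence that the vanishing comes from $d\lambda|_\eta=0$ rather than merely from $dj|_\eta=0$. Writing $j=2^{8}(\lambda^{2}-\lambda+1)^{3}/(\lambda^{2}(\lambda-1)^{2})$ as a function of $\lambda$, the relation $d\lambda|_\eta=0$ immediately recovers $dj|_\eta=0$ and hence the ramification of the $j$-function at $\eta$ asserted in Proposition~\ref{143201_14Oct16}; but the converse implication is false whenever $\lambda(\eta)$ is a critical point of the rational map $\lambda\mapsto{j}$ (that is, when $j(\eta)\in\{0,1728\}$), and in that situation $dj|_\eta=0$ alone would \emph{not} force $\mathrm{KS}_f|_\eta=0$. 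Thus the genuine obstacle is to establish $d\lambda|_\eta=0$ directly, which the ramification argument above supplies, and the factorization of $\mathrm{KS}_f$ through $d\lambda$ then yields the corollary cleanly.
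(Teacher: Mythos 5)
Your proof is correct, and it is in fact more careful than the paper's own argument. The paper's proof is two lines: nonsingularity of $K_{Y}=f^{-1}(\eta)$ from Proposition~\ref{160302_14Oct16}, then the statement that by Proposition~\ref{143201_14Oct16} ``the differential of the $j$-function is zero at $\eta$, hence the Kodaira--Spencer map is zero.'' That last inference is exactly the one you flag: $dj|_{\eta}=0$ forces $\mathrm{KS}_{f}|_{\eta}=0$ only when $j(\eta)\notin\{0,12^{3}\}$, since otherwise the vanishing of $dj$ can be absorbed by the critical point of the degree-$6$ map $\lambda\mapsto{j}$ rather than by the period map; and nothing in the hypotheses excludes $j(K_{Y})\in\{0,12^{3}\}$ for special non-hyperelliptic $C$. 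Your route repairs this: you factor the family near $\eta$ through the Legendre family by the base change $\lambda=\Phi_{|\mathcal{O}_{B}(2\eta)|}$ (Lemma~\ref{112302_18Oct16}), so that $\mathrm{KS}_{f}|_{\eta}=\mathrm{KS}_{\mathrm{Leg}}|_{\lambda(\eta)}\circ{d\lambda|_{\eta}}$, and then prove $d\lambda|_{\eta}=0$ directly from the description of the fibers of $\Phi_{|\mathcal{O}_{B}(2\eta)|}$ as the divisors $\xi_{1}+\xi_{2}\sim2\eta$, whose ramification points are the four solutions of $2\xi\sim2\eta$, among them $\eta$ itself (a fact the paper also records in Remark~\ref{121832_11Dec16}). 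This is the same underlying geometry the paper relies on --- its own proof of Proposition~\ref{143201_14Oct16} derives the ramification of $j$ at $\eta$ precisely from the ramification of $\lambda$ there --- but your version makes the reduction to $d\lambda|_{\eta}=0$ explicit, which is what makes the argument valid uniformly, including in the edge cases $j(\eta)\in\{0,12^{3}\}$ where the published one-line deduction is incomplete. What the paper's formulation buys is brevity by citation; what yours buys is an airtight statement for every non-hyperelliptic $C$.
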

\begin{proof}
 By Proposition~\ref{160302_14Oct16}, the canonical divisor $K_{Y}$ is nonsingular, and by Proposition~\ref{143201_14Oct16}, the differential of the $j$-function is zero at $\eta\in{B}$, hence the Kodaira-Spencer map is zero.
\end{proof}
\section{Prym varieties and duality}\label{115200_2Dec16}
\subsection{Prym varieties}
In this section, we review some results on the Prym varieties \cite{M2} for the case when they are defined from bielliptic curves of genus $3$.
In this case, the Prym variety is a $(1,2)$-polarized abelian surface, and it is intensively studied by Barth in \cite{B}.
Particularly, we introduce the duality of bielliptic curves of genus $3$, which is given in \cite{P}, and is called Pantazis' bigonal construction.
We will give an explanation for the duality through the elliptic surface $Y$.\par
Let $\pi:C\rightarrow{E}$ be a bielliptic curve of genus $3$, and let $\sigma:C\rightarrow{C}$ be its bielliptic involution.
The Prym variety $P=\Prym{(C/E)}$ is defined as the image of the homomorphism
$$
J(C)\overset{\mathrm{id}-\sigma^{*}}{\longrightarrow}J(C),
$$
where $J(C)$ denotes the Jacobian variety of $C$.
Since $\pi:C\rightarrow{E}$ has ramification points, the kernel of the norm homomorphism
$$
J(C)\simeq\Pic^{(0)}{(C)}\overset{N}{\longrightarrow}
\Pic^{(0)}{(E)}\simeq{J(E)}
$$
is connected, and it coincides with $P$.
The Prym variety $P$ is a $(1,2)$-polarized abelian surface by the restriction $\Theta_{C}\vert_{P}$ of the theta divisor $\Theta_{C}$ on $J(C)$.
Let $A$ be the abelian surface defined as the cokernel of the homomorphism
$\pi^{*}:J(E)\rightarrow{J(C)}$.
Then $A$ is isomorphic to the dual abelian variety $\Pic^{(0)}{(P)}$
of $P$.
The natural composition $P\hookrightarrow{J(C)}\twoheadrightarrow{A}$ is an isogeny, which coincides with the polarization isogeny
$$
P\longrightarrow{A\simeq\Pic^{(0)}{(P)}};\
x\longmapsto[t_{x}^{*}\mathcal{O}_{P}(\Theta_{C}\vert_{P})\otimes
\mathcal{O}_{P}(-\Theta_{C}\vert_{P})],
$$
where $t_{x}:P\rightarrow{P}$ denotes the translation by $x\in{P}$.
Let $\iota:C\rightarrow{A}$ be the composition $C\hookrightarrow{J(C)}\twoheadrightarrow{A}$, where $C\hookrightarrow{J(C)}$ is the Abel-Jacobi embedding defined by fixing a point of $C$.
By \cite[Proposition~(1.8)]{B}, the morphism $\iota$ is a closed immersion, and it gives a $(1,2)$-polarization on $A$.
By \cite[Duality theorem (1.12)]{B}, the class $[\iota(C)]\in{H^{2}(A,\mathbf{Z})}$ corresponds to
$2[\Theta_{C}\vert_{P}]\in{H^{2}(P,\mathbf{Z})}$ by the pull-back
${H^{2}(A,\mathbf{Z})}\hookrightarrow{H^{2}(P,\mathbf{Z})}$,
which means that the polarizations $[\iota(C)]\in{H^{2}(A,\mathbf{Z})}$ and
$[\Theta_{C}\vert_{P}]\in{H^{2}(P,\mathbf{Z})}$ are dual to each other in the sense of \cite[Section~14.4]{BL}.

\begin{lemma}\label{115601_2Dec16}
 The morphism
 $$
 C^{(2)}\longrightarrow{A};\
 q+q'\longmapsto\iota(q)+\iota(q')
 $$
 is a generically finite double covering, and it induces a birational morphism
 $C^{(2)}/(\sigma^{(2)}\circ\kappa)\rightarrow{A}$,
 where $\kappa$ denotes the involution given in Remark~\ref{110654_1Dec16}.
\end{lemma}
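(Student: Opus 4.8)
The plan is to first show that the morphism $s\colon C^{(2)}\rightarrow{A}$, $q+q'\mapsto\iota(q)+\iota(q')$, is invariant under the involution $\sigma^{(2)}\circ\kappa$, and then to compute its degree by an intersection-number count on the abelian surface $A$. Throughout I would regard $s$, up to a fixed translation, as the composition $C^{(2)}\rightarrow\Pic^{(2)}{(C)}\overset{\pr}{\rightarrow}A$ induced by $q+q'\mapsto[\mathcal{O}_{C}(q+q')]$, where $\pr\colon{J(C)}\rightarrow{A}$ is the projection with $\Ker(\pr)=\pi^{*}J(E)$.

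For the invariance I would argue at the level of divisor classes. Writing $D=\kappa(q+q')$, so that $q+q'+D\sim\Omega_{C}^{1}$, the image divisor $(\sigma^{(2)}\circ\kappa)(q+q')=\sigma(D)$ satisfies $[\sigma(D)]=\sigma^{*}\bigl([\Omega_{C}^{1}]-[q+q']\bigr)=[\Omega_{C}^{1}]-[\sigma(q)+\sigma(q')]$, since $\sigma^{*}[\Omega_{C}^{1}]=[\Omega_{C}^{1}]$. Hence
$$
[q+q']-[(\sigma^{(2)}\circ\kappa)(q+q')]=[q+\sigma(q)]+[q'+\sigma(q')]-[\Omega_{C}^{1}].
$$
Now I would invoke the two defining relations of the bielliptic structure: $[\mathcal{O}_{C}(q+\sigma(q))]=\pi^{*}[\mathcal{O}_{E}(\pi(q))]$, because $q+\sigma(q)$ is the full fibre of $\pi$ over $\pi(q)$, together with $[\Omega_{C}^{1}]=\pi^{*}\eta$. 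The right-hand side then equals $\pi^{*}\bigl([\mathcal{O}_{E}(\pi(q)+\pi(q'))]-\eta\bigr)$, and since $\deg(\pi(q)+\pi(q'))=2=\deg\eta$ this class lies in $\pi^{*}J(E)=\Ker(\pr)$. Thus $s$ is constant on the $\sigma^{(2)}\circ\kappa$-orbits, and because $\sigma^{(2)}\circ\kappa$ is a regular involution of $C^{(2)}$ (by Remark~\ref{110654_1Dec16}, $\kappa$ being everywhere defined), $s$ descends to a morphism $C^{(2)}/(\sigma^{(2)}\circ\kappa)\rightarrow{A}$.

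To identify the degree of $s$, I would count the fibre over a general $a\in{A}$. Since $\iota$ is a closed immersion, $s^{-1}(a)$ corresponds to the unordered pairs $\{q,q'\}$ with $\iota(q')=a-\iota(q)$, that is, to the intersection $\iota(C)\cap(a-\iota(C))$ inside $A$, grouped by the pairing $q\leftrightarrow{q'}$. The translate $a-\iota(C)$ has the same cohomology class as $\iota(C)$, because both translation and $(-1)^{*}$ act trivially on $H^{2}(A,\mathbf{Z})$; hence the number of intersection points equals $[\iota(C)]^{2}$. As $\iota$ gives a $(1,2)$-polarization, $[\iota(C)]^{2}=4$, and for general $a$ the four points split into two orbits $\{q,q'\}$, none fixed. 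Consequently $s$ is generically two-to-one, i.e.\ a generically finite double covering whose general fibre is a single $\sigma^{(2)}\circ\kappa$-orbit; the induced morphism $C^{(2)}/(\sigma^{(2)}\circ\kappa)\rightarrow{A}$ is then generically injective between projective surfaces, hence birational.

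The main obstacle is the invariance step. Each of $\sigma^{(2)}$ and $\kappa$ acts on $A$, through $s$, as a $(-1)$-map composed with a translation; the content of the lemma is that in the product $\sigma^{(2)}\circ\kappa$ the two inversions cancel and the residual translation is trivial in $A$. This triviality is exactly the degree-zero statement $[\mathcal{O}_{E}(\pi(q)+\pi(q'))]-\eta\in{J(E)}$, which encodes the relation $[\Omega_{C}^{1}]=\pi^{*}\eta$ defining the canonical point; once this is in hand, both the descent and the degree count are formal.
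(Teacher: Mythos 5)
Your proposal is correct in substance, and it splits the same way the paper's proof does into an invariance step and a degree step, but the two proofs diverge in the second step. The invariance argument is identical to the paper's: your identity $[q+q']-[(\sigma^{(2)}\circ\kappa)(q+q')]=\pi^{*}\bigl([\mathcal{O}_{E}(\pi(q)+\pi(q'))]-\eta\bigr)$ is exactly the relation $\mathcal{O}_{C}(q+q')\otimes\pi^{*}\eta\simeq\mathcal{O}_{C}(\sigma^{(2)}\circ\kappa(q+q'))\otimes\pi^{*}\mathcal{O}_{E}(\pi(q)+\pi(q'))$ that the paper uses to descend the morphism to the quotient. For the degree and birationality, however, the paper works pointwise with linear series: assuming $\iota(q)+\iota(q')=\iota(r)+\iota(r')$ with both points off $\Delta_{\sigma}\cup\Delta_{h}$, it splits into the cases $h^{0}(C,\mathcal{O}_{C}(q+q'+\sigma(r)))=1$ or $2$ and shows, using \cite[Lemma~(1.9)]{B}, that either $q+q'=r+r'$ or $r+r'=\sigma^{(2)}\circ\kappa(q+q')$. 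This identifies the fibers as orbits away from $\Delta_{\sigma}\cup\Delta_{h}$, so the quotient map is not merely birational but precisely the blow-down of the $(-1)$-curves coming from $\Delta_{\sigma}$ (and $\Delta_{h}$), a structural fact the paper exploits later ($A$ is the minimal model of $C^{(2)}/(\sigma^{(2)}\circ\kappa)$, and the cohomological comparisons in Section~\ref{221934_29Nov16} rest on this geometry). Your route instead computes the generic fiber numerically: $s^{-1}(a)$ is half of $\iota(C)\cap(a-\iota(C))$, the translate has the same class, and $[\iota(C)]^{2}=4$ since $\iota$ gives a $(1,2)$-polarization, so $\deg s=2$. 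This is shorter and avoids the case analysis and \cite[Lemma~(1.9)]{B} (at the cost of invoking generic transversality of translates), but it yields strictly less information about where injectivity fails and which curves are contracted.

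One step you should make explicit: you pass from ``$\deg s=2$ and $s$ is $(\sigma^{(2)}\circ\kappa)$-invariant'' to ``the general fiber is a single orbit,'' but a priori a two-point invariant fiber could consist of two \emph{fixed} points of the involution, in which case the quotient map would still be $2:1$. This cannot happen generically: if it did, the fixed locus of $\sigma^{(2)}\circ\kappa$, being closed, would contain a dense subset of $C^{(2)}$ and hence the involution would be the identity, which is false (its fixed locus is the curve $D$ of Remark~\ref{115406_2Dec16}, whose image in $A$ is a proper closed subset, so a general $a$ avoids it). With that one sentence added, your degree count and the conclusion that $C^{(2)}/(\sigma^{(2)}\circ\kappa)\rightarrow{A}$ is generically injective, hence birational, are sound.
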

\begin{proof}
 Let $\Delta_{\sigma}\subset{C^{(2)}}$ be the curve defined by
 $$
 \Delta_{\sigma}=\{q+q'\in{C^{(2)}}\mid
 q'=\sigma(q)\}.
 $$
 Then $\Delta_{\sigma}$ and $\Delta_{h}$ are contracted by the morphism
 $C^{(2)}\rightarrow{A}$.
 Since the involution $\sigma^{(2)}\circ\kappa$ does not have isolated fixed points, the quotient $C^{(2)}/(\sigma^{(2)}\circ\kappa)$ is a nonsingular surface, and the image of $\Delta_{\sigma}$ in $C^{(2)}/(\sigma^{(2)}\circ\kappa)$ is a $(-1)$-curve.
 If $\Delta_{h}\neq\emptyset$, then the image of $\Delta_{h}$ in $C^{(2)}/(\sigma^{(2)}\circ\kappa)$ is also a $(-1)$-curve disjoint from the image of $\Delta_{\sigma}$.
 Since
 $$
 \mathcal{O}_{C}(q+q')\otimes\pi^{*}\eta\simeq
 \mathcal{O}_{C}(\sigma^{(2)}\circ\kappa(q+q'))\otimes\pi^{*}\mathcal{O}_{E}(\pi(q)+\pi(q'))
 $$
 for $q+q'\in{C^{(2)}}$, the morphism $C^{(2)}\rightarrow{A}$ factors through the quotient $C^{(2)}/(\sigma^{(2)}\circ\kappa)$.
 For $q+q',r+r'\in{C^{(2)}}\setminus(\Delta_{\sigma}\cup\Delta_{h})$,
 we assume that $\iota(q)+\iota(q')=\iota(r)+\iota(r')$ in $A$.
 Then
 $$
 \mathcal{O}_{C}(q+q'-r-r')=\pi^{*}\mathcal{O}_{E}(\pi(x)-\pi(r))
 $$
 for some $x\in{C}$,
 hence we have
 $\mathcal{O}_{C}(q+q'+\sigma(r))=\mathcal{O}_{C}(x+\sigma(x)+r')$.
 If $h^{0}(C,\mathcal{O}_{C}(q+q'+\sigma(r)))=1$, then $q+q'=r+r'$, because $q+q'\notin\Delta_{\sigma}$.
 If $h^{0}(C,\mathcal{O}_{C}(q+q'+\sigma(r)))=2$, then $h^{1}(C,\Omega_{C}^{1}(-q-q'-\sigma(r))))=1$, hence there is a unique point $y\in{C}$ such that
 $q+q'+\sigma(r)+y\in|\Omega_{C}^{1}|$.
 Since $x+\sigma(x)+r'+y,\ \sigma(x)+x+\sigma(r')+\sigma(y)\in|\Omega_{C}^{1}|$,
 we have $\mathcal{O}_{C}(r'+y)\simeq\mathcal{O}_{C}(\sigma(r')+\sigma(y))$.
 If $y\neq\sigma(r')$, then $\sigma(x)=h(x)$, but it is a contradiction to
 \cite[Lemma~(1.9)]{B}.
 Hence we have $y=\sigma(r')$ and $q+q'+\sigma(r)+\sigma(r')\in|\Omega_{C}^{1}|$.
 Since $q+q'\notin\Delta_{h}$, we have $r+r'=\sigma^{(2)}\circ\kappa(q+q')$.
 Hence $C^{(2)}/(\sigma^{(2)}\circ\kappa)\rightarrow{A}$ should be the blow-down of the above $(-1)$-curves.
\end{proof}
\begin{remark}\label{115406_2Dec16}
 The fixed locus of the involution $\sigma^{(2)}\circ\kappa$ on $C^{(2)}$ is given as a component $D$ of the fiber of
 $\psi\circ\pi^{(2)}:C^{(2)}\rightarrow\Pic^{(2)}{(E)}$
 at the canonical point $\eta\in{B=\Pic^{(2)}{(E)}}$;
 $$
 (\psi\circ\pi^{(2)})^{-1}(\eta)=
 \begin{cases}
  D&(\text{if $C$ is non-hyperelliptic}),\\
  D+\Delta_{h}&(\text{if $C$ is hyperelliptic}).
 \end{cases}
 $$
 We remark that $D$ is the normalization of the ``dual'' $C^{\vee}$ of $C$, which is defined later.
\end{remark}
\begin{remark}\label{165630_3Dec16}
 When we take a ramification point $q_{i}$ of $\pi:C\rightarrow{E}$
 as the base point of the embedding
 $$
 \iota:\
 C\longrightarrow{A};\
 q\longmapsto\mathcal{O}_{C}(q-q_{i})\mod{\pi^{*}\Pic^{(0)}{(E)}},
 $$
 the involution $\sigma^{(2)}:C^{(2)}/(\sigma^{(2)}\circ\kappa)\rightarrow{C^{(2)}/(\sigma^{(2)}\circ\kappa)}$ is compatible with the involution $(-1)_{A}$ on $A$.
 Hence the quotient $C^{(2)}/\langle\sigma^{(2)},\kappa\rangle$ is birational to the Kummer surface $\Km{(A)}$ of $A$.
 In fact, the quotient $Y/\kappa$ has $12$ ordinary double points, and $\Km{(A)}$ is the minimal resolution of $Y/\kappa$;
 $$
 \begin{array}{ccccccc}
  & &Y&\longrightarrow&C^{(2)}/\sigma^{(2)}& & \\
  & &\downarrow& &\downarrow& & \\
  \Km{(A)}&\longrightarrow&Y/\kappa&\longrightarrow&
   C^{(2)}/\langle\sigma^{(2)},\kappa\rangle&\longrightarrow&A/(-1)_{A}.\\
 \end{array}
 $$
 By Remark~\ref{110654_1Dec16}, we have a fibration
 $g_{1}:\Km{(A)}\rightarrow{Y/\kappa}\rightarrow\mathbf{P}^{1}=|\mathcal{O}_{B}(2\eta)|$ of curves of genus $1$.
 If $f:Y\rightarrow{B}$ does not have a singular fiber of type $\mathstrut_{1}I_{4}$, then $g_{1}:\Km{(A)}\rightarrow\mathbf{P}^{1}$ has $6$ singular fibers.
 They contains $3$ singular fibers of type $\mathstrut_{1}I_{2}$ which come from the singular fibers of $f:Y\rightarrow{B}$, and the other $3$ singular fibers are of type $I_{0}^{*}$ which appear at $\xi\in{B}$ with $\mathcal{O}_{B}(2\xi)\simeq\mathcal{O}_{B}(2\eta)$ and $\xi\neq\eta$.
 On the other hand, the linear pencil $|\mathcal{O}_{A}(\iota(C))|$ defines a fibration
 $$
 \Phi:A^{\sim}\longrightarrow\mathbf{P}^{1}=|\mathcal{O}_{A}(\iota(C))|,
 $$
 of curves of genus $3$ by the elimination of the base point of the complete linear system
 $|\mathcal{O}_{A}(\iota(C))|$, where ${A}^{\sim}\rightarrow{A}$ is the blow-up at $4$ points $\iota(q_{1}),\dots,\iota(q_{4})$.
 If the fiber of $C_{t}=\Phi^{-1}(t)$ is not singular, then $C_{t}$ is a bielliptic curve of genus $3$, and the bielliptic involution $\sigma_{t}:C_{t}\rightarrow{C_{t}}$ is defined from the involution $(-1)_A$ by \cite[Proposition~(1.6)]{B}.
 Hence the fibration $\Phi$ factors through the quotient $A^{\sim}/(-1)_{A}\simeq{Y/\kappa}$, and it gives another fibration $g_{2}:\Km{(A)}\rightarrow\mathbf{P}^{1}=|\mathcal{O}_{A}(\iota(C))|$ of curves of genus $1$.
 Let $E_{t}$ be the fiber of $g_{2}$ at $t\in\mathbf{P}^{1}$.
 Then the Prym variety $P(C_{t}/E_{t})$ is isomorphic to $P=P(C/E)$ by \cite[Proposition~(1.10)]{B}.
\end{remark}

Let $\pi:C\rightarrow{E}$ be a bielliptic curve of genus $3$.
We denote by $W\subset\Pic^{(2)}{(C)}$ the image of natural morphism $C^{(2)}\rightarrow\Pic^{(2)}{(C)}$, and denote by $P_{\xi}\subset\Pic^{(2)}{(C)}$ the fiber of the norm homomorphism $N:\Pic^{(2)}{(C)}\rightarrow\Pic^{(2)}{(E)}$ at $\xi\in\Pic^{(2)}{(E)}$.
We set $D_{\xi}=W\cap{P_{\xi}}$.
Since the divisors $W$ and $P_{\xi}$ on $\Pic^{(2)}{(C)}$ is stable under the involution
$$
\Pic^{(2)}{(C)}\longrightarrow\Pic^{(2)}{(C)};\
[\mathcal{L}]\longmapsto[\sigma^{*}\mathcal{L}],
$$
it acts on $D_{\xi}$.
If $D_{\xi}$ is nonsingular, then the quotient morphism
$$
D_{\xi}=W\cap{P_{\xi}}\simeq(\psi\circ\pi^{(2)})^{-1}(\xi)
\longrightarrow{(\psi\circ\phi)}^{-1}(\xi)
$$
is a bielliptic curve of genus $3$, where we recall the notation in Section~\ref{101650_14Nov16};
$$
\begin{array}{ccc}
 C^{(2)}&{\longrightarrow}&C^{(2)}/\sigma^{(2)}\\
 \psi\circ\pi^{(2)}\searrow&&\swarrow{\psi\circ\phi}\\
 &\Pic^{(2)}{(E)}.&\\
\end{array}
$$
We define the dual of $C$ by $C^{\vee}=D_{\eta}$, and denote by
$\pi^{\vee}:C^{\vee}\rightarrow{E^{\vee}}$
the quotient by the involution, where $\eta\in\Pic^{(2)}{(E)}$ is the canonical point of the covering $\pi:C\rightarrow{E}$.
If $C$ is not hyperelliptic, then by Proposition~\ref{160302_14Oct16}, the dual $\pi^{\vee}:{C^{\vee}}\rightarrow{E^{\vee}}$ is a nonsingular bielliptic curve.
If $C$ is hyperelliptic, then $E^{\vee}$ is an irreducible rational curve with one node, and $C^{\vee}$ is an irreducible curve of geometric genus $2$ with one node, which is given by contracting the hyperelliptic locus $\Delta_{h}$ form
$(\psi\circ\pi^{(2)})^{-1}(\eta)=D\cup\Delta_{h}$ in Remark~\ref{115406_2Dec16}.
We remark that $P_{\xi}$ is isomorphic to the Prym variety $P=\Prym{(C/E)}\subset{J(C)}$ by the translation
$$
\Pic^{(2)}{(C)}\supset
P_{\xi}\overset{\simeq}{\longrightarrow}{P}
\subset\Pic^{(0)}{(C)};\
[\mathcal{L}]\longmapsto
[\mathcal{L}\otimes\pi^{*}\mathcal{O}_{E}(-p)],
$$
where $p\in{E}$ is a point with $\xi=[\mathcal{O}_{E}(2p)]$.
Let $D_{p}\subset{P}$ be the image of $D_{\xi}=W\cap{P_{\xi}}$ by the above translation, by abusing the notation.
We fix a point $p_{0}\in{E}$ with $\eta=[\mathcal{O}_{E}(2p_{0})]$.
Then the divisor $C^{\vee}\simeq{D_{p_{0}}}\subset{P}$ defines the $(1,2)$-polarization on the Prym variety $P$.
We recall that the embedding $\iota:C\hookrightarrow{A}$ defines the $(1,2)$-polarization on the dual abelian variety $A\simeq\Pic^{(0)}{(P)}$ of $P$.
This is the reason that we call $C^{\vee}$ the dual of $C$.
In fact, we have $(C^{\vee})^{\vee}\simeq{C}$ for any non-hyperelliptic $C$ by Lemma~\ref{095421_30Nov16}.
We remark that the dual bielliptic curve $\pi^{\vee}:C^{\vee}\rightarrow{E^{\vee}}$ also defines two fibrations $g_{i}:\Km{(P)}\rightarrow{\mathbf{P}^{1}}$ in the same way as $g_{i}:\Km{(A)}\rightarrow\mathbf{P}^{1}$ in Remark~\ref{165630_3Dec16}.
\begin{lemma}\label{103122_30Nov16}
 $D_{p}\subset{P}$ is contained in the linear system $|\mathcal{O}_{P}(D_{p_{0}})|$ for any $p\in{E}$.
\end{lemma}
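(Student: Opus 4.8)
The plan is to show that the assignment $p \mapsto [\mathcal{O}_P(D_p)]$ defines a morphism $E \to \Pic(P)$ which is constant, by proving that its difference from the value $[\mathcal{O}_P(D_{p_0})]$ is a morphism from the connected curve $E$ into the finite group $\Pic^0(P)[2]$.

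First I would assemble the translations into a family. Writing $\xi=[\mathcal{O}_E(2p)]$, the translations $T_p:P_\xi\rightarrow P$, $[\mathcal{L}]\mapsto[\mathcal{L}\otimes\pi^*\mathcal{O}_E(-p)]$, come from the single morphism $g:P\times E\rightarrow\Pic^{(2)}{(C)}$, $([\mathcal{M}],p)\mapsto[\mathcal{M}\otimes\pi^*\mathcal{O}_E(p)]$, since $p\mapsto\pi^*\mathcal{O}_E(p)$ is a morphism $E\rightarrow\Pic^{(2)}{(C)}$. Then $\mathcal{D}:=g^{*}W$ is a divisor on $P\times E$ whose restriction to the slice $P\times\{p\}$ is $D_p$, so $p\mapsto[\mathcal{O}_P(D_p)]$ is a morphism $E\rightarrow\Pic(P)$. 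As $E$ is connected, its image lies in a single algebraic equivalence class, and hence $\beta(p):=[\mathcal{O}_P(D_p-D_{p_0})]$ defines a morphism $\beta:E\rightarrow\Pic^0(P)$ with $\beta(p_0)=0$.

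The key step is to show that every $D_p$ is a symmetric divisor. The involution $[\mathcal{L}]\mapsto[\sigma^*\mathcal{L}]$ preserves both $W$ and $P_\xi$, hence preserves $D_\xi=W\cap P_\xi$. Because $P=\Image(\mathrm{id}-\sigma^*)$ satisfies $\sigma^*=-1$ on $P$, and $\sigma^*\pi^*=\pi^*$, the translation $T_p$ intertwines this involution with $(-1)_P$: explicitly $T_p([\sigma^*\mathcal{L}])=[\sigma^*(\mathcal{L}\otimes\pi^*\mathcal{O}_E(-p))]=-T_p([\mathcal{L}])$. Therefore $(-1)_P$ preserves $D_p=T_p(D_\xi)$, and so $(-1)_P^*\mathcal{O}_P(D_p)\simeq\mathcal{O}_P(D_p)$ for every $p$, and likewise for $D_{p_0}$. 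Combining this with the standard fact that $(-1)_P^{*}$ acts as inversion on $\Pic^0(P)$, symmetry of both $\mathcal{O}_P(D_p)$ and $\mathcal{O}_P(D_{p_0})$ gives $(-1)_P^*\beta(p)=\beta(p)$ while $(-1)_P^*\beta(p)=\beta(p)^{-1}$; hence $\beta(p)^{\otimes2}=\mathcal{O}_P$, so $\beta$ takes values in the finite group $\Pic^0(P)[2]$. A morphism from the connected curve $E$ into a finite set is constant, so $\beta\equiv\beta(p_0)=0$ and $\mathcal{O}_P(D_p)\simeq\mathcal{O}_P(D_{p_0})$, which is the assertion.

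I expect the main obstacle to be the symmetry step: correctly identifying $[\mathcal{L}]\mapsto[\sigma^*\mathcal{L}]$ with $(-1)_P$ under $T_p$ via $\sigma^*=-1$ on the Prym, and checking that the slices $D_\xi$ are genuinely stable so the resulting line bundle is symmetric. The family formalism and the ``connected into finite is constant'' endgame are routine once symmetry is in hand; indeed, symmetry is exactly what rules out an a priori nonzero homomorphism $E\rightarrow\Pic^0(P)=A$, which could otherwise occur when $A$ contains an elliptic curve isogenous to $E$.
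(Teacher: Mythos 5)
Your proof is correct, and it takes a genuinely different route from the paper's. Both arguments begin the same way: all $D_{p}$ are algebraically equivalent (the paper simply asserts this; you justify it by pulling $W$ back to a family on $P\times E$, which is a nice touch), so the difference class lies in $\Pic^{(0)}(P)$. They then diverge at the key step. The paper writes $\mathcal{O}_{P}(D_{p})\simeq t_{x}^{*}\mathcal{O}_{P}(D_{p_{0}})$ for some $x\in P$ via surjectivity of the polarization homomorphism, and pins down $x$ geometrically: every $D_{p}$ contains $\pi^{*}(J(E)_{2})$, the intersection number $(D_{p}.\ D_{p_{0}})=4$ forces $D_{p}\cap D_{p_{0}}=\pi^{*}(J(E)_{2})$ when $D_{p}\neq D_{p_{0}}$, and Barth's result that $\pi^{*}(J(E)_{2})$ is the base locus of $|\mathcal{O}_{P}(D_{p})|$ then forces $x\in\pi^{*}(J(E)_{2})$, which is exactly the kernel of the polarization homomorphism, whence $t_{x}^{*}\mathcal{O}_{P}(D_{p_{0}})\simeq\mathcal{O}_{P}(D_{p_{0}})$. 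You instead prove that every $\mathcal{O}_{P}(D_{p})$ is a symmetric line bundle: the norm identity $\mathcal{L}\otimes\sigma^{*}\mathcal{L}\simeq\pi^{*}N(\mathcal{L})$ shows that $T_{p}$ intertwines $[\mathcal{L}]\mapsto[\sigma^{*}\mathcal{L}]$ with $(-1)_{P}$ (your computation here is correct, since $N(\mathcal{L})=[\mathcal{O}_{E}(2p)]$ on $P_{\xi}$), so each difference class $\beta(p)$ is both fixed and inverted by $(-1)_{P}^{*}$, hence $2$-torsion, and rigidity of morphisms from a connected curve into a finite set finishes the argument. Your route avoids the citation to Barth's base-locus result and the intersection-theoretic case analysis, relying only on standard abelian-variety facts; the paper's route avoids the family and rigidity formalism and exploits the specific geometry of the $(1,2)$-polarization. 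Your closing remark is also well taken: algebraic equivalence plus connectedness alone would not suffice, because $\Pic^{(0)}(P)\simeq A$ may contain elliptic curves isogenous to $E$, so some extra input (your symmetry, or the paper's base-locus argument) is genuinely needed.
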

\begin{proof}
 It is clear that $D_{p}$ and $D_{p_{0}}$ are algebraically equivalent, hence
 $[\mathcal{O}_{P}(D_{p}-D_{p_{0}})]\in\Pic^{(0)}{(P)}$.
 Since $D_{p_{0}}$ is an ample divisor on $P$, the polarization homomorphism
 $$
 P\longrightarrow{A\simeq\Pic^{(0)}{(P)}};\
 x\longmapsto[t_{x}^{*}\mathcal{O}_{P}(D_{p_{0}})\otimes
 \mathcal{O}_{P}(-D_{p_{0}})]
 $$
 is surjective, hence there is a point $x\in{P}$ such that
 $t_{x}^{*}\mathcal{O}_{P}(D_{p_{0}})\simeq\mathcal{O}_{P}(D_{p})$.
 We remark that $D_{p}$ contains $\pi^{*}(J(E)_{2})\subset{J(C)}$ for any $p\in{E}$, where $J(E)_{2}$ denotes the set of $2$-torsion points on $J(E)\simeq\Pic^{(0)}{(E)}$.
 Since $(D_{p}.\ D_{p_{0}})=(D_{p_{0}}^{2})=4$, we have $D_{p}\cap{D_{p_{0}}}=\pi^{*}(J(E)_{2})$ or $D_{p}={D_{p_{0}}}$.
 We consider the case $D_{p}\neq{D_{p_{0}}}$.
 By \cite[(1.5)]{B}, the set $\pi^{*}(J(E)_{2})=\pi^{*}(J(E))\cap{P}$ is the base locus of the linear system $|\mathcal{O}_{P}(D_{p})|$, hence the translation point $x\in{P}$ should be contained in $\pi^{*}(J(E)_{2})$, which is the kernel of the polarization homomorphism.
 It implies
 $\mathcal{O}_{P}(D_{p_{0}})\simeq
 t_{x}^{*}\mathcal{O}_{P}(D_{p_{0}})\simeq
 \mathcal{O}_{P}(D_{p})$.
\end{proof}
\begin{remark}\label{111720_14Dec16}
 Let $\mathcal{D}=W\times_{B}E=\{D_{p}\}_{p\in{E}}$ be the algebraic family of the divisor $D_{p}\subset{P}$;
 $$
 \begin{array}{cccccc}
  &W&\longleftarrow&\mathcal{D}&\hookrightarrow&E\times{P}\\
  &\downarrow&\Box&\downarrow&\swarrow&\\
  B=&\Pic^{(2)}{(E)}&\longleftarrow&E.&&\\
 \end{array}
 $$
 Lemma~\ref{103122_30Nov16} gives the commutative diagram
 $$
 \begin{array}{ccc}
  \mathcal{D}&\longrightarrow&{P}^{\sim}\\
  \downarrow& &\downarrow\\
  E&\longrightarrow&|\mathcal{O}_{P}(D_{p_{0}})|,\\
 \end{array}
 $$
 where ${P}^{\sim}$ denotes the blow-up of $P$ at the base locus $\pi^{*}(J(E)_{2})=\pi^{*}(J(E))\cap{P}$.
 Then the morphism $E\rightarrow|\mathcal{O}_{P}(D_{p_{0}})|\simeq\mathbf{P}^{1}$
 is the double covering defined by the linear system $|\eta|$, and the fibrations on the Kummer surfaces are exchanged through the duality as the following diagram;
 $$
 \begin{array}{ccccccccccc}
  &&A
   &\longleftarrow&W&\longleftarrow&\mathcal{D}&\longrightarrow&P& & \\
  & &\text{\rotatebox{-90}{$\!\!\!\dashrightarrow$}}& &\text{\rotatebox{-90}{$\!\!\!\dashrightarrow$}}&
   &\text{\rotatebox{-90}{$\!\!\!\dashrightarrow$}}& &\text{\rotatebox{-90}{$\!\!\!\dashrightarrow$}}& & \\
  & &\Km{(A)}\qquad&\dashleftarrow&Y&\longleftarrow&Y\times_{B}{E}&
   \dashrightarrow&\qquad\Km{(P)}& & \\
  &\mathstrut^{g_{2}}{\swarrow}&\quad\downarrow{g_{1}}&
   &\downarrow& &\downarrow& &g_{2}\downarrow\quad&{\searrow}^{g_{1}}&\\
  &\mathbf{P}^{1}\qquad&\mathbf{P}^{1}&
   \underset{\Phi_{|\mathcal{O}_{B}(2\eta)|}}{\longleftarrow}&B&
   \underset{1:4}{\longleftarrow}&E&
   \underset{\Phi_{|\eta|}}{\longrightarrow}&
   \mathbf{P}^{1}&\qquad\mathbf{P}^{1}.&\\
 \end{array}
 $$
\end{remark}
\begin{remark}\label{121832_11Dec16}
 Here we give a description of special fibers of these fibrations of genus $3$.
 For simplicity, we assume that $f:Y\rightarrow{B}$ does not have a singular fiber of type $\mathstrut_{1}I_{4}$.
 As we have seen in Section~\ref{101650_14Nov16}, there are $6$ singular fibers of type $\mathstrut_{1}I_{2}$ in $f:Y\rightarrow{B}$.
 In this case, the number of special fibers of the above bielliptic fibrations of genus $3$ are summarized in the following table, where the $j$-functions is for the corresponding fibration of curves of genus $1$, which is computed in Proposition~\ref{143201_14Oct16} for $f:Y\rightarrow{B}$.\medskip\\
 \begin{tabular}{|l||c|c|c|c|}
  \hline
  Genus $3$ fibration
  &$A'=C^{(2)}/(\sigma^{(2)}\circ\kappa)\rightarrow\mathbf{P}^{1}$
      &$W\rightarrow{B}$
	  &$\mathcal{D}\rightarrow{E}$
	      &$P^{\sim}\rightarrow\mathbf{P}^{1}$\\
  \hline
  \hline
  (1)\ Dual fibers $C^{\vee}$&$1$&$1$&$4$&$4$\\
  \hline
  (2)\ Hyperelliptic fibers&&$3$&$12$&$6$\\
  (2)'\ Multiple fibers&$3$& & & \\
  \hline
  (3)\ Singular fibers
  &$3$&$6$&$24$&$12$\\
  \hline
  \hline
  Genus $1$ fibration
  &$\Km{(A)}\overset{g_{1}}{\rightarrow}\mathbf{P}^{1}$
      &$Y\rightarrow{B}$
	  &$Y\times_{B}{E}\rightarrow{E}$
	      &$\Km{(P)}\overset{g_{2}}{\rightarrow}\mathbf{P}^{1}$\\
  \hline
  Degree of $j$-function&$6$&$12$&$48$&$24$\\
  \hline
 \end{tabular}
 \medskip\\
 The double covering $\Phi_{|\mathcal{O}_{B}(2\eta)|}:B\rightarrow{\mathbf{P}^{1}}$ has $4$ ramification points.
 One of them is the canonical point $\eta$, at which the fiber is the dual bielliptic curve $C^{\vee}=D_{\eta}$.
 The other $3$ ramification points correspond to hyperelliptic fibers $D_{\xi}=W\cap{P_{\xi}}$, because the involution $\kappa$ acts on $D_{\xi}$ as the hyperelliptic involution.
 In this case, the quotient $D_{\xi}/(\sigma^{(2)}\circ\kappa)$ is a nonsingular projective curve of genus $2$, which gives a multiple fiber of $A'=C^{(2)}/(\sigma^{(2)}\circ\kappa)\rightarrow\mathbf{P}^{1}=|\mathcal{O}_{B}(2\eta)|$.
 The ramification points of $\Phi_{|\eta|}:E\rightarrow{\mathbf{P}^{1}}$ corresponds to $4$ dual fibers $C^{\vee}$ in $\mathcal{D}\rightarrow{E}$.
 The number of singular fibers and hyperelliptic fibers for the pencil $P^{\sim}\rightarrow\mathbf{P}^{1}=|\mathcal{O}_{P}(D_{p_{0}})|$ is already computed in \cite[Section~3]{BPS}.
\end{remark}
\begin{corollary}\label{104804_30Nov16}
 If $D_{\xi}=W\cap{P_{\xi}}$ is nonsingular, then
 $\Prym{(D_{\xi}/F_{\xi})}\simeq{A}$,
 where $F_{\xi}$ denotes the fiber of $f:Y\rightarrow{B}$ at $\xi\in{B}$.
\end{corollary}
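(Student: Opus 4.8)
The plan is to recognize that the bielliptic covering $D_{\xi}\rightarrow{F_{\xi}}$ is embedded in $P=\Prym{(C/E)}$ exactly as the original covering $\pi:C\rightarrow{E}$ is embedded in $A$ via $\iota$, and then to invoke Barth's results with the roles of $A$ and $P$ interchanged. First I would use the translation isomorphism $P_{\xi}\overset{\simeq}{\longrightarrow}P$, $[\mathcal{L}]\mapsto[\mathcal{L}\otimes\pi^{*}\mathcal{O}_{E}(-p)]$ with $\xi=[\mathcal{O}_{E}(2p)]$, to identify $D_{\xi}=W\cap{P_{\xi}}$ with the divisor $D_{p}\subset{P}$. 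By Lemma~\ref{103122_30Nov16}, $D_{p}$ lies in the linear system $|\mathcal{O}_{P}(D_{p_{0}})|$, which defines the $(1,2)$-polarization on $P$; hence $D_{\xi}$ is a smooth member of the genus $3$ pencil on $P$ appearing in Remark~\ref{165630_3Dec16} and Remark~\ref{111720_14Dec16}.

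Next I would pin down the bielliptic involution. On $D_{\xi}$ it is the restriction of the involution $[\mathcal{L}]\mapsto[\sigma^{*}\mathcal{L}]$ on $\Pic^{(2)}{(C)}$. Since every $x\in{P}$ is of the form $x=y-\sigma^{*}y$, one has $\sigma^{*}x=-x$, so $\sigma^{*}$ acts as $(-1)_{P}$ on the Prym variety; moreover $\sigma^{*}\pi^{*}=\pi^{*}$, so the translation above carries $[\mathcal{L}]\mapsto[\sigma^{*}\mathcal{L}]$ to $(-1)_{P}$ restricted to $D_{p}$. Thus the triple $(P,D_{\xi},F_{\xi})$ is the exact dual of $(A,\iota(C),E)$: a smooth bielliptic curve embedded as a $(1,2)$-polarizing divisor on an abelian surface, whose bielliptic involution is induced by the $(-1)$ map of the surface.

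With this dictionary established, I would run the construction of Remark~\ref{165630_3Dec16} verbatim with $P$ in place of $A$. The linear pencil $|\mathcal{O}_{P}(D_{p_{0}})|$ gives a fibration of bielliptic curves of genus $3$ whose bielliptic involutions come from $(-1)_{P}$ by \cite[Proposition~(1.6)]{B}, and by \cite[Proposition~(1.10)]{B} all of its smooth fibers $D_{\xi}\rightarrow{F_{\xi}}$ share the same Prym variety, namely $\Pic^{(0)}{(P)}$. Since $A$ is by definition the dual abelian surface $\Pic^{(0)}{(P)}$ of $P$, this gives $\Prym{(D_{\xi}/F_{\xi})}\simeq{A}$, which also realizes Pantazis' bigonal construction on the level of Prym varieties.

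The step I expect to be the main obstacle is the verification of this dictionary in the first two paragraphs: confirming that the abstractly defined curve $D_{\xi}=W\cap{P_{\xi}}\subset\Pic^{(2)}{(C)}$, together with its involution $[\mathcal{L}]\mapsto[\sigma^{*}\mathcal{L}]$ and quotient $F_{\xi}$, genuinely matches a $(1,2)$-polarizing divisor on $P$ with bielliptic involution $(-1)_{P}$, so that Barth's propositions apply with $A$ and $P$ interchanged. Once this symmetry is secured, the identification $\Prym{(D_{\xi}/F_{\xi})}\simeq\Pic^{(0)}{(P)}\simeq{A}$ is immediate.
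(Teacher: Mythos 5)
Your proposal is correct and takes essentially the same route as the paper: identify $D_{\xi}$ with $D_{p}\subset{P}$ so that the bielliptic involution becomes compatible with $(-1)_{P}$, then invoke Barth's Proposition~(1.10) on the essential uniqueness of such embeddings to conclude $P\simeq\Prym{(D_{\xi}/F_{\xi})}^{\vee}$ and hence $\Prym{(D_{\xi}/F_{\xi})}\simeq{P^{\vee}}=A$. The additional verifications you carry out (the computation $\sigma^{*}x=-x$ on $P$, and the appeal to Lemma~\ref{103122_30Nov16} placing $D_{p}$ in the polarization pencil) are details the paper leaves implicit rather than a genuinely different argument.
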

\begin{proof}
 The bielliptic involution on $D_{\xi}$ is compatible with the involution $(-1)_{P}$ on $P$ by the embedding $D_{\xi}\simeq{D_{p}}\subset{P}$.
 By \cite[Proposition~(1.10)]{B}, the embedding of $D_{\xi}$ to the abelian surface is essentially unique, and it should be the embedding to the dual
$\Prym{(D_{\xi}/F_{\xi})^{\vee}}$ of the Prym variety $\Prym{(D_{\xi}/F_{\xi})}$.
 Hence $P\simeq\Prym{(D_{\xi}/F_{\xi})^{\vee}}$ and
 $A\simeq\Prym{(D_{\xi}/F_{\xi})}$.
\end{proof}
Let $\mathcal{M}$ be the set of isomorphism class of nonsingular bielliptic curve $C\rightarrow{E}$ of genus $3$, where two bielliptic curves $C\rightarrow{E}$ and $C'\rightarrow{E'}$ is isomorphic if there is an isomorphism $C\simeq{C'}$ which commutes with the bielliptic involutions.
We denote the isomorphism class by
$[C\rightarrow{E}]\in\mathcal{M}$.
Let $\mathcal{M}_{3}$ be the moduli space of nonsingular projective curve of genus $3$.
Then the map
$\mathcal{M}\rightarrow\mathcal{M}_{3};
[C\rightarrow{E}]\mapsto[C]$
is birational to the image, and $\mathcal{M}$ is a rational variety of dimension $4$ by \cite{BC}.
Let $\mathcal{A}_{1}\simeq\mathbf{A}^{1}$ be the moduli space of elliptic curves.
Then the map
$\mathcal{M}\rightarrow\mathcal{A}_{1};
[C\rightarrow{E}]\mapsto[J(E)]$
is computed by the $j$-function
$j:\mathcal{M}\rightarrow\mathbf{P}^{1};
[C\rightarrow{E}]\mapsto{j(E)}$.
Let $\mathcal{A}_{2}^{(1,2)}$ be the moduli space of $(1,2)$-polarized abelian surface, which is a rational variety of dimension $3$ by \cite{BL2}.
The Prym map
$$
\mathcal{M}\longrightarrow\mathcal{A}_{2}^{(1,2)};\
[C\rightarrow{E}]\longmapsto\Prym{(C/E)}
$$
is dominant, and it has $1$-dimensional fiber.
Let $\mathcal{M}(A)$ be the fiber of the Prym map at $(A,[\mathcal{L}_{A}])\in\mathcal{A}_{2}^{(1,2)}$, and let $(P,[\mathcal{L}_{P}])$ be the dual of $(A,[\mathcal{L}_{A}])$, where $\mathcal{L}_{A}$ and $\mathcal{L}_{P}$ denote the invertible sheaves giving their polarizations.
\begin{lemma}\label{110908_14Dec16}
 If $\Aut{(P,0,[\mathcal{L}_{P}])}=\{\pm1_{P}\}$, then the members of the linear system $|\mathcal{L}_{P}|$ gives a covering $|\mathcal{L}_{P}|\dashrightarrow\mathcal{M}(A)$ of degree $4$.
\end{lemma}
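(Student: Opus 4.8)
The plan is to realise the asserted rational map geometrically, to show it is dominant with source and target both of dimension one, and then to compute its generic fibre as an orbit under the polarized automorphisms of $(P,[\mathcal{L}_P])$. Since $[\mathcal{L}_P]$ is a $(1,2)$-polarization we have $h^{0}(P,\mathcal{L}_P)=2$, so $|\mathcal{L}_P|\simeq\mathbf{P}^{1}$ is a pencil. For a general member $D\in|\mathcal{L}_P|$, Barth's results (\cite[Proposition~(1.6)]{B}, \cite[Proposition~(1.8)]{B}) show that $D$ is a smooth curve of genus $3$ carrying a bielliptic involution $\sigma_D$ which, after replacing $D$ by a symmetric translate, is the restriction of $(-1)_P$; write $E_D=D/\langle\sigma_D\rangle$. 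By Corollary~\ref{104804_30Nov16} applied to $D\subset{P}$ we get $\Prym{(D/E_D)}\simeq{A}$, so $D\mapsto[D\to{E_D}]$ defines a rational map $|\mathcal{L}_P|\dashrightarrow\mathcal{M}(A)$. Conversely, any $[C'\to{E'}]\in\mathcal{M}(A)$ satisfies $\Prym{(C'/E')}\simeq{A}$, so by the duality construction of Section~\ref{115200_2Dec16} (cf.\ Lemma~\ref{103122_30Nov16} and \cite[Proposition~(1.8)]{B}) it embeds in the dual surface $P$ as a member of $|\mathcal{L}_P|$ with bielliptic involution induced by $(-1)_P$. Thus the map is dominant, and as $\mathcal{M}(A)$ is $1$-dimensional while $|\mathcal{L}_P|\simeq\mathbf{P}^{1}$, it is a generically finite covering.

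To compute the degree I would fix a general $[C'\to{E'}]\in\mathcal{M}(A)$ and identify its fibre, namely the set of members $D\in|\mathcal{L}_P|$ with $(D,\sigma_D)\simeq(C',\sigma')$ as bielliptic curves. The essential input is the uniqueness of the embedding: by \cite[Proposition~(1.10)]{B} every isomorphism of bielliptic curves between two such members is induced by an automorphism of $P$ preserving the polarization class and commuting with $(-1)_P$. Under the hypothesis $\Aut(P,0,[\mathcal{L}_P])=\{\pm1_P\}$, any automorphism $\phi=t_{a}\circ\alpha$ of $P$ preserving $|\mathcal{L}_P|$ forces $\alpha^{*}[\mathcal{L}_P]=[\mathcal{L}_P]$, hence $\alpha=\pm1_P$, and then $t_{a}^{*}\mathcal{L}_P\simeq\mathcal{L}_P$, hence $a\in{K(\mathcal{L}_P)}=\Ker(\phi_{\mathcal{L}_P})\simeq(\mathbf{Z}/2)^{2}$. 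Therefore the fibre is the orbit of $[D]$ under the group $G=\{t_{x}\circ(\pm1_P)\mid{x}\in{K(\mathcal{L}_P)}\}$ acting on $|\mathcal{L}_P|$.

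Finally I would count this orbit. For the symmetric member $D$ we have $(-1)_P(D)=D$, so $-1_P$ fixes $[D]$ and every element $t_{x}\circ(\pm1_P)$ sends $D$ to $t_{x}(D)$; thus the orbit is $\{t_{x}(D)\mid{x}\in{K(\mathcal{L}_P)}\}$. As $2x=0$ for $x\in{K(\mathcal{L}_P)}$, the translation $t_{x}$ conjugates $(-1)_P$ to itself and hence gives an isomorphism $(D,\sigma_D)\simeq(t_{x}(D),\sigma_{t_{x}(D)})$ of bielliptic curves, so all these translates lie over the same point of $\mathcal{M}(A)$. The group $K(\mathcal{L}_P)\simeq(\mathbf{Z}/2)^{2}$ acts on $H^{0}(P,\mathcal{L}_P)\simeq\mathbf{C}^{2}$ through the irreducible representation of its theta group, hence faithfully on $\mathbf{P}^{1}=|\mathcal{L}_P|$ as the Klein four-group in $\mathrm{PGL}_{2}$, whose general orbits have exactly $4$ points; since a general genus-$3$ curve in $P$ is invariant under no nonzero translation, the stabilizer of $[D]$ is trivial and the orbit has exactly $4$ members, giving degree $4$.

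The hardest point to make rigorous is the identification of the fibre with a single $G$-orbit: one must argue, using \cite[Proposition~(1.10)]{B} together with the reconstruction of $P$ as the dual of $\Prym{(D/E_D)}$, that no member outside the orbit can be abstractly isomorphic to $(C',\sigma')$, and here the hypothesis $\Aut(P,0,[\mathcal{L}_P])=\{\pm1_P\}$ is precisely what excludes extra polarized automorphisms that would enlarge the fibre. The remaining verifications — smoothness and the bielliptic structure of the general member, and the faithfulness of the $K(\mathcal{L}_P)$-action — are standard consequences of Barth's analysis and of the theory of theta groups.
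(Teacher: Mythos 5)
Your proof is correct and follows essentially the same route as the paper: both invoke Barth's Proposition (1.10) together with the hypothesis $\Aut{(P,0,[\mathcal{L}_{P}])}=\{\pm1_{P}\}$ to conclude that the fibre over a general $[D\rightarrow{F}]\in\mathcal{M}(A)$ consists exactly of the translates of the symmetric member $D$ by $K(\mathcal{L}_{P})\simeq(\mathbf{Z}/2)^{2}$, hence of $4$ members. The only difference is presentational: the paper phrases these as translations by the $4$ base points of $|\mathcal{L}_{P}|$ (the base locus being precisely $K(\mathcal{L}_{P})$, cf.\ the proof of Lemma~\ref{103122_30Nov16}) and leaves the generic distinctness of the $4$ translates implicit, whereas you make the acting group $G=\{t_{x}\circ(\pm1_{P})\mid x\in{K(\mathcal{L}_{P})}\}$ explicit and justify distinctness via the faithful Klein-four action on the pencil.
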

\begin{proof}
 Let $D\rightarrow{F}$ be a bielliptic curve in $\mathcal{M}(A)$.
 By \cite[Proposition~(1.10)]{B}, the curve $D$ is embedded in
 $\Prym{(D/F)}^{\vee}\simeq{P}$ as a member of the linear system $|\mathcal{L}_{P}|$, and if $\Aut{(P,0,[\mathcal{L}_{P}])}=\{\pm1_{P}\}$, then the embedding is uniquely determined up to the translation by a base point of $|\mathcal{L}_{P}|$.
 Since $|\mathcal{L}_{P}|$ has $4$ base points, a general bielliptic curve
 $[D\rightarrow{F}]\in\mathcal{M}(A)$ gives $4$ different members in $|\mathcal{L}_{P}|$.
 Hence the covering $|\mathcal{L}_{P}|\dashrightarrow\mathcal{M}(A)$ is of degree $4$.
\end{proof}
\begin{proposition}\label{175005_11Dec16}
 The period map
 $$
 \mathcal{M}\longrightarrow\mathcal{A}_{1}\times\mathcal{A}_{2}^{(1,2)};\
 [C\rightarrow{E}]\longmapsto(J(E),\Prym{(C/E)})
 $$
 is quasi-finite of degree $6$.
\end{proposition}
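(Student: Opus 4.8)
The plan is to reduce the degree computation to the $j$-function of a single elliptic fibration on a Kummer surface. Since $\dim\mathcal{M}=4=\dim(\mathcal{A}_1\times\mathcal{A}_2^{(1,2)})$, and the Prym map $\mathcal{M}\to\mathcal{A}_2^{(1,2)}$ is dominant with $1$-dimensional fibers $\mathcal{M}(A)$, a general fiber of the period map over $(e,(A,[\mathcal{L}_A]))$ is exactly the fiber over $e\in\mathcal{A}_1$ of the restricted $j$-function $[C\rightarrow E]\mapsto[J(E)]$ on $\mathcal{M}(A)$. Hence it suffices to show that this restricted $j$-function $\mathcal{M}(A)\rightarrow\mathcal{A}_1$ has degree $6$ for general $(A,[\mathcal{L}_A])$, and that it is non-constant on every $\mathcal{M}(A)$, which yields finiteness of all fibers.

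First I would take $(A,[\mathcal{L}_A])$ general so that its dual $(P,[\mathcal{L}_P])$ satisfies $\Aut(P,0,[\mathcal{L}_P])=\{\pm1_P\}$. Then Lemma~\ref{110908_14Dec16} provides a degree-$4$ rational covering $\rho\colon|\mathcal{L}_P|\dashrightarrow\mathcal{M}(A)$, whose fibers are the $4$ translates of a fixed bielliptic curve by the base points of $|\mathcal{L}_P|$. The key step is to identify the composite $j\circ\rho\colon|\mathcal{L}_P|\rightarrow\mathcal{A}_1$ with the $j$-function of a Kummer fibration. Since $\Prym(C^\vee/E^\vee)\simeq A$ by Corollary~\ref{104804_30Nov16}, applying the construction of Remark~\ref{165630_3Dec16} to the dual bielliptic curve $C^\vee\rightarrow E^\vee$ produces the genus-$1$ fibration $g_2\colon\Km(P)\rightarrow\mathbf{P}^1=|\mathcal{L}_P|$ whose fiber over $t$ is the base curve $E^\vee_t$ of the bielliptic member $C^\vee_t\rightarrow E^\vee_t$ parametrized by $\rho$. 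Thus $j\circ\rho(t)=j(E^\vee_t)$ is precisely the $j$-function of $g_2$, which has degree $24$ by the bookkeeping of special fibers in Remark~\ref{121832_11Dec16}.

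Because the $4$ members of $|\mathcal{L}_P|$ lying in a single $\rho$-fiber are isomorphic as abstract bielliptic curves, they share the same base curve $E^\vee$, so $j\circ\rho$ is constant on the fibers of $\rho$ and therefore descends to $\mathcal{M}(A)$. Multiplicativity of degrees then gives $24=\deg(j\circ\rho)=\deg(\rho)\cdot\deg(j|_{\mathcal{M}(A)})=4\cdot\deg(j|_{\mathcal{M}(A)})$, so $\deg(j|_{\mathcal{M}(A)})=6$. Non-constancy of $j$ on each $\mathcal{M}(A)$ follows from the non-constancy of the $j$-function of $g_2$ (Proposition~\ref{143201_14Oct16} and Remark~\ref{121832_11Dec16}), so every fiber of the period map is finite and the general fiber has $6$ points, giving quasi-finiteness of degree $6$.

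I expect the main obstacle to be the identification in the key step: matching the parametrization $\rho$ of $\mathcal{M}(A)$ by $|\mathcal{L}_P|$ with the base of the fibration $g_2$ on $\Km(P)$, checking that $j\circ\rho$ genuinely descends to $\mathcal{M}(A)$, and confirming the degree $24$ of the relevant Kummer $j$-function from the data of Remark~\ref{121832_11Dec16}. Everything else is a dimension count together with the multiplicativity of degrees.
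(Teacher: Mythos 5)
Your proof is correct and follows essentially the same route as the paper's: both reduce to the restricted $j$-function on $\mathcal{M}(A)$, identify its pullback under the degree-$4$ covering $|\mathcal{L}_{P}|\dashrightarrow\mathcal{M}(A)$ of Lemma~\ref{110908_14Dec16} with the degree-$24$ $j$-function of the fibration $g_{2}:\Km{(P)}\rightarrow\mathbf{P}^{1}=|\mathcal{L}_{P}|$ from Remark~\ref{121832_11Dec16}, and divide to obtain degree $6$. The extra bookkeeping you supply (the dimension count, the descent of $j\circ\rho$ along $\rho$-fibers, and non-constancy for quasi-finiteness) is left implicit in the paper but changes nothing essential.
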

\begin{proof}
 Let $(A,[\mathcal{L}_{A}])\in\mathcal{A}_{2}^{(1,2)}$ be a $(1,2)$-polarized abelian variety which is in the image of the Prym map
 $\mathcal{M}\rightarrow\mathcal{A}_{2}^{(1,2)}$,
 and let $(P,[\mathcal{L}_{P}])$ be the dual of $(A,[\mathcal{L}_{A}])$.
 As we noted in Remark~\ref{121832_11Dec16}, by Proposition~\ref{143201_14Oct16}, the $j$-function
 $$
 j:|\mathcal{L}_{P}|\longrightarrow\mathbf{P}^{1};\
 D\subset{P}\longmapsto{j(D/(-1)_{P})}
 $$
 of the fibration $g_{2}:\Km{(P)}\rightarrow\mathbf{P}^{1}=|\mathcal{L}_{P}|$
 is of degree $24$.
 Since the map $|\mathcal{L}_{P}|\dashrightarrow\mathcal{M}(A)$ is of degree $4$ by Lemma~\ref{110908_14Dec16}, the map
 $$
 j:\mathcal{M}(A)\longrightarrow\mathbf{P}^{1};\
 [D\rightarrow{F}]\longmapsto{j(F)}
 $$
 is of degree $6$.
\end{proof}
\begin{remark}
 The Proposition~\ref{175005_11Dec16} can be proved by another way.
 Let $E\in\mathcal{A}_{1}$ be a general elliptic curve, and let $(A,[\mathcal{L}_{A}])\in\mathcal{A}_{2}^{(1,2)}$ be a general $(1,2)$-polarized abelian surface.
 Let $K(A)\subset{A}$ be the subgroup
 $$
 K(A)=\Ker{(A\longrightarrow{\Pic^{(0)}(A)};\
 x\longmapsto[t_{x}^{*}\mathcal{L}_{A}\otimes\mathcal{L}_{A}^{\vee}])}.
 $$
 For an isomorphism
 $
 \chi:
 J(E)_{2}\simeq{K(A)}
 $
 of finite groups,
 we set a finite subgroup by
 $$
 K_{\chi}=\{(x,y)\in{J(E)\times{A}}\mid{x\in{J(E)_{2},\
 y\in{K(A)},\ y=\chi(x)}}\}.
 $$
 Then
 $
 J_{\chi}=(J(E)\times{A})/K_{\chi}
 $
 is a principally polarized abelian variety of dimension $3$.
 When $J_{\chi}$ is the Jacobian variety of the nonsingular projective curve $C_{\chi}$, we have a bielliptic curve $C_{\chi}\rightarrow{E}$ and an isomorphism $A\simeq\Prym{(C_{\chi}/E)}$.
 Since we have $6$ choices of the isomorphism $\chi$, the period map is of degree $6$.
\end{remark}
\subsection{Dual families}
We give an explicit equation of the family $\{D_{\xi}\rightarrow{F_{\xi}}\}_{\xi}$ of bielliptic curves for a bielliptic curve $C\rightarrow{E}$.
We assume that $C$ is not hyperelliptic.
Then we can denote the equation of the canonical model of $C$ by
$$
C=\{[x:y:z]\in\mathbf{P}^{2}\mid
(z^{2}+S(x,y))^{2}=T(x,y)\},
$$
and the bielliptic involution $\sigma$ is given by $z\mapsto{-z}$.
The quotient $E=C/\sigma$ is given by
$$
\pi:C\longrightarrow
E=\{[x:y:w]\in\mathbf{P}(1,1,2)\mid
(w+S(x,y))^{2}=T(x,y)\};\
[x:y:z]\longmapsto[x:y:w]=[x:y:z^{2}].
$$
\begin{lemma}\label{095421_30Nov16}
 The dual bielliptic curve $C^{\vee}$ is a non-hyperelliptic curve defined by
 $$
 C^{\vee}\simeq
 \{[x:y:z]\in\mathbf{P}^{2}\mid
 (z^{2}+S(x,y))^{2}=S(x,y)^{2}-T(x,y).
 \}
 $$
\end{lemma}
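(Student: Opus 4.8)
The plan is to realize $C^\vee$ explicitly inside the canonical plane model of $C$ and then read off its equation. By Remark~\ref{115406_2Dec16}, since $C$ is non-hyperelliptic the normalization of $C^\vee=D_\eta$ is the fixed locus $D=\mathrm{Fix}(\sigma^{(2)}\circ\kappa)\subset C^{(2)}$. A divisor $q+q'$ lies in $D$ precisely when $\sigma^{(2)}(\kappa(q+q'))=q+q'$; as $\kappa(q+q')$ is the residual divisor of $q+q'$ in $|\Omega_C^1|$, this is the condition $q+q'+\sigma(q)+\sigma(q')\in|\Omega_C^1|$. Under the canonical embedding $C=\{(z^2+S)^2=T\}\subset\mathbf{P}^2$, membership in $|\Omega_C^1|$ is collinearity, so $D$ consists of the divisors $q+q'$ for which $q,q',\sigma(q),\sigma(q')$ are collinear.

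Next I would pin down the relevant lines. The involution $\sigma\colon[x:y:z]\mapsto[x:y:-z]$ has fixed locus the point $[0:0:1]$ together with the line $\{z=0\}$, and a line of $\mathbf{P}^2$ is $\sigma$-invariant iff it is $\{z=0\}$ or passes through $[0:0:1]$. Since $\{q,q',\sigma(q),\sigma(q')\}$ is $\sigma$-stable, the line carrying it is $\sigma$-invariant. The line $\{z=0\}$ meets $C$ in the four ramification points, and the resulting divisor $2q_i+2q_j=\pi^*[\mathcal{O}_E(p_i+p_j)]$ is canonical only when $C$ is hyperelliptic, by Proposition~\ref{160302_14Oct16}; hence in our case every point of $D$ is cut out by a line $\{ax+by=0\}$ through $[0:0:1]$. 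On such a line the ratio $[x:y]$ is constant, so substituting into the equation of $C$ gives $z^2=-S\pm\sqrt{T}$, the intersection $\ell\cap C$ splits into two $\sigma$-orbits, and the four members of $D$ over $\ell$ are $a+b$, $a+\sigma(b)$, $\sigma(a)+b$, $\sigma(a)+\sigma(b)$.

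I would then introduce the comparison map $\Psi\colon D\to\mathbf{P}^2$ sending $q+q'$ to $[x:y:\tfrac1{\sqrt2}(z_q+z_{q'})]$, where $(x,y)$ is the common horizontal part of $q,q'$ (taken in a shared homogeneous representative) and $z_q,z_{q'}$ are their $z$-coordinates; as $z_q+z_{q'}$ and $z_qz_{q'}$ are symmetric, $\Psi$ is a well-defined morphism, and it intertwines $\sigma^{(2)}|_D$ with $[x:y:Z]\mapsto[x:y:-Z]$. Writing $Z=\tfrac1{\sqrt2}(z_q+z_{q'})$ and using $z_q^2+z_{q'}^2=-2S$ together with $(z_qz_{q'})^2=(-S+\sqrt T)(-S-\sqrt T)=S^2-T$, one gets $Z^2+S=z_qz_{q'}$ and hence $(Z^2+S)^2=S^2-T$. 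Conversely, from a point $[x:y:Z]$ of this quartic the numbers $z_q,z_{q'}$ are recovered as the roots of $t^2-\sqrt2\,Z\,t+(Z^2+S)$, whose squares are exactly $-S\pm\sqrt T$, so $\Psi$ is birational onto $\{(Z^2+S)^2=S^2-T\}$.

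Finally I would upgrade this to the asserted isomorphism. Since $C$ is non-hyperelliptic, $C^\vee$ is a nonsingular projective curve of genus $3$, the target quartic is a smooth plane curve, and a birational morphism between smooth projective curves is an isomorphism; smoothness of the plane quartic simultaneously shows $C^\vee$ is non-hyperelliptic. I expect the only delicate points to be the reduction to lines through $[0:0:1]$—ruling out $\{z=0\}$ and the degenerate, non-generic collinear configurations where the four points fail to determine a unique line—and the bookkeeping that makes $\Psi$ well defined on $C^{(2)}$; the algebraic identity $(Z^2+S)^2=S^2-T$ is then immediate. As a consistency check, the construction replaces $(S,T)$ by $(S,S^2-T)$, an operation that squares to the identity and thereby reproduces $(C^\vee)^\vee\simeq C$.
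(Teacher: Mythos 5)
Your proof is correct, and its computational core coincides with the paper's: both reduce to the observation that a point of $C^{\vee}$ is a divisor $q+q'$ whose two points share the same $[x:y]$, then use the symmetric functions $z^{2}+z'^{2}=-2S$, $z^{2}z'^{2}=S^{2}-T$ and the map $[x:y:z]+[x:y:z']\mapsto[x:y:\tfrac{z+z'}{\sqrt{2}}]$ to land on the quartic $(Z^{2}+S)^{2}=S^{2}-T$. Where you differ is the route to that reduction: the paper gets it in one line, noting that $\Omega_{C}^{1}\simeq(\Phi\circ\pi)^{*}\mathcal{O}_{\mathbf{P}^{1}}(1)$ forces $\eta=[\Phi^{*}\mathcal{O}_{\mathbf{P}^{1}}(1)]$, so $\pi(q)+\pi(q')\in|\eta|$ means $q,q'$ lie over the same fiber of $\Phi$; you instead start from the fixed-locus description $C^{\vee}=\mathrm{Fix}(\sigma^{(2)}\circ\kappa)$ of Remark~\ref{115406_2Dec16}, interpret membership in $|\Omega_{C}^{1}|$ as a line section of the canonical model, and classify $\sigma$-invariant lines. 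This is equivalent but longer, and your treatment of the excluded case is slightly muddled: if the cutting line were $\{z=0\}$, its section is the four \emph{distinct} ramification points, which can never equal $q+q'+\sigma(q)+\sigma(q')$ (such a divisor supported on $\mathrm{Fix}(\sigma)$ has the form $2q+2q'$), so that case is vacuous for elementary reasons; the case your hyperellipticity argument genuinely handles is $q+q'=q_{i}+q_{j}$ cut by a bitangent through $[0:0:1]$, which must indeed be excluded (otherwise $\Psi$ is undefined there), and Proposition~\ref{160302_14Oct16} does exclude it. On the other hand, your proposal is more complete than the paper's proof at the end: the paper simply exhibits the map and calls it an isomorphism, whereas you verify birationality by the explicit inverse, upgrade to an isomorphism via smoothness of the image quartic (geometric genus $3$ forces a plane quartic to be smooth), and obtain non-hyperellipticity of $C^{\vee}$ from the smooth plane quartic model --- points the paper leaves implicit. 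Your closing consistency check that the construction is the involution $(S,T)\mapsto(S,S^{2}-T)$, recovering $(C^{\vee})^{\vee}\simeq C$, matches the paper's later assertion.
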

\begin{proof}
 Let
 $
 \Phi:
 E\rightarrow\mathbf{P}^{1};\
 [x:y:w]\mapsto[x:y]
 $
 be the projection.
 Then the canonical point is given by
 $\eta=[\Phi^{*}\mathcal{O}_{\mathbf{P}^{1}}(1)]\in{B=\Pic^{(2)}{(E)}}$,
 because $\Omega_{C}^{1}\simeq(\Phi\circ\pi)^{*}\mathcal{O}_{\mathbf{P}^{1}}(1)$.
 If $q+q'\in{C^{\vee}}\subset{C^{(2)}}$,
 then $\pi(q)+\pi(q')\in|\Phi^{*}\mathcal{O}_{\mathbf{P}^{1}}(1)|$,
 hence $\Phi\circ\pi(q)=\Phi\circ\pi(q')$.
 When we denote by $q=[x:y:z]$ and $q'=[x:y:z']$,
 we have
 $$
 \begin{cases}
  z^{2}+z'^{2}=-2S(x,y),\\
  z^{2}z'^{2}=S(x,y)^{2}-T(x,y),
 \end{cases}
 $$
 because $\pi(q)+\pi(q')=\Phi^{-1}([x:y])$.
 Then the isomorphism is given by
 $$
 \begin{array}{ccl}
  C^{\vee}&\longrightarrow&
  \{[x:y:z]\in\mathbf{P}^{2}\mid
  (z^{2}+S(x,y))^{2}=S(x,y)^{2}-T(x,y)\};\\
  \mathstrut[x:y:z]
   +[x:y:z']&\longmapsto&
  [x:y:\frac{z+z'}{\sqrt{2}}].
 \end{array}
 $$
\end{proof}
By a suitable change of the coordinate, we assume that
$$
\begin{cases}
 S(x,y)=s_{0}x^{2}+s_{1}xy+s_{2}y^{2},\\
 T(x,y)=x^{3}y+t_{1}x^{2}y^{2}+t_{2}xy^{3}+t_{3}y^{4}.
\end{cases}
$$
Then $E$ is isomorphic to the plane cubic curve by
$$
E\simeq
\{[x:y:z]\in\mathbf{P}^{2}\mid
y^{2}z=x^{3}+t_{1}x^{2}z+t_{2}xz^{2}+t_{3}z^{3}\};\
[x:y:w]\longmapsto
[x:y:z]=[xy:w+S(x,y):y^{2}].
$$
Let $p_{0}=[0:1:0]\in{E\subset\mathbf{P}^{2}}$ be the point on the cubic curve.
We remark that $\eta=[\mathcal{O}_{E}(2p_{0})]\in\Pic^{(2)}{(E)}$ is the canonical point of the covering $\pi:C\rightarrow{E}$.
Then for $p=[a:b:1]\in{E}$, we compute the equation of the canonical model of
$$
D_{\xi}=W\cap{P_{\xi}}\simeq\{q+q'\in{C^{(2)}}\mid\pi(q)+\pi(q')\in|\xi|\},
$$
where $\xi=[\mathcal{O}_{E}(p+p_{0})]\in\Pic^{(2)}{(E)}$.
\begin{lemma}\label{172705_10Dec16}
 $$
 D_{\xi}\simeq
 \{[x:y:z]\in\mathbf{P}^{2}\mid
 c_{0}z^{4}-2c_{1}(x,y)z^{2}+c_{2}(x,y)=0\}
 $$
 for general $\xi\in\Pic^{(2)}{(E)}$, where
 $$
 \begin{cases}
  c_{0}=-a^{3}+({s}_{1}^{2}-4{s}_{0}{s}_{2}-{t}_{1})a^{2}
  -(4{s}_{0}{s}_{2}{t}_{1}-2{s}_{0}{s}_{1}{t}_{2}+4{s}_{0}^{2}{t}_{3}-2{s}_{1}{s}_{2}+{t}_{2})a\\
  \hspace*{230pt}
  +{s}_{0}^{2}{t}_{2}^{2}-4{s}_{0}^{2}{t}_{1}{t}_{3}-2{s}_{0}{s}_{2}{t}_{2}+4{s}_{0}{s}_{1}{t}_{3}+{s}_{2}^{2}-{t}_{3},\\
  c_{1}(x,y)=(2{s}_{0}a^{2}+(2{s}_{0}{t}_{1}-{s}_{1})a+{s}_{0}{t}_{2}-{s}_{2})x^{2}+2({s}_{1}a^{2}+({s}_{0}{t}_{2}+{s}_{2})a+2{s}_{0}{t}_{3})xy\\
  \hspace*{130pt}
  +(2{s}_{2}a^{2}+(2{s}_{2}{t}_{1}-{s}_{1}{t}_{2}+4{s}_{0}{t}_{3})a-{s}_{0}{t}_{2}^{2}+4{s}_{0}{t}_{1}{t}_{3}+{s}_{2}{t}_{2}-2{s}_{1}{t}_{3})y^{2},\\
  c_{2}(x,y)=x^{4}-4ax^{3}y-2(2{t}_{1}a+{t}_{2})x^{2}y^{2}-4({t}_{2}a+2{t}_{3})xy^{3}-(4{t}_{3}a-{t}_{2}^{2}+4{t}_{1}{t}_{3})y^{4}.\\
 \end{cases}
 $$
\end{lemma}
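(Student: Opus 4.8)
The plan is to realize $D_\xi$ concretely as the family of unordered pairs $q+q'$ on $C$ whose images $\pi(q),\pi(q')$ lie in the pencil $|\xi|=|\mathcal{O}_E(p+p_0)|$, and then to extract its canonical plane model by elimination, in the same spirit as the dual-curve computation of Lemma~\ref{095421_30Nov16}.

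First I would make the linear system $|\xi|$ geometric on the plane cubic model of $E$. Here $p_0=[0:1:0]$ is the flex at infinity, and the claim is that $|\mathcal{O}_E(p+p_0)|$ is cut out residually by the lines through the conjugate point $\bar p=[a:-b:1]$: writing the affine cubic coordinates as $\mathsf{x}=x/y$ and $\mathsf{y}=(w+S(x,y))/y^2$, the function $(\mathsf{y}+b)/(\mathsf{x}-a)$ has polar divisor exactly $p+p_0$ (a pole at $p=(a,b)$, a finite value at $\bar p$, and a simple pole at $p_0$), so it realizes the degree-two map $\rho_\xi\colon E\rightarrow\mathbf{P}^1$. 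Intersecting the line of slope $\mu$ through $\bar p$ with the cubic and applying Vieta's formulas expresses the symmetric functions $\mathsf{x}_1+\mathsf{x}_2$ and $\mathsf{x}_1\mathsf{x}_2$ of the two residual points as rational functions of $\mu$. Thus $q+q'\in D_\xi$ precisely when $\pi(q)$ and $\pi(q')$ are collinear with $\bar p$.

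Next I would lift this to $C$. For $q=[x:y:z]\in C$ one has $\pi(q)=(\mathsf{x},\mathsf{y})$ with $\mathsf{x}=x/y$ and $\mathsf{y}=(z^2+S(x,y))/y^2$, which lies on the cubic precisely because $(z^2+S)^2=T$; so on the two sheets the relations $z_i^2+S(x_i,y_i)=y_i^2\mathsf{y}_i$ turn the collinearity condition into polynomial relations among $x_i,y_i,z_i$ and $\mu$. The bielliptic involution $\sigma$ acts as $(z_1,z_2)\mapsto(-z_1,-z_2)$, so on the canonical model of $D_\xi$ the invariant coordinates $[x:y]$ should parametrize the base $\mathbf{P}^1=|\xi|$ (a normalization of the slope $\mu$), while the anti-invariant coordinate $z$ is the symmetric odd combination of $z_1,z_2$, mirroring the substitution $\tilde z=(z_1+z_2)/\sqrt2$ used for $C^\vee$ in Lemma~\ref{095421_30Nov16}. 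Eliminating $\mu,x_i,y_i,z_i$ between the Vieta relations and the two lifting equations yields a single relation even in $z$, which after rescaling I would match term by term against $c_0z^4-2c_1(x,y)z^2+c_2(x,y)=0$. A useful internal check is that the discriminant $c_1(x,y)^2-c_0\,c_2(x,y)$ should be the quartic whose four roots are the slopes of the lines joining $\bar p$ to the branch points $p_1,\dots,p_4$, so that $w^2=c_1^2-c_0c_2$ recovers the fiber $F_\xi$ and $z\mapsto -z$ recovers the bielliptic cover $D_\xi\rightarrow F_\xi$. Since $\xi$ is general, Proposition~\ref{160302_14Oct16} ensures $D_\xi$ is non-hyperelliptic, so the resulting plane quartic is genuinely its canonical model.

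The main obstacle is the elimination in this final step. Unlike the dual-curve case, where both points share the coordinate $[x:y]$ and the symmetric functions of $z_1^2,z_2^2$ are read off directly from one quadratic, here $\pi(q)$ and $\pi(q')$ are distinct points linked only through collinearity with $\bar p$, so the symmetric-function bookkeeping is heavier, and the correct invariant coordinates $[x:y]$ (the precise normalization of $\mu$ placing $p_0$ and the $p_i$ in standard position) together with the correct anti-invariant $z$ must be pinned down before the coefficients collapse to the stated $c_0,c_1,c_2$. The remaining work is lengthy but routine polynomial algebra, with the branch-slope discriminant identity serving as the key consistency check.
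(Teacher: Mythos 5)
Your reduction of the pencil $|\mathcal{O}_E(p+p_0)|$ to collinearity with $\bar{p}=[a:-b:1]$, and the use of Vieta's formulas for the residual intersections of a line through $\bar{p}$ with the cubic, reproduce exactly the first half of the paper's proof. The gap is in your choice of plane coordinates, and it makes the final matching step impossible. Over a general pencil member of slope $m$ the fiber of $D_\xi\rightarrow|\xi|$ consists of the four points $q+q'$, $q+\sigma q'$, $\sigma q+q'$, $\sigma q+\sigma q'$, on which your proposed anti-invariant coordinate $z_1+z_2$ takes the values $\pm(z+z')$ and $\pm(z-z')$. Eliminating as you propose therefore yields $s^4-2A(m)s^2+B(m)=0$ with $A=z^2+z'^2$ and $B=(z^2-z'^2)^2$ up to constants, and Vieta's relations $x+x'=m^2-t_1-a$, $xx'=am^2+2bm+t_2+a(t_1+a)$ give $\deg_m A=4$ and $\deg_m B=8$ (still $3$ and $6$ when $s_0=0$). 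The target equation $c_0z^4-2c_1(x,y)z^2+c_2(x,y)=0$ requires these coefficients to have degrees $2$ and $4$ in the invariant coordinate, and the degree of a rational function on $\mathbf{P}^1$ is unchanged by M\"obius reparametrization of $\mu$ and constant rescaling of $z$; so no normalization of the kind you allow can collapse your relation to the stated one. What your coordinates produce is a singular, non-canonical plane model of $D_\xi$, not the canonical quartic.

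The missing idea is how the paper obtains the correct coordinates: $D_\xi$ is a fiber of $C^{(2)}\rightarrow B$, so its normal bundle is trivial and $\Omega^1_{D_\xi}\simeq\Omega^2_{C^{(2)}}\vert_{D_\xi}$; hence the canonical model of $D_\xi$ is its image under the canonical map of $C^{(2)}$, which is explicitly $q+q'\mapsto[x'z-xz':y'z-yz':xy'-x'y]$ (the line $\overline{qq'}$ viewed as a point of the dual plane). In the affine chart this gives $u=\frac{x'z-xz'}{z-z'}$ and $v=\frac{x-x'}{z-z'}$, and the identities $(m-s_0(x+x')-s_1)v=z+z'$ and $(z+z')v=(x+x')-2u$ measure exactly how far these are from your coordinates: $v$ equals $z+z'$ divided by a quadratic polynomial in $m$, while $u=\frac{1}{2}(x+x')-\frac{(z^2+z'^2)+2zz'}{2(m-s_0(x+x')-s_1)}$ involves $zz'$, which changes sign along the fibers of $D_\xi\rightarrow|\xi|$, so $u$ is not a function of $m$ at all. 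In particular the invariant pencil $[x:y]$ of the canonical model and the pencil map to $|\xi|$ are degree-two maps $F_\xi\rightarrow\mathbf{P}^1$ defined by different line bundles, hence with disjoint ramification points on $F_\xi$; this also invalidates your internal consistency check, since the roots of $c_1^2-c_0c_2$ are the branch values of the former map and have no reason to sit at the slopes of the lines joining $\bar{p}$ to $p_1,\dots,p_4$. Your analogy with Lemma~\ref{095421_30Nov16} is what misleads here: for $\xi=\eta$ the two points of each pair share the coordinate $[x:y]$, which then genuinely is an invariant canonical coordinate, but for general $\xi$ there is no such coincidence, and the essential content of the lemma is precisely the passage to the coordinates $u,v$ above.
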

\begin{proof}
 Since $\Omega_{C^{(2)}}^{2}\vert_{D_{\xi}}\simeq\Omega_{D_{\xi}}^{1}$, we compute the image of $D_{\xi}\subset{C^{(2)}}$ by the canonical morphism
 $$
 \Phi_{|\Omega_{C^{(2)}}^{2}|}:\
 C^{(2)}\longrightarrow\mathbf{P}^{2};\
 [x:y:z]+[x':y':z']\longmapsto
 [x'z-xz':y'z-yz':xy'-x'y].
 $$
 For $q+q'=[x:1:z]+[x':1:z']\in{D_{\xi}\subset{C^{(2)}}}$, we set
 $$
 m=\frac{z^{2}+S(x,1)-z'^2-S(x',1)}{x-x'},\quad
 u=\frac{x'z-xz'}{z-z'},\quad
 v=\frac{x-x'}{z-z'},
 $$
 where $u$ and $v$ give the coordinate of the point
 $\Phi_{|\Omega_{C^{(2)}}^{2}|}(q+q')=[u:1:v]$.
 Then these variables have relations
 $$
 \begin{cases}
  (m-s_{0}p_{1}-s_{1})v=z+z',\\
  (z+z')v=(x+x')-2u.
 \end{cases}
 $$
 Since $\pi(q)+\pi(q')\in|\xi|=|\mathcal{O}_{E}(p+p_{0})|$, we have a linear equivalence
 $$
 [x:z^{2}+S(x,1):1]+[x':z'^{2}+S(x',1):1]\sim
 [a:b:0]+[0:1:0]
 $$
 on the plane cubic curve $E\subset\mathbf{P}^{2}$,
 hence we have relations
 $$
 \begin{cases}
  z^{2}=m(x-a)-b-S(x,1),\\
  z'^{2}=m(x'-a)-b-S(x',1),\\
  x+x'=m^{2}-t_{1}-a,\\
  xx'=am^{2}+2bm+t_{2}+a(t_{1}+a).
 \end{cases}
 $$
 By eliminating the variable $x,x',z,z',m$ from these relations, we have a relation of $u$ and $v$.
 When we eliminate $b^{2}$ from this relation by $b^{2}=a^{3}+t_{1}a^{2}+t_{2}a+t_{3}$, we have the relation
 $$
 c_{0}v^{4}-2c_{1}(u,1)v^{2}+c_{2}(u,1)=0.
 $$
\end{proof}
\begin{corollary}\label{191514_10Dec16}
 The $j$-invariant of the canonical divisor $K(D_{\xi}/F_{\xi})$
 of $Y(D_{\xi}/F_{\xi})$ is
 $$
 j(K(D_{\xi}/F_{\xi}))=2^{8}\cdot
 \frac{(({t}_{1}^{2}-3{t}_{2})a^{2}+({t}_{1}{t}_{2}-9{t}_{3})a+{t}_{2}^{2}-3{t}_{1}{t}_{3})^{3}}
 {\disc{(\tau)}\cdot{\tau(a)^{2}}},
 $$
 where
 $$
 \tau(x)=T(x,1)={x^{3}+x^{2}{t}_{1}+x{t}_{2}+{t}_{3}}
 $$
 and
 $$
 \disc{(\tau)}={t}_{1}^{2}{t}_{2}^{2}-4{t}_{1}^{3}{t}_{3}-4{t}_{2}^{3}+18{t}_{1}{t}_{2}{t}_{3}-27{t}_{3}^{2}
 $$
 denotes the discriminant of the equation $\tau(x)=0$.
\end{corollary}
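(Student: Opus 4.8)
The plan is to recognize $K(D_\xi/F_\xi)$ as an explicit genus-$1$ double cover of $\mathbf{P}^1$ and then to read off its $j$-invariant from the classical invariants of the associated binary quartic, using the equation of $D_\xi$ from Lemma~\ref{172705_10Dec16}. The first point is to identify the curve whose $j$-invariant is sought. For any non-hyperelliptic bielliptic curve $D\to F$ the effective canonical divisor of $Y(D/F)$ is the fiber over the canonical point, and by the construction of $Y$ together with the definition of the dual in Remark~\ref{115406_2Dec16} and Proposition~\ref{160302_14Oct16}, this fiber is exactly the base elliptic curve of the dual bielliptic curve $D^\vee\to F^\vee$. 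Applying this to $D_\xi$, I write its equation from Lemma~\ref{172705_10Dec16} as $c_0z^4-2c_1(x,y)z^2+c_2(x,y)=0$; since $c_0$ is a nonzero scalar, dividing by $c_0$ and completing the square puts $D_\xi$ in the normal form $(z^2+S_\xi)^2=T_\xi$ with $S_\xi=-c_1/c_0$ and $T_\xi=(c_1^2-c_0c_2)/c_0^2$. Lemma~\ref{095421_30Nov16} then gives $(D_\xi)^\vee\colon(z^2+S_\xi)^2=S_\xi^2-T_\xi$, and since $S_\xi^2-T_\xi=c_2/c_0$, its base is the double cover of $\mathbf{P}^1$ branched at the four roots of $c_2$. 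As the $j$-invariant depends only on the branch points, $K(D_\xi/F_\xi)$ is the elliptic curve $W^2=c_2(x,y)$.

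Next I would apply the standard formula for the $j$-invariant of $W^2=q(x,y)$ with $q=\sum_{i}a_ix^{4-i}y^i$ a binary quartic, namely $j=2^8\,I(q)^3/\disc(q)$, where $I(q)=a_2^2-3a_1a_3+12a_0a_4$; this normalization is fixed once and for all by the base curve $E$ itself, for which $q=T$ gives $I(T)=t_1^2-3t_2$ and $\disc(T)=\disc(\tau)$, reproducing $j(E)=2^8(t_1^2-3t_2)^3/\disc(\tau)$. A direct substitution of the coefficients of $c_2$ then yields
$$
I(c_2)=16\bigl((t_1^2-3t_2)a^2+(t_1t_2-9t_3)a+t_2^2-3t_1t_3\bigr),
$$
so $I(c_2)^3$ already produces the numerator of the claimed expression, up to the factor $2^{12}$.

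The remaining and main task is the evaluation of $\disc(c_2)$, which I expect to be the only genuine obstacle. This is a degree-six expression in the coefficients of $c_2$, hence of degree six in $a$, and the assertion is the factorization $\disc(c_2)=2^{12}\,\disc(\tau)\,\tau(a)^2$; the two factors match the loci where the fiber degenerates, namely $\disc(\tau)=0$ (singular base $E$) and $\tau(a)=b^2=0$ (the point $p=[a:b:1]$ becoming a $2$-torsion point). Granting this factorization, the factor $2^{12}$ in $\disc(c_2)$ cancels the $16^3=2^{12}$ coming from $I(c_2)^3$, leaving exactly
$$
j(K(D_\xi/F_\xi))=2^8\cdot\frac{\bigl((t_1^2-3t_2)a^2+(t_1t_2-9t_3)a+t_2^2-3t_1t_3\bigr)^3}{\disc(\tau)\,\tau(a)^2}.
$$
The delicate points are the bookkeeping of the power of $2$ (so that the discriminant carries precisely $2^{12}$ and the final constant is exactly $2^8$) and checking that the repeated divisions by the scalar $c_0$ introduce no spurious factors; both are controlled by the normalization against $j(E)$ and by $c_0$ being a nonzero constant in $x,y$, so that it does not affect the branch locus.
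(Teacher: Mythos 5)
Your proposal is correct and takes essentially the same route as the paper: the paper likewise invokes Lemma~\ref{095421_30Nov16} to write the dual bielliptic curve $D_{\xi}^{\vee}\rightarrow K(D_{\xi}/F_{\xi})$ as $(z^{2}-c_{1}/c_{0})^{2}=c_{2}/c_{0}$, so that $K(D_{\xi}/F_{\xi})$ is the double cover of $\mathbf{P}^{1}$ branched along $\{c_{2}(x,y)=0\}$, and then computes $j$ directly from the quartic $c_{2}$. Your invariant-theoretic bookkeeping, namely $I(c_{2})=16\bigl((t_{1}^{2}-3t_{2})a^{2}+(t_{1}t_{2}-9t_{3})a+t_{2}^{2}-3t_{1}t_{3}\bigr)$ together with the factorization $\disc(c_{2})=2^{12}\disc(\tau)\,\tau(a)^{2}$ (which does check out), merely makes explicit the ``direct computation'' that the paper leaves implicit.
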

\begin{proof}
 By Lemma~\ref{095421_30Nov16}, the dual bielliptic curve
 $D_{\xi}^{\vee}\rightarrow{K(D_{\xi}/F_{\xi})}$ of $D_{\xi}\rightarrow{F_{\xi}}$ is defined by
 $$
 (z^{2}-\frac{c_{1}(x,y)}{c_{0}})^{2}
 =\frac{c_{2}(x,y)}{c_{0}}.
 $$
 We can directly compute the $j$-invariant of $K(D_{\xi}/F_{\xi})$ from the polynomial $c_{2}(x,y)$, because $K(D_{\xi}/F_{\xi})$ is the double covering of $\mathbf{P}^{1}$ branched along
 $\{[x:y]\in\mathbf{P}^{1}\mid{c_{2}(x,y)=0}\}$.
\end{proof}
\begin{remark}\label{153107_13Jan17}
 By Corollary~\ref{191514_10Dec16}, the $j$-function
 $$
 B=\Pic^{(2)}{(E)}\longrightarrow\mathbf{P}^{1};\
 \xi\longmapsto
 j(K(D_{\xi}/F_{\xi}))
 $$
 factors through
 $\Phi_{|\mathcal{O}_{B}(2\eta)|}:B\rightarrow
 \mathbf{P}^{1}=\mathbf{P}^{1}(a\text{-line})$.
 Then the covering
 $$
 j_{K}:\
 \mathbf{P}^{1}(a\text{-line})\longrightarrow\mathbf{P}^{1};\
 a\longmapsto
 2^{8}\cdot
 \frac{(({t}_{1}^{2}-3{t}_{2})a^{2}+({t}_{1}{t}_{2}-9{t}_{3})a+{t}_{2}^{2}-3{t}_{1}{t}_{3})^{3}}
 {\disc{(\tau)}\cdot{\tau(a)^{2}}},
 $$
 has $3$ branched points $j=0,12^{3},\infty$.
 The roots of
 $$
 ({t}_{1}^{2}-3{t}_{2})a^{2}+({t}_{1}{t}_{2}-9{t}_{3})a+{t}_{2}^{2}-3{t}_{1}{t}_{3}=0
 $$
 gives points of the ramification index $3$,
 ans the roots of
 $$
 ((2{t}_{1}^{3}-9{t}_{1}{t}_{2}+27{t}_{3})a^{3}+3({t}_{1}^{2}{t}_{2}-6{t}_{2}^{2}+9{t}_{1}{t}_{3})a^{2}-3({t}_{1}{t}_{2}^{2}-6{t}_{1}^{2}{t}_{3}+9{t}_{2}{t}_{3})a-(2{t}_{2}^{3}-9{t}_{1}{t}_{2}{t}_{3}+27{t}_{3}^{2}))
 \tau(a)=0
 $$
 gives points of the ramification index $2$.
 Hence the covering $j_{K}$ has $10$ ramification points on the $a$-line.
\end{remark}
\begin{corollary}\label{170044_12Jan17}
 The $j$-invariant of $F_{\xi}$ is
 $$
 j(F_{\xi})=2^{8}\cdot
 \frac{(({\check{t}}_{1}^{2}-3{\check{t}}_{2})a^{2}+({\check{t}}_{1}{\check{t}}_{2}-9{\check{t}}_{3})a+{\check{t}}_{2}^{2}-3{\check{t}}_{1}{\check{t}}_{3})^{3}}
 {\disc{(\check{\tau})}\cdot{\check{\tau}(a)^{2}}},
 $$
 where
 $$
 \begin{cases}
  \check{t}_{1}=-{s}_{1}^{2}+4{s}_{0}{s}_{2}+{t}_{1},\\
  \check{t}_{2}=4{s}_{0}{s}_{2}{t}_{1}-2{s}_{0}{s}_{1}{t}_{2}+4{s}_{0}^{2}{t}_{3}-2{s}_{1}{s}_{2}+{t}_{2},\\
  \check{t}_{3}=-{s}_{0}^{2}{t}_{2}^{2}+4{s}_{0}^{2}{t}_{1}{t}_{3}+2{s}_{0}{s}_{2}{t}_{2}-4{s}_{0}{s}_{1}{t}_{3}-{s}_{2}^{2}+{t}_{3}
 \end{cases}
 $$
 and
 $$
 \check{\tau}(x)=x^{3}+{\check{t}}_{1}x^{2}+{\check{t}}_{2}x+{\check{t}}_{3}.
 $$
\end{corollary}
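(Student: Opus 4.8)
The plan is to realise $F_{\xi}$ explicitly as a genus one double cover and read off its $j$-invariant. By construction $F_{\xi}$ is the quotient of the bielliptic curve $D_{\xi}$ of Lemma~\ref{172705_10Dec16} by its bielliptic involution $z\mapsto -z$. First I would pass to the quotient by setting $w=z^{2}$: the defining equation $c_{0}z^{4}-2c_{1}(x,y)z^{2}+c_{2}(x,y)=0$ becomes $c_{0}w^{2}-2c_{1}(x,y)w+c_{2}(x,y)=0$, which after multiplying by $c_{0}$ reads
$$
(c_{0}w-c_{1}(x,y))^{2}=c_{1}(x,y)^{2}-c_{0}\,c_{2}(x,y).
$$
Hence $F_{\xi}$ is the double cover of the $[x:y]$-line branched at the four zeros of the binary quartic $Q(x,y):=c_{1}(x,y)^{2}-c_{0}\,c_{2}(x,y)$, exactly as $K(D_{\xi}/F_{\xi})$ was the double cover branched at $\{c_{2}=0\}$ in the proof of Corollary~\ref{191514_10Dec16}. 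Computing $j(F_{\xi})$ is therefore the same as computing the $j$-invariant of $Q$.

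The structural input that makes the stated formula appear is the identity $c_{0}=-\check{\tau}(a)$, which one reads off directly by comparing the displayed expression for $c_{0}$ in Lemma~\ref{172705_10Dec16} with $\check{\tau}(x)=x^{3}+\check{t}_{1}x^{2}+\check{t}_{2}x+\check{t}_{3}$ and the definitions of $\check{t}_{1},\check{t}_{2},\check{t}_{3}$. This already explains the denominator: the monic branch quartic is $Q/c_{0}^{2}$, so the $j$-invariant carries the factor $c_{0}^{2}=\check{\tau}(a)^{2}$. I would then expand $Q=c_{1}^{2}-c_{0}c_{2}$ as an explicit binary quartic in $x,y$ and feed it into the classical formula expressing the $j$-invariant of a genus one double cover through the two invariants $I,J$ of its branch quartic; using $c_{0}=-\check{\tau}(a)$ to recognise the recurring factors, the result should collapse to $2^{8}\,\check{P}(a)^{3}/(\disc(\check{\tau})\,\check{\tau}(a)^{2})$, where $\check{P}(a)=(\check{t}_{1}^{2}-3\check{t}_{2})a^{2}+(\check{t}_{1}\check{t}_{2}-9\check{t}_{3})a+\check{t}_{2}^{2}-3\check{t}_{1}\check{t}_{3}$. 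Equivalently, one may show that $Q$ is $\mathrm{PGL}_{2}$-equivalent to the quartic $\check{c}_{2}(x,y)$ obtained from $c_{2}(x,y)$ by the substitution $t_{i}\mapsto\check{t}_{i}$ (with the same $a$) and invoke Corollary~\ref{191514_10Dec16} verbatim; producing the explicit coordinate change is the cleanest packaging but is in fact the crux, since equality of the two $j$-invariants is precisely this projective equivalence.

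The main obstacle is the size of this polynomial algebra: either evaluating $I,J$ on $Q$ and collapsing the outcome to the claimed closed form, or exhibiting the transformation $Q\sim\check{c}_{2}$, is a long symbolic computation. Two consistency checks guide and validate it. First, $c_{0}=-\check{\tau}(a)$ forces the factor $\check{\tau}(a)^{2}$ in the denominator. Second, the limit $a\to\infty$ corresponds to $\xi\to\eta$, where $F_{\eta}=E^{\vee}$ is the base of the dual bielliptic curve $C^{\vee}\to E^{\vee}$ of Lemma~\ref{095421_30Nov16}; the limit of the claimed formula is $2^{8}(\check{t}_{1}^{2}-3\check{t}_{2})^{3}/\disc(\check{\tau})$, which is precisely the $j$-invariant of $y^{2}=\check{\tau}(x)$. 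This identifies $\check{\tau}$ as the Weierstrass cubic of the dual base $E^{\vee}$ and corroborates the answer. It also points to a second, more conceptual route: since duality is an involution with $(C^{\vee})^{\vee}\simeq C$ and, by Remark~\ref{111720_14Dec16}, interchanges the two bases $F_{\xi}$ and $K(D_{\xi}/F_{\xi})$, the whole statement is Corollary~\ref{191514_10Dec16} transported to the dual family. Making that route rigorous requires only matching the fibration parameter on $E$ with the one on $E^{\vee}$, after which the checked formula follows without a fresh quartic computation.
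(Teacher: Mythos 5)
Your proposal follows essentially the same route as the paper's own proof: the paper likewise rewrites the equation of Lemma~\ref{172705_10Dec16} as $(z^{2}-c_{1}/c_{0})^{2}=(c_{1}^{2}-c_{0}c_{2})/c_{0}^{2}$, identifies $F_{\xi}$ as the double cover of $\mathbf{P}^{1}$ branched along $\{c_{1}(x,y)^{2}-c_{0}c_{2}(x,y)=0\}$, and concludes by a direct computation of the $j$-invariant of that branch quartic, exactly as you do (your identity $c_{0}=-\check{\tau}(a)$ is correct and is a useful guide for that computation). One small caveat: your claim that the denominator factor $\check{\tau}(a)^{2}$ is ``explained'' by rescaling the quartic by $c_{0}^{2}$ is spurious, since the $j$-invariant of a binary quartic, $2^{8}I^{3}/\disc$, is invariant under rescaling; that factor only emerges from the discriminant computation itself, but this heuristic is not load-bearing and does not affect the correctness of your argument.
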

\begin{proof}
 By Lemma~\ref{172705_10Dec16}, the curve $D_{\xi}$ is defined by
 $$
 (z^{2}-\frac{c_{1}(x,y)}{c_{0}})^{2}
 =\frac{-c_{0}c_{2}(x,y)+c_{1}(x,y)^{2}}{c_{0}^{2}},
 $$
 hence
 $F_{\xi}$ is the double covering of $\mathbf{P}^{1}$ branched along
 $\{[x:y]\in\mathbf{P}^{1}\mid{c_{1}(x,y)^{2}-c_{0}c_{2}(x,y)=0}\}$, and we can directly compute the $j$-invariant $j(F_{\xi})$.
\end{proof}
\begin{remark}\label{153122_13Jan17}
 The $j$-function in Corollary~\ref{170044_12Jan17} is already computed at Proposition~\ref{143201_14Oct16}, and
 it also factors through the $\mathbf{P}^{1}(a\text{-line})$;
 $$
 j_{F}:\
 \mathbf{P}^{1}(a\text{-line})\longrightarrow\mathbf{P}^{1};\
 a\longmapsto
 2^{8}\cdot
 \frac{(({\check{t}}_{1}^{2}-3{\check{t}}_{2})a^{2}+({\check{t}}_{1}{\check{t}}_{2}-9{\check{t}}_{3})a+{\check{t}}_{2}^{2}-3{\check{t}}_{1}{\check{t}}_{3})^{3}}
 {\disc{(\check{\tau})}\cdot{\check{\tau}(a)^{2}}}.
 $$
\end{remark}
\begin{remark}
 The following conditions are equivalent;
 \begin{enumerate}
  \item $C$ is nonsingular,
  \item $C^{\vee}$ is nonsingular,
  \item $\disc{(\tau)}\cdot\disc{(\check{\tau})}\neq0$.
 \end{enumerate}
\end{remark}

\section{Hodge structure}\label{221934_29Nov16}
In this section, We describe the relation between the Hodge structures of the elliptic surface $Y(C/E)$ and the abelian surface $A=J(C)/\pi^{*}J(E)$.\par
Let $S\rightarrow{C^{(2)}}$ be the blow-up at the $6$ points $q_{i}+q_{j}\in{C^{(2)}}$, where $q_{1},\dots,q_{4}$ denote the ramification points of $\pi:C\rightarrow{E}$.
Then we have a morphism $\rho:S\rightarrow{Y}$ in the commutative diagram
$$
\begin{array}{ccc}
 C^{(2)}&\longleftarrow&{S}\\
 \downarrow& &\quad\downarrow\rho\\
 C^{(2)}/\sigma^{(2)}&\underset{\nu}{\longleftarrow}&Y.\\
\end{array}
$$
We denote by $E_{ij}\subset{S}$ the exceptional curve over the point $q_{i}+q_{j}$.
Since the curve $\Delta_{\sigma}$ in the proof of Lemma~\ref{115601_2Dec16} does not meet the blow-up center of $S\rightarrow{C^{(2)}}$, we consider $\Delta_{\sigma}$ as a curve in $S$.
Then $\Delta_{\sigma}+\sum_{1\leq{i}<{j}\leq{4}}E_{ij}$ is the ramification divisor of the finite double covering $\rho$.
We denote by $\tilde{\Gamma}_{i}$ the proper transform of
$\{q_{i}+q\in{C^{(2)}}\mid{q\in{C}}\}$
in $S$.
Then $\rho(\tilde{\Gamma}_{i})\subset{Y}$ is the component of the ramification divisor of the double covering $\phi\circ\nu:Y\rightarrow{E^{(2)}}$.
We set a finite subset $\Pi_{Y}\subset{H^{2}(Y,\mathbf{Z})}$ by
$$
\Pi_{Y}=\{[K_{Y}],
[\rho(\Delta_{\sigma})],[\rho(\tilde{\Gamma}_{1})],\dots,[\rho(\tilde{\Gamma}_{4})],
[\rho(E_{12})],[\rho(E_{13})],[\rho(E_{14})],
[\rho(E_{34})],[\rho(E_{24})],[\rho(E_{34})]
\}.
$$
\begin{theorem}\label{104530_30Nov16}
 The Hodge structure $(H^{2}(A,\mathbf{Z}),\langle\ ,\ \rangle_{A},[\iota(C)])$ with the symmetric form and the ample class is determined by $(H^{2}(Y,\mathbf{Z}),\langle\ ,\ \rangle_{Y},\Pi_{Y})$.
 Conversely, the Hodge structure $(H^{2}(Y,\mathbf{Z}),\langle\ ,\ \rangle_{Y},\Pi_{Y})$ with the symmetric form and the finite set of classes is determined by $(H^{2}(A,\mathbf{Z}),\langle\ ,\ \rangle_{A},[\iota(C)])$.
\end{theorem}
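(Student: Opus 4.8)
The plan is to realize both Hodge structures as explicit sublattices in the cohomology of the blow-up $\beta\colon S\to C^{(2)}$, which is at the same time the double covering $\rho\colon S\to Y$, and to control the integral structure by fixing a basis of $H^{1}(C,\mathbf{Z})$ adapted to $\sigma$. First I would split $H^{1}(C,\mathbf{Q})=H^{+}\oplus H^{-}$ into the $\pm1$-eigenspaces of $\sigma^{*}$, with $H^{+}=\pi^{*}H^{1}(E,\mathbf{Q})$ of rank $2$ and $H^{-}$ of rank $4$. Through the Abel--Jacobi identification $H^{1}(C^{(2)},\mathbf{Z})=H^{1}(C,\mathbf{Z})$ and the decomposition $H^{2}(C^{(2)},\mathbf{Q})=\wedge^{2}H^{1}(C,\mathbf{Q})\oplus\mathbf{Q}\zeta$, where $\zeta=[\{x_{0}\}+C]$, the commuting involutions act on $\wedge^{2}H^{1}(C,\mathbf{Q})=\wedge^{2}H^{+}\oplus(H^{+}\otimes H^{-})\oplus\wedge^{2}H^{-}$ with $\kappa$ trivial throughout and $\sigma^{(2)}$ acting by $+,-,+$ on the three summands.

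Next I would locate the two target lattices in this common frame. Since $H^{1}(A,\mathbf{Z})=H^{1}(C,\mathbf{Z})/\pi^{*}H^{1}(E,\mathbf{Z})$, one has $H^{2}(A,\mathbf{Z})=\wedge^{2}H^{1}(A,\mathbf{Z})$ with rational span $\wedge^{2}H^{-}$, and its ample class is fixed by the duality $[\iota(C)]\leftrightarrow2[\Theta_{C}\vert_{P}]$. On the other side, every centre $q_{i}+q_{j}$ is $\sigma^{(2)}$-fixed because the $q_{i}$ are ramification points, so the exceptional classes $[E_{ij}]$ are $\sigma^{(2)}$-invariant and $H^{2}(S,\mathbf{Z})^{\sigma^{(2)}}=H^{2}(C^{(2)},\mathbf{Z})^{\sigma^{(2)}}\oplus\bigoplus_{i<j}\mathbf{Z}[E_{ij}]$; the maps $\rho^{*}$ and $\rho_{*}$ then identify this invariant part with $H^{2}(Y,\mathbf{Z})$ up to the standard branch-divisor correction. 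Rationally this yields $H^{2}(Y,\mathbf{Q})=\wedge^{2}H^{-}\oplus\bigl(\wedge^{2}H^{+}\oplus\mathbf{Q}\zeta\oplus\bigoplus_{i<j}\mathbf{Q}[E_{ij}]\bigr)$, the first summand being $H^{2}(A,\mathbf{Q})$ and the bracket the Tate part $\mathbf{Q}(-1)^{\oplus8}$.

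With this dictionary both reconstructions become explicit computations in the chosen basis. Expanding each member of $\Pi_{Y}$, the six classes $\rho(E_{ij})$ realise the exceptional lattice, while $K_{Y}=[F]$ (Proposition~\ref{161646_15Oct16}), $\rho(\Delta_{\sigma})$ and the sections $\rho(\tilde{\Gamma}_{i})$ fill out the remaining Tate directions spanned rationally by $\wedge^{2}H^{+}$ and $\zeta$. From $(H^{2}(Y,\mathbf{Z}),\Pi_{Y})$ one therefore forms $T=\langle\Pi_{Y}\rangle$, projects $H^{2}(Y,\mathbf{Z})$ onto the $A$-part $\wedge^{2}H^{-}$ along $T$, and identifies the resulting lattice with $H^{2}(A,\mathbf{Z})=\wedge^{2}H^{1}(A,\mathbf{Z})$ after adjoining the glue vectors prescribed by how the members of $\Pi_{Y}$ pair with this projection. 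The ample class is then pinned down geometrically: under $C^{(2)}\to A$ of Lemma~\ref{115601_2Dec16} the curve $\{q_{i}+q\}$ maps to the translate $t_{\iota(q_{i})}(\iota(C))$, so the $\wedge^{2}H^{-}$-component of $[\rho(\tilde{\Gamma}_{i})]$ is exactly $[\iota(C)]$. For the converse I would start from $H^{2}(A,\mathbf{Z})$, reconstruct the weight-one Hodge structure $H^{1}(A,\mathbf{Z})$ with $\wedge^{2}H^{1}(A,\mathbf{Z})=H^{2}(A,\mathbf{Z})$, use the $(1,2)$-polarization $[\iota(C)]$ to recover its kernel $K(A)=\pi^{*}J(E)_{2}$ and hence the index-two gluing $\pi^{*}H^{1}(E,\mathbf{Z})\subset H^{+}$ that rebuilds $H^{1}(C,\mathbf{Z})$ as a $\sigma$-module, and finally push this through the blow-up and double-cover formulae to recover $H^{2}(Y,\mathbf{Z})$, its form, and the classes $\Pi_{Y}$.

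The heart of the matter, and the only real obstacle, is the integral bookkeeping. Rationally the splitting $H^{2}(Y,\mathbf{Q})\cong H^{2}(A,\mathbf{Q})\oplus\mathbf{Q}(-1)^{\oplus8}$ is immediate and, because $\wedge^{2}H^{+}=H^{2}(E)$ is a constant Tate class, manifestly independent of $E$, which explains why $H^{2}(Y)$ is locally constant along the Prym fibres. Integrally, however, the sublattice $(\wedge^{2}H^{1}(C,\mathbf{Z}))\cap(\wedge^{2}H^{-})$, the quotient lattice $\wedge^{2}H^{1}(A,\mathbf{Z})$, and the $A$-part of $H^{2}(Y,\mathbf{Z})$ differ by $2$-power indices originating in $H^{+}(\mathbf{Z})\oplus H^{-}(\mathbf{Z})\subset H^{1}(C,\mathbf{Z})$ and in $\pi^{*}H^{1}(E,\mathbf{Z})\subset H^{+}(\mathbf{Z})$. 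The role of $\Pi_{Y}$ is precisely to record these $2$-torsion glue data, and the delicate step is to verify, through the explicit computation with the basis of $H^{1}(C,\mathbf{Z})$, that they pin the gluing down uniquely in both directions, neither leaving a residual $2$-power ambiguity nor over-determining the structure.
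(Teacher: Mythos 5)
Your framework coincides with the paper's own: both lattices are compared inside $H^{2}(S,\mathbf{Z})$ via the double covering $\rho\colon S\rightarrow Y$ and the morphism $\lambda\colon S\rightarrow A$ of Lemma~\ref{115601_2Dec16}, using a symplectic basis of $H^{1}(C,\mathbf{Z})$ adapted to $\sigma$, and your rational decomposition $H^{2}(Y,\mathbf{Q})\simeq H^{2}(A,\mathbf{Q})\oplus\mathbf{Q}(-1)^{\oplus 8}$ is the one the introduction already records as the easy part. The problem is that what you call ``the only real obstacle --- the integral bookkeeping'' \emph{is} the theorem: the paper's proof consists precisely of the computations you defer. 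Lemmas~\ref{123002_3Nov16}, \ref{175822_2Nov16} and \ref{120819_5Nov16} produce an explicit $\mathbf{Z}$-basis $\delta_{1},\dots,\delta_{11},2[E_{12}],2[E_{13}],2[E_{14}]$ of $\rho^{*}H^{2}(Y,\mathbf{Z})$ by a rank-and-determinant argument, and Lemmas~\ref{161526_8Dec16} and~\ref{132213_8Dec16} then establish the two determinations by exhibiting the isomorphisms on these bases and checking that the required auxiliary classes are recoverable from $\Pi_{Y}$ (via self-intersection numbers) and from $[\iota(C)]$. A proposal that stops at ``the delicate step is to verify\dots that they pin the gluing down uniquely'' has not proved either direction.

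Beyond the deferral, two steps you do commit to would fail. First, projecting $H^{2}(Y,\mathbf{Z})$ onto the $A$-part along $T=\langle\Pi_{Y}\rangle$ does not produce $H^{2}(A,\mathbf{Z})$: since $\rho^{*}[\rho(\tilde{\Gamma}_{1})]=\widehat{\delta}-[E_{12}]-[E_{13}]-[E_{14}]$, its orthogonal projection onto $\lambda^{*}H^{2}(A,\mathbf{Q})$ equals $\frac{1}{2}\lambda^{*}[\iota(C)]$ (this also corrects, by a factor of $2$, your claim that the component of $[\rho(\tilde{\Gamma}_{i})]$ ``is exactly $[\iota(C)]$''), so the projected lattice is a strict overlattice of $H^{2}(A,\mathbf{Z})$ containing half the primitive polarization class. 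This is exactly why the paper does not project: it first \emph{enlarges} $H^{2}(Y,\mathbf{Z})$ to $H_{Y}=H^{2}(Y,\mathbf{Z})+\mathbf{Z}\widetilde{\gamma}$ with $\widetilde{\gamma}=\frac{1}{2}\sum_{i}[\rho(\tilde{\Gamma}_{i})]$, takes inside $H_{Y}$ the orthogonal complement of $[\rho(E_{ij})]$, $[\rho(\Delta_{\sigma})]$, $[K_{Y}]-\widetilde{\gamma}$, and identifies the ample class as $c_{Y}=[K_{Y}+\rho(\Delta_{\sigma})]$, not as a component of a section class. Second, your converse route --- reconstruct $H^{1}(A,\mathbf{Z})$, then $K(A)$, then $H^{1}(C,\mathbf{Z})$ as a $\sigma$-module --- cannot be run from the given data: by Shioda's theorem the weight-one Hodge structure is determined by $H^{2}(A,\mathbf{Z})$ only up to replacing $A$ by its dual, and nothing in $(H^{2}(A,\mathbf{Z}),\langle\ ,\ \rangle_{A},[\iota(C)])$ determines $H^{1}(E,\mathbf{Z})$, since the Prym fibre is positive-dimensional. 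The paper's converse avoids weight-one data altogether: it is the purely lattice-theoretic gluing $H_{A}=(H^{2}(A,\mathbf{Z})\oplus H)+\mathbf{Z}\widetilde{v}$, $H'_{A}=\{x\in H_{A}\mid\langle x,\widetilde{v}\rangle_{A}\in\mathbf{Z}\}$, together with the explicit set $\Pi_{A}$, and the verification that $\lambda^{*}$ carries this to $\rho^{*}H^{2}(Y,\mathbf{Z})$ and $\Pi_{A}$ to $\Pi_{Y}$.
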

The idea of the proof of this theorem is to compare Hodge structures $H^{2}(Y,\mathbf{Z})$ and $H^{2}(A,\mathbf{Z})$ in $H^{2}(S,\mathbf{Z})$, because we have the double covering $\lambda:S\rightarrow{A}$ by Lemma~\ref{115601_2Dec16}.
In fact, we will see that $\lambda^{*}H^{2}(A,\mathbf{Z})$ is contained in the primitive closure of $\rho^{*}H^{2}(Y,\mathbf{Z})$ in $H^{2}(S,\mathbf{Z})$.
In subsection~\ref{154110_6Dec16}, we prepare the explicit description for the integral basis of $H^{2}(Y,\mathbf{Z})$.
In subsection~\ref{154259_6Dec16}, we construct the Hodge structure $(H^{2}(A,\mathbf{Z}),\langle\ ,\ \rangle_{A},[\iota(C)])$ from the Hodge structure $(H^{2}(Y,\mathbf{Z}),\langle\ ,\ \rangle_{Y},\Pi_{Y})$.
In subsection~\ref{154538_6Dec16}, we construct the Hodge structure $(H^{2}(Y,\mathbf{Z}),\langle\ ,\ \rangle_{Y},\Pi_{Y})$ from the Hodge structure $(H^{2}(A,\mathbf{Z}),\langle\ ,\ \rangle_{A},[\iota(C)])$.
Theorem~\ref{104530_30Nov16} is proved by Lemma~\ref{161526_8Dec16} and Lemma~\ref{132213_8Dec16}.
\subsection{Lattice $H^{2}(Y,\mathbf{Z})$}\label{154110_6Dec16}
Let $\lambda_{1}:H^{1}(C,\mathbf{Z}){\longrightarrow}
H^{1}(C^{(2)},\mathbf{Z})$
and
$\lambda_{2}:\bigwedge^{2}H^{1}(C,\mathbf{Z}){\longrightarrow}
H^{2}(C^{(2)},\mathbf{Z})$
be homomorphisms defined by
$$
\epsilon^{*}\lambda_{1}(\gamma)
=\mathrm{pr}_{1}^{*}\gamma+\mathrm{pr}_{2}^{*}\gamma,\quad
\epsilon^{*}\lambda_{2}(\gamma\wedge\gamma')
=\mathrm{pr}_{1}^{*}\gamma\cup\mathrm{pr}_{2}^{*}\gamma'
+\mathrm{pr}_{2}^{*}\gamma\cup\mathrm{pr}_{1}^{*}\gamma'
$$
for $\gamma,\gamma'\in{H^{1}(C,\mathbf{Z})}$, where
$\epsilon:C\times{C}\rightarrow{C^{(2)}}$ denotes the natural double covering, and
$\mathrm{pr}_{i}:C\times{C}\rightarrow{C}$ denotes the $i$-th projection.
Then $\lambda_{1}$ is an isomorphism, and $\lambda_{2}$ is injective.
We denote by $\widehat{\delta}\in{H^{2}(C^{(2)},\mathbf{Z})}$ the class of the divisor
$
\{q+q'\in{C^{(2)}}\mid{q'\in{C}}\},
$
which does not depend on $q\in{C}$.
Then $H^{2}(C^{(2)},\mathbf{Z})$ is generated by $\widehat{\delta}$ and
$\lambda_{2}(\bigwedge^{2}H^{1}(C,\mathbf{Z}))$ by \cite{Ma}.
We remark that
$$
\lambda_{1}(\gamma)\cup\lambda_{1}(\gamma')
=\lambda_{2}(\gamma\wedge\gamma')+\langle\gamma,\gamma'\rangle_{C}\widehat{\delta}
\in{H^{2}(C^{(2)},\mathbf{Z})}
$$
for $\gamma,\gamma'\in{H^{1}(C,\mathbf{Z})}$, where
$\langle\ ,\ \rangle_{C}$ denotes the alternating form on $H^{1}(C,\mathbf{Z})$.
\begin{lemma}\label{123412_3Nov16}
 $$
 \langle\lambda_{2}(\gamma_{1}\wedge\gamma'_{1}),
 \lambda_{2}(\gamma_{2}\wedge\gamma'_{2})\rangle_{C^{(2)}}
 =-\det{
 \left(
 \begin{array}{cc}
  \langle\gamma_{1},\gamma_{2}\rangle_{C}&\langle\gamma_{1},\gamma'_{2}\rangle_{C}\\
  \langle\gamma'_{1},\gamma_{2}\rangle_{C}&\langle\gamma'_{1},\gamma'_{2}\rangle_{C}\\
 \end{array}
 \right)}
 $$
 for
 $\gamma_{1},\gamma'_{1},\gamma_{2},\gamma'_{2}
 \in{H^{1}(C,\mathbf{Z})}$.
\end{lemma}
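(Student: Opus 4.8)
The plan is to pull everything back along the natural double covering $\epsilon\colon C\times C\to C^{(2)}$, where the pairing becomes a Künneth computation on a product of curves. Since $\epsilon$ has degree $2$, for any classes $\alpha,\beta\in H^{2}(C^{(2)},\mathbf{Q})$ one has $\int_{C\times C}\epsilon^{*}\alpha\cup\epsilon^{*}\beta=2\int_{C^{(2)}}\alpha\cup\beta$, because $\int_{C\times C}\epsilon^{*}u=2\int_{C^{(2)}}u$ for any top-degree class $u$ and $\epsilon^{*}$ is a ring homomorphism. Hence
$$\langle\lambda_{2}(\gamma_{1}\wedge\gamma'_{1}),\lambda_{2}(\gamma_{2}\wedge\gamma'_{2})\rangle_{C^{(2)}}=\frac{1}{2}\int_{C\times C}\epsilon^{*}\lambda_{2}(\gamma_{1}\wedge\gamma'_{1})\cup\epsilon^{*}\lambda_{2}(\gamma_{2}\wedge\gamma'_{2}).$$
Both sides of the asserted identity are integers, so it suffices to verify it rationally, and this reduction is harmless.

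Next I would substitute the defining formula $\epsilon^{*}\lambda_{2}(\gamma\wedge\gamma')=\mathrm{pr}_{1}^{*}\gamma\cup\mathrm{pr}_{2}^{*}\gamma'+\mathrm{pr}_{2}^{*}\gamma\cup\mathrm{pr}_{1}^{*}\gamma'$ into each factor and expand the cup product into four terms. Writing $\gamma^{(i)}=\mathrm{pr}_{i}^{*}\gamma$, every term is a product of four degree-one classes, two of them pulled back from each factor of $C\times C$; by the Künneth/projection formula $\int_{C\times C}\mathrm{pr}_{1}^{*}a\cup\mathrm{pr}_{2}^{*}b=\bigl(\int_{C}a\bigr)\bigl(\int_{C}b\bigr)$, so after regrouping the first-factor and second-factor classes together each term evaluates to a product of two pairings $\langle\ ,\ \rangle_{C}$. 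For instance the term $\gamma_{1}^{(1)}\cup(\gamma'_{1})^{(2)}\cup\gamma_{2}^{(1)}\cup(\gamma'_{2})^{(2)}$ rearranges, using one transposition of odd-degree classes, to $-\mathrm{pr}_{1}^{*}(\gamma_{1}\cup\gamma_{2})\cup\mathrm{pr}_{2}^{*}(\gamma'_{1}\cup\gamma'_{2})$, contributing $-\langle\gamma_{1},\gamma_{2}\rangle_{C}\langle\gamma'_{1},\gamma'_{2}\rangle_{C}$.

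Carrying out all four terms, the two ``diagonal'' terms each give $-\langle\gamma_{1},\gamma_{2}\rangle_{C}\langle\gamma'_{1},\gamma'_{2}\rangle_{C}$ and the two ``cross'' terms each give $+\langle\gamma_{1},\gamma'_{2}\rangle_{C}\langle\gamma'_{1},\gamma_{2}\rangle_{C}$, so their sum is $-2\langle\gamma_{1},\gamma_{2}\rangle_{C}\langle\gamma'_{1},\gamma'_{2}\rangle_{C}+2\langle\gamma_{1},\gamma'_{2}\rangle_{C}\langle\gamma'_{1},\gamma_{2}\rangle_{C}$; dividing by $2$ yields exactly $-\det$ of the stated $2\times2$ matrix. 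The only delicate point is the sign bookkeeping arising from the graded-commutativity of cup products of the odd-degree classes $\gamma^{(i)}$: one must track the parity of the permutation needed to segregate the first- and second-factor classes in each of the four terms, and check that the signs combine so that the two diagonal and the two cross contributions coincide in pairs. I expect this sign tracking, rather than any conceptual difficulty, to be the main thing to get right.
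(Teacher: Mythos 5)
Your proposal is correct and takes essentially the same route as the paper: the paper's proof is precisely the one-line reduction via $\langle\epsilon^{*}x,\epsilon^{*}y\rangle_{C\times C}=2\langle x,y\rangle_{C^{(2)}}$, with the Künneth expansion and sign bookkeeping you spell out left implicit. Your expansion of the four terms and the resulting signs (two diagonal terms giving $-\langle\gamma_{1},\gamma_{2}\rangle_{C}\langle\gamma'_{1},\gamma'_{2}\rangle_{C}$ each, two cross terms giving $+\langle\gamma_{1},\gamma'_{2}\rangle_{C}\langle\gamma'_{1},\gamma_{2}\rangle_{C}$ each) check out exactly.
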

\begin{proof}
 This follows from
 $\langle\epsilon^{*}x,\epsilon^{*}y\rangle_{C\times{C}}
 =2\langle{x},y\rangle_{C^{(2)}}$
 for $x,y\in{H^{2}(C^{(2)},\mathbf{Z})}$.
\end{proof}
We fix a symplectic basis
$
\alpha_{1},\dots,\alpha_{3},\beta_{1},\dots,\beta_{3}\in
H^{1}(C,\mathbf{Z})
$
which satisfies
$$
\sigma^{*}\alpha_{1}=-\alpha_{1},\
\sigma^{*}\alpha_{2}=\alpha_{3},\
\sigma^{*}\beta_{1}=-\beta_{1},\
\sigma^{*}\beta_{2}=\beta_{3}.
$$
We set elements in $H^{2}(C^{(2)},\mathbf{Z})$ by
$$
\begin{cases}
 \delta_{0}=\lambda_{2}(\alpha_{1},\beta_{1}),\\
 \delta_{1}=\lambda_{2}(\alpha_{2},\beta_{2})+\lambda_{2}(\alpha_{3},\beta_{3}),\\
 \delta_{2}=\lambda_{2}(\alpha_{2},\beta_{3})+\lambda_{2}(\alpha_{3},\beta_{2}),
\end{cases}
\qquad
\begin{cases}
 \delta_{3}=\lambda_{2}(\alpha_{1},\alpha_{2}-\alpha_{3}),\\
 \delta_{4}=\lambda_{2}(\beta_{1},\beta_{2}-\beta_{3}),\\
 \delta_{5}=\lambda_{2}(\alpha_{1},\beta_{2}-\beta_{3}),\\
 \delta_{6}=\lambda_{2}(\alpha_{2}-\alpha_{3},\beta_{1}).
\end{cases}
$$
\begin{lemma}\label{123002_3Nov16}
 $\widehat{\delta},\delta_{0},\dots,\delta_{6}$
 form a $\mathbf{Z}$-basis of the invariant part $H^{2}(C^{(2)},\mathbf{Z})^{\sigma}$ of the $\sigma^{*}$-action, and the intersection matrix is
 $$
 \langle
 \left(
 \begin{array}{c}
  \widehat{\delta}\\
  \delta_{0}\\
  \vdots\\
  \delta_{6}\\
 \end{array}
 \right),
 \left(
 \begin{array}{cccc}
  \widehat{\delta}&\delta_{0}&\cdots&\delta_{6}\\
 \end{array}
 \right)
 \rangle_{C^{(2)}}={\scriptscriptstyle
 \left(
 \begin{array}{cccccccc}
  1&0&0&0&0&0&0&0\\
  0&-1&0&0&0&0&0&0\\
  0&0&-2&0&0&0&0&0\\
  0&0&0&-2&0&0&0&0\\
  0&0&0&0&0&-2&0&0\\
  0&0&0&0&-2&0&0&0\\
  0&0&0&0&0&0&0&-2\\
  0&0&0&0&0&0&-2&0\\
 \end{array}
 \right)}.
 $$
\end{lemma}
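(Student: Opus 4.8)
The plan is to prove this by explicit computation using the intersection form on $H^2(C^{(2)},\mathbf{Z})$, which is determined by Lemma~\ref{123412_3Nov16} together with the class $\widehat{\delta}$. The statement has two parts: first, that the eight classes $\widehat{\delta},\delta_0,\dots,\delta_6$ form a $\mathbf{Z}$-basis of the invariant part $H^2(C^{(2)},\mathbf{Z})^{\sigma}$; second, that their intersection matrix is the displayed one. I would dispatch the intersection-matrix part first, since it is purely mechanical, and then use rank/discriminant information from that matrix to help establish the basis claim.

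Let me look at the problem. Given the symplectic basis with $\sigma^*\alpha_1=-\alpha_1$, $\sigma^*\alpha_2=\alpha_3$, etc., we know the action of $\sigma^*$ on $H^1(C,\mathbf{Z})$ completely. The only nonzero symplectic pairings among the basis are $\langle\alpha_i,\beta_i\rangle_C=1$ (and their negatives). Let me verify the matrix entries. For the diagonal: $\langle\delta_0,\delta_0\rangle_{C^{(2)}} = \langle\lambda_2(\alpha_1\wedge\beta_1),\lambda_2(\alpha_1\wedge\beta_1)\rangle$, which by Lemma~\ref{123412_3Nov16} equals $-\det\begin{pmatrix}\langle\alpha_1,\alpha_1\rangle&\langle\alpha_1,\beta_1\rangle\\\langle\beta_1,\alpha_1\rangle&\langle\beta_1,\beta_1\rangle\end{pmatrix} = -\det\begin{pmatrix}0&1\\-1&0\end{pmatrix}=-1$. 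Good, that matches. For $\delta_1$: this is a sum of two $\lambda_2$ terms, so I expand bilinearly; the cross-terms $\langle\lambda_2(\alpha_2\wedge\beta_2),\lambda_2(\alpha_3\wedge\beta_3)\rangle$ vanish because $\alpha_2,\beta_2$ pair trivially with $\alpha_3,\beta_3$, and each self-term gives $-1$, summing to $-2$. The off-diagonal $\langle\delta_1,\delta_2\rangle$: the relevant determinants involve $\langle\alpha_2,\beta_2\rangle$ against $\langle\alpha_2,\beta_3\rangle$-type terms, and these mix the indices $2,3$ in a way that should cancel — I would tabulate each of the $\binom{8}{2}$ pairings, but the only subtle ones are the $\delta_3$–$\delta_4$ and $\delta_5$–$\delta_6$ blocks, which give the off-diagonal $-2$ entries in the antidiagonal $2\times2$ blocks. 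The class $\widehat\delta$ pairs trivially with every $\delta_i$ (since $\lambda_2$ lands in the primitive part orthogonal to $\widehat\delta$) and has self-intersection $1$; I would justify this from the self-intersection of the diagonal-type divisor.

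For the basis claim, the strategy is to show (i) these eight classes lie in $H^2(C^{(2)},\mathbf{Z})^{\sigma}$, (ii) they are linearly independent, and (iii) they span the invariant lattice over $\mathbf{Z}$. Part (i) follows by checking $\sigma^*$-invariance of each: since $\sigma^*$ swaps $\alpha_2\leftrightarrow\alpha_3$ and $\beta_2\leftrightarrow\beta_3$ and negates $\alpha_1,\beta_1$, each $\delta_i$ is manifestly fixed (e.g. $\sigma^*\delta_3=\lambda_2(-\alpha_1,\alpha_3-\alpha_2)=\lambda_2(\alpha_1,\alpha_2-\alpha_3)=\delta_3$). Part (ii) is immediate once I have the intersection matrix, since its determinant is a nonzero power of $2$ (here $\pm 2^6$), so the classes are independent over $\mathbf{Q}$. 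The real work is part (iii): proving these generate the full invariant lattice and not merely a finite-index sublattice. The main obstacle will be this integrality/saturation step. Here I would invoke the structure theorem of Macdonald~\cite{Ma}: $H^2(C^{(2)},\mathbf{Z})$ is generated by $\widehat\delta$ and $\lambda_2(\bigwedge^2 H^1(C,\mathbf{Z}))$, and since $\lambda_2$ is injective with the invariant part $(\bigwedge^2 H^1(C,\mathbf{Z}))^{\sigma}$ being a rank-$7$ lattice, I must exhibit my $\delta_0,\dots,\delta_6$ as an explicit $\mathbf{Z}$-basis of $(\bigwedge^2 H^1(C,\mathbf{Z}))^{\sigma}$. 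Concretely, I would compute the $\sigma^*$-action on the $\binom{6}{2}=15$-dimensional lattice $\bigwedge^2 H^1(C,\mathbf{Z})$ with its standard wedge basis, decompose into $\pm1$ eigenlattices, and verify by a change-of-basis matrix with determinant $\pm1$ that $\{\delta_0,\dots,\delta_6\}$ is a $\mathbf{Z}$-basis of the $+1$-eigenlattice. This reduces saturation to a unimodularity check on an explicit integer matrix, which is the crux and where I would be most careful.
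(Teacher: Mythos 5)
Your proposal is correct and takes essentially the same route as the paper: the paper's entire proof is the one\mbox{-}line remark that the lemma ``follows from a computation by Lemma~\ref{123412_3Nov16}'', and what you spell out --- the pairings of the $\delta_{i}$ via Lemma~\ref{123412_3Nov16}, the orthogonality $\langle\widehat{\delta},\lambda_{2}(\cdot)\rangle=0$ with $\widehat{\delta}^{2}=1$, and the saturation step using Macdonald's generation result \cite{Ma} together with the signed-permutation action of $\sigma^{*}$ on the wedge basis of $\bigwedge^{2}H^{1}(C,\mathbf{Z})$ --- is exactly the computation that proof leaves implicit. Your determinant check ($-2^{6}$) and the identification of the invariant eigenlattice are both accurate, so no gap remains.
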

\begin{proof}
 This follows from a computation by Lemma~\ref{123412_3Nov16}.
\end{proof}
\begin{lemma}\label{120929_3Nov16}
 The classes of divisors $\Delta_{\sigma}$ and $K_{C^{(2)}}$ on $C^{(2)}$ are
 $[\Delta_{\sigma}]=\widehat{\delta}+\delta_{0}-\delta_{2}$ and
 $[K_{C^{(2)}}]=3\widehat{\delta}+\delta_{0}+\delta_{1}$
 in
 $H^{2}(C^{(2)},\mathbf{Z})$.
\end{lemma}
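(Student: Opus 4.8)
The plan is to verify both identities after pulling back along the natural double covering $\epsilon\colon C\times C\to C^{(2)}$, which is injective on $H^{2}(\,\cdot\,,\mathbf{Z})$ modulo torsion since $\langle\epsilon^{*}x,\epsilon^{*}y\rangle_{C\times C}=2\langle x,y\rangle_{C^{(2)}}$ (the relation already used in Lemma~\ref{123412_3Nov16}). Thus it suffices to match the $\epsilon^{*}$-images of both sides in $H^{2}(C\times C,\mathbf{Z})$. I first record the two building blocks. The preimage of the divisor $\{q+q'\mid q'\in C\}$ representing $\widehat{\delta}$ is $(\{q\}\times C)\cup(C\times\{q\})$, so $\epsilon^{*}\widehat{\delta}=\mathrm{pr}_{1}^{*}\omega+\mathrm{pr}_{2}^{*}\omega$, where $\omega\in H^{2}(C,\mathbf{Z})$ is the point class; and directly from the definition of $\lambda_{2}$ together with anticommutativity, $\epsilon^{*}\lambda_{2}(\gamma\wedge\gamma')=\mathrm{pr}_{1}^{*}\gamma\cup\mathrm{pr}_{2}^{*}\gamma'-\mathrm{pr}_{1}^{*}\gamma'\cup\mathrm{pr}_{2}^{*}\gamma$, which makes each $\epsilon^{*}\delta_{k}$ an explicit sum of terms $\mathrm{pr}_{1}^{*}\theta\cup\mathrm{pr}_{2}^{*}\theta'$.

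The computational engine is the K\"unneth class of a graph. For an automorphism $f\colon C\to C$ set $\Gamma_{f}=\{(q,f(q))\}\subset C\times C$ and write $[\Gamma_{f}]=a\,\mathrm{pr}_{1}^{*}\omega+b\,\mathrm{pr}_{2}^{*}\omega+\Xi_{f}$ with $\Xi_{f}\in H^{1}\otimes H^{1}$. Intersecting with $\mathrm{pr}_{1}^{*}\omega$ and $\mathrm{pr}_{2}^{*}\omega$ gives $a=b=1$ since $\deg f=1$, while intersecting with $\mathrm{pr}_{1}^{*}\gamma\cup\mathrm{pr}_{2}^{*}\gamma'$ and using $\int_{\Gamma_{f}}\mathrm{pr}_{1}^{*}\gamma\cup\mathrm{pr}_{2}^{*}\gamma'=\langle\gamma,f^{*}\gamma'\rangle_{C}$ together with the pairing rule $(\mathrm{pr}_{1}^{*}\theta_{1}\cup\mathrm{pr}_{2}^{*}\theta_{2})\cdot(\mathrm{pr}_{1}^{*}\theta_{3}\cup\mathrm{pr}_{2}^{*}\theta_{4})=-\langle\theta_{1},\theta_{3}\rangle_{C}\langle\theta_{2},\theta_{4}\rangle_{C}$ identifies $\Xi_{f}=-\sum_{i=1}^{3}\bigl(\mathrm{pr}_{1}^{*}\alpha_{i}\cup\mathrm{pr}_{2}^{*}f^{*}\beta_{i}-\mathrm{pr}_{1}^{*}\beta_{i}\cup\mathrm{pr}_{2}^{*}f^{*}\alpha_{i}\bigr)$. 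The verification that this $\Xi_{f}$ reproduces $\langle\gamma,f^{*}\gamma'\rangle_{C}$ is the standard symplectic reproducing identity, using that $\sigma^{*}$ is an isometry.

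For $\Delta_{\sigma}$ I note that $\Delta_{\sigma}$ is not contained in the branch locus $\{2q\}$ of $\epsilon$, and that $\epsilon^{-1}(\Delta_{\sigma})=\Gamma_{\sigma}$ as a reduced divisor mapping $2:1$, because the swap involution fixes $\Gamma_{\sigma}$ (as $\sigma$ is an involution, $(\sigma q,q)=(x,\sigma x)$ with $x=\sigma q$); hence $\epsilon^{*}[\Delta_{\sigma}]=[\Gamma_{\sigma}]$. Substituting $\sigma^{*}\alpha_{1}=-\alpha_{1}$, $\sigma^{*}\alpha_{2}=\alpha_{3}$, $\sigma^{*}\alpha_{3}=\alpha_{2}$ and the analogous relations for $\beta$ into $\Xi_{\sigma}$, the $i=1$ contribution collapses to $\epsilon^{*}\delta_{0}$ and the $i=2,3$ contributions collapse to $-\epsilon^{*}\delta_{2}$, so that $[\Gamma_{\sigma}]=\epsilon^{*}(\widehat{\delta}+\delta_{0}-\delta_{2})$, which yields $[\Delta_{\sigma}]=\widehat{\delta}+\delta_{0}-\delta_{2}$.

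For $K_{C^{(2)}}$ I use the ramification formula for the double cover $\epsilon$ branched along the image of the diagonal $\Delta_{C}$, namely $\epsilon^{*}K_{C^{(2)}}=\mathrm{pr}_{1}^{*}K_{C}+\mathrm{pr}_{2}^{*}K_{C}-[\Delta_{C}]$. Since $\deg K_{C}=4$ and $[\Delta_{C}]=[\Gamma_{\mathrm{id}}]=\mathrm{pr}_{1}^{*}\omega+\mathrm{pr}_{2}^{*}\omega+\Xi_{\mathrm{id}}$ with $\Xi_{\mathrm{id}}=-\epsilon^{*}(\delta_{0}+\delta_{1})$, this gives $\epsilon^{*}K_{C^{(2)}}=3\,\mathrm{pr}_{1}^{*}\omega+3\,\mathrm{pr}_{2}^{*}\omega+\epsilon^{*}(\delta_{0}+\delta_{1})=\epsilon^{*}(3\widehat{\delta}+\delta_{0}+\delta_{1})$, hence $[K_{C^{(2)}}]=3\widehat{\delta}+\delta_{0}+\delta_{1}$. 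The only delicate points are bookkeeping: the anticommutativity signs in the pairing rule, the multiplicity in $\epsilon^{*}[\Delta_{\sigma}]=[\Gamma_{\sigma}]$, and the sign of the ramification term; I expect the sign tracking in the graph-class computation to be the main place where care is needed.
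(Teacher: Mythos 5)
Your proposal is correct and follows essentially the same route as the paper's proof: pull back along $\epsilon\colon C\times C\to C^{(2)}$, identify $\epsilon^{*}[\Delta_{\sigma}]$ with the class of the graph of $\sigma$, expand that class and the diagonal class in the K\"unneth basis, and compute $[K_{C^{(2)}}]$ from $\epsilon^{*}K_{C^{(2)}}=K_{C\times C}-[\Delta_{C}]$. The only difference is presentational: the paper simply quotes the graph and diagonal classes as known, whereas you rederive them by intersection-theoretic bookkeeping (which is fine; just note that for a general automorphism $f$ the middle K\"unneth term involves $f_{*}=(f^{*})^{-1}$ rather than $f^{*}$ — harmless here since $\sigma$ and $\mathrm{id}$ are involutions).
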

\begin{proof}
 Since the class of the graph
 $\{(p,p')\in{C\times{C}}\mid{p'=\sigma(p)}\}$
 is
 $$
 \epsilon^{*}\widehat{\delta}
 +\pr_{1}^{*}\alpha_{1}\cup\pr_{2}^{*}\beta_{1}
 +\pr_{2}^{*}\alpha_{1}\cup\pr_{1}^{*}\beta_{1}
 -\pr_{1}^{*}\alpha_{2}\cup\pr_{2}^{*}\beta_{3}
 -\pr_{2}^{*}\alpha_{2}\cup\pr_{1}^{*}\beta_{3}
 -\pr_{1}^{*}\alpha_{3}\cup\pr_{2}^{*}\beta_{2}
 -\pr_{2}^{*}\alpha_{3}\cup\pr_{1}^{*}\beta_{2},
 $$
 we have
 $\epsilon^{*}[\Delta_{\sigma}]=\epsilon^{*}(\widehat{\delta}+\delta_{0}-\delta_{2})$.
 Since the class of the diagonal $\Delta_{C}\subset{C\times{C}}$ is
 $$
 \epsilon^{*}\widehat{\delta}
 -\pr_{1}^{*}\alpha_{1}\cup\pr_{2}^{*}\beta_{1}
 -\pr_{2}^{*}\alpha_{1}\cup\pr_{1}^{*}\beta_{1}
 -\pr_{1}^{*}\alpha_{2}\cup\pr_{2}^{*}\beta_{2}
 -\pr_{2}^{*}\alpha_{2}\cup\pr_{1}^{*}\beta_{2}
 -\pr_{1}^{*}\alpha_{3}\cup\pr_{2}^{*}\beta_{3}
 -\pr_{2}^{*}\alpha_{3}\cup\pr_{1}^{*}\beta_{3},
 $$
 we have
 $$
 \epsilon^{*}[K_{C^{(2)}}]=[K_{C\times{C}}]-[\Delta_{C}]
 =4\epsilon^{*}\widehat{\delta}
 -\epsilon^{*}(\widehat{\delta}-\delta_{0}-\delta_{1})
 =\epsilon^{*}(3\widehat{\delta}+\delta_{0}+\delta_{1}).
 $$
\end{proof}
\begin{corollary}\label{145134_5Nov16}
 $\rho^{*}[K_{Y}]=2\widehat{\delta}+\delta_{1}+\delta_{2}
 \in{H^{2}(S,\mathbf{Z})=H^{2}(C^{(2)},\mathbf{Z})
 \oplus\bigoplus_{1\leq{i}<{j}\leq{4}}
 \mathbf{Z}[E_{ij}]}$.
\end{corollary}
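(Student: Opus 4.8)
The plan is to combine the ramification formula for the double covering $\rho:S\to Y$ with the blow-up formula for $S\to C^{(2)}$, and then to substitute the two divisor classes already computed in Lemma~\ref{120929_3Nov16}. No genuinely new computation is needed; the content is bookkeeping in $H^{2}(S,\mathbf{Z})$.

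First I would record the two canonical-class relations. Writing $\mu:S\to C^{(2)}$ for the blow-up at the six points $q_{i}+q_{j}$, the blow-up formula gives $K_{S}=\mu^{*}K_{C^{(2)}}+\sum_{1\leq i<j\leq 4}E_{ij}$, since there are exactly $\binom{4}{2}=6$ exceptional curves $E_{ij}$. On the other hand, as recorded in the text immediately preceding the corollary, $\rho:S\to Y$ is a finite double covering whose reduced ramification divisor is $\Delta_{\sigma}+\sum_{1\leq i<j\leq 4}E_{ij}$; hence the Hurwitz-type formula $K_{S}=\rho^{*}K_{Y}+R$ for a double cover with ramification divisor $R$ yields $K_{S}=\rho^{*}K_{Y}+\Delta_{\sigma}+\sum_{1\leq i<j\leq 4}E_{ij}$.

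Next I would subtract the two identities. The exceptional contributions $\sum_{i<j}E_{ij}$ cancel, leaving $\rho^{*}[K_{Y}]=\mu^{*}[K_{C^{(2)}}]-[\Delta_{\sigma}]$. Because $\Delta_{\sigma}$ does not meet the blow-up centers $q_{i}+q_{j}$ (as noted in the construction of $S$), its class in $S$ is $\mu^{*}[\Delta_{\sigma}]$ with no component along any $E_{ij}$; thus, under the direct sum decomposition $H^{2}(S,\mathbf{Z})=H^{2}(C^{(2)},\mathbf{Z})\oplus\bigoplus_{i<j}\mathbf{Z}[E_{ij}]$, the class $\rho^{*}[K_{Y}]=\mu^{*}([K_{C^{(2)}}]-[\Delta_{\sigma}])$ lies entirely in the first summand. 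Substituting $[K_{C^{(2)}}]=3\widehat{\delta}+\delta_{0}+\delta_{1}$ and $[\Delta_{\sigma}]=\widehat{\delta}+\delta_{0}-\delta_{2}$ from Lemma~\ref{120929_3Nov16} gives $\rho^{*}[K_{Y}]=2\widehat{\delta}+\delta_{1}+\delta_{2}$, as claimed.

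The only point requiring care is the ramification formula itself: one must be sure that $\rho$ is genuinely a finite degree-two morphism of smooth surfaces and that its ramification divisor is reduced and equal to $\Delta_{\sigma}+\sum E_{ij}$. This is exactly the content of the construction of $S$ and $\rho$ given just before the corollary: blowing up the isolated fixed points $q_{i}+q_{j}$ of $\sigma^{(2)}$ turns the quotient into the smooth resolution $Y$, and the lifted involution fixes each exceptional $\mathbf{P}^{1}$ pointwise, so that each $E_{ij}$ enters the ramification divisor with multiplicity one alongside the fixed curve $\Delta_{\sigma}$. Granting that description, which I may assume, the remaining steps are the elementary class manipulation above.
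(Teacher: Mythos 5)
Your proof is correct and follows essentially the same route as the paper: the paper's proof likewise computes $[K_{S}]=3\widehat{\delta}+\delta_{0}+\delta_{1}+\sum_{1\leq i<j\leq 4}[E_{ij}]$ from Lemma~\ref{120929_3Nov16} and the blow-up formula, then applies the double-cover ramification formula $\rho^{*}[K_{Y}]=[K_{S}]-[\Delta_{\sigma}]-\sum_{1\leq i<j\leq 4}[E_{ij}]$ and substitutes $[\Delta_{\sigma}]=\widehat{\delta}+\delta_{0}-\delta_{2}$. Your extra remarks (that $\Delta_{\sigma}$ misses the blow-up centers and that the exceptional curves lie in the ramification divisor) are exactly the facts the paper records in the text preceding the corollary, so nothing is missing.
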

\begin{proof}
 By Lemma~\ref{120929_3Nov16}, we have
 $$
 [K_{S}]=
 3\widehat{\delta}+\delta_{0}+\delta_{1}+
 \sum_{1\leq{i}<{j}\leq{4}}[E_{ij}]
 \in{H^{2}(S,\mathbf{Z})=H^{2}(C^{(2)},\mathbf{Z})
 \oplus\bigoplus_{1\leq{i}<{j}\leq{4}}
 \mathbf{Z}[E_{ij}]}
 $$
 and
 $$
 \rho^{*}[K_{Y}]=[K_{S}]-[\Delta_{\sigma}]-\sum_{1\leq{i}<{j}\leq{4}}[E_{ij}]
 =2\widehat{\delta}+\delta_{1}+\delta_{2}.
 $$
\end{proof}
\begin{lemma}\label{175822_2Nov16}
 The classes
 $$
 \begin{cases}
  \delta_{7}=\widehat{\delta}-[E_{12}]-[E_{13}]-[E_{14}],\\
  \delta_{8}=\widehat{\delta}-[E_{12}]-[E_{23}]-[E_{24}],\\
  \delta_{9}=\widehat{\delta}-[E_{13}]-[E_{23}]-[E_{34}],\\
  \delta_{10}=\widehat{\delta}-[E_{14}]-[E_{24}]-[E_{34}],\\
  \delta_{11}
  =\widehat{\delta}+\delta_{0}-\delta_{2}
  +\sum_{1\leq{i}<{j}\leq{4}}[E_{ij}]
 \end{cases}
 $$
 in
 $H^{2}(S,\mathbf{Z})=H^{2}(C^{(2)},\mathbf{Z})
 \oplus\bigoplus_{1\leq{i}<{j}\leq{4}}
 \mathbf{Z}[E_{ij}]$
 are contained in $\rho^{*}H^{2}(Y,\mathbf{Z})$.
\end{lemma}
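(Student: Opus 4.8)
The plan is to realize each of $\delta_{7},\dots,\delta_{11}$ as the pullback $\rho^{*}$ of an explicit integral class on $Y$, using that $\rho:S\rightarrow{Y}$ is a finite flat double covering of smooth surfaces with ramification divisor $R=\Delta_{\sigma}+\sum_{1\leq{i}<{j}\leq{4}}E_{ij}$.

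For $\delta_{7},\dots,\delta_{10}$ I would identify them with the proper transforms $\tilde{\Gamma}_{i}$. The curve $\Gamma_{i}=\{q_{i}+q\in{C^{(2)}}\mid{q\in{C}}\}$ has class $\widehat{\delta}$ on $C^{(2)}$, and since $q_{i}+q=q_{k}+q_{l}$ forces $i\in\{k,l\}$, it passes simply through exactly the three blow-up centers $q_{i}+q_{j}$ with $j\neq{i}$. Hence
$$
[\tilde{\Gamma}_{i}]=\widehat{\delta}-\sum_{j\neq{i}}[E_{ij}],
$$
which is precisely $\delta_{6+i}$. Because $q_{i}$ is a ramification point, $\sigma(q_{i})=q_{i}$, so $\Gamma_{i}$ is stable under $\sigma^{(2)}$ and the induced involution on $\tilde{\Gamma}_{i}\simeq{C}$ is the bielliptic involution $\sigma$; in particular $\tilde{\Gamma}_{i}$ is not contained in the ramification locus and $\rho|_{\tilde{\Gamma}_{i}}:\tilde{\Gamma}_{i}\rightarrow\rho(\tilde{\Gamma}_{i})$ is generically $2:1$. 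As $\tilde{\Gamma}_{i}$ is invariant under the covering involution, it is the whole preimage of $\rho(\tilde{\Gamma}_{i})$, and both sheets over a general point of $\rho(\tilde{\Gamma}_{i})$ lie on $\tilde{\Gamma}_{i}$; therefore $\rho^{*}[\rho(\tilde{\Gamma}_{i})]=[\tilde{\Gamma}_{i}]=\delta_{6+i}$, placing $\delta_{7},\dots,\delta_{10}$ in $\rho^{*}H^{2}(Y,\mathbf{Z})$.

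For $\delta_{11}$ I would first use Lemma~\ref{120929_3Nov16}, which gives $[\Delta_{\sigma}]=\widehat{\delta}+\delta_{0}-\delta_{2}$ on $C^{(2)}$; since $\Delta_{\sigma}$ avoids the blow-up centers, its class in $H^{2}(S,\mathbf{Z})$ has vanishing $[E_{ij}]$-components. Consequently
$$
\delta_{11}=[\Delta_{\sigma}]+\sum_{1\leq{i}<{j}\leq{4}}[E_{ij}]=[R],
$$
the class of the ramification divisor. Now I would invoke the standard structure of a smooth double cover: $\rho$ is determined by a line bundle $L$ on $Y$ with $L^{\otimes2}\simeq\mathcal{O}_{Y}(B)$, where $B$ is the branch divisor, and the ramification divisor satisfies $\mathcal{O}_{S}(R)\simeq\rho^{*}L$. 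Hence $\delta_{11}=[R]=\rho^{*}[L]\in\rho^{*}H^{2}(Y,\mathbf{Z})$.

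The main obstacle is $\delta_{11}$. The naive computation only yields $\rho^{*}[B]=2[R]=2\delta_{11}$, an even class, so it is not immediate that the primitive class $\delta_{11}$ itself lies in $\rho^{*}H^{2}(Y,\mathbf{Z})$. The point to get right is the identification $\delta_{11}=[R]$ together with the existence of the square root $L$ of $\mathcal{O}_{Y}(B)$ coming from the double-cover data, which is what produces $\delta_{11}$ rather than $2\delta_{11}$. By contrast, $\delta_{7},\dots,\delta_{10}$ are genuine pullbacks of curves not contained in the branch locus and are comparatively routine once their classes are computed.
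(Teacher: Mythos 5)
Your proof is correct and takes essentially the same route as the paper: you realize $\delta_{7},\dots,\delta_{10}$ as $\rho^{*}[\rho(\tilde{\Gamma}_{i})]$ via the double covering $\rho\vert_{\tilde{\Gamma}_{i}}$, and you produce $\delta_{11}$ from the square-root line bundle of the branch divisor, which is exactly the paper's invertible sheaf $\mathcal{F}$ with $\mathcal{F}^{\otimes2}\simeq\mathcal{O}_{Y}(\rho(\Delta_{\sigma})+\sum_{i<j}\rho(E_{ij}))$. The only cosmetic difference is the last step: the paper computes $2\rho^{*}c_{1}(\mathcal{F})=2\delta_{11}$ and divides by two using that $H^{2}(S,\mathbf{Z})$ is torsion-free, while you invoke the standard identity $\mathcal{O}_{S}(R)\simeq\rho^{*}\mathcal{F}$ for the ramification divisor directly; both are valid.
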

\begin{proof}
 Since $\rho\vert_{\tilde{\Gamma}_{i}}:\tilde{\Gamma}_{i}\rightarrow{\rho(\tilde{\Gamma}_{i})}$ is a double covering,
 $\delta_{i+6}=[\tilde{\Gamma}_{i}]=\rho^{*}[\rho(\tilde{\Gamma}_{i})]$
 for $i=1,\dots,4$.
 Since
 $
 \rho(\Delta_{\sigma})+\sum_{1\leq{i}<{j}\leq{4}}\rho(E_{ij})
 $
 is the branch divisor of $\rho$, there is an invertible sheaf $\mathcal{F}$ such that
 $\mathcal{F}^{\otimes2}\simeq\mathcal{O}_{Y}(\rho(\Delta_{\sigma})+\sum_{1\leq{i}<{j}\leq{4}}\rho(E_{ij}))$.
 By Lemma~\ref{120929_3Nov16},
 $$
 2\rho^{*}c_{1}(\mathcal{F})=
 \rho^{*}[\rho(\Delta_{\sigma})+\sum_{1\leq{i}<{j}\leq{4}}\rho(E_{ij})]
 =2([\Delta_{\sigma}]+\sum_{1\leq{i}<{j}\leq{4}}[E_{ij}])
 =2\delta_{11},
 $$
 where $c_{1}(\mathcal{F})\in{H^{2}(Y,\mathbf{Z})}$ denotes the first Chern class of $\mathcal{F}$.
 Since $H^{2}(S,\mathbf{Z})$ is a free $\mathbf{Z}$-module, we have
 $
 \delta_{11}=\rho^{*}c_{1}(\mathcal{F}).
 $
\end{proof}
\begin{lemma}\label{120819_5Nov16}
 $H^{2}(Y,\mathbf{Z})$ is a free $\mathbf{Z}$-module, and
 $$
 \delta_{1},\dots,\delta_{6},\
 \delta_{7},\dots,\delta_{11},\
 2[E_{12}],\
 2[E_{13}],\
 2[E_{14}]
 $$
 form a $\mathbf{Z}$-basis of the image of the injective homomorphism
 $$
 \rho^{*}:\
 H^{2}(Y,\mathbf{Z})\longrightarrow{H^{2}(S,\mathbf{Z})}
 =H^{2}(C^{(2)},\mathbf{Z})\oplus\bigoplus_{1\leq{i}<{j}\leq{4}}
 \mathbf{Z}[E_{ij}].
 $$
\end{lemma}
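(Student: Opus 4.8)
The plan is to carry out the whole computation inside $H^{2}(S,\mathbf{Z})$, exploiting that $\rho:S\rightarrow{Y}$ is a finite double covering with covering involution $\tau$ (the lift of $\sigma^{(2)}$). First I record the ambient structure. Since $C^{(2)}$ has torsion-free cohomology and $S$ is a blow-up, $H^{2}(S,\mathbf{Z})=H^{2}(C^{(2)},\mathbf{Z})\oplus\bigoplus_{i<j}\mathbf{Z}[E_{ij}]$ is free. The points $q_{i}+q_{j}$ are isolated fixed points of $\sigma^{(2)}$, and since $\sigma$ acts by $-1$ on each tangent line $T_{q_{i}}C$, the involution acts on $T_{q_{i}+q_{j}}C^{(2)}$ by $-\mathrm{id}$; hence $\tau$ fixes every exceptional curve $E_{ij}$ pointwise and $\tau^{*}[E_{ij}]=[E_{ij}]$. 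As $\tau^{*}=\sigma^{*}$ on $H^{2}(C^{(2)},\mathbf{Z})$, the $\tau^{*}$-invariant lattice is $\Lambda^{+}=H^{2}(C^{(2)},\mathbf{Z})^{\sigma}\oplus\bigoplus_{i<j}\mathbf{Z}[E_{ij}]$, of rank $8+6=14$, with the first summand described by Lemma~\ref{123002_3Nov16}. Since $\rho_{*}\rho^{*}=2$, the map $\rho^{*}$ is injective modulo torsion and $\rho^{*}H^{2}(Y,\mathbf{Z})\otimes\mathbf{Q}=\Lambda^{+}\otimes\mathbf{Q}$, of dimension $14=b_{2}(Y)$.

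Next I would settle freeness. Because $f:Y\rightarrow{B}$ carries a section (Proposition~\ref{161646_15Oct16}) it has no multiple fibres, which forces $H_{1}(Y,\mathbf{Z})\cong H_{1}(B,\mathbf{Z})$ to be torsion-free; by universal coefficients $H^{2}(Y,\mathbf{Z})$ is then torsion-free, hence unimodular by Poincar\'e duality, and $\rho^{*}$ is injective. Because $\langle\rho^{*}x,\rho^{*}y\rangle_{S}=2\langle{x},y\rangle_{Y}$ in rank $14$, the lattice $\rho^{*}H^{2}(Y,\mathbf{Z})$ has discriminant of absolute value $2^{14}$. This is the number the whole argument is steering toward.

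Now the memberships. The curves $\tilde{\Gamma}_{i}$ are $\tau$-invariant and map to sections, so $\delta_{i+6}=[\tilde{\Gamma}_{i}]=\rho^{*}[\rho(\tilde{\Gamma}_{i})]$; $\delta_{11}$ is the pulled-back first Chern class of the line bundle defining $\rho$; and $2[E_{ij}]=\rho^{*}[\rho(E_{ij})]$ because $E_{ij}$ is a ramification component. These are exactly Lemma~\ref{175822_2Nov16} together with the ramification identity, and adjoining $\rho^{*}[K_{Y}]=2\widehat{\delta}+\delta_{1}+\delta_{2}$ from Corollary~\ref{145134_5Nov16} yields $\delta_{1}+\delta_{2}$ and $2\widehat{\delta}$ inside $\rho^{*}H^{2}(Y,\mathbf{Z})$. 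The main obstacle is that none of these algebraic classes involves $\delta_{3},\delta_{4},\delta_{5},\delta_{6}$ (nor $\delta_{1},\delta_{2}$ individually): these are the images under $\lambda_{2}$ of products of the $\sigma$-anti-invariant classes $\alpha_{1},\beta_{1},\alpha_{2}-\alpha_{3},\beta_{2}-\beta_{3}$, and are in general transcendental, so no divisor on $Y$ can witness them. I would establish $\delta_{1},\dots,\delta_{6}\in\rho^{*}H^{2}(Y,\mathbf{Z})$ by a topological descent argument: over $Y\setminus{B}$ the covering is unramified with monodromy character $\chi$, the anti-invariant part of $H^{1}$ is $H^{1}(Y\setminus{B};\mathbf{Z}_{\chi})$, and the cup pairing $\mathbf{Z}_{\chi}\otimes\mathbf{Z}_{\chi}\rightarrow\mathbf{Z}$ carries products of two anti-invariant classes to integral classes on $Y$. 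A Mayer--Vietoris computation across a tubular neighbourhood of the branch $B=\rho(\Delta_{\sigma})\sqcup\bigsqcup_{i<j}\rho(E_{ij})$ then decides exactly which of these products extend over $Y$, and a parity of the pairing with the ramification divisor distinguishes $\delta_{1},\dots,\delta_{6}$ (which lift) from $\widehat{\delta},\delta_{0}$ (which do not). This integral, rather than rational, descent is where the real work lies.

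Finally I would close with a discriminant count. Expressing the fourteen listed classes in the basis $\widehat{\delta},\delta_{0},\dots,\delta_{6},[E_{ij}]$ of $\Lambda^{+}$, the six classes $\delta_{1},\dots,\delta_{6}$ are common basis vectors, and the transition matrix on the remaining eight coordinates has determinant $\pm16$; hence the listed classes span a sublattice of index $2^{4}$ in $\Lambda^{+}$. Since $\disc(\Lambda^{+})=-2^{6}$ by Lemma~\ref{123002_3Nov16}, this sublattice has discriminant $-2^{6}\cdot(2^{4})^{2}=-2^{14}$. Having shown that all fourteen classes lie in $\rho^{*}H^{2}(Y,\mathbf{Z})$ and are linearly independent, and that both lattices have discriminant of absolute value $2^{14}$, the index of the span inside $\rho^{*}H^{2}(Y,\mathbf{Z})$ must be $1$; therefore the listed classes form a $\mathbf{Z}$-basis of $\rho^{*}H^{2}(Y,\mathbf{Z})\cong H^{2}(Y,\mathbf{Z})$, which is free.
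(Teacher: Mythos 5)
Your endgame is computed correctly: the fourteen listed classes are independent, they span a sublattice of index $2^{4}$ in your invariant lattice $\Lambda^{+}$, hence of discriminant $-2^{6}\cdot(2^{4})^{2}=-2^{14}$, and this matches the discriminant $\pm2^{14}$ of $\rho^{*}H^{2}(Y,\mathbf{Z})$ once $H^{2}(Y,\mathbf{Z})$ is known to be unimodular of rank $14$. The genuine gap is the step you yourself flag as ``where the real work lies'': the memberships $\delta_{1},\dots,\delta_{6}\in\rho^{*}H^{2}(Y,\mathbf{Z})$ are never proved. (Corollary~\ref{145134_5Nov16} only hands you $\delta_{1}+\delta_{2}$ modulo classes you already have, not $\delta_{1}$ and $\delta_{2}$ separately, so all six are open at that point.) Invariance under the covering involution gives these memberships only after inverting $2$, and the integral statement is exactly the content of the lemma: $\widehat{\delta}$ and $\delta_{0}$ are equally invariant and are \emph{not} in $\rho^{*}H^{2}(Y,\mathbf{Z})$, so any argument must separate the two cases integrally. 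The paragraph you offer in place of a proof --- local systems on the complement of the branch divisor, a Mayer--Vietoris computation that ``decides exactly which of these products extend'', an unspecified ``parity of the pairing with the ramification divisor'' --- is a program, not an argument: the restriction map $H^{2}(Y,\mathbf{Z})\rightarrow H^{2}\bigl(Y\setminus(\rho(\Delta_{\sigma})\cup\bigcup_{i<j}\rho(E_{ij})),\mathbf{Z}\bigr)$ kills classes supported on the branch locus and is not injective, the comparison between cup products of $\mathbf{Z}_{\chi}$-classes on the open part and $\rho^{*}$ of integral classes on the compact surface $Y$ is itself the nontrivial point, and no verifiable criterion is stated, let alone checked, for the six classes in question.

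The paper's proof shows how to avoid this difficulty entirely, and your discriminant count survives the repair verbatim: bound $\rho^{*}H^{2}(Y,\mathbf{Z})$ from \emph{above} instead of from below. Set $H=\{\gamma\in{H^{2}(S,\mathbf{Z})^{\sigma}}\mid\langle\delta_{i},\gamma\rangle_{S}\in2\mathbf{Z}\ \text{for}\ i=7,\dots,11\}$. The only memberships this requires are the easy geometric ones you already have (Lemma~\ref{175822_2Nov16}): writing $\delta_{i}=\rho^{*}x_{i}$ for $i=7,\dots,11$, the projection formula gives $\langle\delta_{i},\rho^{*}y\rangle_{S}=2\langle{x_{i},y}\rangle_{Y}\in2\mathbf{Z}$, hence $\rho^{*}H^{2}(Y,\mathbf{Z})\subseteq{H}$. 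One then checks from Lemma~\ref{123002_3Nov16} that the fourteen listed classes form a $\mathbf{Z}$-basis of $H$, and equality $\rho^{*}H^{2}(Y,\mathbf{Z})=H$ follows from your rank-and-discriminant comparison, now with the containment pointing in the provable direction; the memberships $\delta_{1},\dots,\delta_{6}\in\rho^{*}H^{2}(Y,\mathbf{Z})$ come out as a corollary rather than going in as an input. A smaller wrinkle in the same vein: your freeness chain ``section $\Rightarrow$ no multiple fibres $\Rightarrow$ $H_{1}(Y,\mathbf{Z})\simeq{H_{1}(B,\mathbf{Z})}$'' is not formally valid --- an elliptic fiber bundle over an elliptic curve with a section (the Albanese fibration of a bielliptic surface) has no multiple fibres yet torsion in $H_{1}$ --- so you must invoke the existence of singular fibres ($\sharp\Sigma\geq3$ here), or simply cite \cite{CZ} as the paper does.
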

\begin{proof}
 By \cite[Corollary~(1.48)]{CZ}, the $\mathbf{Z}$-module $H^{2}(Y,\mathbf{Z})$ does not have a non-trivial torsion element, hence $\rho^{*}:H^{2}(Y,\mathbf{Z})\rightarrow{H^{2}(S,\mathbf{Z})}$ is injective.
 Let
 $H\subset{H^{2}(S,\mathbf{Z})^{\sigma}}
 ={H^{2}(C^{(2)},\mathbf{Z})^{\sigma}\oplus\bigoplus_{1\leq{i}<{j}\leq{4}}}
 \mathbf{Z}[E_{ij}]$
 be the $\mathbf{Z}$-submodule defined by
 $$
 H=\{\gamma\in{H^{2}(S,\mathbf{Z})^{\sigma}}\mid
 \langle{\delta_{i}},\gamma\rangle_{S}\in2\mathbf{Z}\
 \text{for $i=7,8,9,10,11$}\}.
 $$
 Then by Lemma~\ref{123002_3Nov16}, we can show that
 $$
 \delta_{1},\dots,\delta_{6},\
 \delta_{7},\dots,\delta_{11},\
 2[E_{12}],\
 2[E_{13}],\
 2[E_{14}]
 $$
 form a $\mathbf{Z}$-basis of $H$.
 Since $\langle\rho^{*}x,\rho^{*}y\rangle_{S}=2\langle{x},y\rangle_{Y}$
 for $x,y\in{H^{2}(Y,\mathbf{Z})}$,
 by Lemma~\ref{175822_2Nov16} we have $\rho^{*}H^{2}(Y,\mathbf{Z})\subset{H}$.
 By Lemma~\ref{123002_3Nov16}, we can compute the determinant of the symmetric form $\langle\ ,\ \rangle_{S}$ on $H$.
 Since $H$ has the same rank and the same determinant as $\rho^{*}H^{2}(Y,\mathbf{Z})$, we have $\rho^{*}H^{2}(Y,\mathbf{Z})={H}$.
\end{proof}
\begin{remark}\label{145227_5Nov16}
 By Lemma~\ref{120819_5Nov16}, we define a $\mathbf{Z}$-basis
 $\gamma_{1},\dots,\gamma_{14}$ of $H^{2}(Y,\mathbf{Z})$ by
 $$
 \begin{cases}
  \rho^{*}\gamma_{1}=\delta_{1},\\
  \qquad\cdots\\
  \rho^{*}\gamma_{11}=\delta_{11},\\
 \end{cases}
 \qquad
 \begin{cases}
  \rho^{*}\gamma_{12}=2[E_{12}],\\
  \rho^{*}\gamma_{13}=2[E_{13}],\\
  \rho^{*}\gamma_{14}=2[E_{14}].
 \end{cases}
 $$
 Then the intersection matrix is
 $$
 \left(
 \langle\gamma_{i},\gamma_{j}\rangle_{Y}
 \right)%_{1\leq{i}\leq{14},\ 1\leq{j}\leq{14}}
 ={\scriptscriptstyle
 \left(
 \begin{array}{cccccccccccccc}
  {-1}&0&0&0&0&0&0&0&0&0&0&0&0&0\\
  0&{-1}&0&0&0&0&0&0&0&0&1&0&0&0\\
  0&0&0&{-1}&0&0&0&0&0&0&0&0&0&0\\
  0&0&{-1}&0&0&0&0&0&0&0&0&0&0&0\\
  0&0&0&0&0&{-1}&0&0&0&0&0&0&0&0\\
  0&0&0&0&{-1}&0&0&0&0&0&0&0&0&0\\
  0&0&0&0&0&0&{-1}&0&0&0&2&1&1&1\\
  0&0&0&0&0&0&0&{-1}&0&0&2&1&0&0\\
  0&0&0&0&0&0&0&0&{-1}&0&2&0&1&0\\
  0&0&0&0&0&0&0&0&0&{-1}&2&0&0&1\\
  0&1&0&0&0&0&2&2&2&2&{-4}&{-1}&{-1}&{-1}\\
  0&0&0&0&0&0&1&1&0&0&{-1}&{-2}&0&0\\
  0&0&0&0&0&0&1&0&1&0&{-1}&0&{-2}&0\\
  0&0&0&0&0&0&1&0&0&1&{-1}&0&0&{-2}\\
 \end{array}
 \right)},
 $$
 and the classes of curves on $Y$ are
 $$
 \begin{cases}
  [K_{Y}]=\gamma_{1}+\gamma_{2}+2\gamma_{7}+\gamma_{12}+\gamma_{13}+\gamma_{14},\\
  [\rho(\Delta_{\sigma})]=-3\gamma_{7}+\gamma_{8}+\gamma_{9}+\gamma_{10}
  +2\gamma_{11}-2\gamma_{12}-2\gamma_{13}-2\gamma_{14},\\
 \end{cases}
 $$
 $$
 \begin{cases}
  [\rho(\tilde{\Gamma}_{1})]=\gamma_{7},\\
  [\rho(\tilde{\Gamma}_{2})]=\gamma_{8},\\
  [\rho(\tilde{\Gamma}_{3})]=\gamma_{9},\\
  [\rho(\tilde{\Gamma}_{4})]=\gamma_{10},\\
 \end{cases}\quad
 \begin{cases}
  [\rho(E_{12})]=\gamma_{12},\\
  [\rho(E_{13})]=\gamma_{13},\\
  [\rho(E_{14})]=\gamma_{14},\\
  [\rho(E_{34})]=\gamma_{7}+\gamma_{8}-\gamma_{9}-\gamma_{10}+\gamma_{12},\\
  [\rho(E_{24})]=\gamma_{7}-\gamma_{8}+\gamma_{9}-\gamma_{10}+\gamma_{13},\\
  [\rho(E_{23})]=\gamma_{7}-\gamma_{8}-\gamma_{9}+\gamma_{10}+\gamma_{14}.
 \end{cases}
 $$
\end{remark}
\subsection{Construction of $H^{2}(A,\mathbf{Z})$ from $H^{2}(Y,\mathbf{Z})$}\label{154259_6Dec16}
 We set
 $\widetilde{\gamma}=\frac{1}{2}([\rho(\tilde{\Gamma}_{1})]+\dots+[\rho(\tilde{\Gamma}_{4})])
 \in{H^{2}(Y,\mathbf{Q})}$,
 and we define a $\mathbf{Z}$-submodule of ${H^{2}(Y,\mathbf{Q})}$ by
 $$
 H_{Y}=H^{2}(Y,\mathbf{Z})+\mathbf{Z}
 \widetilde{\gamma}
 \subset{H^{2}(Y,\mathbf{Q})}.
 $$
 Since
 $\rho^{*}(\widetilde{\gamma})
 =\frac{1}{2}(\delta_{7}+\delta_{8}+\delta_{9}+\delta_{10})
 =2\widehat{\delta}-\sum_{1\leq{i}<{j}\leq{4}}[E_{ij}]
 \in{H^{2}(S,\mathbf{Z})}$,
 the image of $H_{Y}$ by the pull-back $\rho^{*}:H^{2}(Y,\mathbf{Q})\rightarrow{H^{2}(S,\mathbf{Q})}$ is contained in the integral cohomology $H^{2}(S,\mathbf{Z})$.
 We define $H'_{Y}\subset{H_{Y}}$ as the orthogonal subspace to the classes
 $$
 [\rho{(E_{ij})}]\ (1\leq{i}<{j}\leq{4}),\
 [\rho(\Delta_{\sigma})],\
 [K_{Y}]-\widetilde{\gamma}\
 \in{H_{Y}}.
 $$
 Then the class
 $c_{Y}=[K_{Y}+\rho(\Delta_{\sigma})]$
 is contained in $H'_{Y}$, and the symmetric form
 $$
 \langle\ ,\ \rangle_{Y}:\
 {H^{2}(Y,\mathbf{Q})}\times
 {H^{2}(Y,\mathbf{Q})}\longrightarrow\mathbf{Q}
 $$
 has integral values on $H'_{Y}$.
 We remark that the data $(H'_{Y},\langle\ ,\ \rangle_{Y},c_{Y})$ is defined from the data $(H^{2}(Y,\mathbf{Z}),\langle\ ,\ \rangle_{Y},\Pi_{Y})$, because we can divide the elements of $\Pi_{Y}$ into $4$ type by their self-intersection numbers
 $$
 \begin{cases}
  (K_{Y})^{2}=0,\\
  (\rho(\Delta_{\sigma}))^{2}=-4,\\
  (\rho(\tilde{\Gamma}_{i}))^{2}=-1&(1\leq{i}\leq{4}),\\
  (\rho{(E_{ij})})^{2}=-2&(1\leq{i}<{j}\leq{4}).
 \end{cases}
 $$
\begin{lemma}\label{161526_8Dec16}
 There is an isomorphism $\Psi_{Y}:(H'_{Y},\langle\ ,\ \rangle_{Y},c_{Y})\overset{\simeq}{\rightarrow}(H^{2}(A,\mathbf{Z}),\langle\ ,\ \rangle_{A},[\iota(C)])$ of Hodge structure which preserve the symmetric forms and the ample classes.
\end{lemma}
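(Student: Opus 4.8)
The plan is to realize both Hodge structures inside $H^2(S,\mathbf{Z})$ by means of the two double coverings $\rho\colon S\to Y$ and $\lambda\colon S\to A$ of Lemma~\ref{115601_2Dec16}, and then to define $\Psi_Y$ as the composite $(\lambda^*)^{-1}\circ\rho^*$ on the appropriate sublattices. First I would record the structure of the target. Since $A=J(C)/\pi^*J(E)$, the group $H^1(A,\mathbf{Z})$ is canonically the $\sigma^*$-anti-invariant part $H^1(C,\mathbf{Z})^-$ (the kernel of the norm, spanned by $\alpha_1,\beta_1,\alpha_2-\alpha_3,\beta_2-\beta_3$), and $H^2(A,\mathbf{Z})=\bigwedge^2H^1(A,\mathbf{Z})$. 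Using the identity $\lambda_1(\gamma)\cup\lambda_1(\gamma')=\lambda_2(\gamma\wedge\gamma')+\langle\gamma,\gamma'\rangle_C\widehat{\delta}$ together with the factorization of $\lambda$ through the addition map $C^{(2)}\to A$, I would check that $\lambda^*$ sends a wedge $\xi\wedge\xi'$ of anti-invariant classes to $\lambda_2(\xi\wedge\xi')+\langle\xi,\xi'\rangle_C\widehat{\delta}$. Concretely $\lambda^*H^2(A,\mathbf{Z})$ is spanned by $\delta_0+\widehat{\delta},\,\delta_3,\,\delta_4,\,\delta_5,\,\delta_6$ and $(\delta_1-\delta_2)+2\widehat{\delta}$; moreover $[\iota(C)]$ corresponds to the $(1,2)$-form $2\,\alpha_1\wedge\beta_1+(\alpha_2-\alpha_3)\wedge(\beta_2-\beta_3)$ (the elementary divisors $1,2$ coming from $\langle\alpha_1,\beta_1\rangle_C=1$ and $\langle\alpha_2-\alpha_3,\beta_2-\beta_3\rangle_C=2$), whence $\lambda^*[\iota(C)]=4\widehat{\delta}+2\delta_0+\delta_1-\delta_2$.

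Next I would compute $\rho^*H'_Y$ from the basis and intersection matrix of Remark~\ref{145227_5Nov16}. By Corollary~\ref{145134_5Nov16} and the description of the ramification and branch curves one has $\rho^*[\rho(E_{ij})]=2[E_{ij}]$, $\rho^*[\rho(\Delta_{\sigma})]=2(\widehat{\delta}+\delta_0-\delta_2)$ and $\rho^*([K_Y]-\widetilde{\gamma})=\delta_1+\delta_2+\sum_{i<j}[E_{ij}]$. Orthogonality to the six classes $[\rho(E_{ij})]$ forces an element of $\rho^*H'_Y$ to have vanishing exceptional component, i.e.\ to lie in $H^2(C^{(2)},\mathbf{Z})^{\sigma}$; the two remaining conditions then reduce to orthogonality to $\widehat{\delta}+\delta_0-\delta_2$ and to $\delta_1+\delta_2$. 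A direct check against the matrix of Lemma~\ref{123002_3Nov16} shows that the six generators of $\lambda^*H^2(A,\mathbf{Z})$ listed above satisfy exactly these two relations, so $\lambda^*H^2(A,\mathbf{Z})\subseteq\rho^*H'_Y$.

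It then remains to prove the integral equality $\lambda^*H^2(A,\mathbf{Z})=\rho^*H'_Y$. The classes $\delta_3,\delta_4,\delta_5,\delta_6$ already lie in $\rho^*H^2(Y,\mathbf{Z})$ by Lemma~\ref{120819_5Nov16}, whereas the combinations $\delta_0+\widehat{\delta}$ and $(\delta_1-\delta_2)+2\widehat{\delta}$ become integral only after adjoining the half-class $\widetilde{\gamma}$; this is precisely the role of the enlargement $H_Y=H^2(Y,\mathbf{Z})+\mathbf{Z}\widetilde{\gamma}$, and it is the lattice-level reflection of the fact, noted in the introduction, that $H^2(A,\mathbf{Z})$ is not a direct summand of $H^2(Y,\mathbf{Z})$. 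Since both lattices have rank $6$, the inclusion becomes an equality once the discriminant of $\langle\ ,\ \rangle_S$ on each (computed via Lemma~\ref{123412_3Nov16} and Lemma~\ref{123002_3Nov16}) is seen to agree.

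Finally I would set $\Psi_Y=(\lambda^*)^{-1}\circ\rho^*$ on $H'_Y$. As $\rho^*$ and $\lambda^*$ are injective morphisms of Hodge structures each scaling the cup form by the factor $2$, the identification $\rho^*H'_Y=\lambda^*H^2(A,\mathbf{Z})$ makes $\Psi_Y$ an isomorphism of Hodge structures preserving the symmetric forms; and the coincidence $\rho^*c_Y=4\widehat{\delta}+2\delta_0+\delta_1-\delta_2=\lambda^*[\iota(C)]$ (obtained by pulling back $c_Y=[K_Y+\rho(\Delta_{\sigma})]$ and cancelling all exceptional terms) yields $\Psi_Y(c_Y)=[\iota(C)]$, matching the ample classes. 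The main obstacle is the integral lattice matching of the third paragraph: over $\mathbf{Q}$ the isomorphism is immediate from the splitting $H^2(Y,\mathbf{Q})\simeq H^2(A,\mathbf{Q})\oplus\mathbf{Q}(-1)^{\oplus 8}$, but pinning down the exact integral structure requires the careful bookkeeping of the $\widehat{\delta}$-corrections and of the $2$-divisibilities supplied by $\widetilde{\gamma}$.
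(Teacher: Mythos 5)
Your proposal is correct and follows essentially the same route as the paper: both realize the two lattices inside $H^{2}(S,\mathbf{Z})$ via the coverings $\rho$ and $\lambda$, identify $H^{1}(A,\mathbf{Z})$ with the anti-invariant part of $H^{1}(C,\mathbf{Z})$, show that both $\rho^{*}H'_{Y}$ and $\lambda^{*}H^{2}(A,\mathbf{Z})$ are spanned by $\delta_{0}+\widehat{\delta}$, $(\delta_{1}-\delta_{2})+2\widehat{\delta}$, $\delta_{3},\dots,\delta_{6}$, define $\Psi_{Y}=(\lambda^{*})^{-1}\circ\rho^{*}$, and match the ample classes through $\rho^{*}c_{Y}=\lambda^{*}[\iota(C)]=4\widehat{\delta}+2\delta_{0}+\delta_{1}-\delta_{2}$. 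The only (harmless) difference is the endgame: you close the lattice equality by a rank-and-discriminant comparison (the same device the paper uses in Lemma~\ref{120819_5Nov16}), whereas the paper exhibits the explicit $\mathbf{Z}$-basis of $H'_{Y}$ from Remark~\ref{145227_5Nov16} and computes its image directly, which in particular settles the membership of $\delta_{0}+\widehat{\delta}$ and $(\delta_{1}-\delta_{2})+2\widehat{\delta}$ in $\rho^{*}H_{Y}$ that you defer to ``careful bookkeeping.''
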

\begin{proof}
 By Remark~\ref{145227_5Nov16}, we can compute that
 $$
 \gamma_{1}+\widetilde{\gamma}+\gamma_{11},\quad
 \gamma_{2}-3\widetilde{\gamma}+2\gamma_{8}+2\gamma_{9}+2\gamma_{10}
 +\gamma_{11}-\gamma_{12}-\gamma_{13}-\gamma_{14},\quad
 \gamma_{3},\
 \gamma_{4},\
 \gamma_{5},\
 \gamma_{6}
 \in{H'_{Y}}
 $$
 form a $\mathbf{Z}$-basis of $H'_{Y}$.
 Since $H^{1}(A,\mathbf{Z})$ is identified with the kernel of the Gysin homomorphism
 $\pi_{*}:H^{1}(C,\mathbf{Z})\rightarrow{H^{1}(E,\mathbf{Z})}$, we have
 $$
 H^{1}(A,\mathbf{Z})\simeq
 \mathbf{Z}\alpha_{1}\oplus
 \mathbf{Z}(\alpha_{2}-\alpha_{3})\oplus
 \mathbf{Z}\beta_{1}\oplus
 \mathbf{Z}(\beta_{2}-\beta_{3})
 \subset{H^{1}(C,\mathbf{Z})}.
 $$
 By the covering $\lambda:{S}\rightarrow{C^{(2)}}\rightarrow{A}$ which is given in Lemma~\ref{115601_2Dec16}, the pull-back
 $\lambda^{*}:H^{1}(A,\mathbf{Z})\rightarrow{H^{1}(S,\mathbf{Z})}$ coincides with the composition
 $$
 H^{1}(A,\mathbf{Z})\hookrightarrow
 H^{1}(C,\mathbf{Z})\overset{\lambda_{1}}{\longrightarrow}
 H^{1}(C^{(2)},\mathbf{Z})\hookrightarrow
 H^{1}(S,\mathbf{Z}),
 $$
 hence the pull-back
 $\lambda^{*}:H^{2}(A,\mathbf{Z})\rightarrow{H^{2}(S,\mathbf{Z})}$ coincides with the composition
 $$
 H^{2}(A,\mathbf{Z})\simeq
 \bigwedge^{2}H^{1}(A,\mathbf{Z})\hookrightarrow
 \bigwedge^{2}H^{1}(C,\mathbf{Z})\overset{\lambda_{1}}{\longrightarrow}
 \bigwedge^{2}H^{1}(C^{(2)},\mathbf{Z})\overset{\cup}{\longrightarrow}
 H^{2}(C^{(2)},\mathbf{Z})\hookrightarrow
 H^{2}(S,\mathbf{Z}).
 $$
 Then we have $\rho^{*}H'_{Y}=\lambda^{*}H^{2}(A,\mathbf{Z})$, because
 $$
 \begin{cases}
  \rho^{*}(\gamma_{1}+\widetilde{\gamma}+\gamma_{11})
  =(\delta_{0}+\widehat{\delta})+(\delta_{1}-\delta_{2}+2\widehat{\delta})
  =\lambda_{1}(\alpha_{1})\cup\lambda_{1}(\beta_{1})
  +\lambda_{1}(\alpha_{2}-\alpha_{3})\cup\lambda_{1}(\beta_{2}-\beta_{3}),\\
  \rho^{*}(\gamma_{2}-3\widetilde{\gamma}
  +2\gamma_{8}+2\gamma_{9}+2\gamma_{10}
  +\gamma_{11}-\gamma_{12}-\gamma_{13}-\gamma_{14})
  =\delta_{0}+\widehat{\delta}
  =\lambda_{1}(\alpha_{1})\cup\lambda_{1}(\beta_{1}),\\
  \rho^{*}\gamma_{3}=\delta_{3}=
  \lambda_{1}(\alpha_{1})\cup\lambda_{1}(\alpha_{2}-\alpha_{3}),\\
  \rho^{*}\gamma_{4}=\delta_{4}=
  \lambda_{1}(\beta_{1})\cup\lambda_{1}(\beta_{2}-\beta_{3})),\\
  \rho^{*}\gamma_{5}=\delta_{5}=
  \lambda_{1}(\alpha_{1})\cup\lambda_{1}(\beta_{2}-\beta_{3})),\\
  \rho^{*}\gamma_{6}=\delta_{6}=
  \lambda_{1}(\alpha_{2}-\alpha_{3})\cup\lambda_{1}(\beta_{1}).
 \end{cases}
 $$
 Since $\rho^{*}$ and $\lambda^{*}$ are injective homomorphisms of Hodge structures, we have the isomorphism of Hodge structures by
 $\Psi_{Y}=\lambda^{*}\circ(\rho^{*})^{-1}:
 H'_{Y}\rightarrow{H^{2}(A,\mathbf{Z})}$,
 and it satisfies
 $$
 \langle{\Psi_{Y}(x),\Psi_{Y}(x')}\rangle_{A}
 =2\langle{\rho^{*}x,\rho^{*}x'}\rangle_{S}
 =\langle{x,x'}\rangle_{Y}
 $$
 for $x,x'\in{H'_{Y}}$.
 The class
 $$
 [\iota(C)]=2\alpha_{1}\cup\beta_{1}
 +(\alpha_{2}-\alpha_{3})\cup(\beta_{2}-\beta_{3})
 \in{H^{2}(A,\mathbf{Z})}
 $$
 corresponds to $c_{Y}=[K_{Y}+\rho(\Delta_{\sigma})]\in{H'_{Y}}$, because
 $$
 2\lambda_{1}(\alpha_{1})\cup\lambda_{1}(\beta_{1})
 +\lambda_{1}(\alpha_{2}-\alpha_{3})\cup\lambda_{1}(\beta_{2}-\beta_{3})
 =2(\delta_{0}+\widehat{\delta})
 +(\delta_{1}-\delta_{2}+2\widehat{\delta})
 =\rho^{*}[K_{Y}+\rho(\Delta_{\sigma})].
 $$
\end{proof}
\subsection{Construction of $H^{2}(Y,\mathbf{Z})$ from $H^{2}(A,\mathbf{Z})$}\label{154538_6Dec16}
Let $H=\bigoplus_{i=1}^{8}\mathbf{Z}{v}_{i}$ be the lattice defined by
$$
\langle{v}_{i},{v}_{j}\rangle_{A}=
\begin{cases}
 -1&(i=j)\\
 0&(i\neq{j}).
\end{cases}
$$
We set
$$
\widetilde{v}=\frac{1}{2}
([\iota(C)]+{v}_{1}+{v}_{2}+{v}_{3}-{v}_{4}-{v}_{5}-{v}_{6}-{v}_{7}-{v}_{8})
\in{H^{2}(A,\mathbf{Q})\oplus{(\mathbf{Q}\otimes_{\mathbf{Z}}H)}},
$$
and we define a $\mathbf{Z}$-submodule of
$H^{2}(A,\mathbf{Q})\oplus{(\mathbf{Q}\otimes_{\mathbf{Z}}H)}$
by
$$
H_{A}=(H^{2}(A,\mathbf{Z})\oplus{H})+\mathbf{Z}\widetilde{v}.
$$
Let $H'_{A}\subset{H_{A}}$ be the $\mathbf{Z}$-submodule defined by
$$
H'_{A}=\{x\in{H_{A}}\mid\langle{x},\widetilde{v}\rangle_{A}\in\mathbf{Z}\}.
$$
Then the symmetric form
$$
\langle\ ,\ \rangle_{A}:\
(H^{2}(A,\mathbf{Q})\oplus{(\mathbf{Q}\otimes_{\mathbf{Z}}H)})\times
(H^{2}(A,\mathbf{Q})\oplus{(\mathbf{Q}\otimes_{\mathbf{Z}}H)})
\longrightarrow
\mathbf{Q}
$$
has integral values on $H'_{A}$, and the finite subset
$$
\Pi_{A}=\{
[\iota(C)]-2{v}_{7},\
2{v}_{7},\
\widetilde{v},\
\widetilde{v}-{v}_{2}-{v}_{3},\
\widetilde{v}-{v}_{1}-{v}_{3},\
\widetilde{v}-{v}_{1}-{v}_{2},\
\pm{v}_{1}+{v}_{4},\
\pm{v}_{2}+{v}_{5},\
\pm{v}_{3}+{v}_{6}
\}
$$
is contained in $H'_{A}$.
\begin{lemma}\label{132213_8Dec16}
 There is an isomorphism $\Psi_{A}:{(H'_{A},\langle\ ,\ \rangle_{A},\Pi_{A})}\overset{\simeq}{\rightarrow}(H^{2}(Y,\mathbf{Z}),\langle\ ,\ \rangle_{Y},\Pi_{Y})$ of Hodge structure which preserve the symmetric forms and the finite subsets.
\end{lemma}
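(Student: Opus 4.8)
The plan is to run the construction of Lemma~\ref{161526_8Dec16} in reverse. Over $\mathbf{Q}$ the desired isomorphism will be block diagonal for the decomposition $H^{2}(Y,\mathbf{Q})\simeq H^{2}(A,\mathbf{Q})\oplus\mathbf{Q}(-1)^{\oplus8}$: the first block is the inverse $\Psi_{Y}^{-1}$ of Lemma~\ref{161526_8Dec16}, and the second block is an explicit isometry of $\mathbf{Q}\otimes H$ onto the $\mathbf{Q}(-1)^{\oplus8}$-summand spanned by the exceptional classes $[\rho(\Delta_{\sigma})]$, $[\rho(\tilde{\Gamma}_{i})]$, $[\rho(E_{ij})]$. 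The substance of the lemma is then purely integral: one must show that the glued lattice $H'_{A}$ is carried exactly onto $H^{2}(Y,\mathbf{Z})$, and that the distinguished finite sets correspond.

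First I would pin down the second block by matching $\Pi_{A}$ to $\Pi_{Y}$ according to self-intersection. Using $\langle v_{i},v_{j}\rangle_{A}=-\delta_{ij}$ and $\langle[\iota(C)],[\iota(C)]\rangle_{A}=4$, a direct computation with $\{i,j\}\subset\{1,2,3\}$ gives
\[
([\iota(C)]-2v_{7})^{2}=0,\quad (2v_{7})^{2}=-4,\quad \widetilde{v}^{\,2}=(\widetilde{v}-v_{i}-v_{j})^{2}=-1,\quad (\pm v_{i}+v_{i+3})^{2}=-2,
\]
which, compared against the self-intersection numbers $0,-4,-1,-2$ of $[K_{Y}]$, $[\rho(\Delta_{\sigma})]$, the $[\rho(\tilde{\Gamma}_{i})]$ and the $[\rho(E_{ij})]$ recorded in subsection~\ref{154259_6Dec16}, forces the dictionary
\[
[\iota(C)]-2v_{7}\mapsto[K_{Y}],\quad 2v_{7}\mapsto[\rho(\Delta_{\sigma})],\quad \{\widetilde{v},\ \widetilde{v}-v_{i}-v_{j}\}\mapsto\{[\rho(\tilde{\Gamma}_{i})]\},\quad \{\pm v_{i}+v_{i+3}\}\mapsto\{[\rho(E_{ij})]\}.
\]
In particular $[\iota(C)]\in H'_{A}$ (since $\langle[\iota(C)],\widetilde{v}\rangle_{A}=2$) is sent to $[K_{Y}]+[\rho(\Delta_{\sigma})]=c_{Y}$, in agreement with Lemma~\ref{161526_8Dec16}; this compatibility is what allows the two blocks to be glued into a single map $\Psi_{A}$.

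With $\Psi_{A}$ fixed on a rational generating set, three verifications remain. That $\Psi_{A}$ is an isometry is a finite linear-algebra check against the intersection matrix of Remark~\ref{145227_5Nov16} and the diagonal form on $H$, of the same kind as in the proof of Lemma~\ref{161526_8Dec16}. That $\Psi_{A}$ is a morphism of Hodge structures is inherited on the $H^{2}(A,\mathbf{Z})$-block from $\Psi_{Y}$, while each $v_{i}$ is of Tate type $(1,1)$ and maps to an algebraic (exceptional) class on $Y$; since $H^{2}(Y,\mathbf{C})^{2,0}$ is one-dimensional and already lies in the rational span of $H'_{Y}$, the Hodge filtrations agree. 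That $\Psi_{A}(\Pi_{A})=\Pi_{Y}$ is precisely the dictionary above.

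The hard part will be the integral statement $\Psi_{A}(H'_{A})=H^{2}(Y,\mathbf{Z})$, as opposed to the mere rational isomorphism. Here the two glueing devices must be shown to coincide: the half-integral vector $\widetilde{v}$ together with the condition $\langle x,\widetilde{v}\rangle_{A}\in\mathbf{Z}$ defining $H'_{A}$ has to reproduce exactly the overlattice relation by which $H^{2}(Y,\mathbf{Z})$ extends $H'_{Y}$ and the exceptional lattice, governed by the class $\widetilde{\gamma}$ and the branch divisor of $\rho$ in subsection~\ref{154259_6Dec16}. Note that $H^{2}(A,\mathbf{Z})$ itself is \emph{not} contained in $H'_{A}$, because the pairing $\langle\,\cdot\,,[\iota(C)]\rangle_{A}$ takes odd values on $H^{2}(A,\mathbf{Z})$; thus $H'_{A}$ genuinely mixes the two blocks, exactly as $H^{2}(Y,\mathbf{Z})$ mixes $H'_{Y}$ with the exceptional classes. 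I would settle this as in Lemma~\ref{120819_5Nov16}, by exhibiting an explicit $\mathbf{Z}$-basis of $H'_{A}$ whose image is the basis $\gamma_{1},\dots,\gamma_{14}$ of Remark~\ref{145227_5Nov16}, or equivalently by checking that $H'_{A}$ and $H^{2}(Y,\mathbf{Z})$ have equal rank and equal discriminant, so that the rational isometry between them is automatically an isomorphism of lattices.
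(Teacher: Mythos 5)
Your operational plan---running Lemma~\ref{161526_8Dec16} in reverse, fixing the map by the dictionary between $\Pi_{A}$ and $\Pi_{Y}$, and then settling the integral comparison---is exactly the paper's strategy, and your dictionary agrees with the one in the paper's proof. But your opening structural claim is a genuine error, not a harmless simplification: no isometry extending $\Psi_{Y}^{-1}$ can be block diagonal with $\mathbf{Q}\otimes{H}$ carried onto the span of the exceptional classes. Since $H$ is orthogonal to $H^{2}(A,\mathbf{Z})$ in the source, any isometry restricting to $\Psi_{Y}^{-1}$ on $H^{2}(A,\mathbf{Q})$ must carry $\mathbf{Q}\otimes{H}$ onto the \emph{orthogonal complement} of $H'_{Y}\otimes\mathbf{Q}$ in $H^{2}(Y,\mathbf{Q})$, and the span of $[\rho(\Delta_{\sigma})]$, $[\rho(\tilde{\Gamma}_{i})]$, $[\rho(E_{ij})]$ (which is the span of $\gamma_{7},\dots,\gamma_{14}$) is not orthogonal to $H'_{Y}$: by Remark~\ref{145227_5Nov16} one has $\langle\gamma_{7},\,2\gamma_{1}+\gamma_{7}+\gamma_{8}+\gamma_{9}+\gamma_{10}+2\gamma_{11}\rangle_{Y}=3\neq0$, while $\gamma_{1}+\widetilde{\gamma}+\gamma_{11}\in{H'_{Y}}$. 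Concretely, your own dictionary (together with $\Psi_{Y}^{-1}$ on the $A$-block) forces
$$
v_{8}=[\iota(C)]+v_{1}+v_{2}+v_{3}-v_{4}-v_{5}-v_{6}-v_{7}-2\widetilde{v}
\ \longmapsto\
\gamma_{1}+\gamma_{2}+\tfrac{1}{2}[\rho(\Delta_{\sigma})],
$$
which has a nonzero component along $H'_{Y}$ and does not lie in the span of the exceptional classes; so the justification that ``each $v_{i}$ maps to an algebraic (exceptional) class'' also fails for $v_{8}$ (its image is still of type $(1,1)$, since $\gamma_{1}+\gamma_{2}$ is, but for a different reason). The mixing of the two blocks is thus already a rational phenomenon, not only an integral one. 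This is precisely why the paper never asserts block-diagonality: it extends $\lambda^{*}$ to all of $H_{A}$ by explicit formulas inside $H^{2}(S,\mathbf{Z})$, namely $\lambda^{*}\widetilde{v}=\widehat{\delta}-[E_{12}]-[E_{13}]-[E_{14}]$, $\lambda^{*}v_{1}=[E_{34}]-[E_{12}]$, \dots, $\lambda^{*}v_{7}=[\Delta_{\sigma}]$, and compares the images of $H_{A}$ and $H_{Y}$ there.

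A second, smaller flaw is the fallback you offer for the integral statement: ``equal rank and equal discriminant, so that the rational isometry between them is automatically an isomorphism of lattices'' is not a valid principle---two full-rank lattices in the same quadratic space with the same discriminant need not correspond under a given rational isometry; one needs a containment first. The paper proves the inclusion $\lambda^{*}H_{A}\subset\rho^{*}H_{Y}$ (each $\lambda^{*}v_{i}$ is exhibited as $\rho^{*}$ of an explicit integral class in $H_{Y}$), and only then uses equality of determinants to upgrade the inclusion to $\lambda^{*}H_{A}=\rho^{*}H_{Y}$; after that, $H'_{A}$ corresponds to $H^{2}(Y,\mathbf{Z})$ because both are cut out of $H_{A}\simeq{H_{Y}}$ by the matching parity conditions against $\widetilde{v}$ and $\gamma_{7}$, and the finite sets are checked by the dictionary. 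Your first alternative---exhibiting an explicit $\mathbf{Z}$-basis of $H'_{A}$ whose image is the basis $\gamma_{1},\dots,\gamma_{14}$ of Remark~\ref{145227_5Nov16}, as in Lemma~\ref{120819_5Nov16}---is sound, and once the false block-diagonal ansatz is discarded it essentially reproduces the paper's argument.
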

\begin{proof}
 We extend the homomorphism $\lambda^{*}:H^{2}(A,\mathbf{Z})\hookrightarrow{H^{2}(S,\mathbf{Z})}$ to $\lambda^{*}:H_{A}\hookrightarrow{H^{2}(S,\mathbf{Z})}$
 by
 $$
 \begin{cases}
  \lambda^{*}\widetilde{v}=\widehat{\delta}-[E_{12}]-[E_{13}]-[E_{14}],\\
  \lambda^{*}v_{1}=[E_{34}]-[E_{12}],\\
  \lambda^{*}v_{2}=[E_{24}]-[E_{13}],\\
  \lambda^{*}v_{3}=[E_{23}]-[E_{14}],\\
 \end{cases}\qquad
 \begin{cases}
  \lambda^{*}v_{4}=[E_{34}]+[E_{12}],\\
  \lambda^{*}v_{5}=[E_{24}]+[E_{13}],\\
  \lambda^{*}v_{6}=[E_{23}]+[E_{14}],\\
  \lambda^{*}v_{7}=[\Delta_{\sigma}],\\
 \end{cases}
 $$
 where we remark that the image $\lambda^{*}v_{8}$ is determined by the relation
 $v_{8}=2\widetilde{v}-[\iota(C)]-v_{1}-v_{2}-v_{3}+v_{4}+v_{5}+v_{6}+v_{7}$, and it satisfies
 $$
 \langle\lambda^{*}x,\lambda^{*}x'\rangle_{A}=2\langle{x,x'}\rangle_{S}
 $$
 for $x,x'\in{H_{A}}$.
 Since $\lambda^{*}H^{2}(A,\mathbf{Z})=\rho^{*}H'_{Y}\subset\rho^{*}H_{Y}$
 and
 $$
 \begin{cases}
  \lambda^{*}\widetilde{v}=\rho^{*}\gamma_{7},\\
  \lambda^{*}v_{1}=\rho^{*}(\widetilde{\gamma}-\gamma_{9}-\gamma_{10}),\\
  \lambda^{*}v_{2}=\rho^{*}(\widetilde{\gamma}-\gamma_{8}-\gamma_{10}),\\
  \lambda^{*}v_{3}=\rho^{*}(\widetilde{\gamma}-\gamma_{8}-\gamma_{9}),\\
 \end{cases}\qquad
 \begin{cases}
  \lambda^{*}v_{4}=\rho^{*}(\widetilde{\gamma}-\gamma_{9}-\gamma_{10}+\gamma_{12}),\\
  \lambda^{*}v_{5}=\rho^{*}(\widetilde{\gamma}-\gamma_{8}-\gamma_{10}+\gamma_{13}),\\
  \lambda^{*}v_{6}=\rho^{*}(\widetilde{\gamma}-\gamma_{8}-\gamma_{9}+\gamma_{14}),\\
  \lambda^{*}v_{7}=\rho^{*}(-3\widetilde{\gamma}
  +2\gamma_{8}+2\gamma_{9}+2\gamma_{10}
 +\gamma_{11}-\gamma_{12}-\gamma_{13}-\gamma_{14}),\\
 \end{cases}
 $$
 we have $\lambda^{*}H_{A}\subset\rho^{*}{H_{Y}}$.
 Since $\lambda^{*}H_{A}$ and $\rho^{*}{H_{Y}}$ have the same determinants by $\langle\ ,\ \rangle_{S}$, we have $\lambda^{*}H_{A}=\rho^{*}{H_{Y}}$, and
 $$
 \lambda^{*}H'_{A}=
 \{\lambda^{*}x\in\lambda^{*}H_{A}\mid\langle{\lambda^{*}x,\lambda^{*}\widetilde{v}}\rangle_{S}
 \in2\mathbf{Z}\}
 =\{\rho^{*}y\in\rho^{*}H_{Y}\mid\langle{\rho^{*}y,\rho^{*}\gamma_{7}}\rangle_{S}
 \in2\mathbf{Z}\}
 =\rho^{*}H^{2}(Y,\mathbf{Z}).
 $$
 The finite set $\Pi_{Y}$ corresponds to $\Pi_{A}$, because
 $$
 \begin{cases}
  \rho^{*}[K_{Y}]=\lambda^{*}([\iota(C)]-2{v}_{7}),\\
  \rho^{*}[\rho(\Delta_{\sigma})]=\lambda^{*}(2{v}_{7}),\\
  \rho^{*}[\rho(\tilde{\Gamma}_{1})]=\lambda^{*}\widetilde{v},\\
  \rho^{*}[\rho(\tilde{\Gamma}_{2})]=\lambda^{*}(\widetilde{v}-{v}_{2}-{v}_{3}),\\
  \rho^{*}[\rho(\tilde{\Gamma}_{3})]=\lambda^{*}(\widetilde{v}-{v}_{1}-{v}_{3}),\\
  \rho^{*}[\rho(\tilde{\Gamma}_{4})]=\lambda^{*}(\widetilde{v}-{v}_{1}-{v}_{2}),\\
 \end{cases}\qquad
 \begin{cases}
  \rho^{*}[\rho(E_{12})]=\lambda^{*}(-{v}_{1}+{v}_{4}),\\
  \rho^{*}[\rho(E_{13})]=\lambda^{*}(-{v}_{2}+{v}_{5}),\\
  \rho^{*}[\rho(E_{14})]=\lambda^{*}(-{v}_{3}+{v}_{6}),\\
  \rho^{*}[\rho(E_{34})]=\lambda^{*}({v}_{1}+{v}_{4}),\\
  \rho^{*}[\rho(E_{24})]=\lambda^{*}({v}_{2}+{v}_{5}),\\
  \rho^{*}[\rho(E_{34})]=\lambda^{*}({v}_{3}+{v}_{6}).
 \end{cases}
 $$
\end{proof}
\begin{corollary}
 The transcendental lattice of $H^{2}(Y,\mathbf{Z})$ is isomorphic to the transcendental lattice of $H^{2}(A,\mathbf{Z})$.
\end{corollary}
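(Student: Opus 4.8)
The plan is to realize both transcendental lattices through the isometry $\Psi_Y$ of Lemma~\ref{161526_8Dec16}. Write $T_Y\subseteq H^{2}(Y,\mathbf{Z})$ for the transcendental lattice, that is, the orthogonal complement $\NS(Y)^{\perp}$, equivalently the smallest primitive sub-Hodge structure whose complexification contains the line $H^{2,0}(Y)$ (a line since $p_{g}(Y)=1$ by Proposition~\ref{161646_15Oct16}), and set $T_{Y,\mathbf{Q}}=\NS(Y)^{\perp}\otimes\mathbf{Q}$; define $T_{A}$ and $T_{A,\mathbf{Q}}$ analogously. Every class used to cut out $H'_{Y}$ from $H_{Y}$, namely $[\rho(E_{ij})]$, $[\rho(\Delta_{\sigma})]$, $[K_{Y}]$ and $\widetilde{\gamma}$, is algebraic, hence of type $(1,1)$, so every element of $T_{Y,\mathbf{Q}}$ is automatically orthogonal to all of them. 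Consequently $T_{Y}\subseteq H'_{Y}$, and moreover $H'_{Y}\cap T_{Y,\mathbf{Q}}=H_{Y}\cap T_{Y,\mathbf{Q}}$.

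First I would transport the transcendental part across $\Psi_Y$. By Lemma~\ref{161526_8Dec16}, $\Psi_Y$ is an isomorphism of integral Hodge structures $H'_{Y}\xrightarrow{\sim}H^{2}(A,\mathbf{Z})$ with $\langle\Psi_Y(x),\Psi_Y(x')\rangle_{A}=\langle x,x'\rangle_{Y}$, and its complexification carries the line $H^{2,0}(Y)\subseteq H'_{Y,\mathbf{C}}$ onto $H^{2,0}(A)$. Since $T_{Y,\mathbf{Q}}$ (resp. $T_{A,\mathbf{Q}}$) is the smallest rational sub-Hodge structure of $H'_{Y,\mathbf{Q}}$ (resp. of $H^{2}(A,\mathbf{Q})$) whose complexification contains this line, and $\Psi_Y$ preserves Hodge type, $\Psi_Y$ sends $T_{Y,\mathbf{Q}}$ onto $T_{A,\mathbf{Q}}$ and therefore restricts to a Hodge isometry $H'_{Y}\cap T_{Y,\mathbf{Q}}\xrightarrow{\sim}H^{2}(A,\mathbf{Z})\cap T_{A,\mathbf{Q}}=T_{A}$. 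Thus it remains only to identify $H'_{Y}\cap T_{Y,\mathbf{Q}}$ with $T_{Y}$ itself.

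The main obstacle is precisely this last identification, because $H'_{Y}$ sits inside the overlattice $H_{Y}=H^{2}(Y,\mathbf{Z})+\mathbf{Z}\widetilde{\gamma}$ obtained by adjoining the half-integral algebraic class $\widetilde{\gamma}=\tfrac12\sum_{i}[\rho(\tilde{\Gamma}_{i})]$; a priori $H_{Y}\cap T_{Y,\mathbf{Q}}$ might contain $T_{Y}$ with index $2$, which would only exhibit $T_{Y}$ as an index-$2$ sublattice of $T_{A}$ and the isomorphism would fail. To rule this out I would argue as follows. Suppose $x=h+\widetilde{\gamma}\in T_{Y,\mathbf{Q}}$ with $h\in H^{2}(Y,\mathbf{Z})$. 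Being transcendental, $x$ is orthogonal to the algebraic class $[\rho(\tilde{\Gamma}_{1})]$, so $\langle h,[\rho(\tilde{\Gamma}_{1})]\rangle_{Y}+\langle\widetilde{\gamma},[\rho(\tilde{\Gamma}_{1})]\rangle_{Y}=0$. But $\langle h,[\rho(\tilde{\Gamma}_{1})]\rangle_{Y}\in\mathbf{Z}$, whereas the intersection numbers in Remark~\ref{145227_5Nov16} give $\langle\widetilde{\gamma},[\rho(\tilde{\Gamma}_{1})]\rangle_{Y}=\tfrac12\langle[\rho(\tilde{\Gamma}_{1})],[\rho(\tilde{\Gamma}_{1})]\rangle_{Y}=-\tfrac12$, since the $\rho(\tilde{\Gamma}_{i})$ are pairwise disjoint with self-intersection $-1$. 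This contradiction shows that the $\widetilde{\gamma}$-coefficient of any element of $H_{Y}\cap T_{Y,\mathbf{Q}}$ is even, whence $H_{Y}\cap T_{Y,\mathbf{Q}}=T_{Y}$. Combined with the second paragraph this gives the desired isometry $T_{Y}\simeq T_{A}$ of Hodge structures with bilinear form; the same conclusion follows symmetrically from Lemma~\ref{132213_8Dec16}.
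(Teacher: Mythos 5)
Your proof is correct, and its decisive computation coincides exactly with the paper's: the only danger is that the overlattice $H_{Y}=H^{2}(Y,\mathbf{Z})+\mathbf{Z}\widetilde{\gamma}$ could contain additional transcendental classes of the form $x+\widetilde{\gamma}$, and both you and the paper rule this out by pairing against $[\rho(\tilde{\Gamma}_{1})]=\gamma_{7}$, whose product with $\widetilde{\gamma}$ is the half-integer $-\tfrac12$. Where you genuinely differ is the transport step. The paper never leaves the full overlattices: it proves $H^{2}(Y,\mathbf{Z})_{\mathrm{tr}}=H_{Y,\mathrm{tr}}$, asserts the analogous identity $H^{2}(A,\mathbf{Z})_{\mathrm{tr}}=(H^{2}(A,\mathbf{Z})\oplus H)_{\mathrm{tr}}=H_{A,\mathrm{tr}}$ ``in the similar way'' on the $A$-side, and then quotes the Hodge isometry $(H_{Y},\langle\ ,\ \rangle_{Y})\simeq(H_{A},\langle\ ,\ \rangle_{A})$ from the proof of Lemma~\ref{132213_8Dec16}; in that formulation the matching of transcendental parts is automatic, because a Hodge isometry manifestly preserves the orthogonal complement of the $(1,1)$-part. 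You instead cut down to $H'_{Y}$ and use Lemma~\ref{161526_8Dec16}, i.e.\ $\Psi_{Y}\colon H'_{Y}\simeq H^{2}(A,\mathbf{Z})$, which lands you directly in the integral cohomology of $A$. This buys you economy: the overlattice argument is needed only on the $Y$-side, and no analogue of the paper's ``similar way'' claim for $H_{A}$ is required. The modest price is that you must invoke the characterization of $T_{\mathbf{Q}}$ as the smallest rational sub-Hodge structure whose complexification contains $H^{2,0}$ (standard for surfaces, via nondegeneracy of the intersection form and the Hodge--Riemann inequality $\langle\omega,\bar{\omega}\rangle>0$) in order to see that $\Psi_{Y}$ identifies the transcendental parts, since $\Psi_{Y}$ is not defined on all of $H_{Y}$ and so the ``orthogonal complement of the $(1,1)$-part'' bookkeeping of the paper is not directly available. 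Both routes are sound; yours is slightly leaner, the paper's is more self-contained at the lattice level.
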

\begin{proof}
 The transcendental lattice of $H^{2}(Y,\mathbf{Z})$ is defined as
 $$
 H^{2}(Y,\mathbf{Z})_{\mathrm{tr}}=
 \{x\in{H^{2}(Y,\mathbf{Z})}\mid\langle{x,y}\rangle_{Y}=0\
 \text{for}\ y\in\NS{(Y)}\}.
 $$
 Since
 $2(H_{Y}\cap{(\mathbf{C}\otimes_{\mathbf{Z}}H_{Y})^{1,1}})
 \subset\NS{(Y)}$,
 the transcendental lattice $H^{2}(Y,\mathbf{Z})_{\mathrm{tr}}$ is contained in
 $$
 H_{Y,\mathrm{tr}}=\{x\in{H_{Y}}\mid\langle{x,y}\rangle_{Y}=0\
 \text{for}\ y\in{H_{Y}\cap{(\mathbf{C}\otimes_{\mathbf{Z}}H_{Y})^{1,1}}}\}.
 $$
 For $x\in{H^{2}(Y,\mathbf{Z})}$ and ${m}\in\mathbf{Z}$, if
 $x+m\widetilde{\gamma}\in{H_{Y,\mathrm{tr}}}$, then
 $\langle{x+m\widetilde{\gamma},\gamma_{7}}\rangle_{Y}=0$,
 hence we have
 $\langle{x,\gamma_{7}}\rangle_{Y}=\frac{1}{2}m\in\mathbf{Z}$
 and
 $x+m\widetilde{\gamma}\in{H^{2}(Y,\mathbf{Z})}$.
 Since $\NS{(Y)}\subset{H_{Y}\cap{(\mathbf{C}\otimes_{\mathbf{Z}}H_{Y})^{1,1}}}$, we have $H^{2}(Y,\mathbf{Z})_{\mathrm{tr}}=H_{Y,\mathrm{tr}}$.
 In the similar way, we can show that
 $$
 H^{2}(A,\mathbf{Z})_{\mathrm{tr}}
 =(H^{2}(A,\mathbf{Z})\oplus{H})_{\mathrm{tr}}
 =H_{A,\mathrm{tr}}.
 $$
 By the proof of Lemma~\ref{132213_8Dec16}, there is an isomorphism of Hodge structures $(H_{Y},\langle\ ,\ \rangle_{Y})\simeq(H_{A},\langle\ ,\ \rangle_{A})$, hence we have $H_{Y,\mathrm{tr}}\simeq{H_{A,\mathrm{tr}}}$.
\end{proof}
\begin{remark}
 Both the Hodge structures $H^{2}(Y,\mathbf{Z})$ and
 $H^{2}(A,\mathbf{Z})\oplus{H}$
 are sublattices of index $2$ in $H_{Y}\simeq{H_{A}}$.
 But $H^{2}(Y,\mathbf{Z})$ is not isometric to $H^{2}(A,\mathbf{Z})\oplus{H}$.
 In generic case, we can compute that the N\'{e}ron-Severi lattice of $Y$ is
 $\NS{(Y)}\simeq\mathbf{1}\oplus(\mathbf{-1})^{\oplus5}\oplus(-A_{3})$,
 where $(-A_{3})$ denotes the negative definite root lattice of type $A_{3}$.
 It is not isomorphic to
 $\NS{(A)}\oplus{H}\simeq\mathbf{4}\oplus(\mathbf{-1})^{\oplus8}$.
\end{remark}
\section{Torelli problem}\label{183635_16Oct16}
\subsection{Infinitesimal Torelli problem}
Let $Y=Y(C/E)$ be the surface constructed from a bielliptic curve $\pi:C\rightarrow{E}$.
\begin{proposition}\label{142139_12Oct16}
 The infinitesimal period map
 $$
 H^{1}(Y,{T}_{Y})\longrightarrow
 \Hom{(H^{0}(Y,\Omega_{Y}^{2}),H^{1}(Y,\Omega_{Y}^{1}))}
 $$
 is not injective.
\end{proposition}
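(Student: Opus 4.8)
The plan is to identify the infinitesimal period map with a single cup-product map and to locate its kernel via the divisor where the holomorphic $2$-form degenerates. Since $p_{g}(Y)=1$ by Proposition~\ref{161646_15Oct16}, the space $H^{0}(Y,\Omega_{Y}^{2})$ is spanned by one form $\omega$, so the target $\Hom(H^{0}(Y,\Omega_{Y}^{2}),H^{1}(Y,\Omega_{Y}^{1}))$ is canonically $H^{1}(Y,\Omega_{Y}^{1})$, and the infinitesimal period map is the contraction $\theta\mapsto\iota_{\theta}\omega$, i.e. the cup-product map $\cup\,\omega\colon H^{1}(Y,T_{Y})\to H^{1}(Y,\Omega_{Y}^{1})$. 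On a surface one has $T_{Y}\otimes\Omega_{Y}^{2}\cong\Omega_{Y}^{1}$ via contraction with a local generator of $\Omega_{Y}^{2}$, and by Proposition~\ref{161646_15Oct16} the form $\omega$ is the pull-back of the unique section of $\mathcal{O}_{B}(\eta)$, hence vanishes exactly along $K_{Y}=f^{-1}(\eta)$. Multiplication by $\omega$ therefore fits into the short exact sequence
\[
0\longrightarrow T_{Y}\xrightarrow{\;\cdot\,\omega\;}\Omega_{Y}^{1}\longrightarrow\Omega_{Y}^{1}\vert_{K_{Y}}\longrightarrow0,
\]
the cokernel being $(T_{Y}\otimes\Omega_{Y}^{2})\vert_{K_{Y}}\cong\Omega_{Y}^{1}\vert_{K_{Y}}$.

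Taking cohomology gives the exact sequence
\[
H^{0}(Y,\Omega_{Y}^{1})\xrightarrow{\;r\;}H^{0}(K_{Y},\Omega_{Y}^{1}\vert_{K_{Y}})\xrightarrow{\;\delta\;}H^{1}(Y,T_{Y})\xrightarrow{\;\cup\,\omega\;}H^{1}(Y,\Omega_{Y}^{1}),
\]
so that $\Ker(\cup\,\omega)=\Image\delta\cong\operatorname{coker}r$. Thus it suffices to show that the restriction map $r$ is not surjective. By Corollary~\ref{143245_20Jan17} we have $h^{0}(Y,\Omega_{Y}^{1})=q(Y)=1$, and moreover $H^{0}(Y,\Omega_{Y}^{1})=f^{*}H^{0}(B,\Omega_{B}^{1})$ consists of pull-backs from the base; such a form restricts to $K_{Y}$ inside the horizontal subsheaf $f^{*}\Omega_{B}^{1}\vert_{K_{Y}}\cong\mathcal{O}_{K_{Y}}$ of $\Omega_{Y}^{1}\vert_{K_{Y}}$, so $\dim\Image r\le1$. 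It remains to prove $h^{0}(K_{Y},\Omega_{Y}^{1}\vert_{K_{Y}})\ge2$.

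Assume first that $C$ is non-hyperelliptic, so that $K_{Y}$ is a smooth curve of genus $1$ by Proposition~\ref{160302_14Oct16}. As $K_{Y}=f^{-1}(\eta)$ is a fibre, its normal bundle is trivial, and the relative cotangent sequence reads
\[
0\longrightarrow\mathcal{O}_{K_{Y}}\longrightarrow\Omega_{Y}^{1}\vert_{K_{Y}}\longrightarrow\Omega_{K_{Y}}^{1}\cong\mathcal{O}_{K_{Y}}\longrightarrow0 .
\]
Its extension class in $\mathrm{Ext}^{1}(\Omega_{K_{Y}}^{1},\mathcal{O}_{K_{Y}})=H^{1}(K_{Y},T_{K_{Y}})$ is precisely the Kodaira--Spencer class of $f$ at $\eta$, which vanishes by Corollary~\ref{112709_18Oct16}. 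Hence the sequence splits, $\Omega_{Y}^{1}\vert_{K_{Y}}\cong\mathcal{O}_{K_{Y}}^{\oplus2}$, and $h^{0}(K_{Y},\Omega_{Y}^{1}\vert_{K_{Y}})=2$. Combined with $\dim\Image r\le1$ this gives $\operatorname{coker}r\ne0$, hence $\Ker(\cup\,\omega)\ne0$ and the map is not injective. The one place where the geometry genuinely enters — and the main point to verify carefully — is the splitting of this conormal sequence, equivalently the vanishing of the Kodaira--Spencer class, which is exactly the ramification of the $j$-function at $\eta$ recorded in Corollary~\ref{112709_18Oct16}; the rest is a dimension count. For the hyperelliptic case, where $K_{Y}$ is a singular fibre of type $\mathstrut_{1}I_{4}$ by Proposition~\ref{160302_14Oct16}, the smooth-fibre step must be replaced by a direct computation of $h^{0}(K_{Y},\Omega_{Y}^{1}\vert_{K_{Y}})$ on this Gorenstein curve; I expect the bound $h^{0}(K_{Y},\Omega_{Y}^{1}\vert_{K_{Y}})\ge2$ to persist, with $\Image r$ still confined to the one-dimensional horizontal part, so that the same count yields non-injectivity.
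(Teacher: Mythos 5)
Your argument for non-hyperelliptic $C$ is correct, and it is essentially the Serre-dual formulation of the paper's own proof: the paper multiplies $\Omega_{Y}^{1}$ by the canonical section to get $0\to\Omega_{Y}^{1}\to\Omega_{Y}^{2}\otimes\Omega_{Y}^{1}\to(\Omega_{Y}^{2}\otimes\Omega_{Y}^{1})\vert_{K_{Y}}\to0$ and shows the cup product fails to be \emph{surjective} by proving $h^{1}\bigl(K_{Y},(\Omega_{Y}^{2}\otimes\Omega_{Y}^{1})\vert_{K_{Y}}\bigr)\geq2$ against $h^{2}(Y,\Omega_{Y}^{1})=1$, while you multiply $T_{Y}$ by the same section and prove the dual bound $h^{0}(K_{Y},\Omega_{Y}^{1}\vert_{K_{Y}})\geq2$ against $h^{0}(Y,\Omega_{Y}^{1})=1$. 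Since $\Omega_{Y}^{2}\vert_{K_{Y}}\simeq\mathcal{O}_{K_{Y}}$, these are Serre-dual quantities on the elliptic curve $K_{Y}$, and both proofs hinge on the identical geometric input, namely the vanishing of the Kodaira--Spencer class at $\eta$ (Corollary~\ref{112709_18Oct16}); your splitting of the conormal sequence and the paper's splitting of the long exact sequence of the tangent sequence are two ways of exploiting that same vanishing.

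The genuine gap is the hyperelliptic case, which you only state you ``expect'' to work. The proposition is asserted for every bielliptic curve $C\rightarrow{E}$, and when $C$ is hyperelliptic the mechanism you rely on disappears entirely: by Proposition~\ref{160302_14Oct16} the canonical fibre $K_{Y}$ is then a singular fibre of type $\mathstrut_{1}I_{4}$, a cycle of four $(-2)$-curves, so $\Omega_{K_{Y}}^{1}$ is not locally free, there is no extension class in $H^{1}(K_{Y},T_{K_{Y}})$ of the kind you use, and Corollary~\ref{112709_18Oct16} is explicitly restricted to non-hyperelliptic $C$. The desired bound $h^{0}(K_{Y},\Omega_{Y}^{1}\vert_{K_{Y}})\geq2$ does persist, but it needs a separate and substantial argument: the paper writes $K_{Y}=\bigcup_{i=1}^{4}K_{i}$, applies the gluing inequality of Lemma~\ref{165642_21Oct16} repeatedly to reduce to the components, computes $h^{0}\bigl(K_{i},\Omega_{Y}^{1}(K_{i})\vert_{K_{i}}\bigr)=1$ for each $(-2)$-curve $K_{i}$ via the sequence $0\to\mathcal{O}_{K_{i}}\to\Omega_{Y}^{1}(K_{i})\vert_{K_{i}}\to\mathcal{O}_{K_{i}}(-4)\to0$, and then uses an Euler-characteristic computation to convert $h^{0}\geq4$ into the needed statement about $h^{1}$. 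Until you supply an argument of this kind (or restrict the statement), your proof establishes the proposition only on the complement of the hyperelliptic locus.
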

\begin{proof}
 By the duality, we prove that the cup product homomorphism
 $$
 \mu:\
 H^{0}(Y,\Omega_{Y}^{2})\otimes
 H^{1}(Y,\Omega_{Y}^{1})\longrightarrow
 H^{1}(Y,\Omega_{Y}^{2}\otimes\Omega_{Y}^{1})
 $$
 is not surjective.
 We have the exact sequence
 $$
 0\longrightarrow\Omega_{Y}^{1}
 \overset{s}{\longrightarrow}\Omega_{Y}^{2}\otimes\Omega_{Y}^{1}
 \longrightarrow
 (\Omega_{Y}^{2}\otimes\Omega_{Y}^{1})\vert_{K}
 \longrightarrow0,
 $$
 where $K$ denotes the zeros of a nontrivial section $s\in{H^{0}(Y,\Omega_{Y}^{2})}$.
 Since $h^{0}(Y,\Omega_{Y}^{2})=1$, the image of $\mu$ coincides with the image of
 $$
 H^{1}(Y,\Omega_{Y}^{1})\overset{s}{\longrightarrow}
 H^{1}(Y,\Omega_{Y}^{2}\otimes\Omega_{Y}^{1}).
 $$
 Since $h^{2}(Y,\Omega_{Y}^{1})=1$, it is enough to show that
 $h^{1}(K,(\Omega_{Y}^{2}\otimes\Omega_{Y}^{1})\vert_{K})\geq2$.
 By Proposition~\ref{161646_15Oct16}, $K$ is the fiber of $f:Y\rightarrow\Pic^{(2)}{(E)}$ at $\eta\in\Pic^{(2)}{(E)}$.
 If $C$ is not a hyperelliptic curve, then by Corollary~\ref{112709_18Oct16}, we have the exact sequence
 $$
 0\longrightarrow
 H^{1}(K,{T}_{K})\longrightarrow
 H^{1}(K,{T}_{Y}\vert_{K})\longrightarrow
 H^{1}(K,\mathcal{O}_{K})\otimes{T}_{E}\vert_{\eta}\longrightarrow0.
 $$
 Since
 $\Omega_{Y}^{2}\vert_{K}\simeq\Omega_{K}^{1}\otimes({T}_{E}\vert_{\eta})^{\vee}
 \simeq\mathcal{O}_{K}$,
 we have
 $$
 h^{1}(K,(\Omega_{Y}^{2}\otimes\Omega_{Y}^{1})\vert_{K})
 =h^{1}(K,(\Omega_{Y}^{2}\otimes\Omega_{Y}^{2}\otimes{T}_{Y})\vert_{K})
 =h^{1}(K,{T}_{Y}\vert_{K})
 =2.
 $$
 If $C$ is a hyperelliptic curve, then by Proposition~\ref{160302_14Oct16} we denote
 $K=\bigcup_{i=1}^{4}K_{i}$, where $K_{i}$ is a $(-2)$-curve on $Y$.
 Then by Lemma~\ref{165642_21Oct16}, we have
 $$
 \begin{array}{rcl}
  h^{0}(K,(\Omega_{Y}^{2}\otimes\Omega_{Y}^{1})\vert_{K})&\geq
   &h^{0}(K,\Omega_{Y}^{1}(K-K_{1})\vert_{K_{2}\cup{K_{3}}\cup{K_{4}}})
   +h^{0}(K,\Omega_{Y}^{1}(K_{1})\vert_{K_{1}})\\
  &\geq
   &h^{0}(K,\Omega_{Y}^{1}(K-K_{1}-K_{2})\vert_{{K_{3}}\cup{K_{4}}})
   +\sum_{i=1}^{2}h^{0}(K,\Omega_{Y}^{1}(K_{i})\vert_{K_{i}})\\
  &\geq
   &\sum_{i=1}^{4}h^{0}(K,\Omega_{Y}^{1}(K_{i})\vert_{K_{i}}).\\
 \end{array}
 $$
 Since $K_{i}$ is a $(-2)$-curve on $Y$,
 by the exact sequence
 $$
 0\longrightarrow\mathcal{N}_{K_{i}/Y}^{\vee}
 \longrightarrow\Omega_{Y}^{1}\vert_{K_{i}}
 \longrightarrow\Omega_{K_{i}}^{1}
 \longrightarrow0,
 $$
 we have a exact sequence
 $$
 0\longrightarrow\mathcal{O}_{K_{i}}
 \longrightarrow\Omega_{Y}^{1}(K_{i})\vert_{K_{i}}
 \longrightarrow\mathcal{O}_{K_{i}}(-4)
 \longrightarrow0.
 $$
 Hence we have $h^{0}(K,\Omega_{Y}^{1}(K_{i})\vert_{K_{i}})=1$.
 By the Serre duality $h^{i}(Y,\Omega_{Y}^{1}(K))=h^{2-i}(Y,\Omega_{Y}^{1}(-K))$, we have
 $$
 \chi(K,(\Omega_{Y}^{2}\otimes\Omega_{Y}^{1})\vert_{K})
 =\chi(Y,\Omega_{Y}^{2}\otimes\Omega_{Y}^{1})-
 \chi(Y,\Omega_{Y}^{1})
 =\chi(Y,\Omega_{Y}^{1}(-K))-
 \chi(Y,\Omega_{Y}^{1})
 =-\chi(K,\Omega_{Y}^{1}\vert_{K}),
 $$
 and by $\Omega_{Y}^{2}\vert_{K}\simeq\mathcal{O}_{K}$, we have
 $\chi(K,(\Omega_{Y}^{2}\otimes\Omega_{Y}^{1})\vert_{K})
 =\chi(K,\Omega_{Y}^{1}\vert_{K})$.
 Hence we have $\chi(K,(\Omega_{Y}^{2}\otimes\Omega_{Y}^{1})\vert_{K})=0$ and
 $$
 h^{1}(K,(\Omega_{Y}^{2}\otimes\Omega_{Y}^{1})\vert_{K})=
 h^{0}(K,(\Omega_{Y}^{2}\otimes\Omega_{Y}^{1})\vert_{K})\geq4
 $$
\end{proof}
\begin{remark}
 $h^{0}(Y,{T}_{Y})=0$, $h^{1}(Y,{T}_{Y})=11$ and
 $h^{2}(Y,{T}_{Y})=1$.
\end{remark}
\begin{remark}
 If $C$ is not a hyperelliptic curve, then the kernel of the infinitesimal period map is of dimension $1$.
 If $C$ is a hyperelliptic curve, then the dimension of the kernel is grater than $2$.
\end{remark}
\begin{lemma}\label{165642_21Oct16}
 Let $C_{1},C_{2}$ be curves on nonsingular surface $Y$, and let $\mathcal{F}$ be a locally free sheaf of $\mathcal{O}_{Y}$-modules.
 If $C_{1}$ and $C_{2}$ have no common components, then
 $$
 h^{0}(C_{1}\cup{C_{2}},\mathcal{F}\vert_{C_{1}\cup{C_{2}}})
 \geq
 h^{0}(C_{1},\mathcal{F}(-C_{2})\vert_{C_{1}})+
 h^{0}(C_{2},\mathcal{F}(-C_{1})\vert_{C_{2}}).
 $$
\end{lemma}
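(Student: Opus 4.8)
The plan is to realize $\mathcal{F}|_{C_{1}\cup{C_{2}}}$ as the kernel of a Mayer--Vietoris map and then to exhibit two independent families of global sections inside it. First I would record the scheme-theoretic input. Since $Y$ is a nonsingular surface, each local ring $\mathcal{O}_{Y,x}$ is regular, hence a unique factorization domain. The hypothesis that $C_{1}$ and $C_{2}$ have no common component means that, locally at every point, their defining equations $f$ and $g$ have no common irreducible factor, so that $(f)\cap(g)=(fg)$ in $\mathcal{O}_{Y,x}$. Globally this reads $\mathcal{O}_{Y}(-C_{1})\cap\mathcal{O}_{Y}(-C_{2})=\mathcal{O}_{Y}(-C_{1}-C_{2})$, which says that the scheme-theoretic union $C_{1}\cup{C_{2}}$, defined by the ideal $\mathcal{I}_{C_{1}}\cap\mathcal{I}_{C_{2}}$, coincides with the effective divisor $C_{1}+C_{2}$.

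With this identification in hand I would write down the standard exact sequence attached to the union and the scheme-theoretic intersection $C_{1}\cap{C_{2}}$ (defined by $\mathcal{I}_{C_{1}}+\mathcal{I}_{C_{2}}$),
$$
0\longrightarrow\mathcal{O}_{C_{1}\cup{C_{2}}}\longrightarrow\mathcal{O}_{C_{1}}\oplus\mathcal{O}_{C_{2}}\longrightarrow\mathcal{O}_{C_{1}\cap{C_{2}}}\longrightarrow0,
$$
whose second arrow is the difference of the two restriction maps. Because $\mathcal{F}$ is locally free, hence flat, tensoring preserves exactness and gives
$$
0\longrightarrow\mathcal{F}|_{C_{1}\cup{C_{2}}}\longrightarrow\mathcal{F}|_{C_{1}}\oplus\mathcal{F}|_{C_{2}}\longrightarrow\mathcal{F}|_{C_{1}\cap{C_{2}}}\longrightarrow0.
$$
Passing to global sections, left-exactness of $H^{0}$ identifies $H^{0}(C_{1}\cup{C_{2}},\mathcal{F}|_{C_{1}\cup{C_{2}}})$ with the kernel of the induced map $H^{0}(\mathcal{F}|_{C_{1}})\oplus{H^{0}(\mathcal{F}|_{C_{2}})}\to{H^{0}(\mathcal{F}|_{C_{1}\cap{C_{2}}})}$.

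The final step is to produce enough elements of this kernel. Since $\mathcal{O}_{Y}(-C_{2})$ is invertible, the kernel of the restriction $\mathcal{F}|_{C_{1}}\to\mathcal{F}|_{C_{1}\cap{C_{2}}}$ is exactly $\mathcal{F}(-C_{2})|_{C_{1}}$, and symmetrically the kernel of $\mathcal{F}|_{C_{2}}\to\mathcal{F}|_{C_{1}\cap{C_{2}}}$ is $\mathcal{F}(-C_{1})|_{C_{2}}$. Therefore every $s_{1}\in{H^{0}(C_{1},\mathcal{F}(-C_{2})|_{C_{1}})}$ yields an element $(s_{1},0)$ of the Mayer--Vietoris kernel (its two restrictions to $C_{1}\cap{C_{2}}$ both vanish), and every $s_{2}\in{H^{0}(C_{2},\mathcal{F}(-C_{1})|_{C_{2}})}$ yields an element $(0,s_{2})$. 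These two subspaces intersect only in $0$, so their internal direct sum injects into $H^{0}(C_{1}\cup{C_{2}},\mathcal{F}|_{C_{1}\cup{C_{2}}})$, which gives the claimed inequality.

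I expect the only genuinely delicate point to be the first step, namely checking that the scheme-theoretic union equals the divisor $C_{1}+C_{2}$ so that the Mayer--Vietoris sequence above is the correct one; this is precisely where both the regularity of $Y$ and the no-common-component hypothesis enter. Once that identification is secured, the remaining cohomological bookkeeping is formal.
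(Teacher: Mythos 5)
Your proof is correct, but it follows a genuinely different route from the paper's. The paper never forms the scheme-theoretic intersection $C_{1}\cap{C_{2}}$ and never invokes a Mayer--Vietoris sequence: it assembles a $3\times3$ commutative diagram out of the standard divisor restriction sequences (for $C_{1}$, for the divisor $C_{1}+C_{2}$, and for the restriction of $\mathcal{F}(-C_{1})$ to $C_{2}$) and extracts from it the asymmetric filtration
$$
0\longrightarrow\mathcal{F}(-C_{1})\vert_{C_{2}}\longrightarrow\mathcal{F}\vert_{C_{1}\cup{C_{2}}}\longrightarrow\mathcal{F}\vert_{C_{1}}\longrightarrow0,
$$
and then concludes on global sections: $H^{0}(C_{2},\mathcal{F}(-C_{1})\vert_{C_{2}})$ injects into $H^{0}(C_{1}\cup{C_{2}},\mathcal{F}\vert_{C_{1}\cup{C_{2}}})$ and maps to zero in $H^{0}(C_{1},\mathcal{F}\vert_{C_{1}})$, while $H^{0}(C_{1},\mathcal{F}(-C_{2})\vert_{C_{1}})$ maps to $H^{0}(C_{1}\cup{C_{2}},\mathcal{F}\vert_{C_{1}\cup{C_{2}}})$ compatibly with an injection into $H^{0}(C_{1},\mathcal{F}\vert_{C_{1}})$, so the two subspaces meet only in $0$. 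In that argument the no-common-component hypothesis is used exactly once, to guarantee that the local equation of $C_{2}$ is a non-zero-divisor on $\mathcal{O}_{C_{1}}$, i.e.\ that $\mathcal{F}(-C_{2})\vert_{C_{1}}\rightarrow\mathcal{F}\vert_{C_{1}}$ is injective; the filtration sequence itself is exact for arbitrary effective divisors, with $C_{1}\cup{C_{2}}$ read as the divisor $C_{1}+C_{2}$. Your symmetric argument spends the hypothesis up front instead, on the identity $\mathcal{I}_{C_{1}}\cap\mathcal{I}_{C_{2}}=\mathcal{I}_{C_{1}}\mathcal{I}_{C_{2}}$ that makes your gluing sequence the correct one for the divisor $C_{1}+C_{2}$, and you use the same non-zero-divisor fact again, in the guise of identifying $\mathcal{F}(-C_{2})\vert_{C_{1}}$ with the kernel of $\mathcal{F}\vert_{C_{1}}\rightarrow\mathcal{F}\vert_{C_{1}\cap{C_{2}}}$. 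What your route buys: symmetry in $C_{1},C_{2}$, completely explicit bookkeeping of where regularity of $Y$ and the hypothesis enter, and the clarification that the union in the statement may be taken either scheme-theoretically or as the divisor sum. What the paper's route buys: it works entirely with divisor restriction sequences and one diagram chase, avoiding the UFD discussion and any mention of the scheme structure on $C_{1}\cap{C_{2}}$.
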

\begin{proof}
 By the commutative diagram
 $$
 \begin{array}{ccccccccc}
  &&0&&0&&&&\\
  &&\downarrow&&\downarrow&&&&\\
  &&\mathcal{F}(-C_{1}-C_{2})
   &=&\mathcal{F}(-C_{1}-C_{2})
   &&&&\\
  &&\downarrow&&\downarrow&&&&\\
  0&\longrightarrow&\mathcal{F}(-C_{1})
   &\longrightarrow&\mathcal{F}
   &\longrightarrow&\mathcal{F}\vert_{C_{1}}
   &\longrightarrow&0\\
  &&\downarrow&&\downarrow&&&&\\
  &&\mathcal{F}(-C_{1})\vert_{C_{2}}
   &\longrightarrow&\mathcal{F}\vert_{C_{1}\cup{C_{2}}}
   &&
   &&\\
  &&\downarrow&&\downarrow&&&&\\
  &&0&&0&&&&\\
 \end{array}
 $$
 of the exact sequences of $\mathcal{O}_{Y}$-modules, we have the exact sequence
 $$
 0\longrightarrow
 \mathcal{F}(-C_{1})\vert_{C_{2}}
 \longrightarrow\mathcal{F}\vert_{C_{1}\cup{C_{2}}}
 \longrightarrow\mathcal{F}\vert_{C_{1}}
 \longrightarrow0.
 $$
 By the commutative diagram
 $$
 \begin{array}{ccccccc}
  & & & &0& &0\\
  & & & &\downarrow& &\downarrow\\
  & & & &H^{0}(C_{1},\mathcal{F}(-C_{2})\vert_{C_{1}})
   &=&H^{0}(C_{1},\mathcal{F}(-C_{2})\vert_{C_{1}})\\
  & & & &\downarrow& &\downarrow\\
  0&\longrightarrow&H^{0}(C_{2},\mathcal{F}(-C_{1})\vert_{C_{2}})
   &\longrightarrow&H^{0}(C_{1}\cup{C_{2}},\mathcal{F}\vert_{C_{1}\cup{C_{2}}})
   &\longrightarrow&H^{0}(C_{1},\mathcal{F}\vert_{C_{1}})\\
 \end{array}
 $$
 of exact sequences, we have a injective homomorphism
 $$
 H^{0}(C_{1},\mathcal{F}(-C_{2})\vert_{C_{1}})\oplus
 H^{0}(C_{2},\mathcal{F}(-C_{1})\vert_{C_{2}})\longrightarrow
 H^{0}(C_{1}\cup{C_{2}},\mathcal{F}\vert_{C_{1}\cup{C_{2}}}).
 $$
\end{proof}
\subsection{Global Torelli problem}
Let $\mathcal{N}$ be the set of isomorphism class of surfaces which have the same topological type as the elliptic surface $Y$.
By Proposition~\ref{112036_17Oct16}, the moduli space $\mathcal{M}$ of bielliptic curves of genus $3$ is embedded in $\mathcal{N}$, hence we regard as $\mathcal{M}\subset\mathcal{N}$.
Let $\mathcal{M}_{0}$ be the inverse image of the self-dual locus of $\mathcal{A}_{2}^{(1,2)}$ by the Prym map $\mathcal{M}\rightarrow\mathcal{A}_{2}^{(1,2)}$.
\begin{theorem}\label{181010_13Dec16}
 For $[Y]\in\mathcal{M}\subset\mathcal{N}$, the locus
 $$
 \mathcal{M}(H^{2}(Y))=\{[Y']\in\mathcal{M}\mid
 (H^{2}(Y,\mathbf{Z}),\langle\ ,\ \rangle_{Y},\Pi_{Y})
 \simeq
 (H^{2}(Y',\mathbf{Z}),\langle\ ,\ \rangle_{Y'},\Pi_{Y'})\}
 $$
 is $1$-dimensional.
 If $[Y]\in\mathcal{M}_{0}$, then $\mathcal{M}(H^{2}(Y))$ is a rational curve.
 If $[Y]\notin\mathcal{M}_{0}$, then $\mathcal{M}(H^{2}(Y))$ is a union of two rational curves.
\end{theorem}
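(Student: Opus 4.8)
The plan is to transport the whole question into the world of $(1,2)$-polarized abelian surfaces, where the Prym map and the results of Section~\ref{115200_2Dec16} give complete control. By Theorem~\ref{104530_30Nov16}, the isomorphism class of the data $(H^{2}(Y,\mathbf{Z}),\langle\ ,\ \rangle_{Y},\Pi_{Y})$ is equivalent to that of the polarized Hodge structure $(H^{2}(A,\mathbf{Z}),\langle\ ,\ \rangle_{A},[\iota(C)])$, where $A=\Prym{(C/E)}^{\vee}=\Pic^{(0)}{(P)}$ with $P=\Prym{(C/E)}$, carrying the $(1,2)$-polarization $[\iota(C)]$. Hence for $[Y']=[C'\rightarrow{E'}]$ with associated surface $A'=\Prym{(C'/E')}^{\vee}$, one has $[Y']\in\mathcal{M}(H^{2}(Y))$ if and only if there is an isomorphism of polarized Hodge structures $(H^{2}(A',\mathbf{Z}),\langle\ ,\ \rangle_{A'},[\iota(C')])\simeq(H^{2}(A,\mathbf{Z}),\langle\ ,\ \rangle_{A},[\iota(C)])$.

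The central input is the Torelli statement for the second cohomology of abelian surfaces underlying \cite{Sh}: via the Pl\"ucker identification $H^{2}\simeq\bigwedge^{2}H^{1}$, a Hodge isometry of $H^{2}$ carrying $[\iota(C')]$ to $[\iota(C)]$ is induced by an isomorphism of the polarized abelian surfaces, or by such an isomorphism composed with duality. Concretely, $(H^{2}(A'),\langle\ ,\ \rangle,[\iota(C')])\simeq(H^{2}(A),\langle\ ,\ \rangle,[\iota(C)])$ holds precisely when $(A',[\iota(C')])$ is isomorphic, as a $(1,2)$-polarized abelian surface, either to $(A,[\iota(C)])$ or to its dual. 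Since $A=P^{\vee}$ and the dual polarization $[\iota(C)]^{\vee}$ is the class $[C^{\vee}]=[D_{p_{0}}]$ of Section~\ref{115200_2Dec16}, that dual is exactly $(P,[C^{\vee}])$.

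Next I would read these two possibilities back through the Prym map $\mathcal{M}\rightarrow\mathcal{A}_{2}^{(1,2)}$. Dualizing the defining isomorphisms, the condition $(A',[\iota(C')])\simeq(A,[\iota(C)])$ is equivalent to $\Prym{(C'/E')}\simeq(P,[C^{\vee}])$, i.e.\ $[Y']\in\mathcal{M}(P)$, while $(A',[\iota(C')])\simeq(P,[C^{\vee}])$ is equivalent to $\Prym{(C'/E')}\simeq(A,[\iota(C)])=(P^{\vee},[\iota(C)])$, i.e.\ $[Y']\in\mathcal{M}(A)$. Thus $\mathcal{M}(H^{2}(Y))=\mathcal{M}(P)\cup\mathcal{M}(A)$ is the union of the two Prym fibers over the dual pair $\{P,P^{\vee}\}$; the original surface lies in $\mathcal{M}(P)$ because $\Prym{(C/E)}=P$, and the dual family $\{Y(D_{\xi}/F_{\xi})\}_{\xi}$ of Corollary~\ref{104804_30Nov16} lies in $\mathcal{M}(A)$. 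By Proposition~\ref{175005_11Dec16} every fiber of the Prym map is $1$-dimensional, and by Lemma~\ref{110908_14Dec16} each such fiber is dominated through a degree $4$ covering by a linear system $|\mathcal{L}|\simeq\mathbf{P}^{1}$, hence is irreducible and rational by L\"uroth; this gives the asserted dimension and rationality. Finally, fibers of the Prym map over distinct points are disjoint, so $\mathcal{M}(P)=\mathcal{M}(A)$ exactly when $P\simeq{P^{\vee}}$ as polarized abelian surfaces, that is when $[Y]\in\mathcal{M}_{0}$: off $\mathcal{M}_{0}$ we obtain two disjoint rational curves, and on $\mathcal{M}_{0}$ a single rational curve.

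The main obstacle is the step resting on \cite{Sh}, together with its bookkeeping. One must verify that the abstract polarized Hodge isometry provided by membership in $\mathcal{M}(H^{2}(Y))$ descends to a geometric isomorphism of $(1,2)$-polarized abelian surfaces \emph{up to duality}, rather than merely up to isogeny; this is exactly where the polarization class $[\iota(C)]$ must be carried along precisely, and where the generic triviality $\Aut{(P,0,[\mathcal{L}_{P}])}=\{\pm1_{P}\}$ of Lemma~\ref{110908_14Dec16} is needed to rule out spurious Hodge isometries of the transcendental lattice. Once this is in place, the identification of the two components with Prym fibers and the self-duality dichotomy are formal consequences of the results of Section~\ref{115200_2Dec16}.
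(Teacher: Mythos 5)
Your proposal is correct and follows essentially the same route as the paper's proof: transport via Theorem~\ref{104530_30Nov16} to the polarized Hodge structure $(H^{2}(A,\mathbf{Z}),\langle\ ,\ \rangle_{A},[\iota(C)])$, apply Shioda's results to conclude $(A',[\iota(C')])$ is isomorphic to $(A,[\iota(C)])$ or its dual as polarized abelian surfaces, deduce $\mathcal{M}(H^{2}(Y))=\mathcal{M}(P)\cup\mathcal{M}(A)$, and then get dimension, rationality and the self-dual dichotomy from the Prym-fiber picture of Section~\ref{115200_2Dec16}. The only inaccuracy is in your ``main obstacle'' paragraph: the paper settles that step purely by \cite{Sh} (Theorem~I to get $A'\simeq A$ or $A'\simeq P$ as complex tori, then Theorems~1 and 2 to write the isometry as $\pm\psi^{*}$, with ampleness of the carried classes fixing the sign, and Lemma~3 giving that $\alpha_{A}$ preserves the polarization), so no hypothesis $\Aut{(P,0,[\mathcal{L}_{P}])}=\{\pm1_{P}\}$ is needed there --- that condition enters only in the degree-$4$ count of Lemma~\ref{110908_14Dec16}, not in ruling out spurious Hodge isometries.
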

\begin{proof}
 We fix $[Y]=[Y(C/E)]\in\mathcal{M}\subset\mathcal{N}$.
 If $[Y']=[Y'(C'/E')]\in\mathcal{M}(H^{2}(Y))$, then by Theorem~\ref{104530_30Nov16}, there is an isomorphism
 $$
 (H^{2}(A,\mathbf{Z}),\langle\ ,\ \rangle_{A},[\iota(C)])
 \simeq(H^{2}(A',\mathbf{Z}),\langle\ ,\ \rangle_{A'},[\iota(C')])
 $$
 of Hodge structures of corresponding $(1,2)$-polarized abelian surfaces.
 By \cite[Theorem~I]{Sh}, the abelian surface $A'$ is isomorphic to $A$ or its dual $P=A^{\vee}$ as complex tori.
 When $A'$ is isomorphic to $A$, then by \cite[Theorem~1]{Sh}, the isomorphism $\Psi:H^{2}(A,\mathbf{Z})\rightarrow{H^{2}(A',\mathbf{Z})}$ is given as $\Psi=\pm\psi^{*}$ by an isomorphism
 $\psi:A'{\rightarrow}{A}$.
 Since $[\iota(C)]$ and $[\iota(C')]$ are ample classes with $\Psi[\iota(C)]=[\iota(C')]$, we have $\Psi=\psi^{*}$, and $\psi$ gives an isomorphism of polarized abelian surfaces.
 When $A'$ is isomorphic to $P$, then by \cite[Theorem~2]{Sh}, the isomorphism $\Psi:H^{2}(A,\mathbf{Z})\rightarrow{H^{2}(A',\mathbf{Z})}$ is given as $\Psi\circ\alpha_{A}=\pm\psi^{*}$ by an isomorphism
 $\psi:A'{\rightarrow}{P}$, where
 $\alpha_{A}:H^{2}(P,\mathbf{Z})\rightarrow{H^{2}(A,\mathbf{Z})}$
 is the Hodge isometry defined in \cite[Lemma~3]{Sh}.
 Since the Hodge isometry $\alpha_{A}$ preserve the class of the $(1,2)$-polarizations, in the same way as the case $A'\simeq{A}$, we have $\Psi\circ\alpha_{A}=\psi^{*}$, and $\psi$ gives an isomorphism of polarized abelian surfaces.
 Hence we have
 $$
 \mathcal{M}(H^{2}(Y))=
 \mathcal{M}(P)\cup\mathcal{M}(A)
 =\mathcal{M}(\Prym{(C/E)})\cup
 \mathcal{M}(\Prym{(C/E)}^{\vee}).
 $$
\end{proof}
\begin{theorem}\label{163939_14Jan17}
 For $[Y]\in\mathcal{M}\subset\mathcal{N}$, the locus
 $$
 \mathcal{M}(H^{2}(Y)\oplus{H^{1}(Y)})\\
 =\{[Y']\in\mathcal{M}\mid{[Y']\in{\mathcal{M}(H^{2}(Y))}}\
 \text{and}\ H^{1}(Y,\mathbf{Z})\simeq{H^{1}(Y',\mathbf{Z})}\}
 $$
 is a finite set.
 For general $[F]\in\mathcal{M}$, it consists of $12$ points.
\end{theorem}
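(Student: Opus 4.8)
The first step is to recognize that the extra datum $H^{1}(Y,\mathbf{Z})$ records only the base elliptic curve. By Corollary~\ref{143245_20Jan17} there is an isomorphism of Hodge structures $H^{1}(Y,\mathbf{Z})\simeq H^{1}(E,\mathbf{Z})$, so for $[Y']=[Y'(C'/E')]\in\mathcal{M}$ the condition $H^{1}(Y,\mathbf{Z})\simeq H^{1}(Y',\mathbf{Z})$ is equivalent to an isomorphism $H^{1}(E,\mathbf{Z})\simeq H^{1}(E',\mathbf{Z})$ of weight-one Hodge structures, that is, to $J(E)\simeq J(E')$, equivalently $j(E)=j(E')$. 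Hence imposing the $H^{1}$-datum on top of the $H^{2}$-datum amounts to fixing the $\mathcal{A}_{1}$-coordinate of the period map of Proposition~\ref{175005_11Dec16}.

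My plan is to combine this with Theorem~\ref{181010_13Dec16}. By that theorem,
$$
\mathcal{M}(H^{2}(Y))=\mathcal{M}(P)\cup\mathcal{M}(A),\qquad P=\Prym{(C/E)},\quad A=\Prym{(C/E)}^{\vee},
$$
where $\mathcal{M}(X)$ denotes the fiber of the Prym map over $X$. Intersecting with the locus $j(E')=j(E)$ coming from the $H^{1}$-condition identifies $\mathcal{M}(H^{2}(Y)\oplus H^{1}(Y))$ with the union of the two fibers of the period map $[C'\to E']\mapsto(J(E'),\Prym{(C'/E')})$ of Proposition~\ref{175005_11Dec16} lying over $(J(E),P)$ and over $(J(E),A)$. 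Since that period map is quasi-finite, each of these two fibers is finite, and therefore $\mathcal{M}(H^{2}(Y)\oplus H^{1}(Y))$ is finite. This proves the first assertion for every $[Y]\in\mathcal{M}$.

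For the exact count I would specialize to a general $[F]=[C\to E]\in\mathcal{M}$. First, both $P$ and $A$ lie in the image of the Prym map, since $P=\Prym{(C/E)}$ by definition and $A=\Prym{(C^{\vee}/E^{\vee})}$ by Corollary~\ref{104804_30Nov16} (for non-hyperelliptic $C$ the dual $C^{\vee}\to E^{\vee}$ is again a nonsingular bielliptic curve); so both fibers are genuine fibers over points of the image, to which the degree count of Proposition~\ref{175005_11Dec16} applies. For general $[F]$ one may assume $[Y]\notin\mathcal{M}_{0}$, so that $A=P^{\vee}\not\simeq P$ as polarized abelian surfaces, and that $(J(E),P)$ and $(J(E),A)$ are general points of the image, so that each fiber is reduced of cardinality $6$. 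As a bielliptic curve cannot have its Prym variety isomorphic simultaneously to the non-isomorphic $A$ and $P$, the two fibers are disjoint, and the union has exactly $6+6=12$ points.

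The only delicate point is the genericity bookkeeping in the final step. One must check that for general $[F]$ the two points $(J(E),P)$ and $(J(E),A)$ avoid the locus over which the degree-$6$ period map of Proposition~\ref{175005_11Dec16} has fewer than $6$ preimages, so that no collision of points lowers the count below $12$, and that the non-self-dual condition $A\not\simeq P$ holds; the latter fails precisely on the proper closed subset $\mathcal{M}_{0}$, where the two fibers coincide and the count genuinely drops, but a general $[F]$ avoids $\mathcal{M}_{0}$. Granting these open conditions, the statement is a formal consequence of Corollary~\ref{143245_20Jan17}, Theorem~\ref{181010_13Dec16}, and the degree-$6$ quasi-finiteness of Proposition~\ref{175005_11Dec16}.
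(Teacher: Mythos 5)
Your proposal is correct and follows exactly the paper's route: the paper's proof is a one-line citation of Corollary~\ref{143245_20Jan17} (to reduce the $H^{1}$-condition to fixing $J(E)$), Theorem~\ref{181010_13Dec16} (to split $\mathcal{M}(H^{2}(Y))$ into the two Prym fibers $\mathcal{M}(P)\cup\mathcal{M}(A)$), and Proposition~\ref{175005_11Dec16} (quasi-finiteness of degree $6$ giving $6+6=12$). Your write-up merely makes explicit the genericity and disjointness bookkeeping that the paper leaves implicit.
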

\begin{proof}
 Since
 $H^{1}(Y,\mathbf{Z})\simeq
 H^{1}(E,\mathbf{Z})$
 by Corollary~\ref{143245_20Jan17}, it follows from Theorem~\ref{181010_13Dec16} and Proposition~\ref{175005_11Dec16}.
\end{proof}

\begin{theorem}\label{163320_13Jan17}
 For general $[Y]\in\mathcal{M}\subset\mathcal{N}$, the locus
 \begin{multline*}
  \mathcal{M}(H^{2}(Y)\oplus{H^{1}(Y)}\oplus{H^{1}(K_{Y})})\\
  =\{[Y']\in\mathcal{M}\mid{[Y']\in{\mathcal{M}(H^{2}(Y)\oplus{H^{1}(Y)})}\
  \text{and}\ H^{1}(K_{Y},\mathbf{Z})\simeq{H^{1}(K_{Y'},\mathbf{Z})}}\}
 \end{multline*}
 consists of $1$ point $[Y]$.
\end{theorem}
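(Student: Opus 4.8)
The plan is to reduce the statement to a uniqueness question about three $j$-invariants and then to settle it with the explicit equations of the dual pencil.

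First I would unwind what the three pieces of Hodge data detect. By Theorem~\ref{104530_30Nov16} the datum $(H^{2}(Y,\mathbf{Z}),\langle\ ,\ \rangle_{Y},\Pi_{Y})$ is equivalent to the polarized Hodge structure of $A=\Prym(C/E)^{\vee}$ together with its ample class, so $[Y']\in\mathcal{M}(H^{2}(Y))$ forces $\Prym(C'/E')$ to be isomorphic to $\Prym(C/E)$ or to its dual $A$. By Corollary~\ref{143245_20Jan17} we have $H^{1}(Y,\mathbf{Z})\simeq H^{1}(E,\mathbf{Z})$, so adding $H^{1}$ pins down $j(E')=j(E)$. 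Finally $K_{Y}=f^{-1}(\eta)$ is exactly the fiber over the canonical point, i.e.\ $K_{Y}=E^{\vee}$ is the base of the dual bielliptic curve $\pi^{\vee}\colon C^{\vee}\to E^{\vee}=D_{\eta}\to F_{\eta}$; hence adding $H^{1}(K_{Y})$ pins down $j\big((E')^{\vee}\big)=j(E^{\vee})$. Thus a point of the locus is a bielliptic curve $C'/E'$ with $\Prym(C'/E')\in\{\Prym(C/E),A\}$, with $j(E')=j(E)$, and with $j\big((E')^{\vee}\big)=j(E^{\vee})$.

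By Theorem~\ref{163939_14Jan17} the first two conditions already cut the locus down to twelve points, splitting into six curves with $\Prym(C'/E')\simeq A$ and six with $\Prym(C'/E')\simeq\Prym(C/E)$, all with base isomorphic to $E$. I would treat the two families through the explicit dual pencil. For the family with Prym isomorphic to $A$ I use $\{D_{\xi}\to F_{\xi}\}$ parametrized by $a\in\mathbf{P}^{1}$ as in Lemma~\ref{172705_10Dec16}, and set $j_{F}(a)=j(F_{\xi})$ and $j_{K}(a)=j\big(K(D_{\xi}/F_{\xi})\big)$, computed in Corollary~\ref{170044_12Jan17} and Corollary~\ref{191514_10Dec16}. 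The membership conditions become $j_{F}(a)=j(E)$ and $j_{K}(a)=j(E^{\vee})$. Reading off the leading coefficients of these two degree-$6$ rational functions at the canonical point $a=\infty$ gives $j_{F}(\infty)=j(E^{\vee})$ and $j_{K}(\infty)=j(E)$ (these are the $j$-invariants of the cubics $\check{\tau}$ and $\tau$, equivalently the identities $F_{\eta}=E^{\vee}$ and $K(D_{\eta}/F_{\eta})=(E^{\vee})^{\vee}=E$). So for general $E$ the point $a=\infty$ fails $j_{F}(a)=j(E)$, and the two level conditions have no common solution; this family should contribute nothing. For the family with Prym isomorphic to $\Prym(C/E)$ I run the same construction for the dual curve $C^{\vee}/E^{\vee}$, whose dual pencil has exactly this Prym variety; writing $\check{\jmath}_{F},\check{\jmath}_{K}$ for the corresponding functions, the analogous identities read $\check{\jmath}_{F}(\infty)=j(E)$ and $\check{\jmath}_{K}(\infty)=j(E^{\vee})$, using $(C^{\vee})^{\vee}=C$ from Lemma~\ref{095421_30Nov16}. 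Hence $\check{a}=\infty$ solves both conditions and recovers the original $C/E$, so the locus is nonempty and contains $[Y]$; the point is to show it contains nothing else. Here I would argue that for general coefficients the map $(\check{\jmath}_{F},\check{\jmath}_{K})\colon\mathbf{P}^{1}\to\mathbf{P}^{1}\times\mathbf{P}^{1}$ meets $(j(E),j(E^{\vee}))$ only at $\check{a}=\infty$, the joint level set being governed by a resultant whose zeros, away from the forced root $\check{a}=\infty$, impose a proper condition on the parameters $s_{i},t_{i}$.

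The main obstacle is precisely this last genericity statement, namely excluding accidental coincidences among the other candidates. Since the locus is a fiber of a morphism from $\mathcal{M}$ and always contains $[Y]$, it suffices to exhibit a single bielliptic curve for which the resultant has the root $\check{a}=\infty$ with the expected multiplicity and no other root, and for which the six-point $A$-family likewise yields no solution; generic constancy of the fiber cardinality then forces the general fiber to be the single point $[Y]$. I expect the safest route to be a degeneration to one explicitly computable curve via Lemma~\ref{172705_10Dec16} and Corollary~\ref{191514_10Dec16}, the same formulas that in Section~\ref{183635_16Oct16} produce the special curves where global Torelli fails, which is consistent with the word \emph{general} being indispensable in the statement.
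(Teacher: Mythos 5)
Your unwinding of the Hodge data is correct and is the same reduction the paper makes: $H^{2}$ determines $\Prym(C/E)$ up to duality (Theorem~\ref{104530_30Nov16}, Theorem~\ref{181010_13Dec16}), $H^{1}(Y)$ determines $j(E)$, $H^{1}(K_{Y})$ determines $j(K_{Y})=j(E^{\vee})$, and the question becomes whether the point $(j(E),j(E^{\vee}))$ is hit only once by the two maps $\mathbf{j}=(j_{F},j_{K})$ and $\check{\mathbf{j}}=(j_{K},j_{F})$ on the $a$-line, which parametrize the two halves $\mathcal{M}(A)$ and $\mathcal{M}(P)$ of $\mathcal{M}(H^{2}(Y))$. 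The first genuine gap is your exclusion of the $\mathcal{M}(A)$-half. You observe that $a=\infty$ is not a solution of $j_{F}(a)=j(E)$ and conclude that the system $j_{F}(a)=j(E)$, $j_{K}(a)=j(E^{\vee})$ has no common solution at all. That is a non sequitur: each equation has six roots, and nothing you say rules out a common root at some $a\neq\infty$. What must be shown is that the point $(j(E),j(E^{\vee}))=\check{\mathbf{j}}(\infty)$ misses the \emph{entire} curve $\Image(\mathbf{j})$; this is exactly the paper's Lemma~\ref{162638_13Jan17} (the two image curves are distinct) combined with the requirement that $\check{\mathbf{j}}(\infty)$ avoid the finitely many points of $\Image(\mathbf{j})\cap\Image(\check{\mathbf{j}})$. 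The genericity here is essential, not cosmetic: in Example~\ref{165513_18Jan17} the surfaces $Y_{-\frac{3}{2}}$ and $Y_{9}^{\vee}$ lie one in each half (their Prym varieties are non-isomorphic), yet have isomorphic $H^{2}\oplus H^{1}\oplus H^{1}(K_{Y})$; so for special $[Y]$ the half you discard really does contribute a second point.

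The second gap is your mechanism for the remaining genericity: exhibit one curve whose locus is exactly $\{[Y]\}$ and invoke ``generic constancy of the fiber cardinality.'' For the maps in play (quasi-finite, not proper, not globally injective) this principle is false: the generic fiber cardinality equals the degree of the map, and special fibers can be strictly smaller (roots colliding at ramification points, or escaping into the singular and hyperelliptic members excluded from $\mathcal{M}$) as well as strictly larger (nodes of the image curves --- which is precisely how the paper manufactures its counterexamples to global Torelli in Examples~\ref{175545_13Jan17}, \ref{165513_18Jan17} and \ref{165437_18Jan17}). Hence a single one-point fiber implies nothing about the general fiber. The paper's actual route is to recast the obstructions as Zariski-closed conditions on the parameter space $U$ and kill each with one explicit computation: failure of generic injectivity of $\mathbf{j}$ forces $j_{F}$ and $j_{K}$ to share a ramification point (Lemma~\ref{162617_13Jan17}), coincidence of the two image curves is a closed condition (Lemma~\ref{162638_13Jan17}), and the single Example~\ref{175545_13Jan17} shows both loci are proper; the conclusion then holds for every $[Y]$ such that $\mathbf{j}_{A}([Y])$ avoids the finite singular locus of $\Image(\mathbf{j})\cup\Image(\check{\mathbf{j}})$. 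Your resultant idea can be made rigorous in the same spirit --- ``the two binary sextics have a common root besides the forced one at $a=\infty$, counted with multiplicity'' is a closed condition on the coefficients, so one example where it fails would suffice --- but that closed-condition argument must \emph{replace} the fiber-cardinality appeal, and it must also be carried out for the $\mathcal{M}(A)$-system, where there is no forced root and the resultant itself has to be shown generically nonzero.
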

For $(s_{0},\dots,s_{2},\lambda)\in\mathbf{A}^{4}$, we set the bielliptic curve
$C_{(s,\lambda)}\rightarrow{E_{(s,\lambda)}}$ by
$$
C_{(s,\lambda)}
=\{[x:y:z]\in\mathbf{P}^{2}\mid
(z^{2}+s_{0}x^{2}+s_{1}xy+s_{2}y^{2})^{2}
=x(x-y)(x-\lambda{y})y\},
$$
where the bielliptic involution is given by $z\mapsto{-z}$.
We set the open subset $U\subset\mathbf{A}^{4}$ by
$$
U=\{(s_{0},\dots,s_{2},\lambda)\in\mathbf{A}^{4}\mid
\text{$C_{(s,\lambda)}$ is nonsingular}\}.
$$
Then we have a dominant morphism
$$
U\longrightarrow\mathcal{M};\
(s_{0},\dots,s_{2},\lambda)\longmapsto
[C_{(s,\lambda)}\rightarrow{E_{(s,\lambda)}}].
$$
\begin{lemma}\label{162617_13Jan17}
 For general $(s_{0},\dots,s_{2},\lambda)\in{U}$, the map
 $$
 \mathbf{j}_{(s_{0},\dots,s_{2},\lambda)}:\
 \mathbf{P}^{1}(a\text{-line})\longrightarrow
 \mathbf{P}^{1}\times\mathbf{P}^{1};\
 a\longmapsto(j_{F}(a),j_{K}(a)),
 $$
 is generically injective, where $j_{K}$ and $j_{F}$ are defined for $C_{(s,\lambda)}$ in Remark~\ref{153107_13Jan17} and Remark~\ref{153122_13Jan17}.
\end{lemma}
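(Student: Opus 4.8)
The plan is to reduce the generic injectivity of $\mathbf{j}$ to a condition on two conjugates of the anharmonic group inside $\mathrm{PGL}_2(\mathbf{C})$, and then to verify that condition for general parameters. First I would record the shape of the two coordinate functions. By Proposition~\ref{143201_14Oct16}, Corollary~\ref{170044_12Jan17} and Remark~\ref{153107_13Jan17}, both $j_F$ and $j_K$ are rational functions of degree $6$ on $\mathbf{P}^1(a\text{-line})$, each being the $j$-invariant of a pencil of four branch points whose defining quartic has coefficients affine-linear in $a$; their explicit forms $2^8(\text{quadratic})^3/(\disc\cdot(\text{cubic})^2)$ show that each has exactly the ramification profile of the classical modular function $j(\lambda)=2^8(\lambda^2-\lambda+1)^3/(\lambda^2(\lambda-1)^2)$, namely two points of index $3$ over $0$ and three points of index $2$ over each of $12^3$ and $\infty$. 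Because the cross-ratio of four points is well defined only up to the anharmonic group $G\cong S_3$ acting on the $\lambda$-line, the monodromy of each map is a subgroup of $G$; and since $G$ acts simply transitively on the six sheets, transitivity of the monodromy forces it to be all of $G$. Hence there are Möbius transformations $\phi_F,\phi_K$ with $j_F=j\circ\phi_F$ and $j_K=j\circ\phi_K$ (concretely, $\phi_K$ is the Möbius map carrying the three roots of $\tau$ to $\{0,1,\infty\}$).

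Next I would compute the degree of $\mathbf{j}$ onto its image. For a general value $v$ the fibre $j_F^{-1}(v)$ is a single orbit $\widetilde{G}_F\cdot a_0$ of the conjugate group $\widetilde{G}_F=\phi_F^{-1}G\phi_F$, and two of its points $\widetilde{g}_1 a_0,\widetilde{g}_2 a_0$ carry the same value of $j_K=j\circ\phi_K$ exactly when $\widetilde{g}_2\widetilde{g}_1^{-1}$ lies in $\widetilde{G}_K=\phi_K^{-1}G\phi_K$. For general $a_0$ this means $\widetilde{g}_2\widetilde{g}_1^{-1}\in\widetilde{G}_F\cap\widetilde{G}_K$, so the number of distinct $j_K$-values on the fibre is $6/|\widetilde{G}_F\cap\widetilde{G}_K|$. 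It follows that $\deg\mathbf{j}=|\widetilde{G}_F\cap\widetilde{G}_K|$, and therefore $\mathbf{j}$ is generically injective if and only if $\widetilde{G}_F\cap\widetilde{G}_K=\{1\}$; writing $\mu=\phi_K\circ\phi_F^{-1}$ this is the condition $\mu G\mu^{-1}\cap G=\{1\}$.

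Finally I would establish this intersection condition for general $(s_0,\dots,s_2,\lambda)\in U$. Since $G$ is finite, a failure $\mu G\mu^{-1}\cap G\neq\{1\}$ means $\mu g\mu^{-1}=g'$ for some pair of nontrivial elements $g,g'\in G$ of equal order; for each of the finitely many such pairs the set of admissible $\mu$ is a coset of the one-dimensional centraliser of $g$ in $\mathrm{PGL}_2(\mathbf{C})$, hence a proper closed subset. Consequently the locus of bad parameters is Zariski closed in $U$, so it suffices to exhibit a single $(s_0,\dots,s_2,\lambda)\in U$ for which $\mu G\mu^{-1}\cap G=\{1\}$. This can be arranged by evaluating the explicit formulas of Corollary~\ref{170044_12Jan17} and Remark~\ref{153107_13Jan17} at one convenient numerical point and checking, by eliminating $a$ from $j_F(a)=j_F(a')$ and $j_K(a)=j_K(a')$, that the resulting resultant carries the diagonal factor $a-a'$ with multiplicity one.

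The hard part is this last step. The conceptual reduction shows that generic injectivity is the \emph{expected} behaviour, but one must still guarantee that the four-parameter family of Möbius pairs $(\phi_F,\phi_K)$ actually produced by bielliptic curves is nondegenerate, i.e.\ that the associated $\mu$ does not lie identically in the finitely many bad cosets. This cannot be settled by the formal argument alone and requires either a concrete specialization as above or an explicit verification that the conjugation equations $\mu g\mu^{-1}\in G$ impose nontrivial relations on the parameters. The delicacy is that $\phi_F$ and $\phi_K$ are built from the \emph{same} data, namely $\phi_F$ is governed by the roots of $\check{\tau}$ and of the numerator quadratic in Corollary~\ref{170044_12Jan17}, while $\phi_K$ is governed by the roots of $\tau$ and of the numerator quadratic in Remark~\ref{153107_13Jan17}, so their relative position must be shown not to be forced into one of the degenerate configurations.
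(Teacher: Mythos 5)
Your reduction is genuinely different from the paper's and is, as a reduction, correct: generic injectivity is indeed equivalent to $\widetilde{G}_F\cap\widetilde{G}_K=\{1\}$, i.e.\ to $\mu G\mu^{-1}\cap G=\{1\}$ with $\mu=\phi_K\circ\phi_F^{-1}$, and the locus of bad $\mu$ is a finite union of cosets of one-dimensional centralizers, hence a proper closed subset of $\mathrm{PGL}_{2}(\mathbf{C})$, so the bad parameters form a closed subset of $U$. But this establishes only \emph{closedness} of the bad locus, not that it is a proper subset of $U$, and the latter is the entire content of the lemma: nothing in your argument excludes the possibility that the map $(s_{0},\dots,s_{2},\lambda)\mapsto\mu$ takes values identically inside one of the bad cosets. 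You concede this yourself (``the hard part is this last step''), but you never produce the required witness, so the proposal is an unfinished reduction rather than a proof. The paper closes exactly this gap with an explicit example: for the curve $(z^{2}+y^{2})^{2}=x(x-y)(x-5y)y$ of Example~\ref{175545_13Jan17} it computes $j_{F}$ and $j_{K}$ and checks that they have no common ramification point on the $a$-line. The paper's criterion for exploiting such an example is also cheaper than yours: if $\mathbf{j}$ were not generically injective, the induced map from $\mathbf{P}^{1}(a\text{-line})$ to the normalization $\tilde{I}$ of the image would have degree at least $2$, hence would ramify by Riemann--Hurwitz, and any such point is automatically a common ramification point of $j_{F}=\pr_{1}\circ\mathbf{j}$ and $j_{K}=\pr_{2}\circ\mathbf{j}$; no Galois structure of the two covers is needed. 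Note that the same explicit check would also finish your argument: a nontrivial element of $\widetilde{G}_F\cap\widetilde{G}_K$ has a fixed point on $\mathbf{P}^{1}$, and a fixed point of a nontrivial deck transformation is a ramification point of the corresponding Galois cover, so absence of common ramification points forces $\widetilde{G}_F\cap\widetilde{G}_K=\{1\}$.

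There is also a repairable flaw in your first step. The assertion that ``the monodromy of each map is a subgroup of $G$'' conflates two different actions: the monodromy of the degree-$6$ cover $j_{K}$ permutes the six points of a fiber $j_{K}^{-1}(v)$ on the $a$-line, whereas the anharmonic group permutes the six cross-ratios attached to a single value of $a$; identifying the two is equivalent to the Galois property you are trying to assert, so the argument is circular as written. Moreover the displayed formulas exhibit only the zeros of order $3$ and the poles of order $2$; the branching over $12^{3}$ is not visible from them and requires Remark~\ref{153107_13Jan17}. Both covers are in fact of the form $j\circ(\text{M\"obius})$, but the clean route is the paper's own: for $j_{F}$ this is precisely Lemma~\ref{112302_18Oct16} (the $\lambda$-function is a M\"obius coordinate on the $a$-line), and for $j_{K}$ one uses that passing to the dual curve $C^{\vee}$ of Lemma~\ref{095421_30Nov16} interchanges the roles of $\tau$ and $\check{\tau}$, hence of $j_{K}$ and $j_{F}$ (compare Corollary~\ref{191514_10Dec16} with Corollary~\ref{170044_12Jan17}, and see how this swap is used in the proof of Theorem~\ref{163320_13Jan17}), so that $j_{K}$ for $C$ is the $j_{F}$ of $C^{\vee}$ and Lemma~\ref{112302_18Oct16} applies again.
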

\begin{proof}
 Let $I$ be the image of
 $$
 \mathbf{P}^{1}(a\text{-line})\longrightarrow
 \mathbf{P}^{1}\times\mathbf{P}^{1};\
 a\longmapsto(j_{F}(a),j_{K}(a)),
 $$
 and let $\tilde{I}$ be the normalization of $I$.
 If the induced morphism $\mathbf{P}^{1}(a\text{-line})\rightarrow\tilde{I}$ is not an isomorphism, then it has ramification points, hence $(s_{0},\dots,s_{2},\lambda)$ is contained in
 $$
 \{(s_{0},\dots,s_{2},\lambda)\in{U}\mid\text{$j_{K}$ and $j_{F}$ has common ramification points on the $a$-line}\}.
 $$
 It is a proper closed subset of $U$ by Example~\ref{175545_13Jan17}.
\end{proof}
\begin{lemma}\label{162638_13Jan17}
 For general $(s_{0},\dots,s_{2},\lambda)\in{U}$, the image of
 $$
 \mathbf{j}_{(s_{0},\dots,s_{2},\lambda)}:\
 \mathbf{P}^{1}(a\text{-line})\longrightarrow
 \mathbf{P}^{1}\times\mathbf{P}^{1};\
 a\longmapsto(j_{F}(a),j_{K}(a)),
 $$
 does not coincides with the image of
 $$
 \check{\mathbf{j}}_{(s_{0},\dots,s_{2},\lambda)}:\
 \mathbf{P}^{1}(a\text{-line})\longrightarrow
 \mathbf{P}^{1}\times\mathbf{P}^{1};\
 a\longmapsto(j_{K}(a),j_{F}(a)).
 $$
\end{lemma}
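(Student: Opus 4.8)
The plan is to show that for general parameters the image curve $I=\Image(\mathbf{j}_{(s,\lambda)})$ fails to be invariant under the coordinate exchange $(u,v)\mapsto(v,u)$ of $\mathbf{P}^{1}\times\mathbf{P}^{1}$. Since $\check{\mathbf{j}}_{(s,\lambda)}=a\mapsto(j_{K}(a),j_{F}(a))$ is $\mathbf{j}_{(s,\lambda)}$ followed by this exchange, its image $\check{I}$ is exactly the exchange of $I$, so the two images coincide precisely when $I$ is exchange-invariant. On the dense open locus of $U$ where Lemma~\ref{162617_13Jan17} applies, $\mathbf{j}_{(s,\lambda)}$ is generically injective, hence $I$ is an irreducible curve; as the exchange is an automorphism of $\mathbf{P}^{1}\times\mathbf{P}^{1}$, $\check{I}$ is an irreducible curve of the same bidegree, and $I=\check{I}$ holds iff $\dim(I\cap\check{I})=1$. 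Because $j_{F}$ and $j_{K}$ are given by explicit rational functions of $a$ whose coefficients are polynomial in $(s_{0},\dots,s_{2},\lambda)$ (Corollary~\ref{191514_10Dec16}, Corollary~\ref{170044_12Jan17}), the curves $I,\check{I}$ form an algebraic family over $U$, so by upper semicontinuity of fibre dimension the locus $\{I=\check{I}\}$ is closed in $U$. It therefore suffices to produce a single member of $U$ for which $I\neq\check{I}$.

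First I would reduce exchange-invariance to a statement about the two $j$-functions. If $I=\check{I}$, the generically injective parametrizations $\mathbf{j}_{(s,\lambda)}$ and $\check{\mathbf{j}}_{(s,\lambda)}$ realize the same irreducible curve, hence factor through isomorphisms onto its normalization $\mathbf{P}^{1}$ and so differ by a unique automorphism $\phi$ of the source $a$-line, i.e. a M\"obius transformation with $\mathbf{j}_{(s,\lambda)}=\check{\mathbf{j}}_{(s,\lambda)}\circ\phi$. Reading off the two coordinates gives $j_{F}=j_{K}\circ\phi$. In particular $\phi$ carries the fibre $j_{F}^{-1}(\infty)$ onto $j_{K}^{-1}(\infty)$ and $j_{F}^{-1}(0)$ onto $j_{K}^{-1}(0)$. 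By the explicit descriptions in Remark~\ref{153107_13Jan17} and Remark~\ref{153122_13Jan17}, the fibre over $\infty$ consists of the three roots of the cubic ($\tau$ for $j_{K}$, $\check{\tau}$ for $j_{F}$), while the fibre over $0$ consists of the two roots of the quadratic $(t_{1}^{2}-3t_{2})a^{2}+(t_{1}t_{2}-9t_{3})a+t_{2}^{2}-3t_{1}t_{3}$ for $j_{K}$ and of its checked analogue for $j_{F}$.

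These conditions are wildly overdetermined. A M\"obius transformation is determined by the images of three points, so the requirement that $\phi$ send the three roots of $\check{\tau}$ to the three roots of $\tau$ leaves at most $3!=6$ candidates $\phi$, one for each bijection between the two root-triples (for general $(s,\lambda)$ the roots are distinct, so there are exactly six). Each candidate is an explicit M\"obius map in the coefficients of $\tau$ and $\check{\tau}$; imposing in addition that $\phi$ carry the two roots of the $j_{F}$-quadratic to the two roots of the $j_{K}$-quadratic yields algebraic equations on $(s_{0},\dots,s_{2},\lambda)$ through the duality relations of Corollary~\ref{170044_12Jan17}. I would finish by exhibiting one explicit member of $U$ at which all six candidate maps fail this quadratic-matching, a finite computation with the formulas above; this shows the closed locus $\{I=\check{I}\}$ is proper and hence that $I\neq\check{I}$ for general $(s,\lambda)$.

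The main obstacle is this last explicit verification: one must control all six candidate M\"obius maps at once and check, through the $s$-dependent relations of Corollary~\ref{170044_12Jan17}, that none of them transports the quadratic root-pair of $j_{F}$ onto that of $j_{K}$. Some care is also needed to choose the sample point inside $U$ with $\disc(\tau)\disc(\check{\tau})\neq0$ and with distinct ramification, so that the six-candidate count is valid and the cover $j_{F}$ carries no extra automorphisms that would enlarge the possible $\phi$. Once a numerical member is fixed, all of this is a bounded though tedious calculation.
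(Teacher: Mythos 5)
Your overall framework is in fact the same as the paper's: both arguments rest on the observation that the coincidence locus
$$
\bigl\{(s_{0},\dots,s_{2},\lambda)\in U\ \big|\ \Image(\mathbf{j}_{(s,\lambda)})=\Image(\check{\mathbf{j}}_{(s,\lambda)})\bigr\}
$$
is closed in $U$ (the paper asserts this without comment; your semicontinuity argument via the algebraic family of image curves is a reasonable justification), so that the entire content of the lemma is the existence of a \emph{single} member of $U$ at which the two images differ. This is exactly where your proposal has a genuine gap: you never exhibit such a member; you only describe a scheme (the six M\"obius maps matching the root triple of $\check{\tau}$ to that of $\tau$ over $j=\infty$, then testing the quadratic root pairs over $j=0$) by which one \emph{could} certify a candidate, and you defer the decisive computation as ``bounded though tedious.'' This cannot be waved through, because the coincidence locus is genuinely nonempty: for $s_{0}=s_{1}=s_{2}=0$ one has $j_{F}=j_{K}$, and in Example~\ref{165437_18Jan17}, where $\Prym(C/E)$ is self-dual, the two images again coincide. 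Properness of the coincidence locus is therefore precisely the nontrivial assertion, and the paper proves it by the explicit Example~\ref{175545_13Jan17}: for $(s_{0},s_{1},s_{2},\lambda)=(0,0,1,5)$,
$$
j_{F}(a)=\frac{2^{8}\cdot7(3a^{2}-3a+1)^{3}}{(a^{3}-6a^{2}+5a-1)^{2}},\qquad
j_{K}(a)=\frac{2^{4}(21a^{2}-30a+25)^{3}}{5^{2}\bigl(a(a-1)(a-5)\bigr)^{2}},
$$
and the swapped point $\check{\mathbf{j}}_{(0,0,1,5)}(\infty)=\bigl(\frac{2^{4}\cdot3^{3}\cdot7^{3}}{5^{2}},\ 2^{8}\cdot3^{3}\cdot7\bigr)$ is checked not to lie on $\Image(\mathbf{j}_{(0,0,1,5)})$.

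Two further remarks on your certification scheme, should you carry it out. First, it is more laborious than necessary: to show the images differ at a sample parameter it suffices, as in the paper, to evaluate $\mathbf{j}$ at one convenient point (say $a=\infty$), swap the coordinates, and check non-membership by verifying that the numerators of $j_{F}(a)-j_{K}(\infty)$ and $j_{K}(a)-j_{F}(\infty)$ have no common root (a single resultant of two binary sextics); this avoids working in the splitting field of the cubic $\check{\tau}$, which your six M\"obius candidates require. Second, your reduction to six candidates is only valid if $\mathbf{j}$ is generically injective \emph{at the chosen sample point}, not merely for general parameters, so you would additionally have to verify at that point that $j_{F}$ and $j_{K}$ have no common ramification (the criterion from the proof of Lemma~\ref{162617_13Jan17}); you flag this correctly, but it is one more computation that the paper's pointwise membership check simply does not need.
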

\begin{proof}
 By Example~\ref{175545_13Jan17}, the locus
 $$
 \{(s_{0},\dots,s_{2},\lambda)\in{U}\mid\Image{(\mathbf{j}_{(s,\lambda)})}=
 \Image{(\check{\mathbf{j}}_{(s,\lambda)})}\}
 $$
 is a proper closed subset of $U$.
\end{proof}
\begin{proof}[Proof of Theorem~\ref{163320_13Jan17}]
 Let $(A,[\mathcal{L}_{A}])\in\mathcal{A}_{2}^{(1,2)}$ be a general point,
 and let $[D\rightarrow{F}]\in\mathcal{M}(A)$ be a point such that
 $D$ is not hyperelliptic.
 Then we may assume that $D$ is defined by
 $$
 D=\{[x:y:z]\in\mathbf{P}^{2}\mid
 z^{4}+2(s_{0}x^{2}+s_{1}xy+s_{2}y^{2})z^{2}+x(x-y)(x-\lambda{y})y=0\}.
 $$
 By Lemma~\ref{095421_30Nov16}, the dual bielliptic curve $D^{\vee}$ of $D$
 is equal to $C_{(s,\lambda)}$.
 By Lemma~\ref{110908_14Dec16}, the bielliptic fibration
 $P^{\sim}\rightarrow{\mathbf{P}^{1}=|\mathcal{L}_{P}}|$
 gives a covering $|\mathcal{L}_{P}|\dashrightarrow\mathcal{M}(A)$ of degree $4$, where $(P,[\mathcal{L}])$ denotes the dual of $(A,[\mathcal{L}_{A}])$.
 By comparing the bielliptic fibration
 $A'=C^{(2)}/(\sigma^{(2)}\circ\kappa)\rightarrow|\mathcal{O}_{B}(2\eta)|
 =\mathbf{P}^{1}(a\text{-line})$
 for $C=C_{(s,\lambda)}$ with
 $P^{\sim}\rightarrow{\mathbf{P}^{1}=|\mathcal{L}_{P}}|$
 in the diagram of Remark~\ref{111720_14Dec16},
 the locus $\mathcal{M}(A)$ is identified with an open subset of the $\mathbf{P}^{1}(a\text{-line})$.
 By Lemma~\ref{162617_13Jan17} and Lemma~\ref{162638_13Jan17},
 $$
 \mathbf{j}_{A}:\
 \mathcal{M}(A)\longrightarrow\mathbf{P}^{1}\times\mathbf{P}^{1};\
 [Y]=[Y(C/E)]\longmapsto(j(E),j(K_{Y}))
 $$
 is generically injective, and
 $$
 \overline{\mathbf{j}_{A}(\mathcal{M}(A))}
 =\Image{(\mathbf{j}_{(s,\lambda)})}\neq
 \Image{(\check{\mathbf{j}}_{(s,\lambda)})}
 =\overline{\mathbf{j}_{A}(\mathcal{M}(P))}.
 $$
 By Theorem~\ref{181010_13Dec16},
 if $\mathbf{j}_{A}([Y])$ is not contained in the singular locus of
 $\Image{(\mathbf{j}_{(s,\lambda)})}\cup
 \Image{(\check{\mathbf{j}}_{(s,\lambda)})}$,
 then the set $\mathcal{M}(H^{2}(Y)\oplus{H^{1}(Y)}\oplus{H^{1}(K_{Y})})$ consists of $1$ point.
\end{proof}

\subsection{Examples}
\begin{example}
 Let $\pi:C\rightarrow{E}$ be the bielliptic curve defined by
 $$
 C=\{[x:y:z]\in\mathbf{C}\mid
 z^{4}=T(x,y)\},
 $$
 which is isomorphic to its dual $C^{\vee}$ by Lemma~\ref{095421_30Nov16}.
 In this locus, the bielliptic curve $C$ is uniquely determined by the base $E$.
 Since $C$ has the automorphism
 $z\mapsto{i}z$,
 we can compute the period matrix of $C$ explicitly.
 Then we have
 $$
 A=\Prym{(C/E)}\simeq\mathbf{C}^{2}/
 \mathbf{Z}(1,0)\oplus
 \mathbf{Z}(0,1)\oplus
 \mathbf{Z}\bigl(\frac{1+i}{2},-\frac{1}{2}\bigr)\oplus
 \mathbf{Z}\bigl(-1,\frac{-1+i}{2}\bigr),
 $$
 and the $(1,2)$-polarization $[\mathcal{L}_{A}]$ is given by the Hermitian form
 $$
 H:\mathbf{C}^{2}\rightarrow\mathbf{C}^{2};\
 H((z_{1},z_{2}),(w_{1},w_{2}))=2(z_{1}\bar{w}_{1}+2z_{2}\bar{w}_{2}).
 $$
 We remark that the polarized abelian surface $(A,[\mathcal{L}_{A}])$ does not depend on $T(x,y)$, and it has the automorphism
 $$
 \mu:A\rightarrow{A};\
 (z_{1},z_{2})\longmapsto
 (z_{1},z_{2})
 \left(
 \begin{array}{cc}
  \frac{1+i}{2}&-\frac{1}{2}\\
  -1&\frac{-1+i}{2}\\
 \end{array}
 \right)
 $$
 of order $12$.
 Since $j_{F}=j_{K}:\mathbf{P}^{1}(a\text{-line})\rightarrow\mathbf{P}^{1}(j\text{-line})$,
 the map
 $\mathbf{j}=(j_{F},j_{K}):\mathbf{P}^{1}(a\text{-line})\rightarrow\mathbf{P}^{1}\times\mathbf{P}^{1}$
 is not generically injective,
 but
 $\mathbf{j}_{A}:\mathcal{M}(A)\subset\mathbf{P}^{1}(j\text{-line})
 \rightarrow\mathbf{P}^{1}\times\mathbf{P}^{1}$
 is injective.
\end{example}
\begin{example}\label{175545_13Jan17}
 Let $\pi:C\rightarrow{E}$ be the bielliptic curve defined by
 $$
 C=\{[x:y:z]\in\mathbf{C}\mid
 (z^{2}+y^{2})^{2}=x(x-y)(x-5y)y\}.
 $$
 Then the $j$-function $\mathbf{j}_{(0,0,1,5)}$ in Lemma~\ref{162617_13Jan17} is computed by
 $$
 j_{F}(a)=\frac{2^{8}\cdot7(3a^{2}-3a+1)^{3}}
 {(a^{3}-6a^{2}+5a-1)^{2}},\quad
 j_{K}(a)=\frac{2^{4}(21a^{2}-30a+25)^{3}}
 {5^{2}(a(a-1)(a-5))^{2}}.
 $$
 Here we can check that $j_{K}$ and $j_{F}$ has no common ramification points on the $a$-line, and it is used in the proof of Lemma~\ref{162617_13Jan17}.
 Since
 $$
 \mathbf{j}_{(0,0,1,5)}(\infty)=(j_{F}(\infty),j_{K}(\infty))
 =(j(K_{Y(C/E)}),j(E))
 =\bigl(2^{8}\cdot3^{3}\cdot7,\
 \frac{2^{4}\cdot3^{3}\cdot7^{3}}{5^{2}}\bigr),
 $$
 we can check that
 $(j(E),j(K_{Y(C/E)}))=\bigl(\frac{2^{4}\cdot3^{3}\cdot7^{3}}{5^{2}},
 2^{8}\cdot3^{3}\cdot7\bigr)
 \in\Image{(\check{\mathbf{j}}_{(0,0,1,5)})}$
 is not contained in $\Image{(\mathbf{j}_{(0,0,1,5)})}$.
 Hence we have
 $\Image{(\mathbf{j}_{(0,0,1,5)})}\neq
 \Image{(\check{\mathbf{j}}_{(0,0,1,5)})}$,
 and it is used in the proof of Lemma~\ref{162638_13Jan17}.\par
 By Lemma~\ref{172705_10Dec16}, the bielliptic curve $C\rightarrow{E}$ defines a family $\{D_{a}\rightarrow{F_{a}}\}_{a}$ of bielliptic curves by
 \begin{multline*}
 D_{a}=\{[x:y:z]\in\mathbf{P}^{2}\mid\\
 z^{4}-2\frac{x^{2}+2axy-(2a^{2}-12a+5)y^{2}}{a^{3}-6a^{2}+5a-1}z^{2}
  -\frac{x^{4}+4ax^{3}y+2(12a-5)x^{2}y^{2}+20axy^{3}+25y^{4}}
  {a^{3}-6a^{2}+5a-1}=0\},
 \end{multline*}
 whose Prym variety $P=\Prym{(D_{a}/F_{a})}$ is the dual of the Prym variety $A=\Prym{(C/E)}$.
 The image of the $j$-function $\mathbf{j}_{(0,0,1,5)}:\mathbf{P}^{1}(a\text{-line})\rightarrow\mathbf{P}^{1}\times\mathbf{P}^{1}$ has many singular points, which give counter examples to the global Torelli theorem.
 For example, the point
 $$
 \mathbf{j}_{(0,0,1,5)}\bigl(-\frac{1}{3}\bigr)
 =\mathbf{j}_{(0,0,1,5)}\bigl(\frac{5}{6}\bigr)
 =\bigl(\frac{2^{8}\cdot3^{3}\cdot7^{2}}{13^{2}},\
 \frac{2^{4}\cdot3^{3}\cdot7^{3}}{5^{2}}\bigr)
 \in\mathbf{P}^{1}\times\mathbf{P}^{1}
 $$
 is a node of $\Image{(\mathbf{j}_{(0,0,1,5)})}$.
 This means that, elliptic surfaces
 $Y_{-\frac{1}{3}}$ and $Y_{\frac{5}{6}}$
 are not isomorphic each other, but their Hodge structures are isomorphic
 $$
 H^{2}(Y_{-\frac{1}{3}},\mathbf{Z})\simeq
 H^{2}(Y_{\frac{5}{6}},\mathbf{Z}),\quad
 H^{1}(Y_{-\frac{1}{3}},\mathbf{Z})\simeq
 H^{1}(Y_{\frac{5}{6}},\mathbf{Z}),\quad
 H^{1}(K_{Y_{-\frac{1}{3}}},\mathbf{Z})\simeq
 H^{1}(K_{Y_{\frac{5}{6}}},\mathbf{Z}),
 $$
 where we set $Y_{a}=Y(D_{a}/F_{a})$.
\end{example}

\begin{example}\label{165513_18Jan17}
 Let $\pi:C\rightarrow{E}$ be the bielliptic curve defined by
 $$
 C=\{[x:y:z]\in\mathbf{C}\mid
 (z^{2}+x(x-y))^{2}=x(x-y)(x+3y)y\}.
 $$
 Then the $j$-function $\mathbf{j}_{(1,-1,0,-3)}$ in Lemma~\ref{162617_13Jan17} is computed by
 $$
 j_{F}(a)
 =\frac{2^{6}(7a^{2}+18a+27)^{3}}
 {3^{2}((a+1)(a-3)(a+3))^{2}},\quad
 j_{K}(a)
 =\frac{2^{4}(13a^{2}-6a+9)^{3}}
 {3^{2}({a}({a-1})({a+3}))^{2}},
 $$
 and the family $\{D_{a}\rightarrow{F_{a}}\}_{a}$ of bielliptic curves is defined by
 \begin{multline*}
 D_{a}=\{[x:y:z]\in\mathbf{P}^{2}\mid\\
 z^{4}+2\frac{(2a-1)x^{2}+2axy-3y^{2}}{(a+1)(a-3)}z^{2}
  -\frac{x^{4}+4ax^{3}y-2(4a-3)x^{2}y^{2}-12axy^{3}+9y^{4}}
  {(a+1)(a-3)(a+3)}=0\}.
 \end{multline*}
 Then for $a=-1,9,-\frac{1}{3},\frac{3}{5},\frac{9}{5},\frac{3}{11}$,
 $$
 {F_{a}}\simeq{E},\quad
 \Prym{(D_{a}/F_{a})}\simeq\Prym{(C/E)}^{\vee},
 $$
 and for $a=\infty,-5,-\frac{3}{2},\frac{3}{5},-\frac{3}{7},-\frac{15}{7}$,
 $$
 {F_{a}^{\vee}}\simeq{E},\quad
 \Prym{(D_{a}^{\vee}/F_{a}^{\vee})}\simeq\Prym{(C/E)},
 $$
 where $D_{a}^{\vee}\rightarrow{F_{a}^{\vee}}$ denotes the dual of $D_{a}\rightarrow{F_{a}}$.
 The $12$ elliptic surfaces
 $$
 \begin{cases}
  Y_{a}=Y(D_{a}/F_{a})&
  (a=-1,9,-\frac{1}{3},\frac{3}{5},\frac{9}{5},\frac{3}{11}),\\
  Y_{a}^{\vee}=Y(D^{\vee}_{a}/F^{\vee}_{a})&
 (a=\infty,-5,-\frac{3}{2},\frac{3}{5},-\frac{3}{7},-\frac{15}{7})
 \end{cases}
 $$
 have the same Hodge structure as $Y=Y(C/E)$;
 $$
 \begin{cases}
  H^{1}(Y_{a},\mathbf{Z})\oplus{H^{2}(Y_{a},\mathbf{Z})}\simeq
  H^{1}(Y,\mathbf{Z})\oplus{H^{2}(Y,\mathbf{Z})}&
  (a=-1,9,-\frac{1}{3},\frac{3}{5},\frac{9}{5},\frac{3}{11}),\\
  H^{1}(Y_{a}^{\vee},\mathbf{Z})\oplus{H^{2}(Y_{a}^{\vee},\mathbf{Z})}\simeq
  H^{1}(Y,\mathbf{Z})\oplus{H^{2}(Y,\mathbf{Z})}&
  (a=\infty,-5,-\frac{3}{2},\frac{3}{5},-\frac{3}{7},-\frac{15}{7}),
 \end{cases}
 $$
 and they give the set
 $\mathcal{M}(H^{2}(Y)\oplus{H^{1}(Y)})$ of $12$ points in
 Theorem~\ref{163939_14Jan17}.
 The elliptic surface
 $[Y_{\infty}^{\vee}]\in\mathcal{M}(H^{2}(Y)\oplus{H^{1}(Y)})$
 is the only member which have the same canonical divisor
 as $Y$;
 ${H^{1}(K_{Y_{\infty}^{\vee}},\mathbf{Z})}\simeq
 {H^{1}(K_{Y},\mathbf{Z})}$.
 Since $C\rightarrow{E}$ is isomorphic to ${D^{\vee}_{\infty}\rightarrow{F^{\vee}_{\infty}}}$,
 the elliptic surface $Y=Y(C/E)$ is determined by the Hodge structure
 $H^{1}(K_{Y},\mathbf{Z})\oplus
 H^{1}(Y,\mathbf{Z})\oplus
 H^{2}(Y,\mathbf{Z})$.\par
 Next we gives an example corresponds to the point of
 $\Image{(\mathbf{j}_{(1,-1,0,-3)})}\cap
 \Image{(\check{\mathbf{j}}_{(1,-1,0,-3)})}$,
 by using this family.
 Since
 $$
 \mathbf{j}_{(1,-1,0,-3)}(-\frac{3}{2})
 =\bigl(\frac{2^{6}\cdot7^{3}}{3^{2}},\
 \frac{2^{4}\cdot3^{3}\cdot7^{3}}{5^{2}}\bigr),\quad
 \mathbf{j}_{(1,-1,0,-3)}(9)
 =\bigl(\frac{2^{4}\cdot3^{3}\cdot7^{3}}{5^{2}},\
 \frac{2^{6}\cdot7^{3}}{3^{2}}\bigr)
 $$
 the point
 $\bigl(\frac{2^{6}\cdot7^{3}}{3^{2}},
 \frac{2^{4}\cdot3^{3}\cdot7^{3}}{5^{2}}\bigr)$
 is contained in
 $\Image{(\mathbf{j}_{(1,-1,0,-3)})}\cap
 \Image{(\check{\mathbf{j}}_{(1,-1,0,-3)})}$.
 It means that $Y_{-\frac{3}{2}}$ and
 $Y_{9}^{\vee}$ has the same Hodge structure and same canonical divisor;
 $$
 H^{2}(Y_{-\frac{2}{3}},\mathbf{Z})\simeq
 H^{2}(Y_{9}^{\vee},\mathbf{Z}),\quad
 H^{1}(Y_{-\frac{2}{3}},\mathbf{Z})\simeq
 H^{1}(Y_{9}^{\vee},\mathbf{Z}),\quad
 H^{1}(K_{Y_{-\frac{2}{3}}},\mathbf{Z})\simeq
 H^{1}(K_{Y_{9}^{\vee}},\mathbf{Z}).
 $$
 But in this case, the Prym varieties are not isomorphic;
 $$
 \Prym{(D_{-\frac{3}{2}}/F_{-\frac{3}{2}})}
 \ncong\Prym{(D_{-\frac{3}{2}}^{\vee}/F_{-\frac{3}{2}}^{\vee})}
 =\Prym{(D_{9}^{\vee}/F_{9}^{\vee})}.
 $$
\end{example}

\begin{example}\label{165437_18Jan17}
 Let $\pi:C\rightarrow{E}$ be the bielliptic curve defined by
 $$
 C=\{[x:y:z]\in\mathbf{C}\mid
 (z^{2}+x^{2}-2xy+3y^{2}))^{2}=x(x-y)(x-3y)y\}.
 $$
 Then the $j$-function $\mathbf{j}_{(1,-2,3,3)}$ in Lemma~\ref{162617_13Jan17} is computed by
 $$
 j_{F}(a)
 =\frac{2^{6}(79a^{2}-84a+441)^{3}}
 {3^{2}\cdot5^{2}\cdot7^{2}(a(a-3)(a+7))^{2}},\quad
 j_{K}(a)
 =\frac{2^{6}(7a^{2}-12a+9)^{3}}{3^{2}(a(a-1)(a-3))^{2}},
 $$
 and the family $\{D_{a}\rightarrow{F_{a}}\}_{a}$ of bielliptic curves is defined by
 \begin{multline*}
 D_{a}=\{[x:y:z]\in\mathbf{P}^{2}\mid\\
 z^{4}+4\frac{x^{2}+2xy+3y^{2}}{a+7}z^{2}
  -\frac{x^{4}+4ax^{3}y+2(8a-3)x^{2}y^{2}+12axy^{3}+9y^{4}}
  {(a-3)(a+7)}=0\}.
 \end{multline*}
 Since $C\rightarrow{E}$ is isomorphic to $D_{\frac{7}{9}}\rightarrow{F_{\frac{7}{9}}}$, the original bielliptic curve is contained in this family.
 It means that the Prym variety $\Prym{(C/E)}$ is isomorphic to its dual
 $\Prym{(D_{a}/F_{a})}=\Prym{(C^{\vee}/E^{\vee})}$.
 We remark that $C\rightarrow{E}$ is not isomorphic to its dual $C^{\vee}\rightarrow{E^{\vee}}$.
 Using this family, we give an example of elliptic surface which has the same Hodge structure as its dual.
 Since
 $$
 \mathbf{j}_{(1,-2,3,3)}\bigl(-\frac{4}{3}\bigr)
 =\mathbf{j}_{(1,-2,3,3)}\bigl(\frac{21}{11}\bigr)
 =\bigl(\frac{2^{6}\cdot3^{3}\cdot13^3}{5^2\cdot7^{2}},\
 \frac{2^{6}\cdot3^{3}\cdot13^3}{5^2\cdot7^{2}}\bigr)
 $$
 is a node of
 $\Image{(\mathbf{j}_{(1,-2,3,3)})}=\Image{(\check{\mathbf{j}}_{(1,-2,3,3)})}$,
 the elliptic surfaces $Y_{-\frac{4}{3}}$ and $Y_{\frac{21}{11}}$
 are not isomorphic each other, but their Hodge structures are isomorphic
 $$
 H^{2}(Y_{-\frac{4}{3}},\mathbf{Z})\simeq
 H^{2}(Y_{\frac{21}{11}},\mathbf{Z}),\quad
 H^{1}(Y_{-\frac{4}{3}},\mathbf{Z})\simeq
 H^{1}(K_{Y_{-\frac{4}{3}}},\mathbf{Z})\simeq
 H^{1}(Y_{\frac{21}{11}},\mathbf{Z})\simeq
 H^{1}(K_{Y_{\frac{21}{11}}},\mathbf{Z}).
 $$
 We remark that the bielliptic curves $D_{-\frac{4}{3}}\rightarrow{F_{-\frac{4}{3}}}$ and
 $D_{\frac{21}{11}}\rightarrow{F_{\frac{21}{11}}}$
 are dual to each other, and by suitable change of the coordinate, $D_{-\frac{4}{3}}$ and $D_{\frac{21}{11}}$ are defined by the following equation;
 $$
 \begin{cases}
  D_{-\frac{4}{3}}\simeq
  \{[x:y:z]\in\mathbf{P}^{2}\mid
  (z^{2}+x^{2}-2xy+\frac{7}{5}y^{2})^{2}=-\frac{5}{3}x(x-y)(x-\frac{7}{5}y)y\},\\
  D_{\frac{21}{11}}\simeq
  \{[x:y:z]\in\mathbf{P}^{2}\mid
  (z^{2}+\frac{5}{7}x^{2}-2xy+y^{2})^{2}=-\frac{5}{3}x(x-y)(x-\frac{7}{5}y)y\}.
 \end{cases}
 $$
\end{example}

\subsection*{Acknowledgment}
The author would like to thank Juan Carlos Naranjo and Gian Pietro Pirola
for giving lectures on Prym varieties and Galois covering in Pragmatic 2016 at the University of Catania.
The idea of using the Prym varieties was motivated by their lectures.
This paper was prepared when the author stayed at the University of Pavia, and he is grateful for the hospitality.
He also thanks Masa-Hiko Saito, Sampei Usui and Kazuhiro Konno for responding to a question about the paper \cite{S}.

\bigskip
\begin{flushleft}
 {\sc Department of Mathematics, School of Engineering\\
 Tokyo Denki University, Adachi-ku\\
 Tokyo 120-8551, Japan}\\
 {\it E-mail address}:
 {\ttfamily atsushi@mail.dendai.ac.jp}
\end{flushleft}
\end{document}